\definecolor{dkblue}{RGB}{1,31,91} 
\definecolor{mygreen}{RGB}{44,85,17}
\definecolor{myblue}{RGB}{34,31,217}
\definecolor{mybrown}{RGB}{194,164,113}
\definecolor{myred}{RGB}{255,66,56}
\theoremstyle{definition}
\newtheorem{theorem}{Theorem}
\newtheorem{corollary}[theorem]{Corollary}
\newtheorem{lemma}[theorem]{Lemma}
\newtheorem{proposition}[theorem]{Proposition}
\newtheorem{remark}[theorem]{Remark}
\newtheorem{definition}[theorem]{Definition}
\numberwithin{equation}{section}
\numberwithin{theorem}{section}
\newcommand{\eqdef }{\overset{\mbox{\tiny{def}}}{=}}
\newcommand{\R}{\mathbb R}
\newcommand{\Z}{\mathbb Z}
\newcommand{\T}{\mathbb T}
\newcommand{\N}{\mathbb N}
\newcommand{\C}{\mathbb C}
\newcommand{\1}{\mathds{1}}
\newcommand{\bT}{\mathbf{T}}
\newcommand{\BT}{\mathbf{T}}
\newcommand{\TE}{\mathcal{T}}
\newcommand{\SL}{\mathcal{S}}
\newcommand{\subw}{\mu}
\newcommand{\BS}{\mathcal{B}^\subw}
\newcommand{\CM}{\BS_T}
\newcommand{\DM}{\mathcal{D}_T^\subw}
\newcommand{\DMD}{d}
\newcommand{\CTS}{C_*}
\newcommand{\BN}{\mathcal{B}^\nu}
\newcommand{\CN}{\BN_T}
\newcommand{\DN}{\mathcal{D}_T^\nu}
\newcommand{\BldU}{\bm{u}}
\newcommand{\BX}{\bm{X}}
\newcommand{\bX}{\bm{X}}
\newcommand{\BY}{\bm{Y}}
\newcommand{\bY}{\bm{Y}}
\newcommand{\BZ}{\bm{Z}}
\newcommand{\bZ}{\bm{Z}}
\newcommand{\BU}{\bm{U}^{\Theta_0}}
\newcommand{\bU}{\bm{U}^{\Theta_0}}
\newcommand{\bV}{\bm{V}^{\Theta_0}}
\newcommand{\bF}{\bm{F}^{\Theta_0}}
\newcommand{\DAL}{D_\alpha}
\newcommand{\DATX}{\DAL \BX (\theta)}
\newcommand{\DATY}{\DAL \BY (\theta)}
\newcommand{\RO}{\mathcal{R}}
\newcommand{\DO}{\mathcal{P}}
\newcommand{\IO}{\mathcal{I}}
\newcommand{\HEU}{\mathcal{H}}
\newcommand{\HEK}{\mathcal{H}_1}
\newcommand{\HUK}{\mathcal{H}_2}
\newcommand{\HJK}{\mathcal{H}_3}
\newcommand{\WK}{\mathcal{W}}
\newcommand{\BB}{\mathcal{U}_1}
\newcommand{\BK}{\mathcal{U}_2}
\newcommand{\BC}{\mathcal{U}_3}
\newcommand{\BG}{\mathcal{U}_4}
\newcommand{\BPU}{\mathcal{U}}
\newcommand{\JT}{\mathcal{J}}
\newcommand{\LLT}{\mathcal{L}}
\newcommand{\TT}{\mathcal{N}}
\newcommand{\STT}{\mathcal{Z}}
\newcommand{\YTT}{\mathcal{Y}}
\newcommand{\VTT}{\mathcal{V}}
\newcommand{\jj}{j_*}
\newcommand{\DTT}{\mathcal{C}_{1\TE}}
\newcommand{\DDTT}{\mathcal{C}_{2\TE}}
\newcommand{\DDDTT}{\mathcal{C}_{3\TE}}
\newcommand{\DKTT}{\mathcal{C}_{k\TE}}
\newcommand{\KTA}{\mathcal{K}(\theta, \alpha)}
\newcommand{\KXTA}{\mathcal{K}[\bX](\theta, \alpha)}
\newcommand{\HXTA}{\mathcal{K}_0[\bX](\theta, \alpha)}
\newcommand{\DAX}{\DAL \BX }
\newcommand{\DAY}{\DAL \BY }
\newcommand{\DAP}{\delta_\alpha^+ \BX' }
\newcommand{\DAM}{\delta_\alpha^- \BX' }
\newcommand{\DAYP}{\delta_\alpha^+ \BY' }
\newcommand{\DAYM}{\delta_\alpha^- \BY' }
\newcommand{\DAPXY}{\delta_\alpha^+ (\bX'-\bY') }
\newcommand{\DAMXY}{\delta_\alpha^- (\bX'-\bY') }
\newcommand{\DBT}{\overline{D\bT} }
\newcommand{\DDBT}{\overline{D^2\bT} }
\newcommand{\DDTB}{\widetilde{D^2\bT} }
\newcommand{\DDDBT}{\overline{D^3\bT} }
\newcommand{\DBTX}{\overline{D\bT}[\BX'] }
\newcommand{\DBTY}{\overline{D\bT}[\BY'] }
\newcommand{\LDS}{\frac{\lambda}{64}}
\newcommand{\CDS}{\frac{1}{64}}
\newcommand{\MA}{\subw}
\newcommand{\BA}{\mathcal{B}^\MA}
\newcommand{\KCC}{C_3}
\newcommand{\KC}{\KCC \max\{1,M\}}
\newcommand{\TLam}{\widetilde{\Lambda}}
\newcommand{\p}{\partial}
\newcommand{\mc}[1]{\mathcal{#1}}
\newcommand{\abs}[1]{\left\lvert #1 \right\rvert}
\newcommand{\PD}[2]{\frac{\partial#1}{\partial#2}}
\newcommand{\paren}[1]{\left(#1\right)}
\newcommand{\pv}{\text{pv}\hspace{-0.1cm}}
\DeclareMathOperator*{\esssup}{ess\,sup}
\newcommand{\jump}[1]{[\![#1]\!]}
\newcommand{\secref}[1]{\S\ref{#1}}
\begin{document}

\keywords{Peskin problem, Fluid-Structure interface, local regularity, critical regularity,  immersed boundary problem, Stokes flow, fractional Laplacian, solvability, stability.}
\subjclass{35Q35, 35C15,  35R11, 35R35, 76D07.}

%
\title[Critical local well-posedness for the Peskin problem]{Critical local well-posedness for the fully nonlinear Peskin problem}

\author[S. Cameron]{Stephen Cameron$^\dagger$}
\address{$^\dagger$Courant Institute, New York University, New York, NY 10012. \href{mailto:spc6@cims.nyu.edu}{spc6@cims.nyu.edu} }
\thanks{$^\dagger$Partially supported by the NSF grant DMS-1902750 of the USA}

\author[R. M. Strain]{Robert M. Strain$^{\ddagger}$}
\address{$^\ddagger$Department of Mathematics, University of Pennsylvania, Philadelphia, PA 19104, USA. \href{mailto:strain@math.upenn.edu}{strain@math.upenn.edu} (\href{https://orcid.org/0000-0002-1107-8570}{https://orcid.org/0000-0002-1107-8570})}
\thanks{$^\ddagger$Partially supported by the NSF grants DMS-1764177 and DMS-2055271 of the USA}

\begin{abstract}
We study the problem where a one-dimensional elastic string is immersed in a two-dimensional steady Stokes fluid.  This is known as the Stokes immersed boundary problem and also as the Peskin problem.   We consider the case with equal viscosities and with a fully non-linear tension law; this model has been called the fully nonlinear Peskin problem.  In this case we prove local in time well-posedness for arbitrary initial data in the scaling critical Besov space $\dot{B}^{3/2}_{2,1}(\T ; \R^2)$.  We additionally prove the optimal higher order smoothing effects for the solution.  To prove this result we derive a new formulation of the boundary integral equation that describes the parametrization of the string, and we crucially utilize a new cancellation structure.  
\end{abstract}

\thispagestyle{empty}
\maketitle
\tableofcontents

\section{Introduction and main results}\label{sec:firstpart}

The immersed boundary method, as formulated by Peskin in \cite{PeskinThesis1972,PESKIN1972252}, has become a useful and effective method to computationally solve fluid-structure interaction (FSI) problems \cite{MR2009378}.  This method has developed numerous applications in different fields of science \cite{MR2115343,MR1156495}.     And the scientific computing of FSI problems has remained an active area of research \cite{MR2242805,MR2009378,TRYGGVASON2001708,richter2017fluid,Bazilevs2013}. 

The {\em Peskin problem}, considered in this paper, describes the time evolution of an elastic simple closed string immersed in a 2D incompressible Stokes flow.  The string exerts a singular force which generates the flow, and then the configuration of the string evolves over time according to the local fluid velocity.  This model is probably among the simplest FSI problems and it has been used extensively as a test problem in the development of numerical algorithms in addition to being used in physical modeling.  We assume that the string $\Gamma(t)$ splits $\R^2$ into two simply connected domains $\Omega(t)$ (interior) and $\R^2 \backslash \Omega(t)$ (exterior). We shall consider the problem when the viscosities, $\mu_i$, in both fluids are equal, and we set them equal to one for simplicity $\mu_1 = \mu_2 =1$.  Then there are several formulations of this problem, all of which are equivalent assuming we have a sufficiently smooth solution.

The first formulation is at the level of the fluid; for each fixed time $t>0$, both the fluid velocity $\BldU$ and pressure $p$ solve the equations
\begin{equation}\label{formulation.first}
\left\{\begin{array}{ll}
 \Delta \BldU + \nabla p = 0, & x\in \R^2\backslash \Gamma(t), \\ 
\nabla_x \cdot \BldU = 0, & x \in \R^2\backslash \Gamma(t) \\
\BldU,p\to 0, & \text{ as }x\to \infty \end{array} \right.
\end{equation}
We are left to describe the time evolution of $\Gamma(t)$ as well as the appropriate boundary conditions for $\BldU$ and $p$ at $\Gamma(t)$.  Parametrize  $\Gamma(t)$ 
by the Lagrangian coordinate $\theta \in\T=\mathbb{R}/(2\pi\mathbb{Z})=[-\pi, \pi]$, and let $\BX(t,\theta):\T \to \R^2$
denote the coordinate position of $\Gamma$ at time $t$.  Here $\BX=(X_1, X_2)^T$ and $|\BX|^2 \eqdef X_1^2+X^2_2$.
 Then the evolution of $\BX$ is given by 
\begin{equation}\label{eqn.u.formula}
 \partial_t \BX(t,\theta) = \BldU(t,\BX(t,\theta)).
\end{equation}
Define $\jump{w}=\jump{w}(\BX(\theta))$ as the jump across the filament $\Gamma$:
\begin{equation}\notag
\jump{w}(\BX(\theta)) = \lim\limits_{\Omega \ni x\to \bX(\theta)} w(x) -  \lim\limits_{\R^2 \backslash\Omega \ni x\to \bX(\theta)} w(x).
\end{equation}
Then the final boundary conditions for $\BldU$ and $p$ are given by 
\begin{equation}\label{jump.first}
\left\{\begin{array}{l}
\jump{\BldU} =0, \\
\jump{\paren{ \paren{\nabla \bm{u}+(\nabla \bm{u})^{\rm T}}-p\IO}\bm{n}}= \bm{F}_{\rm el}\abs{\p_\theta \BX}^{-1}. \end{array} \right.
\end{equation}
Above $\IO$ is the $2\times 2$ identity matrix and $\bm{n}$ is the outward pointing unit normal vector on $\Gamma$: 
\begin{equation*}
\bm{n}=\begin{bmatrix} 0 & 1 \\ -1 & 0 \end{bmatrix}\widehat{\BX^\prime}, \; \widehat{\BX^\prime} = \frac{\BX^\prime}{\abs{\BX'}}, \; \BX^\prime = \p_\theta \BX=\PD{\BX}{\theta}.
\end{equation*}
Since we will frequently be working with the parameterization $\BX$ at fixed times, we will often omit the time variable and denote derivatives in $\theta$ of $\BX$ as $\BX^\prime$.  
Lastly we denote 
$\bm{F}_{\text{el}}$
as the elastic force exerted by the string $\Gamma$. In the case that the elastic string obey’s Hooke’s law, we have a simple tension given by:
\begin{equation}\label{linearF}
\bm{F}_{\rm el}=k_0\p_\theta^2 \BX, \quad k_0>0,
\end{equation}
where $k_0$ is the elasticity constant of the string $\Gamma(t)$.  The general tension force law is given by
\begin{equation}\label{nonlinearF}
 \bm{F}_{\rm el}=\p_\theta\paren{\mc{T}(\abs{\p_\theta \BX})\frac{\p_\theta\BX}{\abs{\p_\theta \BX}}}
 \end{equation}
This is also called the fully nonlinear force law in \cite{rodenberg_thesis}.  Here $\TE(s)$ is a coefficient modeling the elastic tension in the filament that satisfies the structure condition $\TE>0$ and $d\TE/ds>0$.
 Note that \eqref{nonlinearF} is reduced to \eqref{linearF} if we take $\TE(s)=k_0s$, hence $k_0=\TE(1)=d\TE/ds$.

The set of equations \eqref{formulation.first}-\eqref{eqn.u.formula}-\eqref{jump.first} above was first proposed as a simplified model to study blood flow through heart valves \cite{PeskinThesis1972,PESKIN1972252}.  A second equivalent formulation of  \eqref{formulation.first}-\eqref{eqn.u.formula}-\eqref{jump.first} is the following immersed boundary formulation 
\begin{equation}\label{immersed.formulation}
\Delta \BldU +\nabla p = \int_{\T}  \partial_\theta \left(\TE(|\partial_\theta \BX|) \frac{\partial_\theta \BX}{|\partial_\theta \BX|}\right) \delta(x - \BX(\theta)) d\theta, \qquad \nabla\cdot \BldU = 0,
\end{equation}
which is very useful for numerical analysis. Then \eqref{immersed.formulation} combined with \eqref{eqn.u.formula} allows us to discretize the fluid domain in $x$ and the elastic string in $\theta$ independently of each other, with all communication between the two domains coming from the singular forcing of the fluid in  \eqref{immersed.formulation}, and the time evolution of the string in \eqref{eqn.u.formula}. This became the basis for the immersed boundary method, which has been applied to numerous problems and is of great use in applications \cite{MR1156495}.

The third formulation which we will primarily be using is the following boundary integral formulation for the general force law \eqref{nonlinearF}:
\begin{equation}\label{e:boundaryintegral}
\partial_t \BX(\theta) = \int_{\T} G(\delta_\alpha \BX(\theta) )\partial_\alpha \left(\TE(|\BX'(\theta+\alpha) |) \frac{\BX'(\theta+\alpha)}{|\BX'(\theta+\alpha)|}\right) d\alpha.
\end{equation}
Here, for a generic function $f:\T \to \R^2$, we define the standard partial difference operator by
    \begin{equation}\label{delta.notation}
        \delta_\alpha f(\theta) \eqdef f(\theta+\alpha)- f(\theta).
    \end{equation}
For $z\in \R^2$, then $G(z)$ is the Stokeslet given by
\begin{equation}\label{stokeslet.def}
G(z) = G_1(z) + G_2(z), 
\quad G_1(z) \eqdef - \frac{1}{4\pi}\log(|z|) \IO, 
\quad  G_2(z) \eqdef  \frac{1}{4\pi}\frac{z\otimes z}{|z|^2}.
\end{equation}
Notice that in the simple tension case \eqref{linearF} the equation \eqref{e:boundaryintegral} takes the form
\begin{equation}\notag
\partial_t \BX(\theta) = k_0 \int_{\T} G(\delta_\alpha \BX(\theta) ) \partial_\alpha^2 \BX(\theta+\alpha) d\alpha,
\end{equation}
which contains the second order derivative $\partial_\alpha^2 \BX$ inside the equation.
We also define
\begin{equation}\label{distance.X.notation}
    \DATX \eqdef \frac{\delta_\alpha  \BX (\theta)}{ \alpha}.
\end{equation}
Then we introduce the arc-chord number 
\begin{equation}\label{arc.cord.number}
    |\BX|_*  \eqdef \inf\limits_{\theta, \alpha\in \T,\alpha \ne 0} |\DATX|.
\end{equation}   
The evolution equation \eqref{e:boundaryintegral} is then is well-defined for a sufficiently regular function $\BX(t,\theta)$ that satisfies $|\BX(t)|_*>0$.  If the parametrization $\BX(t,\theta)$ is sufficiently regular, it has been proven that all three formulations \eqref{formulation.first}-\eqref{eqn.u.formula}-\eqref{jump.first}, \eqref{eqn.u.formula}-\eqref{immersed.formulation}, and \eqref{e:boundaryintegral} are equivalent \cite{MR1808257}.  Considering the importance of the Peskin problem in applications, establishing the existence of smooth solutions is vitally important in order to guarantee that various numerical methods based on different formulations of
the problem all approximate the same solution.

The Peskin problem has several known similarities with the Muskat problem.  The Muskat problem is also a free boundary problem that can be written in a boundary integral formulation \cite{MR2472040}.  Also, both systems satisfy an energy balance law \cite{MR3935476,CCGS13,CCGRS16}.  Further both equations have the invariant scaling $g_\lambda(t,\theta) = \lambda^{-1}g(\lambda t,\lambda \theta)$ (see also \secref{sec:scaling}).   Lastly, both systems of equations can be written in the form 
\begin{equation}\notag
\partial_t g + (-\Delta)^{\frac12}g = \mathfrak{R},
\end{equation}
with a ``remainder'' term $\mathfrak{R}$.  For the Peskin problem $g=\bX(t,\theta)$ and the remainder is $\mathfrak{R} = \mathcal{R}(t,\theta)$ as in \eqref{e:peskinapprox} below.  Recently there has been a large amount of research work studying the local- and global-in-time well-posedness for the Muskat problem \cite{CCG11,FlynnNguyen2020,NguyenST2019,NguyenPausader2019,2103.14535,AlaNgu2021lipsh,2009.08442,2010.06915,CCGS13,CCGRS16,Cam17,Cam20,CGS16,MR3639321} and break-down \cite{CCFGL12}.  This work was motivated by recent results on scaling critical local-in-time  well-posedness for the Muskat problem in \cite{AlaNgu2021lipsh,2009.08442,2010.06915}, as well as recent analytical work on the Peskin problem in \cite{MR3935476,MR3882225}.

Analytical study of the Peskin problem began very recently, with all but one paper focussing on the case of simple tension $\TE(r) = k_0 r$ in \eqref{linearF}.  Lin and Tong were able to prove local well-posedness for the boundary integral formulation \eqref{e:boundaryintegral} with initial data $\bX_0\in H^{5/2}(\T; \R^2)$ using energy methods and the Schauder fixed point theorem \cite{MR3882225}.  At the same time Mori, Rodenberg, and Spirn  proved local well-posedness for initial data $\bX_0\in C^{1,\gamma}(\T;\R^2)$ for any $0<\gamma<1$ using semigroup theory \cite{MR3935476}.  In particular the result of \cite{MR3935476} is barely subcritical, but the semi-group approach used in the proof makes a scaling critical result difficult.  The only equilibrium states are uniformly parametrized circles \cite{MR3935476}, and both groups were able to prove global well-posedness and exponential convergence to equilibrium for initial data sufficiently close to a circle \cite{MR3882225,MR3935476}.  Additionally, \cite{MR3935476} was able to prove that solutions to the Peskin problem immediately become $C^\infty$ for positive time, and that if $\bX(t)$ blows up in finite time, either a chord arc condition fails or the $C^{1,\gamma}$ norm must blow up for any small $\gamma>0$.  Tong \cite{1904.09528} then further established the global well-posedness of the regularized Peskin problem and proved convergence as the regularization parameter diminishes.  

Regarding scaling critical initial data for \eqref{e:boundaryintegral}, recently Garc\'ia-J\'uarez, Mori, and Strain were able to prove global well-posedness if the initial data is sufficiently close to a uniformly parametrized circle in the Wiener algebra $\dot{\mathcal{F}}^{1,1} : = \{f:\T\to \R^2| \sum\limits_{k\in \Z^2} |k| |\hat{f}(k)| <\infty\}$.  This result uses the spectral decomposition of the linearized operator \cite{2009.03360}, and it holds even in the case that the interior and exterior fluids have different viscosities--it is the first analytical result in that case. Recently, Gancedo, Belinch\'{o}n and Scrobogna \cite{2011.02294} studied a toy model of the Peskin problem and proved global existence and uniqueness in the critical Lipschitz space.  Then more recently, Chen and Nguyen were able to prove local well-posedness for \eqref{e:boundaryintegral} whenever $\bX_0'$ is in VMO using estimates on the fundamental solution of $(-\Delta)^{\frac12}$ and interpolation results, and they further prove global existence when $\bX_0'$ is in BMO for initial data that is close to equilibrium \cite{2107.13854}.

The previously mentioned results in a sense rely on rewriting \eqref{e:boundaryintegral} with \eqref{linearF} as 
\begin{equation}\label{e:peskinapprox}
\partial_t \bX(t,\theta) + (-\Delta)^{\frac12}\bX(t,\theta) = \mathcal{R}(t,\theta),
\end{equation}
for some remainder $\mathcal{R}$.  And then controlling this remainder further requires controlling the derivative $\bX'$.  These results then make use of properties that are particular to the fractional heat equation such as the fundamental solution, the semigroup property, and the spectral decomposition.  Once we consider a general tension $\mathcal{T}$ as in \eqref{nonlinearF} though, we lose access to the full power of these properties, and major alterations to the approach are needed.  The only paper before which that has dealt with a general tension is Rodenberg's thesis \cite{rodenberg_thesis}.  By localizing around the initial data, Rodenberg was able to apply the semigroup method from \cite{MR3935476} again and prove local existence when $\bX_0, \mathcal{T}\in C^{1,\gamma}$.  However, the result is weakened because the approach to localizing the intial data  and thereby patching the semigroup method in \cite{rodenberg_thesis} didn't allow to also prove the smoothing effects, only guaranteeing that the solution $\bX(t)$ remains in $C^{1,\gamma}$ even if the initial data and tension are $C^\infty$.

In order to further develop the fully nonlinear case \eqref{nonlinearF}, it's vital to understand exactly how the addition of a nonlinear tension $\mathcal{T}$ changes the problem.  In particular, its important to understand how this affects the evolution of the derivative $\bX'$, as the regularity of and behavior of the remainder $\mathcal{R}$ in \eqref{e:peskinapprox} has been controlled by that.  In this article, we propose a new representation of the boundary integral equation for the problem.  We write the equation \eqref{e:boundaryintegral} in the following equivalent formulation that will cancel out the terms featuring an $\partial_\alpha^2 \BX = \BX''$.  In \eqref{e:boundaryintegral}  we integrate by parts against $G_1(z)$ while leaving $G_2(z)$ alone to obtain
\begin{equation}\notag
\begin{split}
\partial_t \BX(\theta) =& \int_{\T} \partial_\alpha \left( \frac{\TE(|\BX'|)}{|\BX'|} \partial_\alpha(G_1(\delta_\alpha \BX))\right) \delta_\alpha \BX(\theta) d\alpha
\\ &+ \int_{\T} G_2(\delta_\alpha \BX) \partial_\alpha \left(\TE(|\BX'(\theta+\alpha) |) \frac{\BX'(\theta+\alpha)}{|\BX'(\theta+\alpha)|}\right)d\alpha 
\\
 =& \frac{1}{4\pi}\int_{\T} \frac{2 \left(\BX'(\theta+\alpha)\cdot \frac{\delta_\alpha \BX}{|\delta_\alpha \BX|} \right)^2 - |\BX'(\theta+\alpha)|^2}{ |\delta_\alpha \BX|^2} \frac{\TE(|\BX'(\theta+\alpha)|)}{|\BX'(\theta+\alpha)|} \delta_\alpha \BX(\theta) d\alpha.
\end{split}
\end{equation}
The  calculation is performed in full detail in \secref{sec:GenTenDerivation}.

This property of the cancellation of the highest order derivatives is also satisfied by the equation for $\bX'(t,\theta)$.  Let $\bX(t,\theta)$ be the solution of the Peskin problem with initial data $\bX_0$ and tension $\TE$, Then  $\bX'(t,\theta)$ solves the following equation 
\begin{equation}\label{peskin.general.tension}
    \partial_t \BX'(\theta) = \int_{\T} \frac{d\alpha}{\alpha^2}~ \mathcal{K}[\BX](\theta, \alpha)\delta_\alpha \bT(\BX'(\theta)),
\end{equation}
where $\bT:\R^2\to \R^2$ is the tension map 
\begin{equation}\label{tension.map.def}
    \bT(z) \eqdef \TE(|z|)\hat{z}, \quad z \in \R^2.
\end{equation}
Here the kernel $\KTA=\KXTA$ is given by
\begin{multline}\label{kernel.peskin.noExpand}
\KXTA \eqdef \frac{1}{4\pi} \frac{\BX'(\theta+\alpha) \cdot \DO(\DATX)\BX'(\theta)}{|\DATX|^2} \IO
    \\
    -\frac{1}{4\pi} \frac{\BX'(\theta+\alpha) \cdot \RO(\DATX)\BX'(\theta)}{|\DATX|^2} \RO(\DATX) 
    \\ 
    +\frac{1}{4\pi} \frac{\BX'(\theta+\alpha) \cdot (\DO(\DATX)-\IO)\BX'(\theta)}{|\DATX|^2} \DO(\DATX).
\end{multline} 
Again $\IO$ is the identity matrix on $\R^2$.  Also  the reflection matrices $\RO$ and $\DO$ are defined $\forall z \in \R^2$ by 
\begin{equation}\label{matrix.operators}
  \RO(z) \eqdef \hat{z}\otimes \hat{z}^\perp + \hat{z}^\perp\otimes \hat{z}, \quad  \DO(z) \eqdef \hat{z}\otimes \hat{z} - \hat{z}^\perp \otimes \hat{z}^\perp ,
\end{equation}
where $\hat{z}^\perp\in \R^2 $ is the unit vector perpendicular to $\hat{z}$.  We remark that the three matrices $\IO$, $\RO(z)$, $\DO(z)$ are mutually orthogonal in $\R^4$ and form a basis for the 2 by 2 symmetric matrices for any fixed value of $z\in \R^2\setminus \{0\}$.  This representation of the equation \eqref{peskin.general.tension} for the evolution of $\bX'(t,\theta)$ is fundamental to the analysis in the remainder of this article.  Equation \eqref{peskin.general.tension} is derived in \secref{sec:GenTenDerivation}.

Now, recalling \eqref{distance.X.notation}, to further expand out the additional cancellation in the kernel $\KXTA$ we introduce the notation 
\begin{equation}\label{delta.pm.notation}
   \delta_\alpha^+ \bX'(\theta) \eqdef \bX'(\theta+\alpha) - \DATX, \quad 
   \delta_\alpha^- \bX'(\theta) \eqdef \bX'(\theta) - \DATX.
\end{equation}
Then it is an important observation that the kernel $\KXTA$ from \eqref{kernel.peskin.noExpand} can be expressed as the following matrix valued function 
\begin{equation}\label{kerbel.eqn.deriv}
  \mathcal{K}[\bX](\theta, \alpha)   = \frac{1}{4\pi}\IO + \mathcal{A}[\bX](\theta, \alpha),  
\end{equation}
where
\begin{multline} \label{kerbel.A.eqn.deriv}
 4\pi\mathcal{A}[\BX](\theta, \alpha)  \eqdef \frac{\delta_\alpha^+ \bX'(\theta) \cdot \DO(\DATX) \delta_\alpha^- \bX'(\theta)}{|\DATX|^2}\IO
\\
+  \frac{(\delta_\alpha^+ \bX'(\theta)+\delta_\alpha^- \bX'(\theta)) \cdot \DO(\DATX) \DATX}{|\DATX|^2} \IO
\\ 
-\frac{\delta_\alpha^+ \bX'(\theta) \cdot \RO(\DATX) \delta_\alpha^- \bX'(\theta)}{|\DATX|^2}
\RO(\DATX)
\\
- \frac{(\delta_\alpha^+ \bX'(\theta)+\delta_\alpha^- \bX'(\theta)) \cdot \RO(\DATX) \DATX}{|\DATX|^2} \RO(\DATX)
\\ 
+ \frac{\delta_\alpha^+ \bX'(\theta) \cdot (\DO(\DATX) - \IO) \delta_\alpha^- \bX'(\theta)}{|\DATX|^2} \DO(\DATX).
\end{multline}
This expression follows after taking into account the orthogonality in  \eqref{matrix.operators}.

Then \eqref{peskin.general.tension} can be written as 
\begin{equation}\label{peskin.expand.tension}
    \partial_t \BX'(\theta) -
    \frac{1}{4\pi}\int_{\T} \frac{d\alpha}{\alpha^2}~ \delta_\alpha \bT(\BX'(\theta))= \int_{\T} \frac{d\alpha}{\alpha^2}~ \mathcal{A}[\BX](\theta, \alpha)\delta_\alpha \bT(\BX'(\theta)),
\end{equation}
The expression $\frac{1}{4\pi}\int_{\T} \frac{d\alpha}{\alpha^2}~ \delta_\alpha \bT(\BX'(\theta))$ motivates our definition of $\TLam$ in \eqref{tildeLambda:eq}. 
Then, due to the higher order cancellation of $\mathcal{A}[\BX](\theta, \alpha)$ as in \eqref{kerbel.A.eqn.deriv}, for small $\alpha$ the integrand for the equation \eqref{peskin.general.tension} using \eqref{kerbel.eqn.deriv} is approximately 
\begin{equation}\notag
    \frac{\mathcal{K}[\BX](\theta, \alpha)}{\alpha^2} \delta_\alpha \bT(\BX'(\theta))\approx \frac{\delta_\alpha \bT(\bX')}{4\pi\alpha^2}.  
\end{equation}
Thus a basic model equation for the general tension equation  \eqref{peskin.general.tension} would be a vector version of the fractional porous medium equation: \begin{equation}\notag
    \partial_t \bm{U} = -(-\Delta)^{\frac12} \bT(\bm{U}).
\end{equation}
To the best of our knowledge, this equation has not been studied before, though both the scalar fractional version \cite{FractionalPorousMedium1, FractionalPorousMedium2, MR3656476} and local vector valued \cite{VectorPorous1, VectorPorous2, VectorPorous3} have been studied.  Then the positivity and monotonicity assumptions that we will make on the tension $\TE$ are both physically motivated, as well as the same assumptions that typically appear on the porous media equation in order to ensure ``ellipticity" for the problem such as in \cite{MR3656476}.

\subsection{Scaling}\label{sec:scaling} For the Peskin problem \eqref{peskin.general.tension} in general for any $\lambda>0$ the  rescaling $\BX_{\lambda}(t, \theta) = \lambda^{-1} \BX(\lambda t, \lambda \theta)$ leaves the equation invariant for an arbitrary tension $\TE$ in \eqref{tension.map.def}.  If the tension takes the form of a power law $\TE(r) = r^{1+\gamma}$ for some $\gamma \ge 0$ then the Peskin problem has the additional rescaling $\BX^r(t,\theta) = r \BX(r^\gamma t, \theta)$.  In the case of a simple tension $\TE(r) = k_0 r$, there's a two dimensional family of rescaling $\bX_{\tau, \lambda}(t,\theta) = \tau \bX(\lambda t, \lambda \theta)$, where $\tau\in \R$ and $\lambda>0$ are independent of each other.  To ensure that the arc-chord condition  \eqref{arc.cord.number} also remains invariant then we are limited to the rescaling $\BX_{\lambda}(t, \theta) = \lambda^{-1} \BX(\lambda t, \lambda \theta)$.

Here we give a list of some scaling critical spaces for the Peskin problem \eqref{peskin.general.tension} under the rescaling $\BX_{\lambda}(t, \theta) = \lambda^{-1} \BX(\lambda t, \lambda \theta)$: the Lipshitz space $\dot{W}^{1,\infty}$, the Wiener algebra $\mathcal{A}^1$, $BMO^1$, and the homogeneous Besov spaces $\dot{B}^{1+\frac{1}{p}}_{p, r}$ for all $p, r \in [1,\infty]$.  In particular we emphasize the spaces   $\dot{B}^{\frac32}_{2,r}$ for $1 \le r \le \infty$ and $\dot{H}^{\frac{3}{2}}$ due to their $L^2$ structure.

In this paper we utilize the scaling critical Banach space $\dot{B}^{\frac32}_{2,1}$ since it has a clearly defined $L^2$ based structure, and then hopefully it might be useful also in the further development and study of numerical methods.

\subsection{Notation}\label{sec:normNew}
We use $C>0$ to denote some inessential constant whose value may change from line to line.  We will write $A \lesssim B$ if $A \le C B$.  We also write $A \approx B$ if both $A \lesssim B$ and $B \lesssim A$ hold.    We will use $f:\T \to \R^2$ or $\C$ to denote a generic smooth function throughout this paper, where $f=(f_1, f_2)$ and $|f|^2 \eqdef f_1^2+f^2_2$.    We also define the translation operator $\tau_\beta$ applied to the $\theta \in \T$ variable by
\begin{equation}\label{def.translation}
    \tau_\beta f(\theta) \eqdef f(\theta+\beta).
\end{equation}
We define $\1_{A}$ as the standard indicator function of the set $A$.  We use the notation $\delta_\beta$ for the difference operator \eqref{delta.notation} frequently.

We will use the standard notation for the $L^p(\T)$ spaces as
\begin{equation}\notag
    || f||_{L^p(\T)}=|| f||_{L^p_\theta} \eqdef \left( \int_{\T}  |f(\theta)|^p d\theta\right)^{1/p}, \quad 1 \le p < \infty.
\end{equation}
In this function space, and in all the functional spaces below, we use the standard generalization to $p=\infty$ as
\begin{equation}\notag
    || f||_{L^{\infty}(\T)}\eqdef \esssup_{\theta \in \T}  |f(\theta)|.
\end{equation}
We will also use the temporal spaces 
\begin{equation}\notag
    || f||_{L^p([0,T])}= || f||_{L^p_T} \eqdef \left( \int_{0}^{T} |f(t)|^p dt\right)^{1/p}, \quad 1 \le p < \infty.
\end{equation}
We define the $L^q_TL^p_\theta$ mixed Lebesgue space norms for $1\le p,q \leq \infty$ as follows:
\begin{equation*}
||f||_{L^q_TL^p_\theta}
=
||f||_{L^q_T(L^p_\theta)}
\eqdef
  \big|\big|  || f(\cdot, \cdot)||_{L^p(\T)} \big|\big| _{L^q([0,T])}.
\end{equation*}
Next  we introduce the Besov spaces as follows
\begin{equation}\label{Besov.Space}
    ||f||_{\dot{B}^s_{p,r}} \eqdef \left(\int_{\T}  \frac{d\beta}{|\beta|}  \left(\frac{||\delta_\beta f||_{L^p(\T)}}{|\beta|^{s}} \right)^{r}  \right)^{1/r}.
\end{equation}
Unless otherwise stated, all indicies in the rest of this section are for $0<s<1$ and $p,q,r\in [1,\infty]$. When $r=\infty$ we use  
\begin{equation}\notag
    ||f||_{\dot{B}^s_{p,\infty}} \eqdef  \esssup_{\beta \in \T}\left(\frac{||\delta_\beta f||_{L^p(\T)}}{|\beta|^{s}} \right).
\end{equation}
In the rest of this paper for simplicity when we write $\sup_{\theta \in \T}$ or $\sup_{0 \le t \le T}$ we mean it to be the standard essential supremum.

We will then also use the standard Sobolev spaces that can be defined as 
\begin{equation}\notag
||f||_{\dot{H}^{s}} \eqdef
||f||_{\dot{B}^s_{2,2}}, \quad \forall s \in \R.
\end{equation}    
Technically to define $\dot{B}^s_{2,2}$ in particular $\forall s \in \R$ we use the definition in Remark \ref{rem:besov.define}.

We will also use the Chemin-Lerner \cite{CL1995} mixed regularity spaces as described for example in \cite[Definition 2.67 on page 98]{BCD} that are defined as 
    \begin{equation}\label{Besov.CL.Space}
    || f||_{\widetilde{L}^{q}_T(\dot{B}_{p, r}^{s})}
    \eqdef
       \left( \int_{\T}  \frac{d\beta}{|\beta|} \frac{|| \delta_\beta f||_{L^{q}_T(L^p_\theta)}^r}{|\beta|^{sr}} 
       \right)^{1/r}.
    \end{equation}
Next, motivated by \cite{AlaNgu2021lipsh,2009.08442,2010.06915}, we introduce periodic Besov spaces with additional regularity on the logarithmic scale for $0<s<1$ and $p,r\in [1,\infty]$ as
\begin{equation}\label{Besov.mu.Space}
    ||f||_{\dot{B}^{s,\mu}_{p,r}} \eqdef \left(\int_{\T}  \frac{d\beta}{|\beta|}  \left(\subw(|\beta|^{-1})\frac{||\delta_\beta f||_{L^p(\T)}}{|\beta|^{s}} \right)^{r}  \right)^{1/r}.
\end{equation}
Here the log scale derivative $\subw$ is defined as follows:

\begin{definition}\label{subw.definition}
We consider functions  $\subw\colon[0,\infty) \to [1,\infty)$ which satisfy the following three assumptions: 
\begin{itemize}
\item $\subw(r)$ is increasing and $\lim_{r\to\infty}\subw(r)=\infty$.
    \item There is a $c_0>0$ such that $\subw(2r)\leq c_0\subw(r)$ for any $r\geq 0$. 
    \item The function $r\mapsto \subw(r)/\log(4+r)$ is decreasing on  $[0,\infty)$.
\end{itemize}
\end{definition}    

Then we similarly define 
\begin{equation}\label{Besov.mu.CL.Space}
    || f||_{\widetilde{L}^{q}_T(\dot{B}_{p, r}^{s,\subw})}
    \eqdef
       \left( \int_{\T} \frac{d\beta}{|\beta|}  \left(\subw(|\beta|^{-1})\frac{||\delta_\beta f||_{L^{q}_T(L^p_\theta)}}{|\beta|^{s}} \right)^{r} 
       \right)^{1/r}.
\end{equation}
We introduce streamlined notation for the main norms used in the paper
\begin{equation}\label{C.space.temporal}
    || f||_{\CM}
  \eqdef
       || f||_{\widetilde{L}^{\infty}_T(\dot{B}_{2, 1}^{\frac12,\subw})}
=
\int_{\T} \frac{d\beta}{|\beta|^{3/2}} \subw(|\beta|^{-1}) || \delta_\beta f||_{L^{\infty}_T(L^2_\theta)},
    \end{equation}
    and 
   \begin{equation}\label{D.space.temporal}
    || f||_{\DM}
    \eqdef
     || \TLam^{\frac12} f||_{\widetilde{L}^2_T(\dot{B}^{\frac12,\mu}_{2,1})}
     =
\int_{\T} \frac{d\beta}{|\beta|^{3/2}} \subw(|\beta|^{-1})
 ||\TLam^{\frac12} \delta_\beta f||_{L^2_T(L^2_\theta)}.
    \end{equation}
Above the operator $\TLam$ is a constant multiple of $\Lambda \eqdef (-\Delta)^{\frac12}$ and is defined precisely in \eqref{tildeLambda:eq} in \secref{sec:para}.  
Further from \eqref{tildeLambda:eq} we have
\begin{equation}\notag
     || \TLam^{\frac12} f||_{L^2_\theta}^2
     =
     \int_{\T}  d\theta ~  f(\theta) \cdot \TLam f(\theta)
     =
     \frac{1}{8\pi}\int_{\T}  d\theta \int_{\T}  \frac{d\alpha}{\alpha^{2}}
     ~
    | \delta_\alpha f(\theta)|^2.
\end{equation}
This can be taken as the definition of $|| \TLam^{\frac12} f||_{L^2_\theta}$ in \eqref{D.space.temporal}. 
For the initial data we will use the following norm:
    \begin{equation}\label{initial.B.space}
    || f||_{\BS}
   \eqdef
           || f||_{\dot{B}_{2, 1}^{\frac12,\subw}}
    =
\int_{\T} \frac{d\beta}{|\beta|^{3/2}} \subw(|\beta|^{-1}) || \delta_\beta f||_{L^2_\theta}.
    \end{equation}
Lastly we have 
$||f||_{L^\infty_T(\dot{B}^{1/2, \mu}_{2,1})}
   \leq
   || f||_{\CM},$ and this
 inequality shows that the norm $|| f||_{\CM}$ is stronger than  $||f||_{L^\infty_T(\dot{B}^{1/2, \mu}_{2,1})}$.  We will also use the standard definitions of the H{\"o}lder spaces $C^{k,\gamma}$.

\subsection{Main results}\label{sec:mainResults} Without loss of generality we can suppose initially that $\bX_0'$ has mean zero since the equation \eqref{e:boundaryintegral} and the equation \eqref{peskin.general.tension} both annihilate constants.  Therefore, this mean zero property will be preserved by the solution.  Next we give definitions of our notions of solution.

\begin{definition}\label{def:solution} (Weak solution) Let $\bX_0'\in \dot{B}^{\frac{1}{2}}_{2,1}(\T ; \R^2)$ with $|\bX_0|_*>0$.   We say that $\bX: [0,T]\times \T\to \R^2$ is a weak solution of the Peskin problem \eqref{peskin.general.tension} with tension $\TE$ and initial data $\bX_0$ if $\bX', \bT(\bX')\in L^2_T (L^\infty_\theta \cap \dot{H}^{\frac12}_\theta)$ with $\inf_{0 \le t \le T} |\bX(t)|_*>0$, and for any function $\bY: [0,T]\times \T\to \R^2$ with $\bY'\in L^2_T (L^\infty_\theta \cap \dot{H}^{\frac12}_\theta)$ and $\partial_t Y'\in L^2_T (L^\infty_\theta\cap \dot{H}^{\frac12}_\theta)^*$, we have
\begin{multline}\notag
    \int_\T d\theta ~ \bY'(T,\theta)\cdot \bX'(T,\theta) - \int_\T d\theta~ \bY_0'(\theta)\cdot \bX'_0(\theta) =
    \int_0^T dt \int_\T  d\theta~ \partial_t \bY'(t,\theta)\cdot \bX'(t,\theta) 
    \\
    - \frac12 \int_0^T dt \int_\T d\theta\int_\T   \frac{d\alpha}{\alpha^2} ~ \delta_\alpha \bY'(t) \cdot \mathcal{K}[\BX(t)](\theta, \alpha)\delta_\alpha \bT(\bX'(t)).
\end{multline}
\end{definition}
\begin{remark}
Our definition of a weak solution can be accurately paraphrased as the weakest notion of distributional solution such that $\BX'$ is a valid test function for itself.  This is chosen in order to justify the calculations of our main a priori estimate in \secref{sec:BesovSpace}.
\end{remark}

\begin{definition}\label{def:StrongSolution} (Strong solution) Let $\bX_0'\in \dot{B}^{\frac{1}{2}}_{2,1}(\T ; \R^2)$ with $|\bX_0|_*>0$.
We say that $\bX: [0,T]\times \T\to \R^2$ is a strong solution if $\bX\in C^2((0,T]\times \T\to \R^2)$ solves the equation \eqref{peskin.general.tension} pointwise with $\inf_{0 \le t \le T} |\bX(t)|_*>0$ and 
\begin{equation}\notag
    \lim\limits_{t\to 0} ||\bX'(t)-\bX'_0||_{L^\infty} = 0.
\end{equation}
\end{definition}

\begin{theorem}\label{thm:main}
Let $\bX_0: \T\to \R^2$ with $\bX'_0\in \dot{B}^{\frac{1}{2}}_{2,1}$ and $|\bX_0|_*>0$. 
Let the scalar tension $\TE:(0,\infty)\to (0,\infty)$ be such that $\TE\in C^{1,1}_{loc}(0,\infty)$ with $\TE'(r)>0$ for all $0<r<\infty$. 
 Then there is a time $T>0$ such that there exists a unique weak solution $\bX:[0,T]\times \T\to \R^2$ to the Peskin problem in the sense of Definition \ref{def:solution}, which is also a strong solution to the Peskin problem \eqref{peskin.general.tension} as in Definition \ref{def:StrongSolution}.
Furthermore for any $0<\beta<1$, $\bX\in C^{2,\beta}_{loc}((0, T]\times \T;\R^2)$.
 Additionally, if $\TE\in C^{k,\gamma}_{loc}(0,\infty)$ for some $k\geq 2$ and $0<\gamma<1$ then we have that $\bX\in C_{loc}^{k+1,\gamma}((0, T]\times \T;\R^2)$.
\end{theorem}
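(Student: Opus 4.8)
The proof splits into four stages, corresponding to the four assertions in the theorem: (1) existence of a weak solution, (2) uniqueness and the fact that the weak solution is strong, (3) the $C^{2,\beta}_{loc}$ smoothing for $C^{1,1}_{loc}$ tension, and (4) the higher order $C^{k+1,\gamma}_{loc}$ smoothing under $C^{k,\gamma}_{loc}$ tension. The critical a priori estimate — controlling $\|\bX'\|_{\CM}$ and $\|\bX'\|_{\DM}$ in terms of the initial data $\|\bX'_0\|_{\BS}$ on a short time interval, using the cancellation structure encoded in $\mathcal{A}[\bX]$ from \eqref{kerbel.A.eqn.deriv} — is the engine; I would prove it first in \secref{sec:BesovSpace} by testing the equation \eqref{peskin.expand.tension} for $\delta_\beta \bX'$ against itself, extracting coercivity from the leading term $\frac{1}{4\pi}\TLam$, and absorbing the $\mathcal{A}$-contributions using that each summand in \eqref{kerbel.A.eqn.deriv} carries two difference factors $\delta_\alpha^{\pm}\bX'$ or $\DATX$ scaled by $|\DATX|^{-2}$, which is what makes the remainder subcritical relative to the dissipation once the arc-chord lower bound $|\bX|_*>0$ is in hand. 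The logarithmic weight $\subw$ (Definition \ref{subw.definition}) is the standard device (following \cite{AlaNgu2021lipsh,2009.08442,2010.06915}) to close the estimate at the critical level: one proves the estimate in the $\mu$-weighted space and only afterward removes $\subw$.

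\textbf{Existence and that weak $\Rightarrow$ strong.} For existence I would regularize: mollify the initial data and the equation (e.g. replace the kernel by one with a small vanishing-at-the-diagonal cutoff, or add a small $-\eps\partial_\theta^4$ term) to get smooth approximate solutions $\bX^\eps$ on a common time interval $[0,T]$, where $T$ depends only on $\|\bX'_0\|_{\BS}$ and $|\bX_0|_*$ via the a priori estimate applied uniformly in $\eps$; then pass to the limit using compactness (Aubin--Lions with the $\DM$ bound giving equicontinuity in time in a weak topology) to produce a weak solution in the sense of Definition \ref{def:solution}, preserving $\inf_t|\bX(t)|_*>0$. To upgrade to a strong solution I would run a parabolic bootstrap on the equation \eqref{peskin.general.tension}: the $\CM$-bound already gives $\bX'\in L^\infty_T\dot B^{1/2,\mu}_{2,1}\hookrightarrow$ a space controlling $\bX'$ in $C^0$, the $\DM$-bound gives the half-derivative gain in $L^2_t$, and one iterates — writing the equation as $\partial_t\bX' + \frac{1}{4\pi}\TLam\bT(\bX') = $ (lower order) and using smoothing for the fractional heat flow, together with $\TE\in C^{1,1}_{loc}$ so that $\bT$ is $C^{1,1}$ on the relevant annulus $\{|z|\ge |\bX|_*\}$ — to get $\bX\in C^2_{loc}((0,T]\times\T)$ and hence $C^{2,\beta}_{loc}$ by Schauder-type estimates for the $1/2$-Laplacian on the torus. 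The limit $\|\bX'(t)-\bX'_0\|_{L^\infty}\to 0$ follows from continuity of $t\mapsto\bX'(t)$ into $\dot B^{1/2,\mu}_{2,1}$ at $t=0$, which the Chemin--Lerner structure of $\CM$ provides.

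\textbf{Uniqueness and higher regularity.} Uniqueness I would prove at the level of the equation for the difference $\bX'-\bY'$ of two solutions: subtract the two copies of \eqref{peskin.expand.tension}, test against $\bX'-\bY'$, use coercivity of $\TLam$ against the leading term, and estimate the difference of the nonlinear kernels $\mathcal{K}[\bX]-\mathcal{K}[\bY]$ and of $\bT(\bX')-\bT(\bY')$ — each of which is Lipschitz in the solution on the arc-chord-bounded set — by the difference in a norm weaker than the dissipation, closing a Gronwall inequality. The difference operators $\DAPXY$, $\DAMXY$, $\DAXYTP$ introduced in the preamble are exactly the quantities that appear here. Finally, the higher smoothing: once $\bX\in C^{2,\beta}_{loc}$ and $\TE\in C^{k,\gamma}_{loc}$, one differentiates \eqref{peskin.general.tension} in $\theta$ repeatedly; each derivative lands on a parabolic equation of the same type for $\partial_\theta^j\bX$ with forcing built from lower-order derivatives and derivatives of $\bT$ up to order $j$ (hence the need for $\TE\in C^{k,\gamma}$ to reach $j=k$), and Schauder estimates on compact subsets of $(0,T]\times\T$ for $\partial_t + c\,\TLam$ promote $C^{j+1,\gamma}$ regularity inductively up to $j=k$, yielding $\bX\in C^{k+1,\gamma}_{loc}((0,T]\times\T;\R^2)$.

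\textbf{The main obstacle.} The crux is the a priori estimate at the scaling-critical level: one must show the $\mathcal{A}[\bX]$ remainder in \eqref{peskin.expand.tension}, after being tested against $\delta_\beta\bX'$ and integrated in $\theta$ and $\alpha$, is genuinely controlled by $\|\bX'\|_{\DM}^2$ times a small (short-time, or small-data-type) factor, rather than merely by the critical norm itself — at criticality there is no room to spare, so the extra difference structure in \eqref{kerbel.A.eqn.deriv} and the logarithmic weight $\subw$ must be exploited sharply and in tandem, and the nonlinearity $\bT$ (only $C^{1,1}$, so $\bT'$ merely Lipschitz) must be handled without assuming more smoothness than the structure condition $\TE>0,\ \TE'>0$ provides. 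Managing the interplay of these three — the cancellation, the log weight, and the low regularity of $\bT$ — while keeping the arc-chord constant under control, is where essentially all the work lies; the regularization, uniqueness, and bootstrap arguments are comparatively routine given that estimate.
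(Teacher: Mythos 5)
Your overall architecture matches the paper's: the weighted critical a priori estimate in $\CM\cap\DM$ as the engine, arc-chord control via time-continuity of $\bX'$ in $L^\infty$, uniqueness at the $L^2_\theta$ level upgraded by interpolation with the $\CM$ bound, and higher regularity by the bootstrap of \cite{MR3656476} for $\partial_t+c\,\TLam$ with carefully tracked forcing. The one genuinely different choice is how you manufacture approximate solutions: you propose regularizing the equation itself (a diagonal cutoff in the kernel, or adding $-\eps\partial_\theta^4$), whereas the paper never touches the equation — it truncates the Fourier series of $\bX_0$ to get smooth data and invokes Rodenberg's existing $h^{1,\gamma}$ local well-posedness theorem \cite{rodenberg_thesis} to produce exact smooth solutions $\bX_n$, then shows via the uniform a priori bounds that their common existence time is bounded below independently of $n$. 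Your route is viable but buys you an extra obligation (well-posedness and uniform-in-$\eps$ arc-chord control for the regularized system) that the paper's route avoids at the cost of citing an external existence result; the paper's route also makes the "weak $\Rightarrow$ strong" step essentially automatic, since the approximants are already classical solutions and the smoothing estimates pass to the limit.

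One step you must make explicit, because as written the argument cannot start: the a priori estimates require the \emph{global} quantitative hypotheses $D\bT\in W^{1,\infty}(\R^2)$ and $D\bT\geq\lambda\IO$ on all of $\R^2$, which a qualitative tension such as $\TE(r)=r^2$ violates ($\TE'$ unbounded, $\TE'(0)=0$). Working "on the annulus $\{|z|\geq|\bX|_*\}$" is not enough, since the Taylor-expansion arguments evaluate $D\bT$ along segments $s\,\tau_\alpha\bX'+(1-s)\bX'$ that leave any annulus. The paper's resolution is to replace $\TE$ by a tension $\tilde\TE$ that agrees with $\TE$ on a compact interval $[a,b]\supset\supset\{|\bX_0'(\theta)|\}$ and satisfies the global bounds, solve for $\tilde\TE$, and then use the short-time $L^\infty$ continuity of $\bX'$ to verify a posteriori that $|\bX'(t,\theta)|$ never leaves $(a,b)$, so the constructed $\bX$ solves the original problem. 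With that reduction inserted, and your regularization step fleshed out, the proposal goes through.
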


Note that due to the structure of equation \eqref{peskin.general.tension}, $\bX\in C^{k+1,\gamma}_{loc}$ is the optimal regularity for $\TE\in C^{k, \gamma}_{loc}$. We prove Theorem \ref{thm:main} by first establishing a quantitative version under more restrictive assumptions on the tension.

\begin{theorem}\label{thm:mainquant}(Quantitative existence)
Consider initial data $\bX_0: \T\to \R^2$ such that $||\bX'_0||_{\dot{B}^{\frac{1}{2}, \mu}_{2,1}}\leq M$ for some $\mu$ satisfying Definition \ref{subw.definition}, for any $M>0$, and $|\bX_0|_*>0$.  Let the tension map $\bT:\R^2\to \R^2$ from \eqref{tension.map.def} be such that $D\bT\in W^{1,\infty}(\R^2; \R^{2\times 2})$ satisfying the ellipticity condition $D\bT(z)\geq \lambda \IO>0.$

 Then there exists a time $T>0$ depending only on $M$, $\mu$, $|\bX_0|_*$, $\lambda$ and $||D\bT||_{W^{1,\infty}}$ such that there exists a strong solution, in the sense of Definition \ref{def:StrongSolution}, $\bX: [0,T]\times \T\to \R^2$ to the Peskin problem \eqref{peskin.general.tension} with tension $\bT$ and initial data $\bX_0$.  This solution satisfies for some universal constant $c>0$ that
 \begin{multline}\label{e:primaryestimate}
    \int_{\T}\frac{d\beta}{|\beta|^{\frac32}}\mu(|\beta|^{-1})\left( ||\delta_\beta \bX'||_{L^\infty_T L^2_\theta} + c\sqrt{\lambda} ||\delta_\beta (-\Delta)^{\frac14} \bX'||_{L^2_T L^2_\theta}\right) 
    \\
    \leq 4 \int_{\T}\frac{d\beta}{|\beta|^{\frac32}} \mu(|\beta|^{-1}) ||\delta_\beta \bX'_0||_{L^2_\theta}.
\end{multline}
 Further for any small time $\tau>0$ and any $0<\beta<1$, $\bX\in C^{2,\beta}([\tau, T]\times \T;\R^2)$, with its norm depending only on $\tau, \beta, $ and the previously mentioned constants.  

If we additionally have that $\bT\in C^{k,\gamma}(\R^2;\R^2)$ for some $k\geq 2$ and $0<\gamma<1$, then for any small time $\tau>0$, $\bX\in C^{k+1,\gamma}([\tau, T]\times \T;\R^2)$ with the $C^{k+1,\gamma}$ norm controlled by $M$, $\mu$, $|\bX_0|_*$, $\lambda$, $\gamma$, $||\bT||_{C^{k,\gamma}}$, and $\tau$.  
\end{theorem}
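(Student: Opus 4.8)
The plan is to construct the solution by a vanishing-regularization scheme, propagate the scaling-critical norm \eqref{e:primaryestimate} with constants independent of the regularization parameter, pass to the limit, and finally bootstrap the interior parabolic smoothing. \textbf{Step 1 (Regularized problem).} For $\eps>0$, mollify $\bX'$ wherever it appears in \eqref{peskin.expand.tension} --- both inside the kernel $\mathcal{K}[\bX]$ and inside $\bT(\bX')$ --- or, equivalently, add a vanishing artificial viscosity $\eps\p_\theta^2\bX$. Since the mollified $D\bT$ still satisfies the ellipticity bound $\geq\lambda\IO$ and the global hypothesis $D\bT\in W^{1,\infty}$ spares us any localization in $\bX'$, standard parabolic/ODE theory in, say, $H^3(\T)$ produces a smooth solution $\bX^\eps$ on a maximal interval; the entire content of the theorem is that the life-span and the bound \eqref{e:primaryestimate} are $\eps$-uniform.

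\textbf{Step 2 (Uniform a priori estimate --- the main step).} Apply $\delta_\beta$ to \eqref{peskin.expand.tension}, pair with $\delta_\beta\bX'$ in $L^2_\theta$, and integrate on $[0,t]$. For the leading dissipative term write $\delta_\beta\bT(\bX'(\theta))=\big(\int_0^1 D\bT(\bX'(\theta)+s\,\delta_\beta\bX'(\theta))\dd s\big)\delta_\beta\bX'(\theta)$; the averaged matrix is symmetric and $\geq\lambda\IO$, so symmetrizing the double $\alpha$-integral produces the coercive term $c\lambda\|\delta_\beta(-\Delta)^{1/4}\bX'\|_{L^2_\theta}^2$ --- the balance of \eqref{e:primaryestimate} --- plus a commutator involving $\delta_\alpha$ of this averaged matrix, bounded by $\DDTTN$ times a product of difference quotients of $\bX'$ times the dissipation. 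The remainder $\int\frac{\dd\alpha}{\alpha^2}\mathcal{A}[\bX](\theta,\alpha)\,\delta_\alpha\bT(\bX'(\theta))$ is controlled using the extra cancellation exhibited by \eqref{kerbel.A.eqn.deriv}: every summand of $4\pi\mathcal{A}$ carries a factor $\DAP$ and/or $\DAM$, each vanishing as $\alpha\to0$, while $\delta_\alpha\bT(\bX')$ vanishes there too, so the kernel effectively gains a full derivative and the term is estimated by $\|\bX'\|_{\dot B^{1/2,\mu}_{2,1}}$ (via $\CM$) times the dissipation norm. Here the logarithmic weight $\mu$ of Definition \ref{subw.definition} supplies exactly the small gain that tames the borderline logarithmic divergence and lets the estimate close \emph{in the critical space}. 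Multiplying by $|\beta|^{-3/2}\mu(|\beta|^{-1})$, integrating $\dd\beta$, and pulling this integral inside the time integral (i.e.\ working in the Chemin--Lerner spaces $\CM$ and $\DM$) yields, for some $\kappa>0$,
\[
\|\bX'\|_{\CM}+c\sqrt\lambda\,\|\bX'\|_{\DM}\ \le\ \|\bX_0'\|_{\BS}+C\big(M,\lambda,\DTTN,\DDTTN,|\bX_0|_*\big)\big(T^{\kappa}+\|\bX'\|_{\DM}\big)\|\bX'\|_{\CM},
\]
after also bounding $\inf_{0\le t\le T}|\bX^\eps(t)|_*$ from below (immediate from $|\bX_0|_*>0$, the identity $\delta_\alpha\bX(\theta)/\alpha=\int_0^1\bX'(\theta+\alpha\sigma)\dd\sigma$, and $\|\bX'(t)-\bX_0'\|_{L^\infty}\lesssim t^{\kappa}\|\bX'\|_{\CM}$). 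A standard continuity argument then produces an $\eps$-independent $T=T(M,\mu,|\bX_0|_*,\lambda,\|D\bT\|_{W^{1,\infty}})$ on which \eqref{e:primaryestimate}, with its factor $4$, holds.

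\textbf{Step 3 (Passage to the limit).} The $\eps$-uniform bounds give weak-$*$ compactness of $\{\bX^\eps\}$ in $\CM$ and of $\{(\bX^\eps)'\}$ in $\DM$, while the equation bounds $\{\p_t\bX^\eps\}$ in a space of negative regularity; an Aubin--Lions/Arzelà--Ascoli argument then gives strong convergence of $\bX^\eps$ in $C([0,T]\times\T)$, together with the uniform lower bound on $|\bX^\eps|_*$, which suffices to pass to the limit in the quadratic kernel $\mathcal{K}[\bX]$. The limit $\bX$ solves \eqref{peskin.expand.tension} and satisfies \eqref{e:primaryestimate}; moreover $\bX'\in\CM\hookrightarrow C([0,T];\dot B^{1/2,\mu}_{2,1})\hookrightarrow C([0,T];L^\infty_\theta)$ forces $\lim_{t\to0}\|\bX'(t)-\bX_0'\|_{L^\infty}=0$, so $\bX$ is a strong solution in the sense of Definition \ref{def:StrongSolution} once Step 4 supplies interior $C^2$ regularity.

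\textbf{Step 4 (Smoothing and higher regularity).} For $t\ge\tau>0$, read \eqref{peskin.expand.tension} as a linear nonlocal parabolic equation $\p_t v+\TLam(\DBT\, v)=\mathfrak{R}$ for $v=\bX'$ with coefficients that are uniformly elliptic and, thanks to $\bX'\in\DM$ together with a first De Giorgi--Nash--Moser step, Hölder continuous in $(t,\theta)$; the parabolic Schauder theory for $(-\Delta)^{1/2}$ developed in \secref{sec:para} upgrades $v$ to $C^{1,\beta}([\tau,T]\times\T)$ for every $0<\beta<1$, hence $\bX\in C^{2,\beta}$. Iterating the Schauder estimates --- each application consuming one derivative of $\bT$ --- promotes $\bX$ to $C^{k+1,\gamma}([\tau,T]\times\T)$ when $\bT\in C^{k,\gamma}$, with the asserted dependence of the norms. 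The principal obstacle is Step 2: producing the commutator bound and the $\mathcal{A}$-remainder bound so that the dissipation absorbs them \emph{at critical regularity}, which is exactly what the representation \eqref{kerbel.A.eqn.deriv} and the log-weight $\mu$ are designed to make possible.
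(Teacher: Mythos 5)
Your overall architecture (a $\mu$-weighted critical a priori estimate, short-time arc-chord control, approximation plus compactness, then a freezing-of-coefficients bootstrap for smoothing) matches the paper's, but two of your steps have genuine gaps. The most serious is how you close the main estimate. The inequality you display, $\|\bX'\|_{\CM}+c\sqrt\lambda\|\bX'\|_{\DM}\le\|\bX_0'\|_{\BS}+C(M,\ldots)\bigl(T^{\kappa}+\|\bX'\|_{\DM}\bigr)\|\bX'\|_{\CM}$, cannot produce a uniform-in-$\eps$ time for large data: inside the bootstrap one only knows $\|\bX'\|_{\DM}\lesssim M/\sqrt{\lambda}$, which is not small, and the smallness of $\|\bX'\|_{\DM}$ as $T\to 0$ is purely qualitative (it cannot be quantified by the critical norm of the data alone), so the continuity argument gives some $T^\ast>0$ for each regularized solution but no lower bound depending only on $M,\mu,\lambda,\ldots$. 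The mechanism that actually closes the estimate in Proposition \ref{prop:general.apriori.final.local} is that every dissipation-order error term carries a factor $\mu(\eta^{-1})^{-1}$ (via \eqref{extra.smallness.besov}) and is absorbed by choosing $\eta$ small in terms of $M,\mu,\lambda,\DTT,\DDTT$, leaving only lower-order terms with an explicit $T^{1/2}$; your prose gestures at this but your displayed inequality is not the one that can be proved or used. The same issue infects your arc-chord step: the bound $\|\bX'(t)-\bX_0'\|_{L^\infty}\lesssim t^{\kappa}\|\bX'\|_{\CM}$ is not available at critical regularity; the correct route (Lemmas \ref{H1.small.time}, \ref{time.space.equivalent} and Proposition \ref{prop:arc.chord.small}) splits small and large $\beta$ with the weight $\mu$, uses $\|\partial_t\bX'\|_{L^2_{t,\theta}}\lesssim\|\bX'\|_{L^2_t\dot H^1}$, and yields smallness on a time interval depending on $\mu$, not a power of $t$ times the critical norm.

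Two further points. In Step 4 you invoke a De Giorgi--Nash--Moser step, but $\bX'$ is $\R^2$-valued and DGNM does not apply to systems; it is also unnecessary, since $C^{1/2}_{t,\theta}$ follows from the $\dot H^1$ energy estimate (Proposition \ref{prop:H1.estimate}, Lemma \ref{lem:chalf}) and the $\partial_t$ bound. Likewise \secref{sec:para} contains no Schauder theory; the bootstrap actually runs by freezing $D\bT(\bX'(\Theta_0))$, diagonalizing it in the basis $\widehat{\bX'}(\Theta_0),\widehat{\bX'}(\Theta_0)^\perp$, and applying the constant-coefficient fractional heat estimates of Lemma \ref{lem.heat.eqn.estimates} together with the (log-)H\"older estimates on the forcing $\mathcal{V}$ in Lemma \ref{lem.v.regularity}, which is where the real work lies. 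Finally, your regularization scheme needs care: mollifying $\bX'$ inside $\bT(\bX')$ destroys the coercive pairing (pairing $\delta_\beta\bX'$ with $\TLam\delta_\beta\bT(J_\eps\bX')$ no longer yields $\lambda\|\TLam^{1/2}\delta_\beta\bX'\|_{L^2}^2$), while pure artificial viscosity preserves the structure but then local solvability of the viscous problem from merely critical data (or an additional data-smoothing layer) must be supplied. The paper avoids all of this by smoothing only the initial data via Fourier truncation and invoking Rodenberg's local well-posedness result (Theorem \ref{rodenberg.thm}) for the unmodified equation, so that the a priori estimates apply verbatim to the approximating solutions; if you keep your regularized-equation route you must redo the Step 2 estimate for the modified equation and verify $\eps$-uniformity of every constant.
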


\begin{remark}\label{pointwise.rk} Since in Theorem \ref{thm:main} and Theorem \ref{thm:mainquant} we have that 
 $\bX\in C^{2,\beta}([\tau, T]\times \T;\R^2)$ for any $\tau>0$ and any $0 < \beta < 1$ then the calculation in \secref{sec:GenTenDerivation} can be reversed, and we have that $\bX(t,\theta)$ solves both \eqref{peskin.general.tension} and \eqref{e:boundaryintegral} pointwise for any $t>0$.
\end{remark}

\begin{theorem}\label{first:unique}
(Uniqueness)
Consider $\bX_0$ and $\bY_0$ such that $\bX'_0, \bY'_0\in \dot{B}^{\frac{1}{2}, \subw}_{2,1}(\T; \R^2)$ with $|\bX_0|_*>0$ and $|\bY_0|_*>0$.    Let the tension map $\bT:\R^2\to \R^2$ satisfy the same conditions as in Theorem \ref{thm:mainquant} and consider the corresponding solutions $\bX, \bY: [0,T]\times \T\to \R^2$.  
Choose any $\omega(r)$ satisfying Definition \ref{subw.definition} such that  there exists $r_* \ge 1$ so that $\frac{\omega(r)}{\subw(r)}$ is decreasing for $r \ge r_*$ and 
$\displaystyle\lim\limits_{r\to \infty}\frac{\omega(r)}{\subw(r)} = 0$.  
For any $\varepsilon>0,$ there exists $\delta_*>0$ such that for any $0<\delta\le \delta_*$ then   $||\bX'_0-\bY'_0||_{L^2_\theta}<\delta$ implies
\begin{equation}\label{stability.est.nu}
    \int_{\T}\frac{d\beta}{|\beta|^{\frac32}}\omega(|\beta|^{-1}) ||\delta_\beta (\bX'-\bY')||_{L^\infty_T L^2_\theta} <\varepsilon.
\end{equation}
In particular if $||\bX'_0-\bY'_0||_{L^2_\theta}=0$ then the solution is unique in $\mathcal{B}^{\omega}_T$.
\end{theorem}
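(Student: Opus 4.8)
The plan is to control the difference $\bZ\eqdef\bX-\bY$ at the level of $\bZ'=\bX'-\bY'$ by re-running the core a priori estimate of \secref{sec:BesovSpace} (the one behind Theorem~\ref{thm:mainquant}) on the equation satisfied by $\bZ'$, but now measured in the slightly weaker weighted space $\mathcal{B}^\omega_T$ with norm $\|\bZ'\|_{\mathcal{B}^\omega_T}\eqdef\int_\T\frac{d\beta}{|\beta|^{3/2}}\omega(|\beta|^{-1})\|\delta_\beta\bZ'\|_{L^\infty_T L^2_\theta}$ (the left side of \eqref{stability.est.nu}), feeding in the uniform bound $\|\bX'\|_{\CM},\|\bY'\|_{\CM}\le 4M$ that \eqref{e:primaryestimate} provides with $M\eqdef\max\{\|\bX'_0\|_{\BS},\|\bY'_0\|_{\BS}\}<\infty$. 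First I would write the difference equation: subtracting the two instances of \eqref{peskin.general.tension}, writing $\bT(\bX'(\theta))-\bT(\bY'(\theta))=\DBT(\theta)\bZ'(\theta)$ with $\DBT(\theta)\eqdef\int_0^1 D\bT\big(\bY'(\theta)+s\bZ'(\theta)\big)\,ds$ --- a symmetric matrix field with $\DBT(\theta)\ge\lambda\IO$, $\|\DBT\|_{L^\infty}\le\|D\bT\|_{L^\infty}$, and a $\theta$-modulus of continuity inherited from $\|D^2\bT\|_{L^\infty}$ together with the (critical) regularity of $\bX',\bY'$ --- and inserting $\mathcal{K}[\bX]=\frac1{4\pi}\IO+\mathcal{A}[\bX]$ from \eqref{kerbel.eqn.deriv}, one arrives at
\begin{equation*}
\partial_t\bZ'(\theta)+\frac1{4\pi}\int_\T\frac{d\alpha}{\alpha^2}\,\delta_\alpha\big(\DBT(\theta)\bZ'(\theta)\big)=\mathcal{E}_1+\mathcal{E}_2,
\end{equation*}
with $\mathcal{E}_1\eqdef\int_\T\frac{d\alpha}{\alpha^2}\,\mathcal{A}[\bX](\theta,\alpha)\,\delta_\alpha\big(\DBT(\theta)\bZ'(\theta)\big)$ the lower-order (diagonal-kernel) piece and $\mathcal{E}_2\eqdef\int_\T\frac{d\alpha}{\alpha^2}\,\big(\mathcal{A}[\bX]-\mathcal{A}[\bY]\big)(\theta,\alpha)\,\delta_\alpha\bT(\bY'(\theta))$ the kernel-difference piece. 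The role of the $G_1$-integration by parts and of the expansion \eqref{kerbel.A.eqn.deriv} is that $\mathcal{A}[\bX]-\mathcal{A}[\bY]$ depends multilinearly on $\bZ'$, $\delta_\alpha^+\bZ'$, $\delta_\alpha^-\bZ'$ (each over suitable powers of $\DATX$ or $D_\alpha\bY(\theta)$) against the analogous quantities of $\bX'$ and $\bY'$, so $\mathcal{E}_2$ carries exactly one copy of $\bZ'$ and the estimate can be closed in $\mathcal{B}^\omega_T$; all manipulations are justified in the weak-solution sense of Definition~\ref{def:solution}.

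Next I would run the weighted energy estimate exactly as in \secref{sec:BesovSpace}: apply $\delta_\beta$, pair with $\delta_\beta\bZ'$ in $L^2_\theta$, integrate in $t\in[0,T_0]$, and integrate against $\frac{d\beta}{|\beta|^{3/2}}\omega(|\beta|^{-1})$ in the square-function form of \eqref{C.space.temporal}--\eqref{D.space.temporal} but with $\omega$ replacing $\mu$. The main term produces, via the pointwise representation of $\TLam$, the ellipticity $\DBT\ge\lambda\IO$, and symmetrization, a coercive dissipation $\ge c\lambda\|\bZ'\|_{\mathcal{D}^\omega_{T_0}}^2$, up to a commutator in $\delta_\alpha\DBT$. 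All error contributions --- this commutator, $\mathcal{E}_1$, $\mathcal{E}_2$ --- fall, after Cauchy--Schwarz and the same bilinear bookkeeping as in the existence proof (which keeps the $\bX',\bY'$-factors at the $\CM$-level using $\mu\ge\omega$ for large argument and the doubling of $\omega$, while keeping the single $\bZ'$-factor at the $\mathcal{B}^\omega_{T_0}$- or $\mathcal{D}^\omega_{T_0}$-level), under two headings: paired with $\|\bZ'\|_{\mathcal{D}^\omega_{T_0}}$ against a coefficient carrying a dissipation-type ($\widetilde L^2_{T_0}$) norm of $\bX'$ or $\bY'$, hence absorbable for $T_0$ small since those norms tend to $0$ as $T_0\to0$ (by \eqref{e:primaryestimate} and dominated convergence); or paired with $\|\bZ'\|_{\mathcal{B}^\omega_{T_0}}$ against a coefficient likewise small on $[0,T_0]$. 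Taking $T_0$ small (depending only on $M,\mu,|\bX_0|_*,|\bY_0|_*,\lambda,\|D\bT\|_{W^{1,\infty}}$) this closes to $\|\bZ'\|_{\mathcal{B}^\omega_{T_0}}+c\sqrt\lambda\|\bZ'\|_{\mathcal{D}^\omega_{T_0}}\le 2\|\bZ'_0\|_{\mathcal{B}^\omega}$. For $t\ge T_0>0$ both $\bX,\bY\in C^{2,\beta}$ by Theorem~\ref{thm:mainquant}, so on $[T_0,T]$ the difference equation has smooth, uniformly elliptic coefficients and a Gr\"onwall estimate in $\mathcal{B}^\omega$ yields $\|\bZ'\|_{\mathcal{B}^\omega_{[T_0,T]}}\le e^{C(T-T_0)}\|\bZ'(T_0)\|_{\mathcal{B}^\omega}$, $C$ depending on the $C^{2,\beta}([T_0,T])$ norms of $\bX,\bY$. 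Combining, $\|\bZ'\|_{\mathcal{B}^\omega_T}\le C_*\|\bZ'_0\|_{\mathcal{B}^\omega}$ with $C_*$ independent of the data difference.

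It then remains to make $\|\bZ'_0\|_{\mathcal{B}^\omega}$ small using only $\|\bX'_0-\bY'_0\|_{L^2_\theta}<\delta$, and here the hypotheses on $\omega$ enter decisively. For any $N\ge r_*$ split $\|\bZ'_0\|_{\mathcal{B}^\omega}=\int_\T\frac{d\beta}{|\beta|^{3/2}}\omega(|\beta|^{-1})\|\delta_\beta\bZ'_0\|_{L^2_\theta}$ at $|\beta|=1/N$. On $|\beta|\le 1/N$ write $\omega(|\beta|^{-1})=\frac{\omega}{\mu}(|\beta|^{-1})\,\mu(|\beta|^{-1})\le\frac{\omega}{\mu}(N)\,\mu(|\beta|^{-1})$, so this piece is $\le\frac{\omega}{\mu}(N)\,\|\bZ'_0\|_{\BS}\le\frac{\omega}{\mu}(N)\,2M$. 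On $1/N<|\beta|\le\pi$ use $\|\delta_\beta\bZ'_0\|_{L^2_\theta}\le 2\|\bX'_0-\bY'_0\|_{L^2_\theta}<2\delta$ and $K_N\eqdef\int_{1/N}^{\pi}|\beta|^{-3/2}\omega(|\beta|^{-1})\,d\beta<\infty$, so this piece is $\le 2\delta K_N$. Hence, given $\varepsilon>0$: since $\lim_{r\to\infty}\frac{\omega}{\mu}(r)=0$ and $\frac{\omega}{\mu}$ is decreasing past $r_*$, first fix $N\ge r_*$ with $C_*\,\frac{\omega}{\mu}(N)\,2M<\varepsilon/2$; then fix $\delta_*>0$ with $C_*\,2\delta_* K_N<\varepsilon/2$. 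For $0<\delta\le\delta_*$ the hypothesis $\|\bX'_0-\bY'_0\|_{L^2_\theta}<\delta$ forces $\|\bZ'_0\|_{\mathcal{B}^\omega}<\varepsilon/C_*$, hence $\|\bZ'\|_{\mathcal{B}^\omega_T}<\varepsilon$, which is \eqref{stability.est.nu}. Finally, if $\|\bX'_0-\bY'_0\|_{L^2_\theta}=0$ then $\bZ'_0=0$ a.e., so $\|\bZ'_0\|_{\mathcal{B}^\omega}=0$ and the displayed estimate forces $\bZ'\equiv 0$ on $[0,T]$, i.e. $\bX'=\bY'$ and, by the mean-zero normalization, $\bX=\bY$; in particular the solution is unique in $\mathcal{B}^\omega_T$.

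The step I expect to be the main obstacle is handling the kernel-difference term $\mathcal{E}_2$ together with the variable-coefficient commutator coming from $\delta_\alpha\DBT$: at critical regularity these are only \emph{logarithmically} short of closing in the strong weight $\mu$, because $\bX',\bY'$ (hence $\DBT$ and $\mathcal{A}$) carry no more than critical smoothness --- which is exactly why one is forced down to the weaker weight $\omega$, and why $\lim_{r\to\infty}\omega(r)/\mu(r)=0$ and the eventual monotonicity of $\omega/\mu$ are indispensable. Turning ``$\omega/\mu\to0$'' into a genuine quantitative gain, carried uniformly through the closed $[0,T_0]$ estimate, the $[T_0,T]$ smooth estimate, and the low/high-frequency split of the data, is the delicate accounting the proof must get right.
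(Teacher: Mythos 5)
There is a genuine gap, and it lies exactly where you flag your ``main obstacle,'' but the obstruction is not a logarithmic loss that a weaker weight can absorb: it is a regularity requirement on the tension that the hypotheses of this theorem do not provide. Your plan closes a $\delta_\beta$-differenced energy estimate for $\bZ'=\bX'-\bY'$ in $\mathcal{B}^{\omega}_{T_0}$, which forces you to estimate $\delta_\beta\delta_\alpha\bigl(\bT(\bX')-\bT(\bY')\bigr)$. Writing $\bT(\bX')-\bT(\bY')=\overline{D\bT}\,\bZ'$ with $\overline{D\bT}=\int_0^1 D\bT(\bY'+s\bZ')\,ds$, the cross term in which $\delta_\beta$ falls on $\overline{D\bT}[\bX']-\overline{D\bT}[\bY']$ against $\delta_\alpha\bY'$ (this is precisely \eqref{delta.beta.DBTX.minus} and the resulting term $\TT_9$ in \secref{sec:strongCont}) can only be bounded with smallness in $\beta$ if $D^3\bT$ is bounded, i.e.\ $D\bT\in W^{2,\infty}$ --- the hypothesis of Theorem \ref{main:unique}, not of Theorem \ref{first:unique}. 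Under the present assumption $D\bT\in W^{1,\infty}$ one only has $D^2\bT\in L^\infty$ with no modulus of continuity, so $\delta_\beta$ applied to that coefficient produces a contribution of size $\DDTT\,|\delta_\alpha\bY'|\,|\bZ'|$ carrying \emph{no} factor vanishing as $\beta\to 0$; the integral against $d\beta\,|\beta|^{-3/2}\omega(|\beta|^{-1})$ then diverges no matter how weak $\omega$ is, since $\omega\ge 1$. Replacing $\mu$ by $\omega$ helps with absorption of terms like \eqref{e:StrongContNewBad} (it plays the role of the paper's $\nu$), but it cannot repair this term, so the closed estimate $\|\bZ'\|_{\mathcal{B}^{\omega}_{T_0}}+c\sqrt{\lambda}\|\bZ'\|_{\mathcal{D}^{\omega}_{T_0}}\le 2\|\bZ'_0\|_{\mathcal{B}^{\omega}}$ is not available under the stated hypotheses.

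The paper avoids $\beta$-differencing the difference of solutions altogether: Proposition \ref{prop.L2.cont} proves a plain $L^2_\theta$ Gr\"onwall estimate $\|(\bX'-\bY')(t)\|_{L^2_\theta}\le C\|\bX'_0-\bY'_0\|_{L^2_\theta}$, which uses only $\DTT,\DDTT$ and the a priori $L^\infty_T\dot H^{1/2}\cap L^2_T\dot H^1$ control supplied by \eqref{e:primaryestimate}; then Corollary \ref{cor.L2.cont.m} upgrades this to \eqref{stability.est.nu} by splitting the $\beta$-integral of the \emph{solution} norm at $|\beta|=\eta$: the small-$\beta$ part is bounded by $\frac{\omega(\eta^{-1})}{\mu(\eta^{-1})}\bigl(\|\bX'\|_{\CM}+\|\bY'\|_{\CM}\bigr)$, made small via $\omega/\mu\to 0$, and the large-$\beta$ part by $\eta^{-1/2}\omega(\eta^{-1})\sup_t\|(\bX'-\bY')(t)\|_{L^2_\theta}$, made small via the $L^2$ stability and the choice of $\delta_*$. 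Your final paragraph performs exactly this low/high-frequency split, but applied only to the initial data; if you apply it instead to $\sup_{0\le t\le T}\|\bZ'(t)\|_{L^2_\theta}$ together with the uniform $\CM$ bounds on $\bX',\bY'$, the whole differenced machinery (and the two-regime $[0,T_0]/[T_0,T]$ argument) becomes unnecessary, and the proof goes through under $D\bT\in W^{1,\infty}$ as stated.
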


\begin{remark}
In \eqref{stability.est.nu} we can take for example $\omega(r) = \mu(r)^\gamma$ for any $0<\gamma<1$.  
\end{remark}

\begin{remark}
Note that if $\BX_0', \bY_0' \in \dot{B}^{\frac12}_{2,1}$, then there exists some function $\subw$ satisfying the Definition \ref{subw.definition} such that $\BX_0',\bY_0'\in \dot{B}^{\frac12, \subw}_{2,1}$.  To see this, note that by Lemma \ref{lem:VallePoisson} there exist functions $\subw_X, \subw_Y$ such $\bX_0'\in  \dot{B}^{\frac12, \subw_X}_{2,1}$ and $\bY_0'\in  \dot{B}^{\frac12, \subw_Y}_{2,1}$.  Then taking $\subw(r) = \min\{\subw_X(r), \subw_Y(r)\}$ is sufficient.  
\end{remark}

\begin{theorem}\label{main:unique}
(Strong continuity)
We consider the two strong solutions $\bX, \bY: [0,T]\times \T\to \R^2$ to the Peskin problem \eqref{peskin.general.tension} with initial data $\bX_0$, $\bY_0$ as in Theorem \ref{thm:mainquant}.  Suppose the tension map $\bT$ as in \eqref{tension.map.def} satisfies   $D\bT\in W^{2,\infty}(\R^2; \R^{2\times 2})$ and the ellipticity condition $D\bT(z)\geq \lambda \IO>0$.  

Then there exists a time $T_M>0$ depending only on $M$, $|\bX_0|_*$, $|\bY_0|_*$, $\subw$, $\lambda$, and $||D\bT||_{W^{2,\infty}}$ such that for any $0<T\leq T_M,$ 
we have the following strong continuity estimate
\begin{equation*}
     ||\bX' -  \bY'||_{\CN}
    +
  2\lambda^{\frac12} || \bX'-  \bY'||_{\DN}
    \leq   
    8 || \bX'_0 -  \bY'_0||_{\BN}.
\end{equation*}
Above $\nu$, which is defined precisely in \eqref{nu.definition} also satisfies Definition \ref{subw.definition} and defines norms of $\CN$ and $\DN$ that are equivalent to $\CM$ and $\DM$ respectively as seen in \eqref{equivalent.nu.norm}.  
\end{theorem}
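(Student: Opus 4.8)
The plan is to rerun the a priori argument behind Theorem~\ref{thm:mainquant} — the one culminating in \eqref{e:primaryestimate} — but at the level of the difference $\bZ \eqdef \bX' - \bY'$, viewing the equation for $\bZ$ as a non-local parabolic equation whose leading part is coercive because of the ellipticity of $D\bT$ and whose remainder is perturbative on a short time interval. First I would derive the equation for $\bZ$. Subtracting \eqref{peskin.expand.tension} for $\bX'$ and for $\bY'$ and linearizing the tension by the fundamental theorem of calculus,
\[
  \bT(\bX'(\theta)) - \bT(\bY'(\theta)) = \Big( \int_0^1 D\bT\big(\bY'(\theta) + s\,\bZ(\theta)\big)\,ds \Big)\bZ(\theta) =: \overline{D\bT}^{XY}(\theta)\,\bZ(\theta),
\]
one gets, schematically,
\[
  \partial_t \bZ(\theta) - \frac{1}{4\pi}\int_{\T}\frac{d\alpha}{\alpha^2}\,\delta_\alpha\!\big(\overline{D\bT}^{XY}\bZ\big)(\theta) = \int_{\T}\frac{d\alpha}{\alpha^2}\Big[ \big(\mathcal{A}[\bX]-\mathcal{A}[\bY]\big)\,\delta_\alpha\bT(\bX') + \mathcal{A}[\bY]\,\delta_\alpha\big(\bT(\bX')-\bT(\bY')\big)\Big](\theta).
\]
The structural observation that makes this close in the critical space is that the \emph{position} difference enters only through $\mathcal{A}[\bX]-\mathcal{A}[\bY]$, and there only through $\delta_\alpha^\pm\bZ$ and through $\DAL(\bX-\bY)(\theta) = \frac1\alpha\int_0^\alpha \bZ(\theta+s)\,ds$, a plain average of $\bZ$; together with $|\bX|_*,|\bY|_*>0$ and the multilinearity of $\mathcal{A}$ in \eqref{kerbel.A.eqn.deriv}, the whole right-hand side is then controlled by increments of $\bZ$ and the already-controlled quantities $\bX',\bY'$, with $\mathcal{A}[\bX]-\mathcal{A}[\bY]$ genuinely quadratic-type small.

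Next I would carry out the Besov energy estimate at the level of $\delta_\beta\bZ$: apply $\delta_\beta$ to the $\bZ$-equation, pair with $\delta_\beta\bZ(\theta)$, integrate in $\theta$, and produce $\tfrac12\frac{d}{dt}\|\delta_\beta\bZ\|_{L^2_\theta}^2$. Symmetrizing the main term in $\theta\leftrightarrow\theta+\alpha$ and $\theta\leftrightarrow\theta+\beta$ and using $\overline{D\bT}^{XY}\ge\lambda\IO$ (a consequence of $D\bT\ge\lambda\IO$) yields the coercive lower bound $\gtrsim\lambda\,\|\delta_\beta(-\Delta)^{1/4}\bZ\|_{L^2_\theta}^2$ that builds the $\DN$-dissipation. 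Everything else is an error, of three types: the commutators $[\delta_\beta,\overline{D\bT}^{XY}]$ and $[\delta_\alpha,\overline{D\bT}^{XY}]$ acting on $\bZ$, which by the chain rule cost $\|D^2\bT\|_\infty$ times increments of $\bX',\bY'$; the \emph{iterated} commutator $\delta_\beta\delta_\alpha\overline{D\bT}^{XY}$, whose estimate requires a bound on $D^3\bT$ — this is exactly why the present theorem asks for $D\bT\in W^{2,\infty}$ rather than the $W^{1,\infty}$ of Theorem~\ref{thm:mainquant}; and the contributions of $\big(\mathcal{A}[\bX]-\mathcal{A}[\bY]\big)\delta_\alpha\bT(\bX')$ and $\mathcal{A}[\bY]\,\delta_\alpha\big(\bT(\bX')-\bT(\bY')\big)$ after $\delta_\beta$. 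Reusing the same commutator and product estimates that underlie \eqref{e:primaryestimate}, I would bound each error term by a product of a factor carrying $\delta_\alpha$- and $\delta_\beta$-increments of $\bZ$ (absorbable into the $\DN$-dissipation), increments of $\bX'$ or $\bY'$ (absorbable into the $\lambda$-weighted $\DM$-dissipation of $\bX,\bY$, which by Theorem~\ref{thm:mainquant} is $\lesssim M$), and an $L^\infty_\theta$- or $L^2_\theta$-factor of $\bZ$ controlled by $\|\bZ\|_{\CN}$.

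Finally I would insert the weight and close the estimate. Taking the square root of the differential inequality for $\|\delta_\beta\bZ\|_{L^2_\theta}^2$, integrating in $t$ over $[0,T]$, multiplying by $\nu(|\beta|^{-1})/|\beta|^{3/2}$ and integrating $d\beta$ — with $\nu$ as in \eqref{nu.definition} chosen, exactly as $\mu$ was, so that the $\beta$-summation of the errors converges and stays absorbable while $\CN,\DN$ remain equivalent to $\CM,\DM$ via \eqref{equivalent.nu.norm} — produces an inequality of the form
\[
  \|\bZ\|_{\CN} + 2\lambda^{1/2}\|\bZ\|_{\DN} \le \|\bX_0'-\bY_0'\|_{\BN} + \Phi(T)\big(\|\bZ\|_{\CN} + 2\lambda^{1/2}\|\bZ\|_{\DN}\big),
\]
with $\Phi(T)\to0$ as $T\to0$, depending only on $M,|\bX_0|_*,|\bY_0|_*,\mu,\lambda,\|D\bT\|_{W^{2,\infty}}$. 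Choosing $T_M$ so that $\Phi(T_M)\le\tfrac12$, absorbing, and accounting for the equivalence constants in \eqref{equivalent.nu.norm} gives the stated bound with constant $8$. Since $\bX,\bY$ are strong solutions they are $C^{2,\beta}$ on $[\tau,T]\times\T$ for each $\tau>0$, so all the pairings and integrations by parts above are legitimate on $[\tau,T]$; one then sends $\tau\to0$ using $\lim_{t\to0}\|\bX'(t)-\bX_0'\|_{L^\infty}=\lim_{t\to0}\|\bY'(t)-\bY_0'\|_{L^\infty}=0$ from Definition~\ref{def:StrongSolution} and lower semicontinuity of the $\CN,\DN$ norms. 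The main obstacle is the last group of error terms — carrying out the $\delta_\beta\delta_\alpha$-expansion of $\mathcal{A}[\bX]-\mathcal{A}[\bY]$ and of $\mathcal{A}[\bY]\,\delta_\alpha(\bT(\bX')-\bT(\bY'))$ and matching each resulting piece either to the $\bZ$-dissipation or to a small-in-$T$ multiple of the controlled data — which is precisely where the cancellation encoded in \eqref{kerbel.A.eqn.deriv} and the extra regularity $D\bT\in W^{2,\infty}$ are both essential.
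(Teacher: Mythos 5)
Your overall architecture is the same as the paper's: subtract the two equations, linearize the tension with an averaged $D\bT$, run the $\delta_\beta$--$L^2$ energy estimate, extract the coercive term from $D\bT\geq\lambda\IO$, and observe that the iterated difference $\delta_\beta\delta_\alpha$ of the tension linearization is what forces $D\bT\in W^{2,\infty}$. That part is right. But there is one concrete gap, and it is exactly the step the paper singles out as the new difficulty of the strong continuity estimate: the term \eqref{e:StrongContNewBad} (the term $\TT_4$ in the paper's proof of Proposition \ref{prop:continuity}), namely
\begin{equation*}
\DDTT\int_{\T}d\theta\int_{\T}\frac{d\alpha}{\alpha^2}\,
|\delta_\beta\delta_\alpha(\bX'-\bY')|\;|\tau_\beta\mathcal{K}[\bX]|\;|(\bX'-\bY')(\theta)|\;|\delta_\beta\delta_\alpha\bY'|,
\end{equation*}
which arises from $\delta_\beta\mathcal{A}[\bY]$ acting trivially while $\delta_\beta\delta_\alpha$ both land on $\bY'$ inside $\mathcal{A}[\bY]\,\delta_\alpha(\bT(\bX')-\bT(\bY'))$ (equivalently from $\tau_\beta(\DBTX-\DBTY)\delta_\beta\delta_\alpha\bY'$). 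After Cauchy--Schwarz this yields
$C\lambda^{-1}\DDTT^2\|\bX'-\bY'\|_{L^\infty_\theta}^2\|\delta_\beta\TLam^{1/2}\bY'\|_{L^2_\theta}^2$, and after the $\beta$-integration a coefficient proportional to $\lambda^{-1/2}\DDTT\,\|\bY'\|_{\DM}\approx \lambda^{-1}\DDTT M$ in front of $\|\bX'-\bY'\|_{\CM}$. This coefficient does \emph{not} tend to zero as $T\to 0$, and it has no $\delta_\alpha$ or $\delta_\beta$ falling on $\bX'-\bY'$ in the $L^\infty$ factor, so the decay mechanism \eqref{rate.decay} that saves every other error term is unavailable. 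Your claim that all errors assemble into $\Phi(T)(\|\bZ\|_{\CN}+2\lambda^{1/2}\|\bZ\|_{\DN})$ with $\Phi(T)\to0$ therefore fails at this term, and the absorption step does not close.

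This is also where your description of $\nu$ goes wrong: $\nu$ is not ``chosen exactly as $\mu$ was.'' It is the \emph{flattened} weight $\nu(r)=1+\subw(r)/\KC$ of \eqref{nu.definition}, with $\KCC=\KCC(\lambda^{-1},\DDTT)$ taken large. The point is that $\|\bY'\|_{\DN}$ then splits into an unweighted piece $\int|\beta|^{-3/2}\|\delta_\beta\TLam^{1/2}\bY'\|_{L^2_TL^2_\theta}\,d\beta$ — which \emph{is} made small by splitting $|\beta|\lessgtr\eta_1$ and using $\subw(\eta_1^{-1})^{-1}$ on the small scales and $T^{1/2}$ on the large ones — plus $\KC^{-1}\|\bY'\|_{\DM}$, which is made small only by choosing $\KCC$ large, independently of $T$. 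Without this specific device (or an equivalent one) the estimate cannot be closed in $\CM$ itself, which is precisely why the theorem is stated in $\CN,\DN$. To repair your argument you need to isolate this term explicitly, bound it as above, and then perform the two-parameter choice ($\eta_1$ small, $\KCC$ large) before the final absorption.
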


\begin{corollary}\label{cor:lipshitz}(Locally Lipschitz)
Suppose the tension map $\bT$ as in \eqref{tension.map.def} satisfies   $D\bT\in W^{2,\infty}(\R^2; \R^{2\times 2})$ and the ellipticity condition $D\bT(z)\geq \lambda \IO>0$, and let $\subw$ satisfy the assumptions of Definition \ref{subw.definition}.  Then for any $M, \rho\in (0,\infty)$, there exists a time $T>0$ such that for all $0<t\leq T$, the map 
\begin{equation}\notag
    \BX_0\longrightarrow \BX(t),
\end{equation}
is Lipschitz continuous from the bounded set $\{\bZ: ||\bZ'||_{\dot{B}^{1/2, \subw}_{2,1}}\leq M, |\bZ|_*\geq \rho\}$ to $\dot{B}^{1/2,\subw}_{2,1}$, with Lipschitz constant depending on $M, \rho, \mu, \lambda, $ and $||D\bT||_{W^{2,\infty}}$.  
\end{corollary}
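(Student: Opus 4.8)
The plan is to read the corollary off the strong continuity estimate of Theorem~\ref{main:unique}; essentially no new analysis is needed beyond (i) arranging a single existence/stability time valid on the whole bounded set and (ii) converting the $\nu$-weighted Chemin--Lerner norms appearing in Theorem~\ref{main:unique} back to the $\subw$-weighted norm $\dot{B}^{1/2,\subw}_{2,1}$. First I would fix $M,\rho\in(0,\infty)$, set $\mathcal{S}:=\{\bZ:||\bZ'||_{\dot{B}^{1/2,\subw}_{2,1}}\le M,\ |\bZ|_*\ge\rho\}$, and take arbitrary $\bX_0,\bY_0\in\mathcal{S}$. Since $D\bT\in W^{2,\infty}\subset W^{1,\infty}$ satisfies the ellipticity condition, Theorem~\ref{thm:mainquant} produces strong solutions $\bX,\bY$, and Theorem~\ref{first:unique} guarantees that each is the \emph{unique} such solution (so that $\bX_0\mapsto\bX(t)$ is genuinely single-valued). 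The times furnished by Theorem~\ref{thm:mainquant} and Theorem~\ref{main:unique} depend on $|\bX_0|_*,|\bY_0|_*$ only through lower bounds; replacing both by the worst-case value $\rho$ therefore yields a single $T=T(M,\rho,\subw,\lambda,||D\bT||_{W^{2,\infty}})>0$ such that, for every pair drawn from $\mathcal{S}$, the solutions exist on $[0,T]$ and satisfy
\[
||\bX'-\bY'||_{\CN}+2\lambda^{1/2}||\bX'-\bY'||_{\DN}\le 8\,||\bX_0'-\bY_0'||_{\BN}.
\]

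Next, for each fixed $t\in(0,T]$, I would chain inequalities. Using that $\subw$ and $\nu$ both obey Definition~\ref{subw.definition} together with the norm equivalences $\dot{B}^{1/2,\subw}_{2,1}\approx\BN$ and $\CN\approx\CM$ recorded in~\eqref{equivalent.nu.norm}, and the embedding $||g(t)||_{\BN}\le||g||_{L^\infty_T(\dot{B}^{1/2,\nu}_{2,1})}\le||g||_{\CN}$, one gets
\[
||\bX'(t)-\bY'(t)||_{\dot{B}^{1/2,\subw}_{2,1}}\lesssim||\bX'(t)-\bY'(t)||_{\BN}\le||\bX'-\bY'||_{\CN}\le 8\,||\bX_0'-\bY_0'||_{\BN}\lesssim||\bX_0'-\bY_0'||_{\dot{B}^{1/2,\subw}_{2,1}},
\]
with implied constants controlled exactly as in~\eqref{equivalent.nu.norm}, hence depending only on $\subw$, $M$, $\lambda$, and $||D\bT||_{W^{2,\infty}}$. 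Since the configuration is tracked throughout this paper through its mean-zero derivative, this is precisely the asserted Lipschitz bound for $\bX_0\mapsto\bX(t)$ into $\dot{B}^{1/2,\subw}_{2,1}$, with Lipschitz constant equal to $8$ times the product of those two equivalence constants — a constant independent of $t$ and depending only on $M,\rho,\subw,\lambda$ and $||D\bT||_{W^{2,\infty}}$, as required.

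The one step that requires genuine (if mild) care — and what I would single out as the main obstacle — is the uniformity of the time $T$ over $\mathcal{S}$: Theorems~\ref{thm:mainquant} and~\ref{main:unique} state their times as depending on the individual arc-chord numbers $|\bX_0|_*$, $|\bY_0|_*$, so one must verify that those constructions use only \emph{lower} bounds on these quantities and that decreasing the lower bound to $\rho$ does not increase the admissible time. This is transparent from the proofs of those theorems, where the arc-chord enters solely through a priori lower bounds that propagate forward in time, but it should be remarked explicitly before invoking the uniform $T$. Everything else is a direct substitution into the estimate of Theorem~\ref{main:unique} composed with the built-in norm equivalences.
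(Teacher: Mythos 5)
Your proposal is correct and follows exactly the route the paper takes: the paper's entire proof is the remark that Corollary \ref{cor:lipshitz} ``follows directly from Theorem \ref{main:unique},'' and your fleshing-out of that step (uniform choice of $T$ over the bounded set via the worst-case arc-chord lower bound $\rho$, followed by the norm equivalences \eqref{equivalent.nu.norm} between the $\nu$- and $\subw$-weighted spaces) is the intended argument, with the Lipschitz constant inheriting its dependence on $M,\lambda,\DDTT$ through $\KC$ exactly as you describe.
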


Corollary \ref{cor:lipshitz} follows directly from Theorem \ref{main:unique}.

\subsection{Discussion of the assumptions on the tension}\label{sec:TensionAssumptions}

In this subsection we will discuss our assumptions on the scalar tension $\TE(r)$ and on the tension map $\bT(z) = \TE(|z|)\hat{z}$ in  \eqref{tension.map.def}.  We separate our assumptions on the tension into two groups: the assumptions needed for the qualitative  Theorem \ref{thm:main} versus the assumptions used to prove the quantitative bounds in Theorems \ref{thm:mainquant}, \ref{first:unique}, and \ref{main:unique}. 

Our qualitative assumptions in Theorem \ref{thm:main} are very weak, only requiring 
\begin{equation}\label{e:QualitativeScalarTension}
    \left\{\begin{array}{l} 
    \TE \in C^{1,1}_{loc}((0,\infty); (0,\infty)), 
    \\ \TE'(r)>0, \  0<r<\infty.
    \end{array}\right. 
\end{equation}
By $\TE \in C^{k,\gamma}_{loc}$ or $C^{k,\gamma}_{loc}(0,\infty)$ for an integer $k\geq 0$ and $0\leq \gamma \leq 1$, we mean for any $0<a<b<\infty$ that $\TE\in C^{k,\gamma}([a,b]; (0,\infty))$.  For qualitative higher regularity, we also assume $\TE\in C^{k, \gamma}_{loc}(0,\infty).$ Thus singularities or degeneracy at $r=0$ or as $r\to \infty$ are allowable, and in particular any positive power law $\TE(r) = C r^p$ for $p>0$ and $C>0$ satisfies \eqref{e:QualitativeScalarTension}.  Note that there is no requirement that $\lim\limits_{r\to \infty}\TE(r) = \infty$, so a bounded function such as $\TE(r)=\arctan(r)$ would also satisfy \eqref{e:QualitativeScalarTension}.  

For our quantitative estimates, we work with tensions that have the following global bounds
\begin{equation}\label{e:QuantitativeScalarTension}
    \left\{\begin{array}{l}
     \TE'\in W^{1,\infty}([0,\infty); [0,\infty)), 
    \\ \inf\limits_{0<r<\infty}\TE'(r)\geq \lambda>0,  
    \\ \TE(0) = 0, 
    \end{array}\right. 
\end{equation}
For quantitative higher regularity and the strong continuity estimate, we also need to assume $\TE\in C^{k, \gamma}_r[0,\infty)$ with $\TE^{'(k)}(0)=0$ for $k\geq 2$.  This would be implied for example if $\TE(r) = cr$ on $0\leq r\leq \epsilon$ for some $c>0$ and any small $\epsilon>0$.  Note that the estimates we prove will depend on bounds for the tension map $\bT(z)$, rather than the scalar tension itself.  The assumption that $\TE$ has higher order derivatives vanish at 0 guarantees that $\TE\in C^{k,\gamma}([0,\infty);[0,\infty))$ implies $\bT\in C^{k,\gamma}(\R^2; \R^2),$ with $||\bT||_{C^{k,\gamma}}$ controlled in terms of $||\TE||_{C^{k,\gamma}}$.  
Also note that the global lower bound $\inf \TE'(r)\geq \lambda>0$ and $\TE(0)=0$ give us a lower bound on the derivative of the tension map $\bT$ as
\begin{equation}\label{e:DTdefn}
    D\bT(z) = \TE'(|z|) \hat{z}\otimes \hat{z} + \frac{\TE(|z|)}{|z|} \hat{z}^\perp\otimes \hat{z}^\perp \geq   \lambda \IO,
    \quad \lambda>0.
\end{equation}
Of course in the case of simple tension \eqref{linearF} where $\TE(r) = k_0r$, it follows that $D\bT(z) = k_0 \IO$.  
For our quantitative estimates, we will typically state our assumptions for the tension map $\bT(z)$ in \eqref{tension.map.def} by assuming $\forall z \in \R^2$ that \eqref{e:DTdefn} holds and further that
\begin{equation}\label{e:QuantitativeTensionMap}
\left\{\begin{array}{l} |D\bT(z)|\leq \DTT, 
\\ |D^2 \bT(z)| \leq \DDTT. 
    \end{array}\right.
\end{equation}
Here $\DTT$ and $\DDTT$ are any fixed positive finite constants that are allowed to be large.  For our strong continuity estimate in Theorem \ref{main:unique}, for a fixed positive finite constant $\DDDTT$, we additionally assume $\forall z \in \R^2$ that
\begin{equation}\label{tension.derivatives.continuity}
    |D^3\bT(z)|\leq \DDDTT.
\end{equation}
Our quantitative estimates on higher regularity additionally depend on $||\bT||_{C^{k,\gamma}}$.  

Lastly, we note the apparent mismatch between our qualitative \eqref{e:QualitativeScalarTension} and quantitative \eqref{e:QuantitativeScalarTension} assumptions.  That is, not every scalar tension $\TE$ satisfying the qualitative assumptions will also satisfy the quantitative version.  In particular, all positive power laws satisfy the former, but only the linear case satisfies the latter.

We are able to deal with these different assumptions for the following reason.  Suppose that we have a tension $\TE_1$ satisfying the quantitative assumptions \eqref{e:QuantitativeScalarTension}, and we use our quantitative estimates to construct a solution $\bX:[0,T]\times \T\to \R^2$ to the Peskin problem with tension $\TE_1$.  Then for any time $t$ and any $\theta\in \T$ we have $0<|\bX(t)|_*\leq |\bX'(t,\theta)|\leq ||\bX'(t)||_{L^\infty_\theta}$.  Taking $a = \inf_{0\leq t\leq T} |\bX(t)|_*$ and $b = ||\bX'||_{L^\infty_T (L^\infty_\theta)}$, we then have that $\bX(t)$ is also a solution to the Peskin problem \eqref{peskin.general.tension} for any tension $\TE_2$ such that $\TE_2\big|_{[a,b]} = \TE_1\big|_{[a,b]} $.  

Now suppose that our tension $\TE$ only satisfies the  qualitative assumptions \eqref{e:QualitativeScalarTension}.  These are still enough to guarantee that for any $0<a<b<\infty$, there exists a tension $\tilde{\TE}$ such that $\tilde{\TE}\big|_{[a,b]} = \TE\big|_{[a,b]},$ and $\tilde{\TE} $ satisfies the quantitative assumptions \eqref{e:QuantitativeScalarTension}.  Thus, for any fixed initial data $\bX_0$ with $\BX'_0\in \dot{B}^{\frac12}_{2,1}\subseteq L^\infty$ and $|\bX_0|_*>0$, we take some interval $(a,b)$ which compactly contains $\{|\bX'_0(\theta)|: \theta\in \T\}$, and then we construct a solution $\bX:[0,T]\times \T\to \R^2$ to the Peskin problem with tension $\tilde{\TE}$.  Taking $T>0$ small enough that $\{|\bX'_0(t,\theta)|: (t,\theta)\in [0,T]\times \T\}\subset (a,b)$, we then have that $\bX(t)$ is also a solution to the Peskin problem with our original tension $\TE$.  We go through this argument again in more detail in the proof of our main theorem in \secref{sec:mainThmProof}.

\begin{remark}\label{remark:trick}
We note that the trick explained above and in the proof of our main theorem in \secref{sec:mainThmProof}  always works for the kinds of solutions we consider with Definitions \ref{def:solution} and \ref{def:StrongSolution}.    For the assumptions needed in order to apply this trick (to replace one tension with another) to fail, the solution would have to satisfy one of two conditions.  Either (1) the solution $\bX(t)$ violates the arc-chord condition \eqref{arc.cord.number} after an infinitesimal amount of time $\liminf\limits_{t\to 0+} |\bX(t)|_* = 0$, or (2) the $L^\infty$ norm of the solution $\BX'(t)$ blows up after an infinitesimal amount of time: $\limsup\limits_{t\to 0+} ||\bX'(t)||_{L^\infty} = \infty.$  It's not clear whether a notion of solution which obey's either of these two conditions starting from initial data with $\BX_0'\in \dot{B}^{1/2}_{2,1}\subseteq L^\infty_\theta$ and $|\bX_0|>0$ would represent a physical solution.
\end{remark}

\begin{remark}
At the same time we remark that Theorem's \ref{thm:mainquant}, \ref{first:unique} and \ref{main:unique}  also hold if 
instead we replaced  \eqref{e:QuantitativeTensionMap} and \eqref{tension.derivatives.continuity} with
\begin{equation}\notag
    \bigg| D^{(k)}\bT(z)\big|_{z=\bX'}\bigg|\leq \DKTT(|| \BX' ||_{L^\infty_\theta}, |\bX|_*^{-1}),
\end{equation}
where for $k\in\{1,2,3\}$ we have $\DKTT=\DKTT(|| \BX' ||_{L^\infty_\theta}, |\bX|_*^{-1})$ are any increasing functions of both variables.  Then under these conditions the proofs of those theorems in this paper continue to hold without any essential modifications.   And further the solutions constructed under the assumptions in this remark would prevent the occurrence of (1) or (2) in the previous Remark \ref{remark:trick}. 
\end{remark}

\subsection{A de la Valle-Poisson lemma}
Motivated by the work in \cite{AlaNgu2021lipsh,2009.08442,2010.06915}, we now prove the following de la Valle-Poisson type lemma.

\begin{lemma}\label{lem:VallePoisson} Fix any $p\in [1,\infty]$, $r\in [1,\infty)$, and $s\in (0,1)$.  Given any function $f$ satisfying  $|| f||_{\dot{B}^s_{p,r}(\T)} <\infty$, then there exists a function $\subw$, depending upon $f$, satisfying the assumptions of Definition \ref{subw.definition} such that $|| f||_{\dot{B}^{s,\subw}_{p,r}(\T)} <\infty$.
\end{lemma}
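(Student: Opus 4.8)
The plan is to exploit the fact that $\|f\|_{\dot B^s_{p,r}} < \infty$ means the nonnegative function
\[
g(\beta) \eqdef \frac{1}{|\beta|}\left(\frac{\|\delta_\beta f\|_{L^p(\T)}}{|\beta|^s}\right)^r
\]
is integrable on $\T$, and then to build a weight $\subw(|\beta|^{-1})^r$ that tends to infinity but grows slowly enough that it is still integrable against $g$. The natural device is a ``truncation'' or ``exhaustion'' argument: since $g \in L^1(\T)$, for each $n \ge 1$ one can pick a radius $\rho_n \downarrow 0$ such that $\int_{|\beta|\le \rho_n} g(\beta)\,d\beta \le 2^{-n}$. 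One then \emph{defines} a nondecreasing step function $h(r)$ on $[1,\infty)$ by letting $h = n$ on the range of $|\beta|^{-1}$ corresponding to $\rho_{n+1} \le |\beta| \le \rho_n$ (i.e. $h(r) = n$ for $\rho_n^{-1}\le r \le \rho_{n+1}^{-1}$), so that $\int_\T h(|\beta|^{-1})^r\, g(\beta)\, d\beta \le \sum_n n^r\, 2^{-n} < \infty$. This $h$ already gives summability, but it is a step function and need not satisfy the three structural conditions in Definition \ref{subw.definition}. The remaining work is to regularize $h$ into an admissible $\subw$.

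The key steps, in order, are: (1) reduce to the integrability statement above and extract $\rho_n \downarrow 0$ with geometric control on the tails of $g$; by passing to a subsequence we may also assume $\rho_n \le 2^{-n}$ so that the resulting weight grows no faster than a fixed power of $\log(1/|\beta|)$. (2) Define the step weight $h$ and check $\int h(|\beta|^{-1})^r g(\beta)\,d\beta < \infty$. (3) Dominate $h$ by a smooth increasing function: a clean choice is to set
\[
\subw(r) \eqdef 1 + \int_1^r \frac{\psi(t)}{t}\,dt,
\]
where $\psi \ge 0$ is chosen piecewise constant (then mollified) so that $\subw \ge h$ pointwise while $\psi$ is bounded; since $\rho_n \le 2^{-n}$, on the $n$-th dyadic-type block $h$ increases by $1$ over a range of $\log(1/|\beta|)$ of length $\gtrsim 1$, so a bounded $\psi$ suffices and we still have $\subw(r) \lesssim \log(4+r)$. (4) Verify the three conditions of Definition \ref{subw.definition}: monotonicity and $\subw(r)\to\infty$ are built in; the doubling bound $\subw(2r)\le c_0\subw(r)$ follows because $\subw(2r)-\subw(r) = \int_r^{2r}\psi(t)/t\,dt \le \|\psi\|_\infty \log 2$ is bounded while $\subw(r)\to\infty$; and $r\mapsto \subw(r)/\log(4+r)$ can be made decreasing by possibly shrinking $\psi$ (equivalently passing to a sparser subsequence of $\rho_n$) so that $\subw' \le (\log(4+r))'$ where it matters, or simply by redefining $\subw := \min\{\subw, C\log(4+r)\}$ for suitable $C$ — one must then recheck doubling, which is immediate for a minimum of two doubling functions.

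The main obstacle I expect is \emph{simultaneously} meeting all three conditions of Definition \ref{subw.definition} while keeping $\subw$ large enough to dominate $h$: the third condition (that $\subw/\log(4+r)$ be \emph{decreasing}, not merely bounded) is the delicate one, since it caps the growth rate of $\subw$ at exactly the logarithmic scale and forbids, say, $\subw$ from having long flat stretches followed by sharp jumps. This is precisely why step (1) needs the extra arrangement $\rho_n \le 2^{-n}$ (forcing at least a fixed amount of logarithmic ``room'' between successive thresholds), so that the weight one is forced to build never needs to grow faster than $\log(1/|\beta|)$. Everything else — the doubling property, smoothness, and the summation $\sum n^r 2^{-n}<\infty$ — is routine once the subsequence is chosen correctly. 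The case $r = \infty$ is excluded by hypothesis, so we need not worry about it; for $r \in [1,\infty)$ the argument above is uniform in $p$ since $p$ enters only through the fixed integrable function $g$.
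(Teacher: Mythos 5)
There is a genuine gap, and it is a directional one. Your step function $h$ satisfies $\int_\T h(|\beta|^{-1})^r g(\beta)\,d\beta<\infty$, so to preserve summability the regularized weight must be controlled by $h$ \emph{from above}, i.e.\ you need $\subw \leq h + C$ (equivalently $\subw\leq n+1$ on the $n$-th block). Instead, in step (3) you construct $\subw \geq h$ with $\subw(r)\approx \log(4+r)$, and you never re-verify that $\int \subw(|\beta|^{-1})^r g(\beta)\,d\beta<\infty$; indeed it can fail. Take $f$ with $\|\delta_\beta f\|_{L^p}^r \approx |\beta|^{sr}\left(\log(1/|\beta|)\right)^{-3/2}$ near $\beta=0$, so that $g(\beta)\approx |\beta|^{-1}(\log(1/|\beta|))^{-3/2}$ is integrable and $\|f\|_{\dot B^s_{p,r}}<\infty$, but $\int_\T (\log(1/|\beta|))^{r} g(\beta)\,d\beta=\infty$ for every $r\geq 1$. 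Here the tail mass decays so slowly that the admissible thresholds $\rho_n$ must be taken far smaller than $2^{-n}$, the blocks become very long in logarithmic scale, and any weight that climbs by at least $1$ per unit of $\log(1/|\beta|)$ (your bounded-below $\psi$) vastly overshoots $h$. The arrangement $\rho_n\leq 2^{-n}$ gives a lower bound on block lengths, which is exactly what you need to stay \emph{above} $h$ with a logarithm, but says nothing about staying below it.

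The fix is to interpolate the step function from \emph{below}: on the block $[R_n,R_{n+1}]$ (with $R_n=\rho_n^{-1}$) set $\subw(\alpha)=n+\log(\alpha/R_n)/\log(R_{n+1}/R_n)$, so $\subw\leq n+1$ there and $\sum_n (n+1)^r 2^{-n}<\infty$ closes the estimate; the slope in $\log\alpha$ is then $1/\log(R_{n+1}/R_n)$, which is small on long blocks, and choosing $R_{n+1}\geq R_n^2$ makes $\subw(r)/\log(4+r)$ decreasing and gives the doubling property. For comparison, the paper does none of this by hand: it rewrites $\|f\|_{\dot B^s_{p,r}}^r$ as $\int_1^\infty \omega(\alpha)\,d\alpha<\infty$ via the change of variables $\alpha=\pi\beta^{-1}$ and invokes the de la Vall\'ee-Poussin-type Lemma 3.8 of Alazard--Nguyen to produce $\nu$ with $\int\omega\nu<\infty$, then sets $\subw(|\beta|^{-1})=\nu(\pi|\beta|^{-1})^{1/r}$. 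Your plan to prove that cited lemma from scratch is reasonable, but as written the core comparison is reversed.
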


The proof builds upon the related lemma from \cite[Lemma 3.8 on page 35]{AlaNgu2021lipsh}.

\begin{proof} Since $|| f||_{\dot{B}^s_{p,r}(\T)} <\infty$ then after a simple change of variables we have that 
\begin{equation}\notag
|| f||_{\dot{B}^s_{p,r}(\T)}^r
=
    \int_0^\pi \frac{d\beta}{|\beta|^{1+sr}} \left( ||\delta_\beta f||_{L^p}^r +||\delta_{-\beta} f||_{L^p}^r\right) < \infty.
\end{equation}
We now define 
 \begin{equation*}
    h_{p,r}(\beta) \eqdef ||\delta_\beta f||_{L^p}^r+||\delta_{-\beta} f||_{L^p}^r, \quad  \omega(\alpha) \eqdef  \pi^{-sr}\alpha^{sr-1} h_{p,r}(\pi \alpha^{-1}).
 \end{equation*}
 Then we will use the change of variables $\alpha = \pi \beta^{-1}$ to obtain 
\begin{equation}\notag
    \int_1^\infty d\alpha ~\omega(\alpha)
    =
\int_0^\pi \frac{d\beta}{|\beta|^{1+sr}} \left( ||\delta_\beta f||_{L^p}^r +||\delta_{-\beta} f||_{L^p}^r\right) < \infty.
\end{equation}
By \cite[Lemma 3.8 on page 35]{AlaNgu2021lipsh} there then exists some function $\nu:[1,\infty)\to [1,\infty) \ $ satisfying the conditions of Definition \ref{subw.definition} such that 
\begin{equation}\notag
   \int_1^\infty d\alpha ~\omega(\alpha)~\nu(\alpha)
   =
\int_0^\pi \frac{d\beta}{|\beta|^{1+sr}} \nu(\pi |\beta|^{-1}) \left( ||\delta_\beta f||_{L^p}^r +||\delta_{-\beta} f||_{L^p}^r\right) < \infty.
\end{equation}
Taking $\mu(|\beta|^{-1}) = \nu(\pi |\beta|^{-1})^{1/r}$, we have that $\mu$ satisfies the conditions of Definition \ref{subw.definition} as well, and further $|| f||_{\dot{B}^{s,\subw}_{p,r}(\T)} <\infty$.\end{proof}

We point out that using this Lemma \ref{lem:VallePoisson} then Theorem \ref{thm:main} follows immediately from Theorem's \ref{thm:mainquant} and \ref{first:unique}.

\subsection{The $\Lambda$ operator}\label{sec:para}
For a function $f:\R\to \C$ the $\Lambda^s=(-\Delta)^{\frac{s}{2}}$ operator is widely defined for any $s\in (0,2)$ as
\begin{equation*}
        -\Lambda^s f(x) \eqdef C_{s} \pv\int_{\R} \frac{(\delta_y+\delta_{-y})f(x)}{|y|^{1+s}} dy, \quad C_s \eqdef \frac{2^s \bm{\Gamma}(\frac12 (1+s))}{2\pi^{\frac12}|\bm{\Gamma}(-\frac{s}{2})|}. 
\end{equation*}
Here we use the principal value integral when it is needed, and $\bm{\Gamma}$ is the standard Gamma function.  
Then for $f:\T \to \C$, identified as a periodic function on $\R$, this can readily be reduced to 
\begin{equation}\label{lambda.s.def}
        -\Lambda^s f(\theta) = C_{s}\int_{\T} (\delta_\alpha+\delta_{-\alpha})f(\theta)  \sum_{k \in \Z}\frac{1}{|\alpha + 2\pi k|^{1+s}}   d\alpha.    
\end{equation}
The above can be taken as the definition of $\Lambda^s$ on $\T$.  Now for simplicity we  define the notation $\SL(\alpha)$ as 
\begin{equation}\label{distance.alpha}
    \SL(\alpha) \eqdef 2 \sin(\alpha/2).
\end{equation}
Then we have the following known expansion formula
\begin{equation}\notag
    \frac{1}{\SL(\alpha)^2} 
    =
    \sum_{n=-\infty}^{+\infty} \frac{1}{(\alpha+2\pi  n)^2}, \quad 0 < |\alpha| \leq \pi .
\end{equation}
Thus for $s=1$ the $\Lambda$ operator on $\T$ has the following succinct formula
\begin{equation}\label{lambda.definition}
   - \Lambda f(\theta) = \frac{1}{\pi} \int_{\T}   \frac{ f(\alpha) -f(\theta) }{\SL(\theta - \alpha)^2}d\alpha
    = \frac{1}{\pi} \int_{\T}   \frac{\delta_\alpha f(\theta)}{\SL(\alpha)^2} d\alpha,
\end{equation}
Notice further that we have 
\begin{equation}\label{sine.bound}
\frac{2}{\pi} \leq \frac{ \SL(\alpha)}{\alpha} \leq 1, \quad \forall \alpha \in {\T}.
\end{equation}
In particular in the $L^p(\T)$ sense then \eqref{lambda.definition} is equivalent to the operator containing $\alpha^2$ in the denominator instead of $\SL(\alpha)^2$.  This discussion motivates the following simplifed notation that we will use in the rest of this article
\begin{equation}\label{tildeLambda:eq}
   - \TLam f(\theta) \eqdef \frac{1}{4\pi} \int_{\T} \frac{\delta_\alpha f(\theta)}{\alpha^{2}} d\alpha. 
\end{equation}
By \eqref{sine.bound} the operator $\TLam$ is equivalent to $\Lambda$ from \eqref{lambda.definition} in the $L^p(\T)$ norms.  

More generally, for $s\in (0,2)$ we write the previous sum as 
\begin{equation*}
            \sum_{k \in \Z}\frac{1}{|\alpha + 2\pi k|^{1+s}}
    =
    \frac{1}{|\alpha|^{1+s}}\left(1
    +
    \sum_{k \neq 0}\frac{|\alpha|^{1+s}}{|\alpha + 2\pi k|^{1+s}}\right)
    =\frac{1}{|\alpha|^{1+s}} \left(1+ \mathfrak{U}(\alpha)\right).
\end{equation*}
Notice that for $\alpha \in \T$ the series $\mathfrak{U}(\alpha)$ converges uniformly.  Also $\mathfrak{U}(\alpha)$ is non-negative and is uniformly bounded for $\alpha \in \T$.  We conclude that 
\begin{equation*}
            \sum_{k \in \Z}\frac{1}{|\alpha + 2\pi k|^{1+s}}
            \approx
            \frac{1}{|\alpha|^{1+s}},
            \quad \forall \alpha \in \T.
\end{equation*}
Thus again in the $L^p(\T)$ sense $\Lambda^s$ is equivalent to the operator containing $\frac{1}{|\alpha|^{1+s}}$ instead of $\sum_{k \in \Z} \frac{C_{s}}{|\alpha + 2\pi k|^{1+s}}$ in \eqref{lambda.s.def} for any $s\in(0,2)$.

\subsection{Overview of the proof}\label{subsec:proofOverview} One very important point in the proof is the derivation of the equation \eqref{peskin.general.tension} with the kernel \eqref{kerbel.eqn.deriv}.  It is crucial that the equation \eqref{peskin.general.tension} cancels the second order derivatives that are present in \eqref{e:boundaryintegral}.     Let $\nabla_{\BX'}$ denote the directional derivative in the direction $\BX'$ as in \eqref{directional.def}, then with \eqref{stokeslet.def} the main idea can be seen as in
\begin{equation}\notag
[\nabla_{\BX'(\theta+\alpha)}G_1](\delta_\alpha \BX)\delta_\alpha \BX + G_2(\delta_\alpha \BX) \BX'(\theta+\alpha)=0.
\end{equation}
Fortunately this type of cancellation is preserved when we take higher order derivatives of the equation \eqref{e:boundaryintegral}.  This more general cancellation structure is observed via a sequence of integrations by parts performed in \secref{sec:GenTenDerivation}.

Then the heart of our argument is the initial a priori estimate \eqref{e:primaryestimate}.  In order to prove this, we make use of our new formulation of the equation for $\partial_t \bX'$ in \eqref{peskin.general.tension}.  Because $\mathcal{K}(\theta, \alpha)$ is symmetric in $\theta, \theta+\alpha$, our equation \eqref{peskin.general.tension} has divergence form symmetry making $L^2$ based energy estimates a useful choice.  By making use of Besov spaces, we're interested then in keeping careful track of the time evolution of differences $||\delta_\beta \bX'||_{L_\theta^2}(t)$ where $\beta\in \T$ is arbitrary.  Taking into account \eqref{kerbel.eqn.deriv} and \eqref{peskin.expand.tension} with \eqref{tildeLambda:eq}, we have that $\delta_\beta \bX'$  solves the equation
\begin{equation}\notag
    \partial_t \delta_\beta \bX' + \TLam \delta_\beta \bT(\bX') = \int_\T \frac{d\alpha}{\alpha^2} \delta_\beta \left(\mathcal{A}[\BX](\theta, \alpha)\delta_\alpha \bT(\bX')\right).
\end{equation}
When we calculate $\frac{d}{dt} ||\delta_\beta \bX'||_{L^2_\theta}^2$, we then get one good diffusive term $-\lambda ||\delta_\beta \bX'||_{\dot{H}^{1/2}}^2$ from the $\TLam \delta_\beta \bT(\bX')$ (along with additional error terms if our tension isn't simple).  We treat the remaining terms as error, and then we are left to bound integrals (for $q = 1, 2$) of the form   
\begin{equation}\notag
    \int_\T d\theta\int_\T  \frac{d\alpha}{\alpha^2}~ |\delta_\beta \delta_\alpha \bX'(\theta)|^2 ~ |\delta_\alpha \bX'(\theta)|^q,
\end{equation}
and
\begin{equation}\notag
  \int_\T d\theta\int_\T  \frac{d\alpha}{\alpha^2}~  |\delta_\beta \delta_\alpha \bX'(\theta)| \ |\delta_\beta \bX'(\theta)| \ |\delta_\alpha \bX'(\theta)|^{q+1}.
\end{equation}
If we were to bound the first term naively, we would get
\begin{equation}\notag
   C ||\delta_\beta \bX'||_{\dot{H}^{1/2}}^2 ||\bX'||_{L^\infty_\theta}^q,
\end{equation}
which would make it impossible to close the estimate, as this is of the same order as our good diffusive term but with a possibly large coefficient in front for large data.  However, the norm for $\dot{B}^{\frac12, \mu}_{2,1}$ both controls the size of the norm $\dot{B}^{\frac12}_{2,1}$ and the rate of decay for 
\begin{equation}\label{rate.decay}
    r \to \int_{|\alpha|<r}d\alpha  \frac{||\delta_\alpha f||_{L^2_\theta}}{|\alpha|^{3/2}}  \lesssim \frac{||f||_{\dot{B}^{1/2, \mu}_{2,1}}}{\mu(r^{-1})}.
\end{equation}
Thus splitting the integral in our error term between $|\alpha|<\eta$ and $|\alpha|>\eta$ for some $\eta$ sufficiently small depending on $\mu$,  $||\bX'_0||_{\dot{B}^{1/2, \mu}_{2,1}}$, and other relevant constants, we are able to bound this error term for any small $\epsilon>0$ as 
\begin{equation}\notag
    \epsilon ||\delta_\beta \bX'||_{\dot{H}^{1/2}}^2 + C_{\epsilon} ||\delta_\beta \bX'||_{L^2_\theta}^2, 
\end{equation}
which we can handle.  For the second type of error term, the story is similar except that we are forced to bound the $|\delta_\beta \bX'|$ in $L^\infty$, as it has no decay as $\alpha\to 0$.  Thus we end up with an error term of the form
\begin{equation}\notag
    \epsilon||\delta_\beta \bX'||_{\dot{H}^{1/2}}^2 + C ||\delta_\beta \bX'||_{L^2_\theta}^2 + \epsilon ||\delta_\beta \bX'||_{L^\infty_\theta}^2.
\end{equation}
This $L^\infty$ error term at first seems very bad, as notably the Sobolev embedding fails in $L^\infty$ and $||\delta_\beta\bX'||_{L^\infty_\theta}^2$ is not controlled by our good diffusive piece  $-\lambda ||\delta_\beta \bX'||_{\dot{H}^{1/2}}^2$.  However, once we integrate in $\beta$ against $\mu(|\beta|^{-1}) |\beta|^{-3/2}$ the Sobolev embedding is again true, and we can control this error term at the end of the estimate.

It is also vitally important to get a positive bound from below on the arc-chord condition $|\bX(t)|_*$.  In order to do this, we make use of the estimate
\begin{equation}\notag
||\bX(t)|_* - |\bX_0|_*| \leq ||\bX'(t)-\bX'_0||_{L^\infty}.
\end{equation}
Thus in \secref{sec:ArcChord} we prove continuity of the map $t\to \bX'(t)$ in $L^\infty_\theta$ for small times.  Our main a priori estimate \eqref{e:primaryestimate} grants us uniform bounds on the $\CM$ and $\DM$ norms.  Using our $\DM$ bound, we then control $\partial_t \bX'$ in $L_{t,\theta}^2$ and use this to prove continuity of $\bX'(t)$ in $L_\theta^2$.  Continuity in time in $L_\theta^2$ and our bound in $\CM$ then gives us continuity in time in $\dot{B}^{1/2}_{2,1}$, which controls $L^\infty_\theta$.

The strong continuity estimate given in Theorem \ref{main:unique} is for the most part similar to our main a priori estimate \eqref{e:primaryestimate}.  However to obtain this estimate requires subtracting two solutions to the equation \eqref{peskin.general.tension} which in turn requires using the higher order bound \eqref{tension.derivatives.continuity}.  Additionally when taking the difference of two solutions $\bX'$ and $\bY'$ to  \eqref{peskin.general.tension} we encounter a new term of the form 
\begin{equation}\label{e:StrongContNewBad}
\DDTT  \int_{\T} d\theta \int_{\T} \frac{d\alpha}{\alpha^2} ~
   | \delta_\beta\delta_\alpha  (\bX'-  \bY')(\theta)|  |\tau_\beta \mathcal{K}[\bX](\theta,\alpha)|
   | (\bX'-  \bY')(\theta)| | \delta_\beta \delta_\alpha  \bY'(\theta)|.
\end{equation}
The structure of this term does not have the ability to obtain extra smallness using the rate of decay in \eqref{rate.decay} in the energy estimate.  This major difficulty prevents closing the strong continuity estimate in the norm of $\CM$ in \eqref{C.space.temporal}.    Instead we simply bound this term by
\begin{equation}\notag
\LDS ||   \delta_\beta \TLam^{\frac12}(\bX'-  \bY')||_{L^2_\theta}^2
+
C \lambda^{-1}  \DDTT^2   || \bX'-  \bY'||_{L^\infty_\theta}^2 
||   \delta_\beta \TLam^{\frac12}\bY'||_{L^2_\theta}^2.
\end{equation}
The term $||   \delta_\beta \TLam^{\frac12}(\bX'-  \bY')||_{L^2_\theta}^2$ can be controlled by the dissipation.  But we also require a small constant in front of $|| \bX'-  \bY'||_{L^\infty_\theta}^2 $ to close the continuity estimate.  For this reason instead of using the norm $\CM$ with $\subw$ satisfying Definition \ref{subw.definition}, we need to introduce an equivalent norm as in \eqref{nu.definition} for a small constant $\varepsilon=\varepsilon(\lambda, \DDTT,||\bX'_0||_{\dot{B}^{\frac12,\subw}_{2,1}},||\bY'_0||_{\dot{B}^{\frac12,\subw}_{2,1}})>0$ as
\begin{equation}\notag
    \nu(r) \eqdef 1 + \varepsilon \subw(r).
\end{equation}
And then $\nu$ also satisfies Definition \ref{subw.definition}.  
Then the norm of $\CN$ is equivalent to the norm of $\CM$ and we are able to close the continuity estimate in $\CN$.

We also prove continuity for $\bX'(t)-\bY'(t)$ in $L^2_\theta.$  This estimate is much simpler than the strong continuity estimate, and only requires $D\bT \in W^{1,\infty}$ rather than $D\bT \in W^{2,\infty}$.  In particular, by making use of our a priori estimate in the higher order $\CM$ and $\DM$ norms, we are able to bound a term like \eqref{e:StrongContNewBad} directly without changing to some equivalent norm.  Continuity in $L^2_\theta$ and a bound in $\CM$ then implies that we have control over $\bX'-\bY'$ in the $\mathcal{B}^\omega_T$ norm, for any function $\omega$ satisfying that $\frac{\omega(r)}{\mu(r)}$ is eventually decreasing with $\displaystyle\lim\limits_{r\to \infty} \frac{\omega(r)}{\mu(r)} = 0.$

Our higher regularity proofs are contained in \secref{sec:smoothing}.  We begin by proving an $L^\infty_t \dot{H}^1$ estimate for $\bX'$ and then establish regularity of the remainder from \eqref{peskin.expand.tension}:
\begin{equation}\notag
\mathcal{V}(\theta) \eqdef \int_{\T} \frac{d\alpha}{\alpha^2}~ \mathcal{A}[\BX](\theta, \alpha)\delta_\alpha \bT(\BX'(\theta)),
\end{equation}
in terms of the regularity of $\bX'$.  Following the proof in \cite{MR3656476} for the scalar fractional porous medium equation, we then establish higher regularity for the fully nonlinear Peskin problem with a bootstrapping argument.

\subsection{Outline} In the next \secref{sec:GenTenDerivation} we will derive the equation \eqref{peskin.general.tension} that we will study in the rest of this work.   Then in \secref{sec:BesovSpace} we will prove our main a priori estimate.  After that in \secref{sec:ArcChord} we will explain a priori how we control the arc-chord condition \eqref{arc.cord.number} along the time evolution of \eqref{peskin.general.tension}.  And then in \secref{sec:DifferenceBesovSpace} we prove the a priori continuity estimates for solutions to \eqref{peskin.general.tension} that enable us to establish the strong continuity and uniqueness. 
Next in \secref{sec:smoothing} we prove the higher order smoothing effects.
Finally in \secref{sec:mainThmProof} we collect the previous results to explain the proof of our main theorems.  Afterwards in \secref{sec:LPtorus} we explain some of the inequalities that we use in the previous sections of this text using the Littlewood-Paley decomposition on the torus. Lastly in \secref{sec:kernelDIFF} we give the difference estimates for the kernel \eqref{kerbel.eqn.deriv} and \eqref{kerbel.A.eqn.deriv} of the equation \eqref{peskin.general.tension}.

\section{Derivation of the general tension equation}\label{sec:GenTenDerivation}

In this section we will derive our alternative formulation of the equation for $\BX'(\theta)$ as in \eqref{peskin.general.tension} with \eqref{tension.map.def} and \eqref{kernel.peskin.noExpand}.  It is important for our main theorems in this paper that the equation \eqref{peskin.general.tension} does not contain any terms with $\BX''(\theta)$ or higher derivatives.  This is not obvious because the equation \eqref{e:boundaryintegral} does in fact contain terms with $\BX''(\theta)$.  Then in this section we explain the cancellation necessary to show that the higher derivative terms do not occur.  We will first derive an alternative form of the equation for $\partial_t \BX(\theta)$ in \eqref{peskin.equation.first.order.final}.  Then afterwards we will derive in \eqref{general.tension} the equation for $\partial_t \BX'(\theta)$ that we have written previously in \eqref{peskin.general.tension}.

To this end, with a general tension $\TE$ as in \eqref{tension.map.def}, the Peskin problem \eqref{e:boundaryintegral} takes the form of an equation for the parametrization 
\begin{equation}\notag
    \partial_t \BX(\theta) = \int G(\BX(\eta)-\BX(\theta)) \partial_\eta \left(  \bT(\BX')(\eta)\right)d\eta,
\end{equation}
where $G(z)= G_1(z) + G_2(z)$ is the matrix valued function from \eqref{stokeslet.def} and $\bT(z)$ is the tension map from \eqref{tension.map.def}.   In this section we will write the integral, $\int$, without a domain such as $\T$ to emphasize that our calculations in this section are independent of the parametrization.

Next making the change of variables $\eta = \theta+\alpha$ and using \eqref{delta.notation}, we write 
\begin{multline*}
            \partial_t \BX(\theta) = \int G_1(\delta_\alpha \BX) \partial_\alpha \left( \TE(|\BX'|(\theta+\alpha))\widehat{\BX'}(\theta+\alpha)\right)d\alpha
            \\
            + \int G_2(\delta_\alpha \BX) \partial_\alpha \left( \TE(|\BX'|(\theta+\alpha))\widehat{\BX'}(\theta+\alpha)\right)d\alpha.
\end{multline*}
First we will focus on the term involving $G_1(z)$.

We use integration by parts and $\BX'(\theta+\alpha) = \partial_\alpha (\delta_\alpha \BX'(\theta))$ to obtain
\begin{multline*}
        \int G_1(\delta_\alpha \BX) \partial_\alpha \left( \TE(|\BX'|(\theta+\alpha))\widehat{\BX'}(\theta+\alpha)\right)d\alpha 
        \\
        = -\int \partial_\alpha [G_1(\delta_\alpha \BX)] \TE(|\BX'|(\theta+\alpha)) \widehat{\BX'}(\theta+\alpha) d\alpha 
    \\ = -\int \partial_\alpha [G_1(\delta_\alpha \BX)] \frac{\TE(|\BX'|(\theta+\alpha))}{|\BX'|(\theta+\alpha)} \partial_\alpha (\delta_\alpha \BX(\theta)) d\alpha 
    \\ = \int \partial_\alpha \left( \frac{\TE(|\BX'|(\theta+\alpha))}{|\BX'|(\theta+\alpha)} \partial_\alpha [G_1(\delta_\alpha \BX)]\right)\delta_\alpha \BX(\theta) d\alpha. 
\end{multline*}
We plug this calculation back into the equation to obtain
\begin{multline}\label{equation.intermediate}
    \partial_t \BX(\theta) = \int G(\delta_\alpha \BX) \partial_\alpha\left( \frac{\TE(|\BX'|)}{|\BX'|} \BX'\right)(\theta+\alpha) d\alpha 
    \\= \int \partial_\alpha \left( \frac{\TE(|\BX'|)(\theta+\alpha)}{|\BX'|(\theta+\alpha)} \partial_\alpha [G_1(\delta_\alpha \BX)]\right)\delta_\alpha \BX(\theta)  d\alpha
    \\
    + \int G_2(\delta_\alpha \BX) \partial_\alpha \left( \frac{\TE(|\BX'|)(\theta+\alpha)}{|\BX'|(\theta+\alpha)} \BX'(\theta+\alpha)\right) d\alpha.
\end{multline}
Next let $\nabla_u$ denote the directional derivative in the direction $u\in \R^2$, i.e.
\begin{equation}\label{directional.def}
    \nabla_uf(z) \eqdef \lim\limits_{\epsilon\to 0+} \frac{f(z+\epsilon u)-f(z)}{\epsilon}.
\end{equation}
For the matrix valued functions $G_1(z)$ and $G_2(z)$ from \eqref{stokeslet.def} direct calculation gives  
\begin{equation}\notag
4\pi\left\{\begin{array}{ll}
    \nabla_u G_1(z) & = - \frac{u\cdot \hat{z}}{|z|} \IO,
    \\  \nabla_u G_2(z) & = \frac{u\cdot \hat{z}^\perp}{|z|} \RO(z), 
    \\ \nabla_u \nabla_v G_1(z) & = \frac{u \cdot \DO(z) v}{|z|^2} \IO, 
    \\  \nabla_u \nabla_v G_2(z) &= -\frac{u \cdot \RO(z)v}{|z|^2} \RO(z) + \frac{u \cdot (\DO(z)-\IO)v}{|z|^2} \DO(z), \end{array}\right. 
\end{equation}
where $\RO(z)$ and  $\DO(z)$ are the reflection matrices from  \eqref{matrix.operators}. Thus
\begin{equation}\notag
    \partial_\alpha \left[G_1(\delta_\alpha \BX)\right] = \left[\nabla_{\BX'(\theta+\alpha)} G_1\right](\delta_\alpha 
    \BX), 
\end{equation}
and 
\begin{equation}\notag
    \partial_\alpha^2 [G_1(\delta_\alpha \BX)] = \left[\nabla^2_{\BX'(\theta+\alpha)} G_1\right](\delta_\alpha \BX) + \left[\nabla_{\BX''(\theta+\alpha)} G_1\right] (\delta_\alpha \BX).
\end{equation}
We now {\it claim} that 
\begin{equation}\label{peskin.equation.first.order}
    \partial_t \BX(\theta) = \int \left[\nabla^2_{\BX'(\theta+\alpha)} G_1\right](\delta_\alpha \BX) \frac{\TE(|\BX'|(\theta+\alpha))}{|\BX'|(\theta+\alpha)} \delta_\alpha \BX(\theta)d\alpha.
\end{equation}
Then \eqref{peskin.equation.first.order} directly implies that 
\begin{equation}\label{peskin.equation.first.order.final}
    \partial_t \BX(\theta) = \frac{1}{4\pi}\int \frac{\BX'(\theta+\alpha) \cdot \DO(\delta_\alpha \BX)\BX'(\theta+\alpha)}{|\delta_\alpha \BX|^2}  \frac{\TE(|\BX'|(\theta+\alpha))}{|\BX'|(\theta+\alpha)} \delta_\alpha \BX(\theta) d\alpha.
\end{equation}
Then \eqref{peskin.equation.first.order.final} will be our main expression for the Peskin equation for $\BX(\theta)$.

Now \eqref{equation.intermediate} and the previous calculations imply that
\begin{equation}\notag
\begin{split}
    \partial_t \BX(\theta) &=  \int \left[\nabla_{\BX'(\theta+\alpha)}^2 G_1\right](\delta_\alpha \BX) \frac{\TE(|\BX'|(\theta+\alpha))}{|\BX'|(\theta+\alpha)} \delta_\alpha \BX(\theta)d\alpha
    \\& \quad + \int \frac{\TE(|\BX'|)}{|\BX'|} \left([\nabla_{\BX''(\theta+\alpha)}G_1](\delta_\alpha \BX)\delta_\alpha \BX + G_2(\delta_\alpha \BX) \BX''(\theta+\alpha)\right)d\alpha 
    \\& \quad + \int \partial_\alpha \left(\frac{\TE(|\BX'|)}{|\BX'|}\right) \left([\nabla_{\BX'(\theta+\alpha)}G_1](\delta_\alpha \BX)\delta_\alpha \BX + G_2(\delta_\alpha \BX) \BX'(\theta+\alpha)\right)d\alpha.
\end{split}
\end{equation}
Now to prove the {\it claim} \eqref{peskin.equation.first.order}, with \eqref{stokeslet.def} we use 
\begin{equation}\label{directional.cancellation}
    [\nabla_{u} G_1(z)]z = -\frac{1}{4\pi} \frac{u\cdot \widehat{z} }{|z|} z 
    = -\frac{u\cdot \widehat{z}}{4\pi}\widehat{z} 
    = - G_2(z) u.
\end{equation}
This exact calculation \eqref{directional.cancellation} is crucial to cancel the second two terms above, and in particular to cancel the second order derivatives.   Using this cancellation, since the last two terms in the equation above are zero, then we obtain the {\it claim} in \eqref{peskin.equation.first.order}.  And the equation \eqref{peskin.equation.first.order.final} is our alternative representation of the Peskin equation for $\BX(t,\theta)$.

To obtain an equation for $\partial_t \BX'(\theta)$, we could of course just differentiate \eqref{peskin.equation.first.order.final} in $\theta$.  However, that equation contains $\bX''(\theta)$ and ends up being more difficult to work with.  Luckily though, there is another form for $\partial_t \bX'$ which can be written in terms of only $\bX'$.   To begin our derivation for $\partial_t \BX'$, we note that integrating by parts and using \eqref{e:boundaryintegral} with \eqref{tension.map.def} we have
\begin{equation}\notag
\begin{split}
    \partial_t \BX(\theta) &= \int G(\delta_\alpha \BX) \partial_\alpha  \bT(\BX')(\theta+\alpha) d\alpha 
    \\& = - \int \partial_\alpha G(\delta_\alpha \BX) \delta_\alpha\bT(\BX')(\theta)d\alpha. 
\end{split}
\end{equation}
Differentiating this equation with respect to $\theta,$ we see that
\begin{equation*}
            \partial_t \BX'(\theta)  = -\int \partial_\theta \partial_\alpha  G(\delta_\alpha \BX) \delta_\alpha\bT(\BX')(\theta)
        -\int\partial_\alpha G(\delta_\alpha \BX) \partial_\theta\delta_\alpha \bT(\BX') (\theta)d\alpha.
\end{equation*}
As $\partial_\alpha G(\delta_\alpha \BX)$ is a derivative, it follows that
\begin{equation*}
            \int \partial_\alpha  G(\delta_\alpha \BX) \partial_\theta \bT(\BX')(\theta)d\alpha  
        = \partial_\theta \bT(\BX')(\theta)
        \int \partial_\alpha  G(\delta_\alpha \BX)d\alpha  = 0.
\end{equation*}
Notice that the zero integral above removes a highest order derivative.  

We also have that 
$\partial_\theta \bT(\BX')(\theta+\alpha) = \partial_\alpha \bT(\BX')(\theta+\alpha)$. So we can make this exchange and integrate by parts to obtain
\begin{equation}\notag
\begin{split}
\int \partial_\alpha G(\delta_\alpha \BX) \partial_\theta \delta_\alpha \bT(\BX') (\theta) d\alpha &=   \int \partial_\alpha G(\delta_\alpha \BX) \partial_\alpha \delta_\alpha \bT(\BX') (\theta) d\alpha  
\\&= -\int \partial_\alpha^2 G(\delta_\alpha \BX) \delta_\alpha \bT(\BX')(\theta)d\alpha.
\end{split}
\end{equation}
Hence, we have that
\begin{equation}\notag
    \partial_t \BX'(\theta) = \int (\partial_\alpha^2 - \partial_\alpha \partial_\theta)G(\delta_\alpha \BX)\delta_\alpha \bT(\BX')(\theta)d\alpha.
\end{equation}
Its a straight forward calculation to see that 
\begin{equation}\notag
    (\partial_\alpha -\partial_\theta)[G(\delta_\alpha \BX)] = [(\nabla_{\BX'(\theta+\alpha)} - \nabla_{\delta_\alpha \BX'(\theta)})G](\delta_\alpha \BX) = [\nabla_{\BX'(\theta)} G](\delta_\alpha \BX),
\end{equation}
and 
\begin{equation}\notag
\partial_\alpha [\nabla_{\BX'(\theta)}G (\delta_\alpha \BX)] = [\nabla_{\BX'(\theta+\alpha)}\nabla_{\BX'(\theta)} G](\delta_\alpha \BX).
\end{equation}
Thus using our previous calculations of the derivatives of $G(z)$, we have that 
the Peskin problem for a general tension can be written as an evolution equation for $\BX'(\theta)$ as
\begin{equation} \label{general.tension}
    \partial_t \BX'(\theta) = \int \HXTA \delta_\alpha  \bT(\BX')(\theta)  d\alpha.
\end{equation}
Here the kernel $\HXTA$ is given by
\begin{multline}\label{kernel.peskin.nosing}
\HXTA \eqdef \frac{1}{4\pi} \frac{\BX'(\theta+\alpha) \cdot \DO(\delta_\alpha \BX)\BX'(\theta)}{|\delta_\alpha \BX|^2} \IO
    \\
    -\frac{1}{4\pi} \frac{\BX'(\theta+\alpha) \cdot \RO(\delta_\alpha \BX)\BX'(\theta)}{|\delta_\alpha \BX|^2} \RO(\delta_\alpha \BX) 
    \\ 
    +\frac{1}{4\pi} \frac{\BX'(\theta+\alpha) \cdot (\DO(\delta_\alpha \BX)-\IO)\BX'(\theta)}{|\delta_\alpha \BX|^2} \DO(\delta_\alpha \BX).
\end{multline} 
Note that nothing we have done so far has implied periodicity of the solution $\BX(\theta)$, and that these forms of the equations work for any parametrization.  

Then further we can write $|\delta_\alpha \BX|^2 = \alpha^2 |\DATX|^2$ using \eqref{delta.notation} and \eqref{distance.X.notation}.  Thus we can write that $\HXTA  = \alpha^{-2}\KXTA$ where $\KXTA$ is given by \eqref{kernel.peskin.noExpand}.  This establishes equation \eqref{peskin.general.tension} from \eqref{general.tension} and \eqref{kernel.peskin.nosing}.

\section{Main estimate}\label{sec:BesovSpace}

In this section we will prove our main a priori estimate for the Peskin problem \eqref{peskin.general.tension} with a general tension \eqref{tension.map.def} in Proposition \ref{prop:general.apriori.final.local}.  To this end we let $\bX'(t,\theta)$ be the solution of the Peskin problem \eqref{peskin.general.tension} with the general tension map $\TE$ given in \eqref{tension.map.def} satisfying the assumptions from \secref{sec:TensionAssumptions} 
and the kernel given by \eqref{kerbel.eqn.deriv} with \eqref{kerbel.A.eqn.deriv}.   We consider intitial data for \eqref{peskin.general.tension},  $\bX_0$, satisfying
\begin{equation}\label{initial.assumption}
    ||\bX'_0||_{\BS}=||\bX'_0||_{\dot{B}^{\frac12,\subw}_{2,1}} \leq M, \qquad |\bX_0|_* = \inf_{\alpha\neq \theta} |\DAL \bX_0(\theta)| >0.
\end{equation}
Here $0 < M < \infty$ is allowed to be large.  We then suppose in this section that over a short time interval $T>0$ for some fixed $\rho>0$ we have
\begin{equation}\label{apriori.bd}
 |\BX(t)|_*\geq \rho, \quad 0 \le t \leq T.
\end{equation}
For some $\CTS >0$ we further suppose for $T>0$ that  we have
\begin{equation}\label{apriori.bd.CM}
    ||\bX'||_{\CM} \le \CTS M.
\end{equation}
We recall the notation \eqref{C.space.temporal}, \eqref{D.space.temporal}, and \eqref{initial.B.space}.  Then the main result in this section is the following proposition.

\begin{proposition}\label{prop:general.apriori.final.local}
Let $\bX: [0,T]\times \T \to \R^2$ be a weak solution to the Peskin problem with tension $\TE$ in the sense of Definition \ref{def:solution}.  Assume that $\BX_0,$ and $\bT$ satisfy the assumptions of Theorem \ref{thm:mainquant} including \eqref{initial.assumption}.  Additionally, assume that \eqref{apriori.bd} and \eqref{apriori.bd.CM} hold.  Then there are uniform constants $c, C>0$ such that the solution $\bX'(t,\theta)$ satisfies the following inequality 
\begin{equation*}
     ||\bX'||_{\CM}  
 + c \lambda^{1/2} ||\bX'||_{\DM} 
\leq 2 || \BX'_0||_{\BS}
+ 
CT^{1/2}
\BPU^{1/2}
 ||\bX'||_{\CM}.
\end{equation*}
Above $\BPU=\BPU[M,\rho,\lambda, \DTT, \DDTT]$ is defined in \eqref{BU.def}.   In particular there exists $T_M = T_M(M, \rho, \mu, \lambda, \DTT, \DDTT)>0$ such that if $T\leq T_M$ then we have
\begin{equation}\notag
    ||\bX'||_{\CM} + 2c\lambda^{1/2}||\bX'||_{\DM}\leq 4||\bX'_0||_{\BS}\leq 4M.
\end{equation}
\end{proposition}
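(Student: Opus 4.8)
The plan is to differentiate the equation \eqref{peskin.expand.tension} by the difference operator $\delta_\beta$, run an $L^2_\theta$ energy estimate on the resulting equation for $\delta_\beta \bX'$, and then integrate the outcome against the Besov weight $\mu(|\beta|^{-1})|\beta|^{-3/2}\,d\beta$ so as to reconstruct the norms $\CM$, $\DM$, $\BS$ from \eqref{D.space.temporal}. Using \eqref{kerbel.eqn.deriv}--\eqref{tildeLambda:eq}, the difference $\delta_\beta \bX'$ satisfies
\[
\partial_t \delta_\beta \bX' + \TLam\, \delta_\beta \bT(\bX') = \int_{\T} \frac{d\alpha}{\alpha^2}\, \delta_\beta\!\left( \mathcal{A}[\bX](\theta,\alpha)\, \delta_\alpha \bT(\bX')\right),
\]
and the computation of $\tfrac{d}{dt}\|\delta_\beta \bX'\|_{L^2_\theta}^2$ is made rigorous by testing the weak formulation of Definition \ref{def:solution} against a double $\beta$-difference of $\bX'$; this is exactly the pairing that the notion of weak solution is designed to license. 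The diffusion enters through $\TLam\,\delta_\beta \bT(\bX')$: writing $\delta_\alpha \bT(\bX') = \DBT\,\delta_\alpha \bX'$ for the averaged Jacobian $\DBT = \int_0^1 D\bT(\bX' + s\,\delta_\alpha\bX')\,ds$ and using the symmetric form $\langle f,\TLam g\rangle = \tfrac{1}{8\pi}\int d\theta\int \alpha^{-2}\,\delta_\alpha f\cdot\delta_\alpha g$, the leading contribution is $\tfrac{1}{8\pi}\int d\theta\int \alpha^{-2}\,\delta_\alpha\delta_\beta \bX'\cdot \DBT\,\delta_\alpha\delta_\beta \bX'$, which by the ellipticity $D\bT\geq\lambda\IO$ in \eqref{e:DTdefn} is $\geq \lambda\|\TLam^{1/2}\delta_\beta \bX'\|_{L^2_\theta}^2$, plus a commutator remainder controlled by $\DDTT$ times products of differences of $\bX'$.

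Next I would treat the right-hand side and the commutator as errors. The matrix remainder $\mathcal{A}[\bX](\theta,\alpha)$ in \eqref{kerbel.A.eqn.deriv} is one order milder than the identity piece, and all its contributions are estimated using: the arc-chord hypothesis \eqref{apriori.bd}, which forces $|\DATX|\geq\rho$ so that every denominator and every reflection matrix $\RO(\DATX),\DO(\DATX)$ is under control; the bounds $|D\bT|\leq\DTT$, $|D^2\bT|\leq\DDTT$; and the kernel difference estimates of \secref{sec:kernelDIFF} to handle $\delta_\beta\mathcal{A}[\bX]$. After Cauchy--Schwarz everything reduces to the two types of integrals recorded in the overview, namely $\int d\theta\int \alpha^{-2}\,|\delta_\beta\delta_\alpha \bX'|^2\,|\delta_\alpha \bX'|^{q}$ and $\int d\theta\int \alpha^{-2}\,|\delta_\beta\delta_\alpha\bX'|\,|\delta_\beta\bX'|\,|\delta_\alpha\bX'|^{q+1}$ for $q=1,2$. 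Each is split at $|\alpha|<\eta$ and $|\alpha|>\eta$: on the outer part the weight $\alpha^{-2}$ is harmless, while on the inner part the essential gain is \eqref{rate.decay}, i.e.\ the extra logarithmic weight in $\|\bX'\|_{\dot{B}^{\frac12,\mu}_{2,1}}$ makes $\int_{|\alpha|<\eta}|\alpha|^{-3/2}\|\delta_\alpha\bX'\|_{L^2_\theta}\,d\alpha\lesssim \|\bX'\|_{\dot{B}^{\frac12,\mu}_{2,1}}/\mu(\eta^{-1})$, which is as small as desired once $\eta$ is chosen depending on $\mu$ and the a priori bound \eqref{apriori.bd.CM}. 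Thus the first type is absorbed as $\eps\|\delta_\beta\bX'\|_{\dot H^{1/2}}^2 + C_\eps\|\delta_\beta\bX'\|_{L^2_\theta}^2$, whereas the second type is forced to keep $|\delta_\beta\bX'|$ in $L^\infty_\theta$ (it has no decay as $\alpha\to0$) and yields $\eps\|\delta_\beta\bX'\|_{\dot H^{1/2}}^2 + C\|\delta_\beta\bX'\|_{L^2_\theta}^2 + \eps\|\delta_\beta\bX'\|_{L^\infty_\theta}^2$.

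I expect the $L^\infty_\theta$ error term to be the main obstacle: pointwise in $\beta$ it is of the same order as the good diffusive quantity but is not controlled by it, so it cannot be absorbed at the level of a fixed $\beta$. The resolution is to postpone it. After moving the $\eps\|\delta_\beta\bX'\|_{\dot H^{1/2}}^2$ pieces into the dissipation, one integrates in $t$ over $[0,T]$ (each error term thereby picking up a factor of $T$), compares $\sup_t$ with $\int_0^T dt$ and takes square roots (which is where the factor $2$ in front of $\|\bX_0'\|_{\BS}$ appears), and only then integrates in $\beta$ against $\mu(|\beta|^{-1})|\beta|^{-3/2}\,d\beta$; at that stage the offending term is controlled via the embedding $\dot B^{1/2}_{2,1}\hookrightarrow L^\infty$ together with \eqref{apriori.bd.CM}, as in the Muskat analogues. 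Assembling everything, the data contribute $2\|\bX_0'\|_{\BS}$, the diffusion contributes $c\lambda^{1/2}\|\bX'\|_{\DM}$ on the left, and all error terms are bounded by $CT^{1/2}\BPU^{1/2}\|\bX'\|_{\CM}$, with $\BPU=\BPU[M,\rho,\lambda,\DTT,\DDTT]$ as in \eqref{BU.def} collecting the dependence on $M,\rho,\lambda,\DTT,\DDTT$ (and, through $\eta$, on $\mu$). This is the first displayed inequality. Finally, choosing $T_M$ so that $CT_M^{1/2}\BPU^{1/2}\leq\tfrac12$ lets us absorb $CT^{1/2}\BPU^{1/2}\|\bX'\|_{\CM}$ into the left-hand side, giving $\|\bX'\|_{\CM}+2c\lambda^{1/2}\|\bX'\|_{\DM}\leq 4\|\bX_0'\|_{\BS}\leq 4M$ by \eqref{initial.assumption}.
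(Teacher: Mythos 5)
Your overall strategy coincides with the paper's: the symmetrized $L^2_\theta$ energy identity for $\delta_\beta\bX'$, extraction of the dissipation $\lambda\|\delta_\beta\TLam^{1/2}\bX'\|_{L^2_\theta}^2$ from the ellipticity of $D\bT$, the split of the error integrals at $|\alpha|=\eta$ with the smallness $\mu(\eta^{-1})^{-1}$ from \eqref{extra.smallness.besov}, and the time integration, square root, and $\beta$-integration in that order. The reduction to the two model error integrals, the identification of the $L^\infty_\theta$ term as the obstruction, and the final absorption for $T\leq T_M$ all match the paper.

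There is, however, one concrete misstep in your treatment of the $L^\infty_\theta$ error term, and it is precisely at the point you flag as delicate. You assert that after time integration "each error term thereby picking up a factor of $T$" and that the offending term "is controlled via the embedding $\dot B^{1/2}_{2,1}\hookrightarrow L^\infty$ together with \eqref{apriori.bd.CM}," which would place it inside the $CT^{1/2}\BPU^{1/2}\|\bX'\|_{\CM}$ remainder. This route fails: if you trade $\|\delta_\beta\bX'\|_{L^2_T(L^\infty_\theta)}^2\leq T\,\|\delta_\beta\bX'\|_{L^\infty_T(L^\infty_\theta)}^2$ and then integrate in $\beta$, you arrive at the norm $\|\bX'\|_{\widetilde L^\infty_T(\dot B^{1/2,\subw}_{\infty,1})}$, and by the Besov embedding of Proposition \ref{besov.ineq.prop} one only has $\dot B^{1/2}_{\infty,1}\lesssim\dot B^{1}_{2,1}$, a full half derivative more than $\CM=\widetilde L^\infty_T(\dot B^{1/2,\subw}_{2,1})$ provides; the embedding $\dot B^{1/2}_{2,1}\hookrightarrow L^\infty$ controls $\|f\|_{L^\infty}$, not $\|\delta_\beta f\|_{L^\infty}/|\beta|^{1/2}$ summed in $\beta$. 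The correct mechanism (Lemma \ref{L.infinity.embedding}) keeps the $L^2_T$ structure and uses the Bernstein-type bound $\|\bX'\|_{\widetilde L^2_T(\dot B^{1/2,\subw}_{\infty,1})}\leq C_\subw\|\bX'\|_{\DM}$, so the $L^\infty_\theta$ term carries no factor of $T$ and is instead absorbed into the dissipation on the left, which is possible only because its prefactor $\kappa_0^{1/2}\sim\subw(\eta^{-1})^{-1}$ is made small relative to $\lambda^{1/2}$ by the choice of $\eta$. This is why the good term in the conclusion appears with the constant $c\lambda^{1/2}$ rather than $(\lambda/4)^{1/2}$. With that correction the argument closes exactly as you describe.
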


In the rest of this section, we will prove Proposition \ref{prop:general.apriori.final.local}.  To that end, we first fix some arbitrary $|\beta|>0$.  Then direct calculation using \eqref{peskin.general.tension} gives 
\begin{equation}\notag
\begin{split}
    \frac{d}{dt} ||\delta_\beta \BX'(t)||_{L^2}^2 
    &= 2\int_{\T} d\theta ~\delta_\beta \BX'(\theta) \cdot \delta_\beta \partial_t \BX'(\theta)  
    \\
    &= 2\int_{\T}\int_{\T} d\theta d\alpha ~\frac{\delta_\beta \BX'(\theta)\cdot \delta_\beta \left(\mathcal{K}(\theta, \alpha) \delta_\alpha \bT(\BX')\right)}{\alpha^2}
    \\&= - \int_{\T}\int_{\T} d\theta d\alpha ~\frac{\delta_\beta \delta_\alpha \BX'(\theta)\cdot \delta_\beta \left(\mathcal{K}(\theta, \alpha) \delta_\alpha \bT(\BX')\right)}{\alpha^2}.
\end{split}
\end{equation}
We thus conclude that 
\begin{equation}\label{beta.L2.difference.1}
\begin{split}
    \frac{d}{dt} ||\delta_\beta \BX'(t)||_{L^2}^2 
    &= -\int_{\T}\int_{\T} d\theta d\alpha ~\frac{\delta_\beta \delta_\alpha \BX'(\theta)\cdot \tau_{\beta}\mathcal{K}(\theta, \alpha) \delta_\beta\delta_\alpha \bT(\BX')}{\alpha^2} 
    \\
    & \quad -\int_{\T}\int_{\T} d\theta d\alpha  ~\frac{\delta_\beta \delta_\alpha \BX'(\theta)\cdot \delta_\beta\mathcal{K}(\theta, \alpha) \delta_\alpha \bT(\BX')}{\alpha^2}.
\end{split}
\end{equation}
We will deal with these two integrals on the right side in order.

We next study the differences of the tension map.  
First we give the following useful lemma which tells us in particular that the operators $\delta_\alpha^\pm$ from \eqref{delta.pm.notation} and the kernel \eqref{kerbel.A.eqn.deriv} are bounded above by the same Besov space with the operator $\delta_\alpha$.

\begin{lemma}\label{continunity.delta.pm}
Let $\T = \R/2\pi \Z = [-\pi, \pi]$.  Recall the operators $\DAL$ from \eqref{distance.X.notation} and $\delta_\alpha^\pm$ from \eqref{delta.pm.notation}.  Then  for any $p\in [1,\infty]$ we have
\begin{equation}\label{operator.bd.first}
\begin{split}
    ||\delta_\alpha^- f'||_{L^p_\theta} \leq 2 ||f'||_{L^p_\theta}, \quad 
    ||\delta_\alpha^+ f'||_{L^p_\theta} \leq 4 ||f'||_{L^p_\theta}, \quad  ||D_\alpha f||_{L^p_\theta} \leq   ||f'||_{L^p_\theta}.
\end{split}
\end{equation}
Furthermore, fix $0<s<1$ and $p,q \in [1,\infty]$.  Then we have the uniform estimate
\begin{equation}\label{delta.operator.ineq}
\left(\int_{\T}  \frac{d\beta}{|\beta|^{1+sq}}  ||\delta_\beta^\pm f||^{q}_{L^p_\theta} \right)^{1/q}
    \lesssim
    || f ||_{\dot{B}^s_{p,q}}.
\end{equation}
We use the standard modification of the lower bound when $q=\infty$.
\end{lemma}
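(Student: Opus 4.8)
The three pointwise bounds in \eqref{operator.bd.first} are immediate. Writing $\DAL f(\theta) = \frac{1}{\alpha}\int_0^\alpha f'(\theta+\sigma)\dd\sigma$ (a convex average of translates of $f'$), Minkowski's integral inequality in $L^p_\theta$ gives $\|\DAL f\|_{L^p_\theta}\leq \|f'\|_{L^p_\theta}$ since translation is an $L^p$-isometry on $\T$. Then from \eqref{delta.pm.notation}, $\delta_\alpha^- f' = f'(\theta) - \DAL f(\theta)$, so $\|\delta_\alpha^- f'\|_{L^p_\theta}\leq \|f'\|_{L^p_\theta} + \|\DAL f\|_{L^p_\theta}\leq 2\|f'\|_{L^p_\theta}$; and $\delta_\alpha^+ f' = f'(\theta+\alpha) - \DAL f(\theta) = \delta_\alpha f'(\theta) + \delta_\alpha^- f'(\theta)$, but more directly $\|\delta_\alpha^+ f'\|_{L^p_\theta}\leq \|f'(\cdot+\alpha)\|_{L^p_\theta} + \|\DAL f\|_{L^p_\theta}\leq 2\|f'\|_{L^p_\theta}$, which actually beats the stated constant $4$; I would keep the stated form since the constant is inessential. (One must be slightly careful that $\alpha$ may wrap around $\T$, but since $\DAL$ is defined through $\delta_\alpha$ in \eqref{distance.X.notation} and we work on $\T=[-\pi,\pi]$ with $|\alpha|\leq\pi$, the integral representation is legitimate.)

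For the Besov bound \eqref{delta.operator.ineq}, I would treat the $-$ and $+$ cases separately but both reduce to \eqref{Besov.Space}. For $\delta_\beta^-$: write $\delta_\beta^- f(\theta) = f(\theta) - \DBL f(\theta) = f(\theta) - \frac{1}{\beta}\int_0^\beta f(\theta+\sigma)\dd\sigma = -\frac{1}{\beta}\int_0^\beta \delta_\sigma f(\theta)\dd\sigma$. Hence by Minkowski again $\|\delta_\beta^- f\|_{L^p_\theta}\leq \frac{1}{|\beta|}\int_0^{|\beta|}\|\delta_\sigma f\|_{L^p_\theta}\dd\sigma$. Plug this into the left side of \eqref{delta.operator.ineq}: we must bound
\begin{equation}\notag
\left(\int_{\T}\frac{d\beta}{|\beta|^{1+sq}}\left(\frac{1}{|\beta|}\int_0^{|\beta|}\|\delta_\sigma f\|_{L^p_\theta}\dd\sigma\right)^q\right)^{1/q}.
\end{equation}
Setting $g(\sigma) = \|\delta_\sigma f\|_{L^p_\theta}/|\sigma|^s$, so that $\|\delta_\sigma f\|_{L^p_\theta} = |\sigma|^s g(\sigma)$, the inner average is $\leq |\beta|^{s-1}\int_0^{|\beta|} g(\sigma)\dd\sigma$ up to a harmless factor (using $|\sigma|^s\leq|\beta|^s$ for $\sigma\le\beta$), and the whole quantity becomes a weighted $L^q$ norm of a Hardy-type average of $g$; the Hardy inequality (with the weight $|\beta|^{-1+(s-1)q}\cdot|\beta|^{-q}$... — one checks the exponent: we need $\int |\beta|^{-1-sq}|\beta|^{sq}\big(\text{avg of }g\big)^q\asymp \int|\beta|^{-1}(\text{avg of }g)^q$) reduces it to $\big(\int_{\T}g(\sigma)^q\,\frac{d\sigma}{|\sigma|}\big)^{1/q} = \|f\|_{\dot B^s_{p,q}}$. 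For $q=\infty$ the average is trivially bounded by $\esssup_\sigma g(\sigma)$, giving the stated modification. The $\delta_\beta^+$ case is identical after writing $\delta_\beta^+ f(\theta) = f(\theta+\beta) - \DBL f(\theta) = \frac{1}{\beta}\int_0^\beta \delta_{\beta-\sigma}f(\theta)\dd\sigma$ (or equivalently $\delta_\beta^+ = \delta_\beta + \delta_\beta^-$ and using that $\delta_\beta$ itself trivially satisfies the bound), so the same Hardy/Minkowski argument applies.

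The main technical point — the only place requiring care — is the Hardy inequality step with the correct power weights and its endpoint $q=\infty$ version; everything else is Minkowski's integral inequality plus the translation-invariance of $L^p(\T)$. I would state the one-dimensional weighted Hardy inequality $\big(\int_0^\infty t^{-1}\big(t^{-1}\int_0^t h\big)^q\,dt\big)^{1/q}\lesssim_{q}\big(\int_0^\infty t^{-1}h(t)^q\,dt\big)^{1/q}$ valid for $1\le q\le\infty$ (with the obvious modification at $q=\infty$), apply it to $h(\sigma)=|\sigma|^{s}g(\sigma)$ after absorbing the extra $|\sigma|^s$ versus $|\beta|^s$ discrepancy, and restrict the $\beta$-integral to $[-\pi,\pi]$, which only helps. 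This completes the proof of Lemma \ref{continunity.delta.pm}.
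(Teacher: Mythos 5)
Your proposal is correct and follows essentially the same route as the paper: both rest on writing $\delta_\beta^\pm f$ as an average of ordinary differences $\delta_\sigma f$ over $\sigma$ between $0$ and $\beta$, applying Minkowski's integral inequality, and using translation invariance of $L^p(\T)$, with the $+$ case reduced to the $-$ case via $\delta_\beta^+=\delta_\beta+\delta_\beta^-$. The only cosmetic difference is that you package the final $\beta$-integration as a weighted Hardy inequality, whereas the paper parametrizes the average by $\tau\in[0,1]$ and concludes directly by Minkowski in the $\beta$-variable followed by the change of variables $\alpha=\tau\beta$ (which is precisely the standard proof of the Hardy step you invoke).
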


\begin{proof}
From \eqref{distance.X.notation} we have 
\begin{equation}\notag
\DAL f(\theta) = \int_0^1 d\tau~ f'(\theta+\tau\alpha). 
\end{equation}
Then also using \eqref{delta.pm.notation} we have 
\begin{equation}\notag
\delta_\alpha^- f'(\theta) 
=f'(\theta) - \int_0^1 d\tau~ f'(\theta+\tau\alpha)
=
-\int_0^1 d\tau~ \delta_{\tau\alpha} f'(\theta),
\end{equation}
and then
\begin{equation}\label{delta.plus.formula}
\delta_\alpha^+ f'(\theta) = \delta_\alpha f'(\theta) +\delta_\alpha^- f'(\theta). 
\end{equation}
Then from Minkowski's integral inequality we have for example
\begin{equation}\notag
    ||\delta_\alpha^- f' ||_{L^p_\theta} \leq \int_0^1 d\tau~ ||\delta_{\tau\alpha} f' ||_{L^p_\theta},
\end{equation}
Thus the inequalities in \eqref{operator.bd.first} follow from Minkowski's inequality, translation invariance, and the triangle inequality.

It remains to prove \eqref{delta.operator.ineq}.  We use Minkowski's integral inequality twice as
\begin{equation}\notag
   \left( \int_{\T} \frac{d\beta}{|\beta|^{1+sq}}||\delta_\beta^- f' ||_{L^p_\theta}^q     \right)^{1/q}
    \leq
    \int_0^1 d\tau~ \left(\int_{\T} \frac{d\beta}{|\beta|^{1+sq}}||\delta_{\tau\beta} f' ||_{L^p_\theta}^q \right)^{1/q}. 
\end{equation}
Now applying the change of variable $\alpha = \tau\beta$ we obtain \eqref{delta.operator.ineq} for $\delta_\beta^-$.  The inequality \eqref{delta.operator.ineq} for $\delta_\beta^+$ then follows from the formula \eqref{delta.plus.formula} and the triangle inequality.
\end{proof}

\begin{remark}\label{ignore.remark}
    In the rest of this article, when we use the estimates of \eqref{kerbel.A.eqn.deriv} in Lemma \ref{A.bound.lem}, Lemma \ref{lemm.A.diff} and Lemma \ref{lem:Abeta.upper} that are proven in \secref{sec:kernelDIFF}, we will only write the upper bounds with $\delta_\alpha$ in place of $\delta_\alpha^+$ and $\delta_\alpha^-$ from \eqref{delta.pm.notation}.   We use this simplification to ease the notation, but more importantly because these operators have no effect on our final estimates due to Lemma \ref{continunity.delta.pm} and the inequality in \eqref{delta.operator.ineq}.  We will also ignore the translation operator $\tau_\beta$ from \eqref{def.translation} when we use the estimates of \eqref{kerbel.A.eqn.deriv} as in \eqref{e:Abounds}, which is justified because all of the functional spaces that we are using in this article are translation invariant.  
\end{remark}

Now to begin studying the differences of the tension map in \eqref{beta.L2.difference.1} we write 
\begin{equation}\label{e:deltaalphaT}
    \delta_\alpha \bT(\BX'(\theta)) 
    = \int_0^1 ds_1 \frac{d}{ds_1} \bT( s_1 \delta_\alpha \BX'(\theta)+\BX'(\theta)) 
    = \DBTX \delta_\alpha \BX'(\theta), 
\end{equation}
where letting $D\bT(z)$ denote the derivative in \eqref{e:DTdefn} of the tension map $\BT(z)$ in \eqref{tension.map.def} we have
\begin{equation}\label{DBTX.def}
  \DBTX (\theta)\eqdef 
        \int_0^1 ds_1~  D\bT(g_1[\BX'](s_1, \alpha, \theta)),
\end{equation}
where
\begin{equation*}
    g_1[\BX'](s_1, \alpha, \theta) \eqdef s_1 \tau_\alpha \BX'(\theta) +
    (1-s_1)\BX'(\theta).
\end{equation*}
We therefore obtain from \eqref{e:QuantitativeTensionMap} that 
\begin{equation}\label{delta.alpha.BX.bound}
    |\delta_\alpha \bT(\BX'(\theta))| \leq \DTT |\delta_\alpha \BX'(\theta)|. 
\end{equation}
We will use this estimate on the second term in \eqref{beta.L2.difference.1}.

To study the first term in \eqref{beta.L2.difference.1}, we apply $\delta_\beta$ to $\delta_\alpha \bT(\BX'(\theta))$ to obtain
\begin{equation}\label{e:albe.T.XY}
    \delta_\beta \delta_\alpha \bT(\BX'(\theta)) = \tau_\beta \DBTX(\theta) \delta_\beta \delta_\alpha \BX'(\theta)
    +\delta_\beta\DBTX(\theta)  \delta_\alpha \BX'(\theta), 
\end{equation}
where
\begin{equation}\label{delta.beta.DBTX}
\delta_\beta\DBTX(\theta)  
=
\int_0^1 ds_1~ \DDBT[\BX'](\theta) (g_1[\delta_\beta\BX'](s_1, \alpha, \theta) ),
\end{equation}
and
\begin{equation}\notag
\DDBT[\BX'](\theta)
\eqdef
\int_0^1 ds_2~  D^2\bT(g_2[\BX'](s_2,s_1,\alpha,\theta,\beta)),
\end{equation}
with $g_2[\BX'](s_2,s_1,\alpha,\theta,\beta)$ given by
\begin{equation}\label{g2.s2.def}
    g_2[\BX'](,\alpha,\theta,\beta)s_2,s_1
\eqdef
s_2 g_1[\tau_\beta\BX'](s_1, \alpha, \theta)
   +(1-s_2) g_1[\BX'](s_1, \alpha, \theta).
\end{equation}
 We conclude using Remark \ref{ignore.remark} and \eqref{e:QuantitativeTensionMap} that 
\begin{equation}\label{delta.beta.DBTX.bound}
    |\delta_\beta\DBTX(\theta)  \delta_\alpha \BX'(\theta)| \leq 
    \DDTT |\delta_\beta  \BX'(\theta)|| \delta_\alpha \BX'(\theta)|,
\end{equation}
and
\begin{equation}\label{beta.alpha.TXP}
    |\delta_\beta \delta_\alpha \bT(\bX')| \leq \DTT  |\delta_\beta\delta_\alpha \bX'(\theta)| + \DDTT  |\delta_\beta \BX'(\theta)|  |\delta_\alpha \bX'(\theta)|.
\end{equation}
We will use this estimate on the first term in \eqref{beta.L2.difference.1}.  Also notice that we have  the matrix inequality in \eqref{e:DTdefn} for $\overline{D\bT}$, since 
the pointwise lowerbound for $D\bT(z)$ automatically applies to $\overline{D\bT}$ from \eqref{DBTX.def}.

Plugging all of this into \eqref{beta.L2.difference.1}, and using  \eqref{kerbel.eqn.deriv} with \eqref{kerbel.A.eqn.deriv}, \eqref{tildeLambda:eq} and the bounds from \eqref{e:DTdefn} we obtain 
\begin{equation}\label{beta.L2.difference}
        \frac{d}{dt} || \delta_\beta \bX' ||_{L^2_\theta}^2
    + \lambda
    || \delta_\beta \TLam^{\frac12} \bX' ||_{L^2_\theta}^2 
    \leq  \LLT_1+ \LLT_2 
    +   \LLT_3.
\end{equation}
Then with \eqref{kerbel.A.eqn.deriv} we have 
\begin{equation}\notag
    \LLT_1 \eqdef   \DTT \int_{\T} d\theta \int_{\T} \frac{d\alpha}{\alpha^2} ~
   | \delta_\beta\delta_\alpha  \bX'(\theta)|^2  |\tau_\beta \mathcal{A}[\bX](\theta, \alpha)|, 
\end{equation}
\begin{equation}\notag
    \LLT_2 \eqdef   \DTT \int_{\T} d\theta \int_{\T} \frac{d\alpha}{\alpha^2} ~
   | \delta_\beta\delta_\alpha  \bX'(\theta)| | \delta_\alpha  \bX'(\theta)|  |\delta_\beta \mathcal{A}[\bX](\theta, \alpha)|, 
\end{equation}
\begin{equation}\notag
    \LLT_3 \eqdef   \DDTT \int_{\T} d\theta \int_{\T} \frac{d\alpha}{\alpha^2} ~
   | \delta_\beta\delta_\alpha  \bX'(\theta)|  | \delta_\beta  \bX'(\theta)| | \delta_\alpha  \bX'(\theta)| |\tau_\beta \mathcal{K}[\bX](\theta, \alpha)|. 
\end{equation}
We will estimate each of the terms above.    

\begin{remark}
    Note that in the simple tension case, $D^2\bT \equiv 0,$ and hence as in \eqref{delta.beta.DBTX} in this case $\LLT_{3} \equiv 0.$  
\end{remark}

For $\LLT_{1}$ we split the kernel \eqref{kerbel.A.eqn.deriv}  as 
\begin{equation}\notag
 \mathcal{A}(\theta, \alpha)  
 =
 \mathcal{A}^S(\theta, \alpha)  
+
 \mathcal{A}^L(\theta, \alpha),  
\end{equation}
where for a fixed small $\eta>0$ to be chosen
\begin{equation}\label{split.K}
 \mathcal{A}^S(\theta, \alpha)  
 =
 \mathcal{A}(\theta, \alpha) \1_{|\alpha| < \eta},
\quad
 \mathcal{A}^L(\theta, \alpha)  
 =
 \mathcal{A}(\theta, \alpha) \1_{|\alpha| \ge \eta}.
\end{equation}
Thus we have 
\begin{equation}\notag
    \LLT_1^S \eqdef  \DTT \int_{\T} d\theta \int_{\T} \frac{d\alpha}{\alpha^2} ~
   | \delta_\beta\delta_\alpha  \bX'(\theta)|^2  |\tau_\beta \mathcal{A}[\bX](\theta, \alpha)| \1_{|\alpha|< \eta}.
\end{equation}
And we define $\LLT_{1}^L = \LLT_{1} - \LLT_{1}^S$.  Next, from \eqref{kerbel.A.eqn.deriv} and Remark \ref{ignore.remark} we have the general estimate
\begin{equation}\label{e:Abounds}
    |\mathcal{A}(\theta, \alpha)| \lesssim |\BX|_*^{-2} |\delta_\alpha \BX'(\theta)|^2 + |\BX|_*^{-1} | \delta_\alpha \BX'(\theta)|.
\end{equation}
Then we apply H{\"o}lder's inequality using \eqref{apriori.bd} to obtain
\begin{multline}\notag
   \LLT_1^S
\lesssim 
\DTT
\left( \int_{|\alpha|< \eta}   \frac{d\alpha}{\alpha^2} 
||\delta_\beta \delta_\alpha \BX'||_{L^4_\theta}^4  \right)^{\frac{1}{2}}
\left( \int_{|\alpha|< \eta}   \frac{d\alpha}{\alpha^2} 
|| \delta_\alpha \BX'||_{L^4_\theta}^4 \right)^{\frac{1}{2}}
\frac{1}{\rho^2}
\\
+
\DTT ||\delta_\beta \TLam^{\frac12} \BX'||_{L^2_\theta}
\left( \int_{|\alpha|< \eta}  \frac{d\alpha}{\alpha^2} 
||\delta_\beta \delta_\alpha \BX'||_{L^4_\theta}^4  \right)^{\frac{1}{4}}
\left( \int_{|\alpha|< \eta}  \frac{d\alpha}{\alpha^2} 
|| \delta_\alpha \BX'||_{L^4_\theta}^4 \right)^{\frac{1}{4}}\frac{1}{\rho}.
\end{multline}
In order to deal with this error term, we need to absorb it by the elliptic term.  A priori though, $\DTT$ and $\rho^{-1}$ could both be very large, and this might seem like we need to restrict our choice of tensions in \secref{sec:TensionAssumptions}.  

However, the function $\subw$ from Definition \ref{subw.definition} allows us to control the decay rate of the integral on the right hand side.  Specifically, for any $p> 1$ we have 
\begin{multline}\label{extra.smallness.besov}
 \int_{|\alpha|< \eta}   \frac{d\alpha}{\alpha^2} 
|| \delta_\alpha \BX'||_{L^p_\theta}^p 
\leq
  \frac{1}{\subw(\eta^{-1})^p}   \int_{\T}   \frac{d\alpha}{\alpha^2} 
|| \delta_\alpha \BX'||_{L^p_\theta}^p ~\subw(|\alpha|^{-1})^p 
=
\frac{ ||  \BX' ||_{\dot{B}_{p, p}^{\frac{1}{p},\mu}}^p}{\subw(\eta^{-1})^p}.
\end{multline}
Note that $\subw(\eta^{-1})^{-1}$ can be made arbitrarily small for $\eta>0$ small.  We next use the following embeddings from Proposition \ref{besov.ineq.prop} as 
\begin{equation}\label{embed.la.besov}
|| f ||_{\dot{B}^{\frac1p}_{p,p}}
\lesssim
|| f ||_{\dot{B}^{\frac12}_{2,p}}, \quad 
|| f ||_{\dot{B}^{\frac1p,\mu}_{p,p}}
\lesssim
|| f ||_{\dot{B}^{\frac12,\mu}_{2,p}}, \quad p\ge 2.
\end{equation}
We remark that we will also use the following inequality frequently in the rest of this paper, 
$||  f ||_{\dot{B}_{p, r_1}^{s}} \lesssim ||  f ||_{\dot{B}_{p, r_2}^{s}}$ which holds for any $r_1 \ge r_2 \ge 1$ and any $s\in \R$.

We will now use these inequalities in the form 
\begin{equation}\notag
    || \delta_\beta \BX'||_{\dot{B}^{\frac14}_{4,4}} \lesssim || \delta_\beta \BX'||_{\dot{B}^{\frac12}_{2,4}}
\lesssim 
|| \delta_\beta \BX'||_{\dot{B}^{\frac12}_{2,2}}
\approx
||\delta_\beta \TLam^{\frac12} \BX'||_{L^2_\theta}, 
\end{equation}
and also using \eqref{apriori.bd.CM} we have,
\begin{equation}\notag
|| \BX'||_{\dot{B}^{\frac{1}{4},\mu}_{4,4}} \leq C || \BX'||_{\dot{B}^{\frac{1}{2},\mu}_{2,4}}
\leq C \CTS M.
\end{equation}
We therefore conclude that
\begin{equation}\label{LAS.estimate}
   \LLT_1^S
     \leq
      C \kappa_1   \DTT
||\delta_\beta \bX'||_{\dot{B}^{\frac12}_{2,2}}^2 ,
\end{equation}
where we define $\kappa_1 = \kappa_1(\eta)$ by 
\begin{equation}\label{kappa1.def}
    \kappa_1 \eqdef \frac{ 1 }{\rho^2}  \frac{ M^2}{\subw(\eta^{-1})^2} +\frac{ 1}{\rho}  \frac{ M}{\subw(\eta^{-1})}.
\end{equation}
 This will be  our main estimate for $\LLT_1^S$.  We will later choose $\eta>0$ small enough so that $C \kappa_1  \DTT \ll \lambda$.

 Next we will estimate $\LLT_1^L$ containing $\mathcal{A}^L(\theta, \alpha)$ from \eqref{split.K} and   \eqref{kerbel.A.eqn.deriv} on the region $|\alpha| \geq \eta$. Noting that $||\delta_\alpha f||_{L^p} \leq 2 ||f||_{L^p}$ for all $p \in [1,\infty]$, we can neglect the $\delta_\alpha$'s and apply H{\"o}lder's inequality in $\theta$ to obtain
\begin{equation*}
 \LLT_1^L
\lesssim 
\DTT\left( \int_{|\alpha|\geq \eta}   \frac{d\alpha}{\alpha^2} \right)\left(\frac{||\delta_\beta \bX'||_{L_\theta^4}^2  ||\bX'||_{L_\theta^4}^2}{\rho^2} +  \frac{ ||\delta_\beta \bX'||_{L_\theta^2} ||\delta_\beta \bX'||_{L_\theta^4}  ||\bX'||_{L_\theta^4}}{\rho}\right).
\end{equation*}
Next from Proposition \ref{besov.ineq.prop}, Lemma \ref{Besov.increase}, and \eqref{apriori.bd.CM} we use the following  inequalities
 \begin{equation}\notag
 ||\bX'||_{L_\theta^4}
     \lesssim
|| \bX' ||_{\dot{B}^{0}_{4,2}}
     \lesssim
|| \bX' ||_{\dot{B}^{\frac14}_{2,2}}
     \lesssim
|| \bX' ||_{\dot{B}^{\frac12}_{2,\infty}} \lesssim M.
 \end{equation}
For the other term we use also Lemma \ref{Besov.interpolation} to obtain
 \begin{equation}\notag
 ||\delta_\beta \bX'||_{L_\theta^4}
     \lesssim
|| \delta_\beta \bX' ||_{\dot{B}^{0}_{4,2}}
     \lesssim
|| \delta_\beta \bX' ||_{\dot{B}^{\frac14}_{2,2}}
     \lesssim
||\delta_\beta \TLam^{\frac12}\bX'||_{L^2_\theta}^{1/2} ||\delta_\beta \bX'||_{L^2_\theta}^{1/2}. 
 \end{equation}
We also use that $\left( \int_{|\alpha|\geq \eta}   \frac{d\alpha}{\alpha^2} \right)  = \eta^{-1}$.  Then we obtain 
\begin{equation*}
     \LLT_1^L
\lesssim 
 \frac{\DTT M^2}{\eta\rho^2}
||\delta_\beta \TLam^{\frac12} \bX'||_{L^2_\theta} ||\delta_\beta \bX'||_{L^2_\theta} 
+  
 \frac{\DTT M}{\eta\rho}
||\delta_\beta \TLam^{\frac12}\bX'||_{L^2_\theta}^{1/2} ||\delta_\beta \bX'||_{L^2_\theta}^{3/2}.
\end{equation*}
Using Young's inequality, we can separate out the higher order terms 
and get 
\begin{equation}\label{estimate.LLTAL}
 \LLT_1^L
\leq \frac{\lambda}{8} ||\delta_\beta \bX'||_{\dot{B}^{\frac12}_{2,2}}^2 
+ C||\delta_\beta \bX'||_{L^2_\theta}^2
\BB,
\end{equation}
where
\begin{equation}\label{BB.const.def}
  \BB= \BB [M, \lambda, \rho, \DTT, \eta] \eqdef \frac{\DTT^2 M^4}{\lambda \eta^2 \rho^4} + \frac{\DTT^{4/3} M^{4/3}}{\lambda^{1/3} \eta^{4/3} \rho^{4/3}}.
\end{equation}
This is our main estimate for $\LLT_1^L$.

Now we can collect the estimates for $ \LLT_1^S$ in \eqref{LAS.estimate} and $ \LLT_1^L$ in \eqref{estimate.LLTAL} to obtain
\begin{equation}\label{LA.estimate}
 \LLT_1
     \leq
       \left(\frac{\lambda}{8}+     C \kappa_1   \DTT \right)||\delta_\beta \bX'||_{\dot{B}^{\frac12}_{2,2}}^2 + C ||\delta_\beta \bX'||_{L^2_\theta}^2 \BB.
\end{equation}
This is our main estimate for the term $\LLT_1$.

This estimate above motivates the following lemma.  First, for some $A>0$, we consider a typical term of the following form
\begin{equation}\label{typical.term}
    \LLT = \LLT[\bX'_1, \bX'_2, \bX'_3] \eqdef  A \int_{\T} d\theta \int_{\T} \frac{d\alpha}{\alpha^2} ~
   | \delta_\beta\delta_\alpha  \bX'_1|       | \delta_\beta  \bX'_2|
|\delta_\alpha \bX'_3|.
\end{equation}
Here $\BX'_i$ are given functions for $i=1,2,3$.  Then we have

\begin{lemma}\label{lem:typical.term} For any small constant $0<c<1$, and for $\lambda>0$ from \eqref{e:DTdefn}, for any small $\eta>0$ we have the following uniform estimate for \eqref{typical.term} as 
\begin{multline}\label{TTG.y.estimate}
      \LLT
\leq
c \lambda ||   \delta_\beta \TLam^{\frac12}\BX'_1||_{L^2_\theta}^2
+
\frac{C A^2}{\lambda \mu(\eta^{-1})^2} ||   \delta_\beta \BX'_2||_{L^\infty_\theta}^2
|| \BX'_3||_{\BS}^2
\\
+
c || \delta_\beta\BX'_1||_{L^2_\theta}^2 
+ 
CA^2 \eta^{-2}|| \delta_\beta\BX'_2||_{L^2_\theta}^2 ||\BX'_3||_{L^{\infty}_\theta}^2   .
\end{multline}
In particular if $\BX'_1 = \BX'_2$ then we also have
\begin{equation*}
\LLT
\leq
c \lambda ||   \delta_\beta \TLam^{\frac12}\BX'_1||_{L^2_\theta}^2
+
\frac{C A^2}{\lambda \mu(\eta^{-1})^2} ||   \delta_\beta \BX'_1||_{L^\infty_\theta}^2
|| \BX'_3||_{\BS}^2
+ 
C\frac{A}{\eta} || \delta_\beta\BX'_1||_{L^2_\theta}^2 ||\BX'_3||_{L^{\infty}_\theta}.
\end{equation*}
\end{lemma}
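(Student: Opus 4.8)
The plan is to split the $\alpha$-integral in \eqref{typical.term} at the scale $|\alpha|=\eta$, writing $\LLT=\LLT^S+\LLT^L$, where $\LLT^S$ carries the cutoff $\1_{|\alpha|<\eta}$ and $\LLT^L$ carries $\1_{|\alpha|\ge\eta}$. The far piece $\LLT^L$ is handled crudely: on $|\alpha|\ge\eta$ I would use $|\delta_\alpha\BX'_3(\theta)|\le 2\|\BX'_3\|_{L^\infty_\theta}$, Cauchy--Schwarz in $\theta$, the bound $\|\delta_\beta\delta_\alpha\BX'_1\|_{L^2_\theta}\le 2\|\delta_\beta\BX'_1\|_{L^2_\theta}$, and $\int_{|\alpha|\ge\eta}\frac{d\alpha}{\alpha^2}\le 2/\eta$, to obtain
\[
\LLT^L\le \frac{8A}{\eta}\,\|\BX'_3\|_{L^\infty_\theta}\,\|\delta_\beta\BX'_1\|_{L^2_\theta}\,\|\delta_\beta\BX'_2\|_{L^2_\theta}.
\]
In the general case Young's inequality converts this into $c\|\delta_\beta\BX'_1\|_{L^2_\theta}^2+CA^2\eta^{-2}\|\delta_\beta\BX'_2\|_{L^2_\theta}^2\|\BX'_3\|_{L^\infty_\theta}^2$; when $\BX'_1=\BX'_2$ the display already reads $\frac{8A}{\eta}\|\BX'_3\|_{L^\infty_\theta}\|\delta_\beta\BX'_1\|_{L^2_\theta}^2$, which is exactly the last term in the second assertion (with $C=8$), so no further step is needed there.

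The heart of the matter is the near piece $\LLT^S$, where the smallness factor $\mu(\eta^{-1})^{-2}$ must be produced out of the logarithmically-weighted norm $\|\cdot\|_{\BS}=\|\cdot\|_{\dot{B}^{1/2,\mu}_{2,1}}$. I would first pull $|\delta_\beta\BX'_2(\theta)|\le\|\delta_\beta\BX'_2\|_{L^\infty_\theta}$ out of the $\theta$-integral and apply Cauchy--Schwarz in $\theta$, leaving $A\|\delta_\beta\BX'_2\|_{L^\infty_\theta}\int_{|\alpha|<\eta}\frac{d\alpha}{\alpha^2}\|\delta_\beta\delta_\alpha\BX'_1\|_{L^2_\theta}\|\delta_\alpha\BX'_3\|_{L^2_\theta}$, and then split the measure as $\frac{1}{\alpha^2}=\frac{1}{|\alpha|^{1/2}}\cdot\frac{1}{\mu(|\alpha|^{-1})}\cdot\frac{\mu(|\alpha|^{-1})}{|\alpha|^{3/2}}$. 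Since $\mu$ is increasing (Definition \ref{subw.definition}) one has $\mu(|\alpha|^{-1})^{-1}\le\mu(\eta^{-1})^{-1}$ for $|\alpha|<\eta$, and since $\|f\|_{\dot{B}^{1/2}_{2,\infty}}\lesssim\|f\|_{\dot{B}^{1/2}_{2,2}}$ there is the uniform-in-$\alpha$ bound $\frac{\|\delta_\beta\delta_\alpha\BX'_1\|_{L^2_\theta}}{|\alpha|^{1/2}}\le\|\delta_\beta\BX'_1\|_{\dot{B}^{1/2}_{2,\infty}}\lesssim\|\delta_\beta\TLam^{1/2}\BX'_1\|_{L^2_\theta}$, where the last step uses that $\|\TLam^{1/2}f\|_{L^2_\theta}^2$ is a constant multiple of $\|f\|_{\dot{B}^{1/2}_{2,2}}^2$ (see the identity recorded after \eqref{D.space.temporal} and \eqref{tildeLambda:eq}). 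What remains of the $\alpha$-integral is a tail of $\|\BX'_3\|_{\BS}$, namely $\int_{|\alpha|<\eta}\frac{\mu(|\alpha|^{-1})}{|\alpha|^{3/2}}\|\delta_\alpha\BX'_3\|_{L^2_\theta}\,d\alpha\le\|\BX'_3\|_{\BS}$, so that
\[
\LLT^S\le \frac{CA}{\mu(\eta^{-1})}\,\|\delta_\beta\BX'_2\|_{L^\infty_\theta}\,\|\delta_\beta\TLam^{1/2}\BX'_1\|_{L^2_\theta}\,\|\BX'_3\|_{\BS}.
\]
A final Young's inequality, which squares the factor $\mu(\eta^{-1})^{-1}$, splits this into $c\lambda\|\delta_\beta\TLam^{1/2}\BX'_1\|_{L^2_\theta}^2+\frac{CA^2}{\lambda\mu(\eta^{-1})^2}\|\delta_\beta\BX'_2\|_{L^\infty_\theta}^2\|\BX'_3\|_{\BS}^2$. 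Adding the $\LLT^S$ and $\LLT^L$ estimates gives \eqref{TTG.y.estimate}, and combining the $\LLT^S$ estimate with the sharper $\BX'_1=\BX'_2$ form of the $\LLT^L$ estimate gives the second assertion; throughout, $C$ is permitted to depend on the fixed constant $c$.

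The main obstacle is recognizing the double role played by the weight $\mu$ in the near-field estimate: one factor $\mu(\eta^{-1})^{-1}$ is spent absorbing $|\alpha|^{1/2}\mu(|\alpha|^{-1})$ against the $\dot{B}^{1/2}_{2,\infty}$ bound on $\delta_\beta\BX'_1$, the leftover integral being a convergent piece of $\|\BX'_3\|_{\BS}$ (which is finite precisely because $\BX'_3\in\dot{B}^{1/2,\mu}_{2,1}$), and then Young's inequality restores the second factor $\mu(\eta^{-1})^{-1}$. This is exactly what makes $\eta$ choosable later --- small, depending on $\mu$, $M$, $\rho$, $\lambda$ and the relevant constants --- so that $CA^2\lambda^{-1}\mu(\eta^{-1})^{-2}$ can be made negligible against the dissipation; the whole purpose of the logarithmically-refined spaces of Definition \ref{subw.definition} is to permit this step for arbitrarily large data. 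All other steps are routine applications of Cauchy--Schwarz, Young's inequality, and the Besov embeddings already available in the preceding part of the paper.
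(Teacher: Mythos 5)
Your proof is correct and follows essentially the same route as the paper: split at $|\alpha|=\eta$, bound the far piece crudely by $A\eta^{-1}\|\delta_\beta\BX'_1\|_{L^2_\theta}\|\delta_\beta\BX'_2\|_{L^2_\theta}\|\BX'_3\|_{L^\infty_\theta}$, extract the factor $\mu(\eta^{-1})^{-1}$ from the logarithmic weight in the near piece, and finish with Young's inequality, arriving at exactly the intermediate bound \eqref{TTG.est}. The only (harmless) variation is in the near-field $\alpha$-integral: you use an $L^\infty_\alpha\times L^1_\alpha$ pairing, bounding $\|\delta_\alpha\delta_\beta\BX'_1\|_{L^2_\theta}/|\alpha|^{1/2}$ via $\dot{B}^{1/2}_{2,\infty}\hookrightarrow\dot{B}^{1/2}_{2,2}\approx\TLam^{1/2}$ and keeping the full weight $\mu(|\alpha|^{-1})|\alpha|^{-3/2}$ on $\BX'_3$ as a tail of $\|\BX'_3\|_{\BS}$, whereas the paper applies Cauchy--Schwarz in $\alpha$ and invokes \eqref{extra.smallness.besov} with $p=2$ together with the embedding $\BS\subset\dot{B}^{1/2,\mu}_{2,2}$.
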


\begin{proof}[Proof of Lemma \ref{lem:typical.term}]
We split this term into $\LLT=\LLT^S + \LLT^L$ where $\LLT^S$ is restricted to the integration domain $|\alpha| < \eta$ and $\LLT^L$ to the domain $|\alpha| \ge \eta$ similar to \eqref{split.K}.  For $\LLT^S$ we use  H{\"o}lder's inequality to obtain
\begin{equation}\notag
      \LLT^S
\lesssim 
||   \delta_\beta \BX'_2||_{L^\infty_\theta}
\left( \int_{|\alpha| < \eta}   \frac{d\alpha}{\alpha^2} 
|| \delta_\alpha \delta_\beta\BX'_1||_{L^2_\theta}^2  \right)^{\frac{1}{2}}
\left( \int_{|\alpha| < \eta}   \frac{d\alpha}{\alpha^2} 
|| \delta_\alpha \BX'_3||_{L^{2}_\theta}^{2} \right)^{1/2}.
\end{equation}
We further use the embeddings \eqref{extra.smallness.besov} and \eqref{embed.la.besov} to obtain
\begin{equation}\notag
      \LLT^S
\lesssim 
||   \delta_\beta \TLam^{\frac12}\BX'_1||_{L^2_\theta}
||   \delta_\beta \BX'_2||_{L^\infty_\theta}
\frac{|| \BX'_3||_{\dot{B}_{2, 2}^{\frac{1}{2},\mu}}}{\mu(\eta^{-1})}.
\end{equation}
Next we estimate $\LLT^L$.  Again using H{\"o}lder's inequality we have
\begin{equation}\notag
\begin{split}
\LLT^L
&\lesssim 
|| \delta_\beta\BX'_1||_{L^2_\theta}  || \delta_\beta\BX'_2||_{L^2_\theta} ||\BX'_3||_{L^{\infty}_\theta}
\left( \int_{|\alpha| \geq \eta}   \frac{d\alpha}{\alpha^2} 
 \right)
 \\& \lesssim 
 \eta^{-1}
|| \delta_\beta\BX'_1||_{L^2_\theta}  || \delta_\beta\BX'_2||_{L^2_\theta} ||\BX'_3||_{L^{\infty}_\theta}.
\end{split}
\end{equation}
  We combine the estimates above and use the embedding $\BS \subset \dot{B}_{2, 2}^{\frac{1}{2},\mu}$ to obtain the following general estimate 
  \begin{multline}\label{TTG.est}
      \LLT
\leq
\frac{C A}{\mu(\eta^{-1})}
||   \delta_\beta \TLam^{\frac12}\BX'_1||_{L^2_\theta}
||   \delta_\beta \BX'_2||_{L^\infty_\theta}
|| \BX'_3||_{\BS}
\\
+
 C A \eta^{-1}
|| \delta_\beta\BX'_1||_{L^2_\theta}  || \delta_\beta\BX'_2||_{L^2_\theta} ||\BX'_3||_{L^{\infty}_\theta}.
\end{multline}
  Then \eqref{TTG.y.estimate} follows after applying Young's inequality.
\end{proof}

Next, we turn our attention towards bounding the term $\LLT_{3}$ in \eqref{beta.L2.difference}.  Recalling \eqref{kerbel.eqn.deriv}, \eqref{apriori.bd} and \eqref{e:Abounds}, we can bound $|\tau_\beta \mathcal{K}[\bX]|$ in general  as
\begin{equation}\notag
    |\tau_\beta \mathcal{K}[\bX](\theta, \alpha)| \leq C\left( 1 + \rho^{-1} || \BX' ||_{L^\infty_\theta}
+ \rho^{-2} || \BX' ||_{L^\infty_\theta}^2\right).
\end{equation}
Now we state the following useful embedding as
\begin{equation}\label{embed.infty}
    || \BX' ||_{L^\infty_\theta}
    \lesssim 
    || \BX' ||_{\dot{B}_{2,1}^{\frac12}}.
\end{equation}
This embedding follows as in Proposition \ref{besov.ineq.prop}.  
Then further using $|| \BX' ||_{\dot{B}_{2,1}^{\frac12}} \lesssim || \BX' ||_{\dot{B}_{2,1}^{\frac12,\subw}}$ and \eqref{apriori.bd.CM} we have
\begin{equation}\label{K.bound.infty}
 C\left( 1 + \rho^{-1} || \BX' ||_{L^\infty_\theta}
+ \rho^{-2} || \BX' ||_{L^\infty_\theta}^2\right) 
\leq C (1+ \rho^{-2} M^2)\eqdef \WK_1=\WK_1[\BX'].
\end{equation}
Thus we notice that the remaining part of $\LLT_{3}$ is in the form of \eqref{typical.term} with $\BX_1'=\BX_2'=\BX_3'=\BX'$.  Thus applying \eqref{TTG.est} we obtain
\begin{multline}\notag
      \LLT_{3}
\leq
C||   \delta_\beta \TLam^{\frac12}\BX'||_{L^2_\theta}
||   \delta_\beta \BX'||_{L^\infty_\theta}
\frac{|| \BX'||_{\BS}}{\mu(\eta^{-1})} \WK_1[\BX'] \DDTT
\\
+
 C\eta^{-1}
|| \delta_\beta\BX'||_{L^2_\theta}^2 ||\BX'||_{L^{\infty}_\theta}\WK_1[\BX'] \DDTT.
\end{multline}
We further apply Young's inequality to the first term above, and use \eqref{apriori.bd.CM},  to obtain 
\begin{equation}\label{LTT12.estimate}
      \LLT_{3}
\leq
\frac{\lambda}{8}||\delta_\beta \TLam^{\frac12}\bX'||_{L^2_\theta}^2
+     
C \kappa_2   \DDTT^2 
||   \delta_\beta \BX'||_{L^\infty_\theta}^2
+
 C
|| \delta_\beta\BX'||_{L^2_\theta}^2  \BK,
\end{equation}
where recalling \eqref{K.bound.infty} we have 
\begin{equation}\label{BK.const.def}
  \BK = \BK [M, \rho^{-1}, \DDTT, \eta] \eqdef 
 \eta^{-1}   M(1+ \rho^{-2} M^2) \DDTT,
\end{equation}
and $\kappa_2 = \kappa_2(\eta)$ is
\begin{equation}\label{kappa2.def}
    \kappa_2 \eqdef \lambda^{-1} \frac{M^2}{\subw(\eta^{-1})^2} (1+ \rho^{-2} M^2)^2.
\end{equation}
 This will be  our main estimate for $\LLT_{3}$.  We will later choose $\eta>0$ small enough so that under our assumptions $C\kappa_2  \DDTT^2 \ll \lambda$.

To prove further estimates we will now state the following lemma which gives the pointwise estimates of $\delta_\beta \mathcal{A}(\theta, \alpha)$.  The proof of Lemma \ref{A.bound.lem} is given in \secref{sec:kernelDIFF}.

\begin{lemma}\label{A.bound.lem} 
Considering $\mathcal{A}(\theta, \alpha)$ from \eqref{kerbel.A.eqn.deriv} we can split $\delta_\beta \mathcal{A}(\theta, \alpha)$ as 
\begin{equation}\label{Abeta.split}
     \delta_\beta \mathcal{A}(\theta, \alpha)  =   \mathcal{A}_{1\beta}(\theta, \alpha)  +  \mathcal{A}_{2\beta}(\theta, \alpha), 
\end{equation}
where $\mathcal{A}_{1\beta}(\theta, \alpha)$ satisfies the following uniform upper bound
\begin{multline}\label{A1X.bound}
\left| \mathcal{A}_{1\beta}(\theta, \alpha) \right|
\lesssim
\frac{| \delta_\beta \DAP(\theta)| | \tau_\beta \DAM(\theta)| +|  \DAP(\theta)| | \delta_\beta \DAM(\theta)| }{|\BX|_*^2}
\\
+
\frac{| \delta_\beta \DAP(\theta)|+| \delta_\beta \DAM(\theta)| }{|\BX|_*}.
\end{multline}
Further $\mathcal{A}_{2\beta}(\theta, \alpha)$ satisfies the  uniform upper bound
\begin{multline}\label{A2X.bound}
\left| \mathcal{A}_{2\beta}(\theta, \alpha)  \right| 
\lesssim  
|  \DAP(\theta)| 
\left( | \tau_\beta \DAM(\theta)| + | \DAM(\theta)| \right)
\frac{| \delta_\beta \DAX(\theta)| }{|\BX|_*^3}
\\
+
\left( 
|  \DAP(\theta)| 
+
| \DAM(\theta)| 
\right)
\frac{| \delta_\beta \DAX(\theta)| }{|\BX|_*^2}.
\end{multline}
\end{lemma}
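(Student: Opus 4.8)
The plan is to compute $\delta_\beta$ applied to each of the five lines of \eqref{kerbel.A.eqn.deriv} by the finite-difference Leibniz rule $\delta_\beta(fg)=(\tau_\beta f)(\delta_\beta g)+(\delta_\beta f)g$ (together with its three-factor version), and then to \emph{define} the decomposition \eqref{Abeta.split} by placing into $\mathcal{A}_{1\beta}$ exactly the terms in which $\delta_\beta$ lands on one of the difference vectors $\delta_\alpha^+\BX'$, $\delta_\alpha^-\BX'$ from \eqref{delta.pm.notation}, and into $\mathcal{A}_{2\beta}$ the terms in which $\delta_\beta$ lands on one of the ``geometric'' factors $\widehat{\DATX}$, $\RO(\DATX)$, $\DO(\DATX)$, or $|\DATX|^{-2}$.

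First I would record the elementary pointwise tools, all uniform and depending on $\BX$ only through the arc--chord bound $|\DATX|\geq |\BX|_*$ from \eqref{arc.cord.number}: the bounds $|\RO(z)|,|\DO(z)|\lesssim 1$; the normalization estimate $|\widehat{z_1}-\widehat{z_2}|\leq 2|z_1-z_2|/\min(|z_1|,|z_2|)$; the estimate $\abs{|z_1|^{-k}-|z_2|^{-k}}\lesssim_k |z_1-z_2|/\min(|z_1|,|z_2|)^{k+1}$ for $k=1,2$; and $\abs{z_1|z_1|^{-2}-z_2|z_2|^{-2}}\lesssim |z_1-z_2|/\min(|z_1|,|z_2|)^2$. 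In each of these the difference is expanded \emph{before} it is estimated, so that no upper bound on $|z_i|$ --- and hence no factor of $\|\BX'\|_{L^\infty}$ --- ever enters. Since $\RO,\DO$ are quadratic in $\widehat z,\widehat z^\perp$, taking $z_1=\DAL\BX(\theta+\beta)$ and $z_2=\DATX$ and using $\min(|z_1|,|z_2|)\geq |\BX|_*$ yields $|\delta_\beta\RO(\DATX)|,|\delta_\beta\DO(\DATX)|\lesssim |\delta_\beta\DAX(\theta)|/|\BX|_*$ and $|\delta_\beta(|\DATX|^{-2})|\lesssim |\delta_\beta\DAX(\theta)|/|\BX|_*^3$, where $\delta_\beta\DAX(\theta)=\DAL(\delta_\beta\BX)(\theta)$. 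I would also use the identities $\DO(z)z=z$ and $\RO(z)z=|z|\widehat z^\perp$, immediate from \eqref{matrix.operators}, to rewrite the second and fourth lines of \eqref{kerbel.A.eqn.deriv} as
\begin{equation*}
\frac{(\delta_\alpha^+\BX'+\delta_\alpha^-\BX')\cdot\widehat{\DATX}}{|\DATX|}\IO
\qquad\text{and}\qquad
\frac{(\delta_\alpha^+\BX'+\delta_\alpha^-\BX')\cdot\widehat{\DATX}^\perp}{|\DATX|}\RO(\DATX).
\end{equation*}
This rewriting is the one genuinely non-mechanical step: without it the factors $\DO(\DATX)\DATX$ and $\RO(\DATX)\DATX$, which individually have size comparable to $\|\BX'\|_{L^\infty}$, would force that norm into the final estimates.

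Then I would run the Leibniz expansion line by line. The first and fifth lines are bilinear in $\delta_\alpha^+\BX',\delta_\alpha^-\BX'$ with a coefficient of size $\lesssim |\BX|_*^{-2}$: distributing $\delta_\beta$, the two terms in which it hits a $\delta_\alpha^\pm\BX'$ contribute $|\BX|_*^{-2}\bigl(|\delta_\beta\delta_\alpha^+\BX'||\tau_\beta\delta_\alpha^-\BX'|+|\delta_\alpha^+\BX'||\delta_\beta\delta_\alpha^-\BX'|\bigr)$, the first block of \eqref{A1X.bound}, while the terms in which it hits $\DO(\DATX)$ or $|\DATX|^{-2}$ contribute $|\delta_\alpha^+\BX'|\bigl(|\tau_\beta\delta_\alpha^-\BX'|+|\delta_\alpha^-\BX'|\bigr)|\delta_\beta\DAX(\theta)|/|\BX|_*^3$, the first block of \eqref{A2X.bound} (the two options arising according to whether $\delta_\beta$ landed on $\DO$ or on $|\DATX|^{-2}$). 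In the rewritten form the second and fourth lines are linear in $\delta_\alpha^+\BX'+\delta_\alpha^-\BX'$ with a coefficient of size $\lesssim |\BX|_*^{-1}$: the term hitting a $\delta_\alpha^\pm\BX'$ contributes $\bigl(|\delta_\beta\delta_\alpha^+\BX'|+|\delta_\beta\delta_\alpha^-\BX'|\bigr)/|\BX|_*$, the second block of \eqref{A1X.bound}, and the terms hitting $\widehat{\DATX}/|\DATX|$, $\widehat{\DATX}^\perp/|\DATX|$ or $\RO(\DATX)$ contribute $\bigl(|\delta_\alpha^+\BX'|+|\delta_\alpha^-\BX'|\bigr)|\delta_\beta\DAX(\theta)|/|\BX|_*^2$, the second block of \eqref{A2X.bound}. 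Summing all $\mathcal{A}_{1\beta}$-type pieces gives \eqref{A1X.bound}, and summing all $\mathcal{A}_{2\beta}$-type pieces gives \eqref{A2X.bound}.

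I expect the main obstacle to be purely organizational: for each term produced by the expansion one must track whether each surviving factor is evaluated at $\theta$ or at $\theta+\beta$ once the difference has been peeled off --- this is where the translates $\tau_\beta$ in \eqref{A1X.bound}--\eqref{A2X.bound} come from --- and one must systematically trade powers of $\max(|z_1|,|z_2|)$ for powers of $\min(|z_1|,|z_2|)\geq |\BX|_*$ so that $\|\BX'\|_{L^\infty}$ never appears. Beyond this accounting and the two cancellation identities $\DO(z)z=z$ and $\RO(z)z=|z|\widehat z^\perp$, there is no analytic difficulty in the argument.
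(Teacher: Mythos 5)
Your proposal is correct and follows essentially the same route as the paper: the paper's proof in the appendix performs exactly this Leibniz-rule split, placing the terms where $\delta_\beta$ hits $\delta_\alpha^{\pm}\BX'$ into $\mathcal{A}_{1\beta}$ and those where it hits the geometric factors into $\mathcal{A}_{2\beta}$, and then reads off the bounds; your elementary Lipschitz estimates for $\widehat{z}$, $|z|^{-k}$, $z|z|^{-2}$ and the identities $\DO(z)z=z$, $\RO(z)z=|z|\widehat{z}^{\perp}$ simply make explicit the ``by inspection'' step that the paper leaves implicit. The only cosmetic omission is the third line of \eqref{kerbel.A.eqn.deriv}, which is bilinear in $\delta_\alpha^{\pm}\BX'$ and is handled identically to the first and fifth.
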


Next we will estimate the term $\LLT_{2}$ from \eqref{beta.L2.difference}. For future use we will estimate the following more general term with a constant $A>0$ as
\begin{equation}\label{LLT2.generalform}
    \LLT_2[\bX'_1, \bX'_2, \bX'_3] \eqdef   A  \int_{\T} d\theta \int_{\T} \frac{d\alpha}{\alpha^2} ~
   | \delta_\beta\delta_\alpha  \bX'_1(\theta)|  |\delta_\beta \mathcal{A}[\bX_2](\theta, \alpha)| | \delta_\alpha  \bX'_3(\theta)| . 
\end{equation}
Here $\bX_1$, $\bX_2$ and $\bX_3$ are given functions.  Then  in Lemma \ref{A.bound.lem} we split the kernel from \eqref{kerbel.A.eqn.deriv} as $\delta_\beta \mathcal{A}[\bX_2](\theta, \alpha)=   \mathcal{A}_{1\beta}[\bX_2](\theta, \alpha)  +  \mathcal{A}_{2\beta}[\bX_2](\theta, \alpha)$.  Taking into account Remark \ref{ignore.remark}, from \eqref{A1X.bound} and \eqref{A2X.bound} we have
\begin{equation}\label{A1betaRemark}
    \left| \mathcal{A}_{1\beta}[\bX_2](\theta, \alpha) \right|
\lesssim
\frac{| \delta_\beta \delta_\alpha \BX'_2(\theta)| |\delta_\alpha \BX'_2(\theta)|  }{|\BX_2|_*^2}
+
\frac{| \delta_\beta \delta_\alpha \BX'_2(\theta)|}{|\BX_2|_*},
\end{equation}
and
\begin{equation}\label{A2betaRemark}
\left| \mathcal{A}_{2\beta}[\bX_2](\theta, \alpha)  \right| 
\lesssim  
\frac{|  \delta_\alpha \BX'_2(\theta)|^2  | \delta_\beta \DAL \BX_2(\theta)| }{|\BX_2|_*^3}
+
\frac{|  \delta_\alpha \BX'_2(\theta)|  | \delta_\beta \DAL \BX_2(\theta)| }{|\BX_2|_*^2}.
\end{equation}
Now we split $\LLT_{2}=\LLT_{21}+\LLT_{22}$ according to \eqref{Abeta.split}.  In particular $\LLT_{21}$ is the term $\LLT_{2}$ with $\delta_\beta \mathcal{A}(\theta, \alpha)$ replaced by $\mathcal{A}_{1\beta}(\theta, \alpha)$.  Now notice that $\LLT_{21}[\bX'_1, \bX'_2, \bX'_3]=\LLT_{21}$  satisfies the upper bound 
\begin{equation} \notag 
 \LLT_{21}
\lesssim
A  \sum_{j=1}^2 \rho^{-j} || \BX'_2||_{L^\infty_\theta}^{j-1}\int_{\T}  d\theta \int_{\T}  \frac{d\alpha}{\alpha^2}  
   | \delta_\beta\delta_\alpha  \bX'_1(\theta)|  
  | \delta_\beta\bX'_2(\theta)| 
   | \delta_\alpha  \bX'_3(\theta)|. 
\end{equation}
Therefore as in \eqref{typical.term} we have from \eqref{TTG.y.estimate} the estimate 
\begin{multline}\label{LLT21.estimate}
 \LLT_{21}
\leq
c \lambda ||   \delta_\beta \TLam^{\frac12}\BX'_1||_{L^2_\theta}^2
+
  \frac{C A^2}{\lambda\subw(\eta^{-1})^2} \left( \sum_{j=1}^2 \rho^{-j} || \BX'_2||_{L^\infty_\theta}^{j-1}\right)^2 ||   \delta_\beta \BX'_2||_{L^\infty_\theta}^2
|| \BX'_3||_{\BS}^2
\\
+
c || \delta_\beta\BX'_1||_{L^2_\theta}^2 
+ 
C|| \delta_\beta\BX'_2||_{L^2_\theta}^2 ||\BX'_3||_{L^{\infty}_\theta}^2   A^2  \eta^{-2}    
\left( \sum_{j=1}^2 \rho^{-j} || \BX'_2||_{L^\infty_\theta}^{j-1}\right)^2.
\end{multline}
This is our main estimate for the term $\LLT_{21}[\bX'_1, \bX'_2, \bX'_3]$.

Lastly we will estimate $\LLT_{22}=\LLT_{22}[\bX'_1, \bX'_2, \bX'_3]$.  For this term we have the upper bound 
\begin{equation} \notag 
  \left|  \LLT_{22} \right|
\lesssim
A \sum_{j=2}^3 \rho^{-j} || \BX'_2||_{L^\infty_\theta}^{j-1}\int_{\T}  d\theta \int_{\T}  \frac{d\alpha}{\alpha^2}  
 | \delta_\beta \delta_\alpha \BX'_1(\theta) |
 | \delta_\beta \DAL \BX_2(\theta)|
  | \delta_\alpha \BX'_3(\theta)|.
\end{equation}
We can estimate this term the same way that we estimated $\LLT_{3}$ in \eqref{LTT12.estimate} using Lemma \ref{lem:typical.term}.  This follows because the term $  | \delta_\beta \DAL \BX_2(\theta)|$ in $\LLT_{22}$ is treated exactly as the term $| \delta_\beta \BX_2'(\theta)|$ in \eqref{typical.term} and \eqref{TTG.y.estimate}.  We can do that as in \eqref{operator.bd.first} because 
\begin{equation}\label{averaging.infinity.est}
    ||   \delta_\beta \DAL \BX_2||_{L^p_\theta}
    \lesssim
        || \delta_\beta \BX'_2 ||_{L^p_\theta},
        \quad p \in[1,\infty].
\end{equation}
Thus as in \eqref{TTG.y.estimate} we have
\begin{multline}\label{LLT22.estimate}
 \LLT_{22}
\leq
c \lambda ||   \delta_\beta \TLam^{\frac12}\BX'_1||_{L^2_\theta}^2
+
 \frac{C A^2}{\lambda\subw(\eta^{-1})^2} \left( \sum_{j=2}^3 \rho^{-j} || \BX'_2||_{L^\infty_\theta}^{j-1}\right)^2 ||   \delta_\beta \BX'_2||_{L^\infty_\theta}^2
|| \BX'_3||_{\BS}^2
\\
+
c || \delta_\beta\BX'_1||_{L^2_\theta}^2 
+ 
C|| \delta_\beta\BX'_2||_{L^2_\theta}^2 ||\BX'_3||_{L^{\infty}_\theta}^2 A^2 \eta^{-2}    
\left( \sum_{j=2}^3 \rho^{-j} || \BX'_2||_{L^\infty_\theta}^{j-1}\right)^2.
\end{multline}
This is our main estimate for the term $\LLT_{22}$.

Then for the term $\LLT_{2}=\LLT_{2}[\bX'_1, \bX'_2, \bX'_3]$ from \eqref{LLT21.estimate} and \eqref{LLT22.estimate} we have for any small constant $0<c<1$ that
\begin{multline}\label{LLT2.bound.general}
 \LLT_{2}
\leq 
c \lambda ||   \delta_\beta \TLam^{\frac12}\BX'_1||_{L^2_\theta}^2
+     
C\frac{A^2 \BG [\bX'_2 ]}{\lambda\subw(\eta^{-1})^2}
 ||   \delta_\beta \BX'_2||_{L^\infty_\theta}^2 || \BX'_3||_{\BS}^2
\\
+
c || \delta_\beta\BX'_1||_{L^2_\theta}^2 
+
C|| \delta_\beta\BX'_2||_{L^2_\theta}^2 ||\BX'_3||_{L^{\infty}_\theta}^2 
A^2 \eta^{-2}  \BG [\bX'_2 ]   .
\end{multline} 
where 
\begin{equation}\label{BG.const.def}
\BG =\BG [||\bX'_2||_{L^\infty_\theta}, \rho^{-1} ] \eqdef 
\left( \sum_{j=1}^2 \rho^{-j} || \BX'_2||_{L^\infty_\theta}^{j-1}\right)^2+\left( \sum_{j=2}^3 \rho^{-j} || \BX'_2||_{L^\infty_\theta}^{j-1}\right)^2.
\end{equation}
The above general estimate will be used in \secref{sec:strongCont}.

Specifically for $\LLT_{2}=\LLT_{2}[\bX', \bX', \bX']$ from \eqref{beta.L2.difference} following a similar procedure we  obtain
\begin{equation}\label{LLT2.bound}
 \LLT_{2}
\leq \frac{\lambda}{4}||\delta_\beta \bX'||_{\dot{B}^{\frac12}_{2,2}}^2 + 
C  \kappa_3  \DTT^2
||\delta_\beta \bX'||_{L^\infty_\theta}^2 
+ C||\delta_\beta\bX'||_{L^2_\theta}^2 \DTT \BC, 
\end{equation}
where recalling \eqref{apriori.bd.CM} then 
$\kappa_3 = \kappa_3[\bX' ](\eta)$ is
\begin{equation}\label{kappa3.def}
    \kappa_3 \eqdef  
    \frac{ M^2}{\lambda\subw(\eta^{-1})^2}
 \left( \sum_{j=1}^2 \rho^{-j} M^{j-1}\right)^2(1+\rho^{-2} M^2),
\end{equation}
and
\begin{equation}\label{BC.const.def}
   \BC =\BC [M, \rho^{-1}, \eta^{-1} ] \eqdef 
M \eta^{-1}    
 \sum_{j=1}^3 \rho^{-j} M^{j-1}.
\end{equation}
This is our main estimate for the term $\LLT_{2}$.  We will later choose $\eta>0$ small enough so that under our assumptions $C\kappa_3  \DTT^2 \ll \lambda$.

Putting together our estimates for $\mathcal{L}_1$ \eqref{LA.estimate}, $\mathcal{L}_2$  \eqref{LLT2.bound} and $\mathcal{L}_3$  \eqref{LTT12.estimate} into \eqref{beta.L2.difference} we arrive at 
\begin{multline*}
        \frac{d}{dt}||\delta_\beta \bX'||_{L^2_\theta}^2 + \frac{\lambda}{2} ||\delta_\beta \bX'||_{\dot{B}^{\frac12}_{2,2}}^2 
        \leq 
        C\kappa_1   \DTT ||\delta_\beta \bX'||_{\dot{B}^{\frac12}_{2,2}}^2 
        \\
        +C\left( \DDTT^2 \kappa_2+\DTT^2 \kappa_3  \right)||\delta_\beta \bX'||_{L^\infty_\theta}^2 
+ C||\delta_\beta \bX'||_{L^2_\theta}^2 \left(\BB+\BK+\DTT\BC \right),
\end{multline*}
where we recall \eqref{kappa1.def}, \eqref{kappa2.def}, \eqref{kappa3.def},  \eqref{BB.const.def}, \eqref{BK.const.def} and \eqref{BC.const.def} respectively.  

For convenience from \eqref{BB.const.def}, \eqref{K.bound.infty}, \eqref{BK.const.def} and \eqref{BC.const.def} we now define $\BPU$ by 
\begin{equation}\label{BU.def}
\BPU =\BPU [M,\rho,\lambda, \DDTT, \DTT] \eqdef \BB+\BK+\DTT\BC.
\end{equation}
From \eqref{kappa2.def} and \eqref{kappa3.def}  we also define 
\begin{equation}\notag
    \kappa_0 \eqdef  \DDTT^2 \kappa_2+\DTT^2 \kappa_3.
\end{equation}
Now we can choose $\eta>0$ small enough so that $C\kappa_1   \DTT < \lambda/4$.  Thus we obtain
\begin{equation*}
            \frac{d}{dt}||\delta_\beta \bX'||_{L^2_\theta}^2 + \frac{\lambda}{4} ||\delta_\beta \bX'||_{\dot{B}^{\frac12}_{2,2}}^2 
        \leq 
        C \kappa_0 ||\delta_\beta \bX'||_{L^\infty_\theta}^2 
+ C\BPU ||\delta_\beta \bX'||_{L^2_\theta}^2.
\end{equation*}
Further integrating in time, we get that 
\begin{multline*}
           ||\delta_\beta \bX'||_{L^2_\theta}^2(t) + \frac{\lambda}{4}  ||\delta_\beta \bX'||_{L^2_t(\dot{B}^{\frac12}_{2,2})}^2 
       \leq ||\delta_\beta \bX_0 '||_{L^2_\theta}^2
       +
 C \kappa_0 ||\delta_\beta \bX'||_{L^2_t(L^\infty_\theta)}^2 
 \\
+ C\BPU ||\delta_\beta \bX'||_{L^2_t(L^2_\theta)}^2.
\end{multline*}
Note that trivially, we have the bound
\begin{equation}\label{betaX.L2.Linfinity.bound}
||\delta_\beta \bX'||_{L^2_t(L^2_\theta)}^2\leq t ||\delta_\beta \bX'||_{L^\infty_t(L^2_\theta)}^2.
\end{equation}
Now we take the essential supremum over $0\le t \le T$, at the cost of an extra factor of 2 on the RHS, to obtain
\begin{multline}\label{ineq.refer.later}
           ||\delta_\beta \bX'||_{L^\infty_T(L^2_\theta)}^2 + \frac{\lambda}{4}  ||\delta_\beta \bX'||_{L^2_T(\dot{B}^{\frac12}_{2,2})}^2 
       \leq 2 ||\delta_\beta \bX_0 '||_{L^2_\theta}^2
       +
 C \kappa_0 ||\delta_\beta \bX'||_{L^2_T(L^\infty_\theta)}^2 
 \\
+ CT
\BPU ||\delta_\beta \bX'||_{L^\infty_T(L^2_\theta)}^2.
\end{multline}
Next, note that for any constants $A$ and $B$ we have
\begin{equation}\label{ineq.gen}
    \frac{1}{\sqrt{2}} (|A|+|B|) \le (A^2+B^2)^{1/2} \le  |A|+|B|.
\end{equation}
So taking the previous inequality and raising it to the $1/2$ power, we obtain 
\begin{multline*}
           ||\delta_\beta \bX'||_{L^\infty_T(L^2_\theta)} + \left(\frac{\lambda}{4}\right)^{1/2}  ||\delta_\beta \bX'||_{L^2_T(\dot{B}^{\frac12}_{2,2})}
       \leq 2 ||\delta_\beta \bX_0 '||_{L^2_\theta}
       +
 C \kappa_0^{1/2} ||\delta_\beta \bX'||_{L^2_T(L^\infty_\theta)} 
 \\
+ CT^{1/2}
\BPU^{1/2}||\delta_\beta \bX'||_{L^\infty_T(L^2_\theta)}.
\end{multline*}
Then further integrating the above in $d\beta$ against $|\beta|^{-3/2}\mu(|\beta|^{-1})$ thus gives us 
\begin{multline*}
 ||\bX'||_{\CM}  
 + \left(\frac{\lambda}{4}\right)^{1/2}  ||\bX'||_{\DM} 
\leq 2 || \BX'_0||_{\BS}
       \\
 +C \kappa_0^{1/2}
 \int_{\T} \frac{d\beta}{|\beta|^{3/2}}\mu(|\beta|^{-1}) ||\delta_\beta \bX'||_{L^2_T(L^\infty_\theta)} 
+ 
CT^{1/2}
\BPU^{1/2}
 ||\bX'||_{\CM}.
\end{multline*}
To handle the term containing $\kappa_0^{1/2}$ we will use the following lemma.

\begin{lemma}\label{L.infinity.embedding}
There exists a constant $C_{\mu}>0$ such that 
\begin{equation}\label{embedding.infty.use}
||f||_{\widetilde{L}^2_T(\dot{B}^{\frac12,\mu}_{\infty,1})} 
 \leq 
    C_{\mu}    \int_{\T} \frac{d\beta}{|\beta|^{3/2}} \subw(|\beta|^{-1}) ||\delta_\beta \TLam^{\frac12} f||_{L^2_T(L^{2}_{\theta})} 
    =
    C_{\mu}    ||f||_{\DM}. 
\end{equation}
\end{lemma}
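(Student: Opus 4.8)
The plan is to pass to the Littlewood--Paley decomposition on $\T$ developed in \secref{sec:LPtorus}, use the one--dimensional Bernstein inequality to trade the $L^\infty_\theta$ norm for an $L^2_\theta$ norm at the price of a half derivative, and then recognise that half derivative as $\TLam^{\frac12}$. Since both sides of \eqref{embedding.infty.use} annihilate constants we may assume $f$ has mean zero, so that in the homogeneous Littlewood--Paley decomposition $f=\sum_{j\ge 0}\Delta_j f$ only blocks with frequencies $|k|\sim 2^j$, $j\ge 0$, occur. Using the equivalence between the difference--quotient definition \eqref{Besov.mu.CL.Space} of the weighted Chemin--Lerner spaces and their Littlewood--Paley characterisation --- which is part of \secref{sec:LPtorus} --- we have
\begin{gather*}
\|f\|_{\widetilde{L}^2_T(\dot{B}^{\frac12,\mu}_{\infty,1})}\approx \sum_{j\ge 0} 2^{j/2}\,\mu(2^j)\,\|\Delta_j f\|_{L^2_T(L^\infty_\theta)},\\
\|\TLam^{\frac12}f\|_{\widetilde{L}^2_T(\dot{B}^{\frac12,\mu}_{2,1})}\approx \sum_{j\ge 0} 2^{j/2}\,\mu(2^j)\,\|\Delta_j \TLam^{\frac12}f\|_{L^2_T(L^2_\theta)},
\end{gather*}
and by \eqref{D.space.temporal} the second quantity equals $\|f\|_{\DM}$.

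First I would apply Bernstein's inequality on the torus: $\|\Delta_j g\|_{L^\infty_\theta}\lesssim 2^{j/2}\|\Delta_j g\|_{L^2_\theta}$, the exponent being $\frac1p-\frac1q=\frac12$ for $p=2$, $q=\infty$ in one dimension; taking $g=f(t,\cdot)$ and the $L^2_T$ norm gives $\|\Delta_j f\|_{L^2_T(L^\infty_\theta)}\lesssim 2^{j/2}\|\Delta_j f\|_{L^2_T(L^2_\theta)}$. Next, by \eqref{sine.bound} and the discussion following \eqref{tildeLambda:eq}, $\TLam$ is on the Fourier side multiplication by a symbol comparable to $|k|$, so that $\|\Delta_j\TLam^{\frac12}g\|_{L^2_\theta}\approx 2^{j/2}\|\Delta_j g\|_{L^2_\theta}$ uniformly in $j\ge 0$. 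Combining these two facts,
\[
2^{j/2}\,\mu(2^j)\,\|\Delta_j f\|_{L^2_T(L^\infty_\theta)}\;\lesssim\; 2^{j}\,\mu(2^j)\,\|\Delta_j f\|_{L^2_T(L^2_\theta)}\;\approx\; 2^{j/2}\,\mu(2^j)\,\|\Delta_j \TLam^{\frac12}f\|_{L^2_T(L^2_\theta)},
\]
and summing over $j\ge0$ together with the two equivalences above proves \eqref{embedding.infty.use}.

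The only genuine content beyond bookkeeping is that the log weight $\mu$ interacts harmlessly with the dyadic localisation; this is exactly where the three hypotheses of Definition \ref{subw.definition} are used, in particular the doubling property $\mu(2r)\le c_0\mu(r)$, which both underlies the equivalence of \eqref{Besov.mu.CL.Space} with its Littlewood--Paley version carrying the weight $\mu(2^j)$ and lets us absorb the constants incurred when moving $\mu$ across a bounded number of dyadic scales. I should stress that no bound of the form ``pointwise in $\beta$'' is available, because the endpoint embedding $\dot{H}^{1/2}(\T)\not\hookrightarrow L^\infty(\T)$ fails; the gain in \eqref{embedding.infty.use} comes solely from the $\ell^1$ summation in $j$ that is built into the $\dot{B}^{\bullet,\mu}_{\bullet,1}$ norms. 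The resulting constant $C_\mu$ is the product of the Bernstein constant, the multiplier constants for $\TLam$, and the norm--equivalence constants, all of which depend on $\mu$ only through $c_0$.
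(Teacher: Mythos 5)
Your argument is correct and is essentially the paper's own proof unpacked: the paper derives \eqref{embedding.infty.use} by combining the Besov embedding of Proposition \ref{besov.ineq.prop} (i.e., Bernstein's inequality \eqref{bernstein.1} applied blockwise, trading $L^\infty_\theta$ for $L^2_\theta$ at the cost of $2^{j/2}$) with \eqref{bernstein.2} (identifying $\TLam^{\frac12}\Delta_j$ with $2^{j/2}\Delta_j$ in $L^2$), together with the equivalence of Proposition \ref{Besov.equivalence.prop} between the difference-quotient norms and their Littlewood--Paley versions. Your observations about the role of the $\ell^1$ summation in $j$ and the harmlessness of the weight $\mu$ are accurate and consistent with the paper's discussion.
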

The proof of Lemma \ref{L.infinity.embedding} is a direct combination of Proposition \ref{besov.ineq.prop} with \eqref{bernstein.2}.   Then after using Lemma \ref{L.infinity.embedding} we can further choose $\eta>0$ small enough so that $\left(\frac{\lambda}{4}\right)^{1/2} - C \kappa_0^{1/2} C_{\mu} \geq c \lambda^{1/2}>0$ for some small positive constant $c \ll 1$.  We thus obtain 
\begin{equation*}
     ||\bX'||_{\CM}  
 + c \lambda^{1/2} ||\bX'||_{\DM} 
\leq 2 || \BX'_0||_{\BS}
+ 
CT^{1/2}
\BPU^{1/2}
 ||\bX'||_{\CM}.
\end{equation*}
This completes the proof of Proposition \ref{prop:general.apriori.final.local}.

\section{Control of the arc-chord condition}\label{sec:ArcChord}

In this section we will establish the a priori control over the arc-chord condition defined with \eqref{arc.cord.number} for a solution to the Peskin problem \eqref{peskin.general.tension} with a general tension \eqref{tension.map.def} satisfying the a priori estimates \eqref{apriori.bd.norm}.  Recall from \eqref{peskin.expand.tension} with  \eqref{kerbel.A.eqn.deriv} and \eqref{tildeLambda:eq} that $\BX'(t)$ solves the equation 
\begin{equation}\label{v.theta.def}
\partial_t \bX' + \TLam \bT(\bX') = \mathcal{V}(\theta),
\quad     \mathcal{V}(\theta) \eqdef \int_{\T} \frac{d\alpha}{\alpha^2} \mathcal{A}(\theta, \alpha) \delta_\alpha \bT(\bX'(\theta)).
\end{equation}
We suppose that we are given initial data satisfying \eqref{initial.assumption} for equation \eqref{v.theta.def}.
For some $\CTS >0$ we will further suppose for $T>0$ that for some $c>0$ and for $\lambda>0$ as in \eqref{e:DTdefn} that we have
\begin{equation}\label{apriori.bd.norm}
    ||\bX'||_{\CM}+    c \lambda^{\frac12}||\bX'||_{\DM} \le \CTS M.
\end{equation}
Next we have the following estimate on the $L^2_T(\dot{H}^1)$ norm of a solution.

\begin{lemma}\label{H1.small.time}
Given a solution to \eqref{v.theta.def} satisfying \eqref{apriori.bd} and \eqref{apriori.bd.norm}.  For any $\varepsilon>0,$ there exists $T_{\varepsilon} = T(\varepsilon,M,\mu,\rho , \lambda )>0$ such that 
\begin{equation}\notag
    \int_0^{T_{\varepsilon}}ds~ ||\bX'(s)||_{\dot{H}^1}^2   \leq \varepsilon.
\end{equation}
\end{lemma}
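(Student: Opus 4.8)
The plan is to obtain Lemma \ref{H1.small.time} directly from the a priori bounds \eqref{apriori.bd} and \eqref{apriori.bd.norm}, without further use of the equation \eqref{v.theta.def}. The first observation is that the $\DM$ norm in \eqref{D.space.temporal} already controls $\int_0^T\|\bX'(s)\|_{\dot{H}^1}^2\,ds$: since $\TLam^{\frac12}$ shifts Besov regularity by $\tfrac12$ and $\dot{B}^{1,\subw}_{2,1}\hookrightarrow\dot{B}^1_{2,1}\hookrightarrow\dot{B}^1_{2,2}=\dot{H}^1$, one has $\|\bX'\|_{L^2_T(\dot{H}^1)}\lesssim\|\bX'\|_{\DM}$. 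However this bounds the integral only by the fixed quantity $\sim(\CTS M)^2/(c^2\lambda)$, which is not small. To gain the required smallness I will decompose $\bX'$ with the Littlewood--Paley projections $\Delta_j$ on $\T$ from \secref{sec:LPtorus}, setting $P_{>J}=\sum_{j>J}\Delta_j$ and $P_{\le J}=\mathrm{Id}-P_{>J}$, and estimate the high- and low-frequency pieces by different mechanisms.

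For the high frequencies I will exploit the $\ell^1$ structure of the $\DM$ norm together with the logarithmic gain in $\subw$. By the Littlewood--Paley characterization of the Chemin--Lerner Besov norm \eqref{Besov.mu.CL.Space} and $\|\Delta_j\TLam^{\frac12}\bX'\|_{L^2}\approx 2^{j/2}\|\Delta_j\bX'\|_{L^2}$, one has $\sum_j 2^j\subw(2^j)\,\|\Delta_j\bX'\|_{L^2_T(L^2_\theta)}\approx\|\bX'\|_{\DM}\le\CTS M/(c\sqrt\lambda)$; then, using Plancherel for the $L^2$-based norm $\dot{H}^1$, Fubini, the monotonicity of $\subw$, and $\sum_j a_j^2\le\big(\sum_j a_j\big)^2$ for nonnegative $a_j$,
\[
\int_0^T\|P_{>J}\bX'(s)\|_{\dot{H}^1}^2\,ds\;\approx\;\sum_{j>J}2^{2j}\|\Delta_j\bX'\|_{L^2_T(L^2_\theta)}^2\;\le\;\frac{1}{\subw(2^{J})^{2}}\Big(\sum_{j>J}2^{j}\subw(2^{j})\|\Delta_j\bX'\|_{L^2_T(L^2_\theta)}\Big)^{2}\;\lesssim\;\frac{\CTS^{2}M^{2}}{c^{2}\lambda\,\subw(2^{J})^{2}}.
\]
Since $\subw(r)\to\infty$ by Definition \ref{subw.definition}, this is $\le\varepsilon/2$ once $J=J(\varepsilon,M,\subw,\lambda)$ is chosen large, \emph{uniformly in} $T$. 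For the low frequencies I will use only the $\CM$ bound: since $2^{2j}\le 2^{j}2^{J}$ for $j\le J$, for each $s$
\[
\|P_{\le J}\bX'(s)\|_{\dot{H}^1}^2\;\approx\;\sum_{j\le J}2^{2j}\|\Delta_j\bX'(s)\|_{L^2_\theta}^2\;\le\;2^{J}\sum_{j}2^{j}\|\Delta_j\bX'(s)\|_{L^2_\theta}^2\;\approx\;2^{J}\|\bX'(s)\|_{\dot{H}^{1/2}}^2\;\lesssim\;2^{J}\|\bX'\|_{\CM}^2\;\le\;2^{J}\CTS^2M^2,
\]
where I also used $\dot{B}^{1/2}_{2,1}\hookrightarrow\dot{H}^{1/2}$, $\subw\ge1$, and the inequality $\|\bX'\|_{L^\infty_T(\dot{B}^{1/2,\subw}_{2,1})}\le\|\bX'\|_{\CM}$ from \secref{sec:normNew}. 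Integrating in $s$ gives $\int_0^T\|P_{\le J}\bX'(s)\|_{\dot{H}^1}^2\,ds\lesssim 2^{J}\CTS^2M^2\,T$, which, with $J$ now fixed, is $\le\varepsilon/2$ for all $T\le T_\varepsilon$ with $T_\varepsilon=T(\varepsilon,M,\subw,\rho,\lambda)>0$ chosen small. Combining through $\|\bX'\|_{\dot{H}^1}^2\le 2\|P_{\le J}\bX'\|_{\dot{H}^1}^2+2\|P_{>J}\bX'\|_{\dot{H}^1}^2$ then yields $\int_0^{T_\varepsilon}\|\bX'(s)\|_{\dot{H}^1}^2\,ds\le\varepsilon$.

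The only delicate step is the uniform-in-$T$ smallness of the high-frequency contribution: the naive use of the dissipation bound is useless there because it is not small, so the smallness must come from a frequency truncation, and the truncation succeeds precisely because the available control lives in the critical space $\dot{B}^{1/2,\subw}_{2,1}$ — the summability index $r=1$ and the extra weight $\subw$ leave enough room to sum the $\dot{H}^1$ (index $r=2$) tail and to extract the decaying factor $\subw(2^J)^{-2}$. Everything else is a routine Littlewood--Paley computation using only \eqref{apriori.bd.norm}; in fact $\rho$ never enters, so the claimed dependence of $T_\varepsilon$ on $\rho$ is not sharp.
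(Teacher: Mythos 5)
Your proof is correct and is essentially the paper's argument: the paper performs the same two-regime scale splitting directly on the difference-quotient representation of $\dot{H}^1$ (cutting the $\beta$-integral at $|\beta|=\eta$), using the $\subw$-weight inside the $\DM$ bound to get smallness uniformly in $T$ at small scales and a factor of $T$ times the $\CM$ bound at large scales, which is exactly what your Littlewood--Paley truncation at frequency $2^J\sim\eta^{-1}$ does. Your side remark that $\rho$ does not genuinely enter is also consistent with the paper's proof, which likewise uses only \eqref{apriori.bd.norm}.
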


\begin{proof}
 We split into $|\beta| < \eta$ and $|\beta| \ge \eta$ for some small $\eta>0$ to be chosen:
\begin{multline}\notag
    \int_0^T ||\bX'(s)||_{\dot{H}^1}^2ds \lesssim  
    \int_0^T ds \int_{\T} d\theta \int_{\T} \frac{d\beta}{\beta^2}  \int_{\T} \frac{d\alpha}{\alpha^2} |\delta_\beta \delta_\alpha \BX'(s,\theta)|^2
\\ 
\lesssim \frac{1}{\mu(\eta^{-1})^2}  \int_{|\beta| < \eta} \frac{d\beta}{\beta^2} \subw(|\beta|^{-1})^2
   \int_0^T ds \int_{\T} d\theta  \int_{\T} \frac{d\alpha}{\alpha^2} |\delta_\beta \delta_\alpha \BX'(s,\theta)|^2
\\
+ \frac{1}{\eta} \int_{\T} d\theta \int_0^T ds\int_{\T} d\alpha \frac{| \delta_\alpha \BX'(s,\theta)|^2}{\alpha^2} 
\\
\lesssim \frac{||\TLam^{\frac12}\BX'||_{\widetilde{L}^{2}_T(\dot{B}_{2, 2}^{\frac12, \mu})}^2}{\mu(\eta^{-1})^2} + \frac{T ||\BX'||_{\widetilde{L}^{\infty}_T(\dot{B}_{2, 2}^{\frac12})}^2}{\eta}
\lesssim \frac{\CTS^2 M^2}{\mu(\eta^{-1})^2\lambda} + \frac{\CTS^2 M^2T}{\eta}.
\end{multline}
Above we use the spaces from \eqref{Besov.CL.Space} with \eqref{Besov.mu.Space} as in \eqref{C.space.temporal} and \eqref{D.space.temporal}, and the last line follows from \eqref{apriori.bd.norm}.  We can choose $\eta>0$ small enough so that 
$C \frac{\CTS^2 M^2}{\mu(\eta^{-1})^2\lambda} < \frac12 \varepsilon$, and then we can choose $T=T_{\varepsilon}$ small enough so that $C\frac{\CTS^2 M^2 T}{\eta}< \frac12 \varepsilon$.
\end{proof}

Next, we prove the following lemma, which controls the $L^2(\T)$ norm of the time derivative of a solution by the $\dot{H}^1(\T)$ norm.

\begin{lemma}\label{time.space.equivalent}
A solution to \eqref{v.theta.def} satisfying \eqref{apriori.bd} and \eqref{apriori.bd.CM} has the estimate
\begin{equation}\notag
    ||\partial_t \bX'(t)||_{
    L^2(\T)} \leq C_1||\bX'(t)||_{\dot{H}^1(\T)},
\end{equation}
for some constant $C_1 = C_1(M, \rho, \DTT)>0$ and for any time $0<t<T$.
\end{lemma}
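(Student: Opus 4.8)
The plan is to recast the right-hand side of \eqref{v.theta.def} as a singular integral of Cauchy type acting on $\partial_\theta[\bT(\bX')]$, and then to read off the $L^2$ bound from the boundedness of such operators along chord-arc curves.

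First I would rewrite the equation. Starting from the identity $\partial_t\bX'(\theta)=\int_\T(\partial_\alpha^2-\partial_\alpha\partial_\theta)G(\delta_\alpha\bX)\,\delta_\alpha\bT(\bX')(\theta)\,d\alpha$ established in \secref{sec:GenTenDerivation}, together with $(\partial_\alpha-\partial_\theta)[G(\delta_\alpha\bX)]=[\nabla_{\bX'(\theta)}G](\delta_\alpha\bX)$ (also from \secref{sec:GenTenDerivation}), one has $\partial_t\bX'(\theta)=\int_\T\partial_\alpha\big([\nabla_{\bX'(\theta)}G](\delta_\alpha\bX)\big)\,\delta_\alpha\bT(\bX')(\theta)\,d\alpha$. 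Integrating by parts in $\alpha$, the boundary terms cancelling by periodicity, and noting $\partial_\alpha[\delta_\alpha\bT(\bX')(\theta)]=\Psi(\theta+\alpha)$ with $\Psi:=\partial_\theta[\bT(\bX')]=D\bT(\bX')\,\bX''$, I obtain
\[
\partial_t\bX'(\theta)=-\,\mathrm{p.v.}\!\int_\T[\nabla_{\bX'(\theta)}G](\delta_\alpha\bX(\theta))\,\Psi(\theta+\alpha)\,d\alpha.
\]
Since $\bX'\in\dot{H}^1(\T)\cap L^\infty(\T)$ is absolutely continuous and $D\bT\in W^{1,\infty}$, the chain rule holds a.e.\ and $|\Psi|\le\DTT|\bX''|$, so $\|\Psi\|_{L^2}\le\DTT\|\bX'\|_{\dot{H}^1}$; the integration by parts (pairing $\partial_\alpha G_1$, which has a $|\alpha|^{-1}$ singularity, against the $L^2$ function $\Psi$) is legitimate first for smooth data and then by approximation, using the a priori bounds \eqref{apriori.bd} and \eqref{apriori.bd.CM}.

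Second, I would bound the operator $\Psi\mapsto \mathrm{p.v.}\int_\T[\nabla_{\bX'(\theta)}G](\delta_\alpha\bX)\,\Psi(\theta+\alpha)\,d\alpha$ on $L^2(\T)$. By the formula $4\pi\nabla_uG(z)=-\tfrac{u\cdot\hat z}{|z|}\IO+\tfrac{u\cdot\hat z^\perp}{|z|}\RO(z)$ from \secref{sec:GenTenDerivation}, this kernel equals, up to the bounded matrix factor $\RO(\delta_\alpha\bX)$, a finite combination of Cauchy-integral kernels $\tfrac{\bX'(\theta)\cdot\delta_\alpha\bX(\theta)}{|\delta_\alpha\bX(\theta)|^2}$ and $\tfrac{\bX'(\theta)^\perp\cdot\delta_\alpha\bX(\theta)}{|\delta_\alpha\bX(\theta)|^2}$ along the curve $\bX$. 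By \eqref{apriori.bd} and \eqref{apriori.bd.CM} (with the embedding $\dot{B}^{1/2}_{2,1}\hookrightarrow L^\infty$, which gives $\|\bX'(t)\|_{L^\infty}\lesssim M$) the curve $\bX(t)$ is chord-arc with constants controlled only by $\rho$ and $M$; hence the classical $L^2$ boundedness of the Cauchy integral along chord-arc curves (Coifman--McIntosh--Meyer and its standard extensions) gives that this operator is bounded on $L^2(\T)$ with norm $\le C(M,\rho^{-1})$, depending only on the chord-arc character of $\bX$ and not on any higher regularity. Combining the two steps,
\[
\|\partial_t\bX'(t)\|_{L^2(\T)}\le C(M,\rho^{-1})\,\|\Psi(t)\|_{L^2(\T)}\le C(M,\rho^{-1})\,\DTT\,\|\bX'(t)\|_{\dot{H}^1(\T)},
\]
which is the claim with $C_1=C(M,\rho^{-1})\DTT$.

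I expect the second step to be the main obstacle. If one instead tries to bound $\partial_t\bX'$ directly through \eqref{v.theta.def}, estimating the kernel $\mathcal{A}$ pointwise via \eqref{e:Abounds} and applying Minkowski's inequality, one only controls $\|\partial_t\bX'\|_{L^2}$ by a quantity of the size $\|\bX''\|_{L^4}^2$ --- the wrong power of $\bX''$ in the wrong norm --- because the integrand of $\mathcal{V}$ has pointwise size comparable to $|\bX''|^2$. Obtaining the linear bound in $\|\bX'\|_{\dot{H}^1}$ therefore requires using the cancellation in the Cauchy-type kernel in an essential way (its near-oddness in $\alpha$ as $\alpha\to0$, with leading part $-\tfrac{1}{4\pi\alpha}\IO$); the chord-arc Cauchy-integral theory packages exactly this cancellation, but a more hands-on alternative would be to split off the Hilbert-transform piece $-\tfrac{1}{4\pi\alpha}\IO$ of the kernel, use the $L^2$ boundedness of the Hilbert transform, and estimate the (much less singular) remainder directly.
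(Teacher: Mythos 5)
Your proposal is correct in outline, but it takes a genuinely different and much heavier route than the paper, and your closing assessment of the ``naive'' alternative is mistaken: that alternative is precisely the paper's proof, and it works. The paper splits $\partial_t \bX' = -\TLam \bT(\bX') + \mathcal{V}$ directly from \eqref{v.theta.def}, bounds $||\TLam \bT(\bX')||_{L^2} \approx ||D\bT(\bX')\bX''||_{L^2} \lesssim \DTT ||\bX'||_{\dot{H}^1}$, and then estimates $||\mathcal{V}||_{L^2}$ by the pointwise bound \eqref{Vbound.g} plus Minkowski's inequality. This gives $||\mathcal{V}||_{L^2} \lesssim \DTT\big(\rho^{-1}||\bX'||_{\dot{B}^{1/2}_{4,2}}^2 + \rho^{-2}||\bX'||_{\dot{B}^{1/3}_{6,3}}^3\big)$, not anything like $||\bX''||_{L^4}^2$: the differences entering $\mathcal{A}$ in \eqref{e:Abounds} are differences of $\bX'$, not of $\bX''$, so the quadratic term lives at the level of $\dot{H}^{3/4}$, and the interpolation $||\bX'||_{\dot{H}^{3/4}}^2 \lesssim ||\bX'||_{\dot{B}^{1/2}_{2,1}}\,||\bX'||_{\dot{H}^1} \lesssim M\,||\bX'||_{\dot{H}^1}$ (and its cubic analogue) closes the estimate with a constant depending on $M,\rho,\DTT$ --- which is exactly what the lemma permits. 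No cancellation in $\alpha$ is needed for this lemma; the quadratic structure only becomes dangerous in the main estimate of \secref{sec:BesovSpace}, where the coefficient of the top-order term must be small rather than merely finite.

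Your alternative --- integrating by parts to write $\partial_t\bX'(\theta)$ as a principal-value operator with kernel $[\nabla_{\bX'(\theta)}G](\delta_\alpha\bX)$ acting on $\partial_\theta \bT(\bX') = D\bT(\bX')\bX''$, then invoking $L^2$ bounds for Cauchy-type singular integrals on chord-arc curves --- does yield the claimed bound with $C_1 = C(M,\rho^{-1})\DTT$, and it isolates the cancellation cleanly. Two caveats, though. First, ``up to the bounded matrix factor $\RO(\delta_\alpha\bX)$'' is not by itself a reduction: a bounded but $(\theta,\alpha)$-dependent factor cannot be pulled out of a singular integral whose boundedness rests on cancellation. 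What saves the argument is that the full kernel has the form $\alpha^{-1}F(D_\alpha\bX(\theta))$ (contracted with the $L^\infty$ function $\bX'(\theta)$) with $F$ smooth on the region $\{\rho \le |w| \lesssim M\}$ guaranteed by \eqref{apriori.bd} and \eqref{apriori.bd.CM}, i.e.\ a kernel of Coifman--David--Meyer/Calder\'on-commutator type, so the ``standard extensions'' you cite do apply --- but this is substantially deeper machinery than the Minkowski-plus-interpolation argument it replaces. Second, the integration by parts (the $\alpha\to 0$ boundary term, which is a second difference paired with the odd leading part $-\tfrac{1}{4\pi\alpha}\IO$) should be justified for smooth solutions and transferred by approximation, as you indicate. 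In short: the paper's proof buys elementary self-containedness at the cost of an $M$-dependent constant from interpolation; yours buys a structurally sharper statement at the cost of importing the chord-arc singular-integral theory.
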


\begin{proof}
We use the equation \eqref{v.theta.def} to obtain that
\begin{equation}\notag
    ||\partial_t \BX'||_{L^2}\leq ||\TLam \bT(\BX')||_{L^2} + ||\mathcal{V}||_{L^2}.
\end{equation}
We will therefore estimate each of the two terms in the upper bound.
For the first term, we have from \eqref{e:QuantitativeTensionMap} that 
\begin{equation}\notag
\begin{split}
    ||\TLam \bT(\BX')||_{L^2} & \approx ||\bT(\BX')||_{\dot{H}^1} \approx ||D\bT(\BX') \bX''||_{L^2} \\&\lesssim \DTT ||\bX''||_{L^2} \lesssim \DTT ||\bX'||_{\dot{H}^1}.
\end{split}
\end{equation}
For the term $||\mathcal{V}||_{L^2}$, by the structure of $\mathcal{A}$ from \eqref{e:Abounds} with \eqref{apriori.bd} and \eqref{delta.alpha.BX.bound}, it is straightforward to get that 
\begin{equation}\label{Vbound.g}
|\mathcal{V}(\theta)|\lesssim  \DTT\int_{\T} d\alpha ~ \left(   \frac{ |\delta_\alpha \BX'(\theta)|^2}{\rho \alpha^2}+\frac{|\delta_\alpha \BX'(\theta)|^3}{\rho^2 \alpha^2}\right).
\end{equation}
Applying Minkowski's inequality, we then get that 
\begin{equation}\notag
 ||\mathcal{V}||_{L^2} \lesssim \DTT  \int_{\T} 
   \frac{d\alpha}{\alpha^2} \left(  \rho^{-1}  ||\delta_\alpha \BX'||_{L^4}^2 + \rho^{-2}    ||\delta_\alpha \BX'||_{L^6}^3 \right) .
\end{equation}
In terms of the Besov spaces the upper bound above is 
\begin{equation}\notag
||\mathcal{V}||_{L^2} \lesssim 
  \DTT \rho^{-1} ||\BX'||_{\dot{B}^{1/2}_{4,2}}^2 + \DTT\rho^{-2}   ||\BX'||_{\dot{B}^{1/3}_{6,3}}^3.
\end{equation}
From Proposition \ref{besov.ineq.prop} and Lemma \ref{Besov.interpolation}, we have the  embedding inequalities
\begin{equation}\notag
    ||\bX'||_{\dot{B}_{4,2}^{1/2}}^2 \lesssim
    ||\bX'||_{\dot{B}_{2,2}^{3/4}}^2 \lesssim
    ||\BX'||_{\dot{B}^{1/2}_{2,1}} \ ||\BX'||_{\dot{H}^1}, 
\end{equation}
and
\begin{equation}\notag
    ||\bX'||_{\dot{B}_{6,3}^{1/3}}^3 \lesssim ||\bX'||_{\dot{B}_{2,2}^{2/3}}^3 \lesssim ||\BX'||_{\dot{B}^{1/2}_{2,1}}^2 \ ||\BX'||_{\dot{H}^1}.
\end{equation}
Plugging in these inequalities and using \eqref{apriori.bd.norm} we have
\begin{equation}\notag
||\mathcal{V}||_{L^2} \lesssim 
  \DTT \left(  \rho^{-1}  M + \rho^{-2}   M^2 \right)  ||\BX'||_{\dot{H}^1}.
\end{equation}
This completes the proof.
\end{proof}

\begin{corollary}\label{prop:time.estimate}
Given a solution to \eqref{peskin.general.tension} satisfying both \eqref{apriori.bd} and \eqref{apriori.bd.norm}.  Then for any $\varepsilon>0$ there exists a time $T_\varepsilon = T(\varepsilon, M, \mu, \rho, \lambda, \DTT)>0$ such that 
\begin{equation*}
    \int_0^{T_\varepsilon}dt~ ||\partial_t \bX'(t)||_{L^2_\theta}^2 < \varepsilon.
\end{equation*}
\end{corollary}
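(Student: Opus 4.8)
The plan is to obtain Corollary \ref{prop:time.estimate} as an immediate consequence of the two preceding lemmas, Lemma \ref{H1.small.time} and Lemma \ref{time.space.equivalent}. First I would observe that both lemmas are applicable under the stated hypotheses: Lemma \ref{H1.small.time} needs exactly \eqref{apriori.bd} and \eqref{apriori.bd.norm}, while Lemma \ref{time.space.equivalent} needs \eqref{apriori.bd} and \eqref{apriori.bd.CM}, and the latter follows from \eqref{apriori.bd.norm} since $\|\bX'\|_{\CM}\le \|\bX'\|_{\CM}+c\lambda^{1/2}\|\bX'\|_{\DM}\le \CTS M$.

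Next, squaring the pointwise-in-time bound from Lemma \ref{time.space.equivalent} gives
\begin{equation}\notag
    \|\partial_t \bX'(t)\|_{L^2_\theta}^2 \leq C_1^2 \|\bX'(t)\|_{\dot{H}^1}^2, \qquad 0<t<T,
\end{equation}
with $C_1 = C_1(M,\rho,\DTT)$, and integrating in $t$ over $[0,T]$ yields
\begin{equation}\notag
    \int_0^{T} dt~\|\partial_t \bX'(t)\|_{L^2_\theta}^2 \leq C_1^2 \int_0^{T} dt~ \|\bX'(t)\|_{\dot{H}^1}^2 .
\end{equation}
Then I would apply Lemma \ref{H1.small.time} with $\varepsilon/C_1^2$ in place of $\varepsilon$ to produce a time $T_\varepsilon = T(\varepsilon/C_1^2,M,\mu,\rho,\lambda)$ for which $\int_0^{T_\varepsilon} ds~\|\bX'(s)\|_{\dot{H}^1}^2 \leq \varepsilon/C_1^2$. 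Combining the two displays gives $\int_0^{T_\varepsilon} dt~\|\partial_t \bX'(t)\|_{L^2_\theta}^2 < \varepsilon$. Since $C_1$ depends only on $M,\rho,\DTT$, the resulting $T_\varepsilon$ depends only on $\varepsilon, M, \mu, \rho, \lambda, \DTT$, exactly as asserted.

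There is no genuine obstacle at this step — all the analytic content is already carried by the two lemmas: Lemma \ref{H1.small.time} converts the uniform $\DM$ bound in \eqref{apriori.bd.norm} into the smallness of $\int_0^{T}\|\bX'\|_{\dot H^1}^2$ for short times (via splitting the $\beta$-integral at a small scale $\eta$ and using the logarithmic gain $\mu(\eta^{-1})^{-1}$), while Lemma \ref{time.space.equivalent} controls $\|\partial_t\bX'\|_{L^2}$ by $\|\bX'\|_{\dot H^1}$ using the equation \eqref{v.theta.def}, the pointwise bound \eqref{e:Abounds} on $\mathcal{A}$, and the Besov interpolation inequalities. So the only thing to be careful about in writing the proof is matching up the hypotheses and bookkeeping the constant $C_1$ inside the choice of $T_\varepsilon$.
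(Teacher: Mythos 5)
Your proposal is correct and is exactly the argument the paper intends: the paper's own proof of this corollary is simply the combination of Lemma \ref{time.space.equivalent} and Lemma \ref{H1.small.time}, applied just as you describe (including the harmless adjustment $\varepsilon \mapsto \varepsilon/C_1^2$ and the observation that \eqref{apriori.bd.norm} implies \eqref{apriori.bd.CM}).
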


The proof of Corollary \ref{prop:time.estimate} follows from Lemma \ref{time.space.equivalent} and Lemma \ref{H1.small.time}.

\begin{proposition}\label{prop:arc.chord.small}
Given a solution to \eqref{peskin.general.tension} satisfying both \eqref{apriori.bd} and \eqref{apriori.bd.norm}. Then for any small $\varepsilon>0$ there exists a time $T_\epsilon = T(\epsilon, M, \mu, \rho, \lambda, \DTT)>0$ such that
\begin{equation}\notag
    ||\bX'(t)-\bX'_0||_{L^\infty_\theta} < \varepsilon,
\end{equation}
for all $0<t<T_{\epsilon}$.  
\end{proposition}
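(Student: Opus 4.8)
The plan is to upgrade continuity of $t\mapsto \bX'(t)$ at $t=0$ in $L^2_\theta$ — which follows from the control of $\partial_t\bX'$ in $L^2_T(L^2_\theta)$ — to continuity in $L^\infty_\theta$ by interpolating against the uniform bound on $\bX'$ in the scaling critical space $\dot{B}^{\frac12,\subw}_{2,1}$ supplied by \eqref{apriori.bd.norm}, using the logarithmic decay gain \eqref{rate.decay}.

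First I would record that $\partial_t\bX'\in L^2_T(L^2_\theta)$: the proof of Lemma \ref{H1.small.time} in fact gives $\bX'\in L^2_T(\dot H^1)$ for finite $T$, and then Lemma \ref{time.space.equivalent} yields $\|\partial_t\bX'(t)\|_{L^2_\theta}\le C_1\|\bX'(t)\|_{\dot H^1}$, hence membership in $L^2_T(L^2_\theta)$. Moreover Corollary \ref{prop:time.estimate} tells us that for any $\varepsilon'>0$ there is $T_{\varepsilon'}>0$ with $\int_0^{T_{\varepsilon'}}\|\partial_t\bX'(t)\|_{L^2_\theta}^2\,dt<\varepsilon'$. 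Since $\bX'\in L^\infty_T(L^2_\theta)$ (a fortiori by \eqref{apriori.bd.norm}) and $\partial_t\bX'\in L^2_T(L^2_\theta)\subset L^1_T(L^2_\theta)$, the map $t\mapsto\bX'(t)$ has a representative in $C([0,T];L^2_\theta)$ with $\bX'(0)=\bX'_0$, so for $0<t\le T_{\varepsilon'}$
\begin{equation}\notag
\bX'(t)-\bX'_0=\int_0^t \partial_t\bX'(s)\,ds,
\end{equation}
and by Minkowski's inequality followed by Cauchy--Schwarz in $s$,
\begin{equation}\notag
\|\bX'(t)-\bX'_0\|_{L^2_\theta}\le \int_0^t \|\partial_t\bX'(s)\|_{L^2_\theta}\,ds\le t^{1/2}\Big(\int_0^t\|\partial_t\bX'(s)\|_{L^2_\theta}^2\,ds\Big)^{1/2}\le (T_{\varepsilon'}\,\varepsilon')^{1/2}.
\end{equation}

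Next I would split the critical norm of $\bX'(t)-\bX'_0$ at a small scale $r>0$. Both $\bX'(t)$ and $\bX'_0$ are mean zero, so by the embedding \eqref{embed.infty},
\begin{equation}\notag
\|\bX'(t)-\bX'_0\|_{L^\infty_\theta}\lesssim \|\bX'(t)-\bX'_0\|_{\dot B^{\frac12}_{2,1}}=\Big(\int_{|\beta|<r}+\int_{|\beta|\ge r}\Big)\frac{d\beta}{|\beta|^{3/2}}\,\|\delta_\beta(\bX'(t)-\bX'_0)\|_{L^2_\theta}.
\end{equation}
For the piece $|\beta|<r$ the decay estimate \eqref{rate.decay} bounds it by $C\,\subw(r^{-1})^{-1}\|\bX'(t)-\bX'_0\|_{\dot B^{\frac12,\subw}_{2,1}}$; by \eqref{apriori.bd.norm} together with \eqref{C.space.temporal} and the hypothesis \eqref{initial.assumption}, the last norm is $\le (\CTS+1)M$ uniformly in $t\in[0,T]$, and since $\subw(r^{-1})\to\infty$ as $r\to 0$ by Definition \ref{subw.definition}, this piece is $<\varepsilon/2$ once $r$ is chosen small depending only on $M,\subw,\varepsilon$. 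For the piece $|\beta|\ge r$ I would use $\|\delta_\beta g\|_{L^2_\theta}\le 2\|g\|_{L^2_\theta}$ and $\int_{r\le|\beta|\le\pi}|\beta|^{-3/2}\,d\beta\lesssim r^{-1/2}$, so it is $\le C r^{-1/2}\|\bX'(t)-\bX'_0\|_{L^2_\theta}\le C r^{-1/2}(T_{\varepsilon'}\varepsilon')^{1/2}$ by the previous paragraph. With $r$ now fixed, choosing $\varepsilon'$ (and hence $T_{\varepsilon'}$) small enough makes this $<\varepsilon/2$ as well; setting $T_\varepsilon=T_{\varepsilon'}$ finishes the proof.

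The only genuinely delicate point is the fundamental-theorem-of-calculus representation of $\bX'(t)-\bX'_0$ in $L^2_\theta$ near $t=0$, i.e. that $\bX'$ has a representative in $C([0,T];L^2_\theta)$ attaining $\bX'_0$ at $t=0$; this is the standard consequence of $\bX'\in L^\infty_T(L^2_\theta)$ and $\partial_t\bX'\in L^1_T(L^2_\theta)$. Everything else is a routine interpolation between the uniform critical bound and the small-time $L^2_\theta$ bound, exploiting only the logarithmic gain encoded by $\subw$ in \eqref{rate.decay}.
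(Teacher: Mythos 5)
Your proposal is correct and follows essentially the same route as the paper: the paper likewise splits the $\dot{B}^{\frac12}_{2,1}$ norm of $\bX'(t)-\bX'_0$ at a small scale $\eta$, bounds the small-$\beta$ piece by $(\|\bX'\|_{\CM}+\|\bX'_0\|_{\BS})/\mu(\eta^{-1})$ and the large-$\beta$ piece by $\eta^{-1/2}\|\bX'(t)-\bX'_0\|_{L^2_\theta}$, and then controls the latter by the fundamental theorem of calculus, Cauchy--Schwarz in time, and Corollary \ref{prop:time.estimate}. Your extra remark about the $C([0,T];L^2_\theta)$ representative is a harmless elaboration of a step the paper takes for granted.
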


\begin{proof}
We use the embedding \eqref{embed.infty} and \eqref{apriori.bd.norm}, and then we have for any $\eta>0$:
\begin{multline}\notag
    ||\bX'(t)-\bX'_0||_{L^\infty_\theta} \lesssim \int_\T \frac{d\beta}{|\beta|^{3/2}} ||\delta_\beta (\bX'(t)-\bX'_0)||_{L^2_\theta} 
    \lesssim \int_{|\beta| < \eta } + \int_{|\beta| \geq \eta }
    \\ \lesssim \frac{||\bX'||_{\CM}+||\bX'_0||_{\BS}}{\mu(\eta^{-1})} + \frac{||\bX'(t) - \bX'_0||_{L^2_\theta}}{\eta^{1/2}}\lesssim \frac{\CTS M}{\mu(\eta^{-1})} + \frac{||\bX'(t) - \bX'_0||_{L^2_\theta}}{\eta^{1/2}}.
\end{multline}
Now fix $\varepsilon>0$ small.  Then 
we take $\eta$ sufficiently small, and we can guarantee
\begin{equation}\notag
    ||\bX'(t)-\bX'_0||_{L^\infty_\theta} \leq \frac{\varepsilon}{2} + C\frac{||\bX'(t) - \bX'_0||_{L^2_\theta}}{\eta^{1/2}}.
\end{equation}
Next we apply the Minkowski and H{\"o}lder inequalities so that we can bound the latter term as follows
\begin{equation}\notag
||\bX'(t) - \bX'_0||_{L^2_\theta} = \bigg|\bigg| \int_0^t ds~ \partial_t \bX'(s) \bigg|\bigg|_{L^2_\theta} \leq t^{\frac12}\left(\int_0^t ds~ ||\partial_t \bX'(s)||_{L^2_\theta}^2\right)^{\frac12}.  
\end{equation}
Lastly we apply Corollary \ref{prop:time.estimate}, and then the result then follows so long as $T_{\epsilon}>0$ is taken sufficiently small.  
\end{proof}

We now point out that the argument in \cite[Prop 8.7 on page 337]{MR1867882} shows that for any two vectors $\BX_1$ and $\BX_2$ from \eqref{distance.X.notation} and \eqref{arc.cord.number} we have 
\begin{multline*}
       \left||\BX_1|_* - |\BX_2|_* \right|
       =
           \left|\inf_{\theta \ne \alpha} |\DAL \BX_1(\theta)| - \inf_{\theta \ne \alpha} |\DAL \BX_2(\theta)| \right|
           \\
           \leq
\sup_{\theta \ne \alpha}\left| \frac{|\delta_\alpha \BX_1(\theta)|}{|\alpha|} -  \frac{|\delta_\alpha \BX_2(\theta)|}{|\alpha|} \right|
           \leq
\sup_{\theta \ne \alpha} \frac{|\delta_\alpha (\BX_1-\BX_2)(\theta)|}{|\alpha|}. 
\end{multline*}
We thus conclude that
\begin{equation}\label{chord.arc.upper}
    \left||\BX_1|_* - |\BX_2|_* \right|
    \leq
    || \BX_1' - \BX_2'||_{L^\infty_\theta}.
\end{equation}
We can now deduce from Proposition \ref{prop:arc.chord.small} and \eqref{chord.arc.upper} that if initially $|\BX_0|_*>0$, then for a solution to \eqref{peskin.general.tension} satisfying \eqref{apriori.bd.norm} for any fixed $\rho$ satisfying $0<\rho<|\BX_0|_*$ there exists a small-time $T_\rho>0$ such that \eqref{apriori.bd} holds over $0 \le t \le T_\rho$.

\section{Strong continuity estimate}\label{sec:DifferenceBesovSpace}

In this section we will prove two a priori continuity estimates that will imply the uniqueness of solutions.  In \secref{sec:strongCont} we will prove the estimate that will establish the strong continuity result in Theorem \ref{main:unique}.   Then \secref{sec:l2continuity} we prove the estimates that will give the uniqueness in Theorem \ref{first:unique}.

\subsection{Strong continuity estimate}\label{sec:strongCont}
We consider two different solutions to \eqref{peskin.general.tension}, $\BX'(t,\theta)$ and $\BY'(t,\theta)$, with corresponding initial data $\bX_0$ and $\bY_0$ respectively. In this section we will sometimes use the notation $\bZ$ to denote either $\BX$ or $\BY$. When we use $\BZ$ in the estimates below it will not matter whether it is $\BX$ or $\BY$.  We consider initial data for \eqref{peskin.general.tension},  $\bZ_0$, satisfying
\begin{equation}\label{initial.assumption.Z}
    ||\bZ'_0||_{\BS}=||\bZ'_0||_{\dot{B}^{\frac12,\subw}_{2,1}} \leq M, \qquad |\bZ_0|_* = \inf_{\alpha\neq \theta} |\DAL \bZ_0(\theta)| >0.
\end{equation}
Here $0 < M < \infty$ is allowed to be large.  
  Then for some $\CTS >0$ we suppose for $T>0$ that for some $c>0$ and $\lambda>0$ as in \eqref{e:DTdefn} we have
\begin{equation}\label{apriori.bd.norm.Z}
    ||\bZ'||_{\CM}+    c \lambda^{\frac12}||\bZ'||_{\DM} \le \CTS M.
\end{equation}
We also prove our estimate in this section, for some $\rho>0$ that is allowed to be small, under the following condition
\begin{equation}\label{apriori.bd.Z}
 |\BZ(t)|_*\geq \rho, \quad 0 \le t \le T.
\end{equation}
Given $\subw$ from Definition \ref{subw.definition} we will use the equivalent semi-norm defined with $\nu$ instead of $\subw$ where $\nu$ is given by
\begin{equation}\label{nu.definition}
    \nu(r) \eqdef 1 + \frac{\subw(r)}{\KC} , \quad \KCC \ge 1.
\end{equation}
We will choose $\KCC = \KCC(\lambda^{-1}, \DTT)$ to be a possibly large constant at the end of the proof of Proposition \ref{prop:continuity}.   Notice that $\nu$ defines equivalent norms $\CN$ and $\DN$ to the norms $\CM$ and $\DM$ defined in \eqref{C.space.temporal} and \eqref{D.space.temporal} respectively.  In particular, from \eqref{nu.definition} we have 
\begin{equation}\label{equivalent.nu.norm}
|| f ||_{\CN} \leq    2 || f ||_{\CM}, \quad 
    || f ||_{\DN} \leq   2  || f ||_{\DM},
\end{equation}
and
\begin{equation}\notag
|| f ||_{\CM} \leq     \KC || f ||_{\CN},  \quad
        || f ||_{\DM} \leq     \KC || f ||_{\DN}. 
\end{equation}
Then with this equivalent norm we will prove the following continuity estimate.

\begin{proposition}\label{prop:continuity}
Let $\bX, \bY: [0,T]\times \T \to \R^2$ be two weak solutions to the Peskin problem with tension $\TE$ in the sense of Definition \ref{def:solution} with initial data $\BX_0,$ $\BY_0$ respectively.  Assume that $\BX_0,$ $\BY_0,$ and $\bT$ satisfy the assumptions of Theorem \ref{main:unique} including \eqref{initial.assumption.Z}.  Additionally, assume that \eqref{apriori.bd.norm.Z} and \eqref{apriori.bd.Z} hold.  For the tension map \eqref{tension.map.def} we assume that \eqref{e:QuantitativeTensionMap} and \eqref{tension.derivatives.continuity} hold.
Then for the two solutions $\BX'$ and $\BY'$ to \eqref{peskin.general.tension} over $0\le t \le T$ with $T>0$ we have 
\begin{equation*}
     ||\bX' -  \bY'||_{\CN}
    +
  \lambda^{\frac12} || \bX'-  \bY'||_{\DN}
    \leq   
    4 || \bX'_0 -  \bY'_0||_{\BN}
+C || \bX'-\bY'||_{\CN} T^{\frac12} \WK, 
\end{equation*}
where $\WK=\WK[\rho, M]$  is defined in \eqref{WK.constant.def}.

In particular, there exists $T_M = T_M(M, \rho, \mu, \lambda, \DTT, \DDTT, \DDDTT)>0$ such that for any $0<T\leq T_M,$ 
we have the following estimate
\begin{equation*}
     ||\bX' -  \bY'||_{\CN}
    +
  2\lambda^{\frac12} || \bX'-  \bY'||_{\DN}
    \leq   
    8 || \bX'_0 -  \bY'_0||_{\BN}.
\end{equation*}
\end{proposition}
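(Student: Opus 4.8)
The plan is to mirror the structure of the proof of Proposition \ref{prop:general.apriori.final.local}, but now applied to the difference $\bW' \eqdef \bX' - \bY'$. First I would write down the equation satisfied by $\delta_\beta \bW'$. Subtracting the two copies of \eqref{peskin.expand.tension} and using \eqref{kerbel.eqn.deriv}, one gets
\begin{equation}\notag
\partial_t \delta_\beta \bW' + \TLam \delta_\beta\big(\bT(\bX') - \bT(\bY')\big)
= \int_\T \frac{d\alpha}{\alpha^2}\, \delta_\beta\Big( \mathcal{A}[\bX](\theta,\alpha)\delta_\alpha \bT(\bX') - \mathcal{A}[\bY](\theta,\alpha)\delta_\alpha \bT(\bY')\Big).
\end{equation}
Then I would compute $\frac{d}{dt}\|\delta_\beta \bW'\|_{L^2_\theta}^2$. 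From the $\TLam$ term, after writing $\bT(\bX')-\bT(\bY') = \overline{D\bT}[\bX',\bY']\,\bW'$ with an averaged derivative satisfying the ellipticity bound \eqref{e:DTdefn} and the $W^{2,\infty}$ bounds \eqref{e:QuantitativeTensionMap}, one extracts the good dissipative term $-\lambda \|\delta_\beta \TLam^{1/2}\bW'\|_{L^2_\theta}^2$ plus commutator errors controlled using $\DDTT$ and $\DDDTT$ (this is where $D\bT\in W^{2,\infty}$ is used, unlike the $L^2$-continuity estimate). The right-hand side splits, as in Section \ref{sec:BesovSpace}, into error terms that are either (i) differences of the form $\delta_\beta\mathcal{A}[\bX]\delta_\alpha(\bT(\bX')-\bT(\bY'))$, handled by Lemma \ref{lem:typical.term} and Lemma \ref{A.bound.lem} exactly as $\LLT_1, \LLT_2, \LLT_3$ were, with $\bW'$ in place of $\bX'$; and (ii) the genuinely new term coming from the kernel difference $\mathcal{A}[\bX] - \mathcal{A}[\bY]$, whose pointwise bounds are supplied by the kernel difference lemmas of \secref{sec:kernelDIFF}. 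The crucial one is \eqref{e:StrongContNewBad}: a term of shape $\DDTT \int\!\int \frac{d\alpha}{\alpha^2} |\delta_\beta\delta_\alpha \bW'|\,|\tau_\beta\mathcal{K}[\bX]|\,|\bW'|\,|\delta_\beta\delta_\alpha \bY'|$.

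The main obstacle is precisely this term \eqref{e:StrongContNewBad}, because $|\bW'|$ carries no decay as $\alpha\to 0$, so the usual trick of gaining a factor $\mu(\eta^{-1})^{-1}$ by restricting to $|\alpha|<\eta$ fails on the $\bW'$ slot. As the overview explains, I would instead bound it crudely by
\begin{equation}\notag
\LDS \|\delta_\beta \TLam^{1/2}\bW'\|_{L^2_\theta}^2 + C\lambda^{-1}\DDTT^2 \|\bW'\|_{L^\infty_\theta}^2 \|\delta_\beta \TLam^{1/2}\bY'\|_{L^2_\theta}^2,
\end{equation}
absorbing the first piece into dissipation. After integrating in $d\beta$ against $|\beta|^{-3/2}\nu(|\beta|^{-1})$, the second piece contributes $C\lambda^{-1}\DDTT^2 \|\bW'\|_{L^\infty_\theta}^2 \|\bY'\|_{\DM}^2$ type factors, and $\|\bW'\|_{L^\infty_\theta} \lesssim \|\bW'\|_{\dot B^{1/2,\nu}_{2,1}}$ via \eqref{embed.infty} and \eqref{equivalent.nu.norm}; but this produces a coefficient in front of $\|\bW'\|_{\CN}$-type quantities with no smallness. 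This is exactly why $\nu(r) = 1 + \mu(r)/\KC$ is introduced: when the $L^\infty$ error term is measured against $\nu$ rather than $\mu$, the embedding constant carries an extra factor $\KC^{-1}$ (because $\nu \le 1 + \mu/\KC$ while the dissipation still sees the full $\nu \ge \mu/\KC$), so choosing $\KCC = \KCC(\lambda^{-1},\DTT)$ large makes that coefficient as small as we wish. Concretely I would track, uniformly in $\beta$,
\begin{equation}\notag
\frac{d}{dt}\|\delta_\beta \bW'\|_{L^2_\theta}^2 + \frac{\lambda}{4}\|\delta_\beta \bW'\|_{\dot H^{1/2}}^2 \le C\kappa_0 \|\delta_\beta \bW'\|_{L^\infty_\theta}^2 + C\|\delta_\beta \bW'\|_{L^2_\theta}^2 \WK + (\text{terms with }\delta_\beta\bY'\text{ weights}),
\end{equation}
integrate in $t$, take the square root using \eqref{ineq.gen}, integrate in $\beta$ against $|\beta|^{-3/2}\nu(|\beta|^{-1})$, use Lemma \ref{L.infinity.embedding} (with $\nu$) to control the $\widetilde L^2_T(\dot B^{1/2,\nu}_{\infty,1})$ piece by the dissipation $\|\bW'\|_{\DN}$, and finally choose $\KCC$ and then $\eta$ large/small enough to absorb all the dangerous terms.

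This yields the first displayed inequality of the proposition, with $\WK = \WK[\rho,M]$ collecting the harmless polynomial-in-$(M,\rho^{-1})$ constants (times powers of $\lambda^{-1}, \DTT, \DDTT, \DDDTT$). For the second statement, one notes $\WK$ does not depend on $T$, so choosing $T_M$ small enough that $C T_M^{1/2}\WK \le \tfrac14$ and feeding \eqref{equivalent.nu.norm} into the first inequality gives $\|\bW'\|_{\CN} + \lambda^{1/2}\|\bW'\|_{\DN} \le 4\|\bW'_0\|_{\BN} + \tfrac14\|\bW'\|_{\CN}$, whence $\tfrac34\|\bW'\|_{\CN} + \lambda^{1/2}\|\bW'\|_{\DN} \le 4\|\bW'_0\|_{\BN}$, and multiplying through by $2$ and using $\tfrac32 > 1$ gives the stated $\|\bW'\|_{\CN} + 2\lambda^{1/2}\|\bW'\|_{\DN} \le 8\|\bW'_0\|_{\BN}$. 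One subtlety to handle carefully: to justify the energy identity one needs $\bW'$ to be an admissible test function for itself in the weak formulation of Definition \ref{def:solution}, which is available because each solution already enjoys the regularity from Theorem \ref{thm:mainquant}; and the arc-chord lower bound \eqref{apriori.bd.Z} for both $\bX$ and $\bY$ is what keeps all the kernel denominators $|\DATX|, |D_\alpha\bY|$ bounded below throughout.
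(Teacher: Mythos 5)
Your architecture is the same as the paper's: subtract the two copies of \eqref{peskin.general.tension}, run the $\delta_\beta$-energy estimate, extract the dissipation from $\tau_\beta\overline{D\bT}[\bX']\,\delta_\beta\delta_\alpha(\bX'-\bY')$, treat the remaining terms with Lemma \ref{lem:typical.term} and the kernel-difference lemmas of \secref{sec:kernelDIFF}, and isolate \eqref{e:StrongContNewBad} (the paper's $\TT_4$) as the one term requiring the equivalent weight $\nu$. The identification of the difficulty and the Cauchy--Schwarz/Young bound for that term are correct, as is the final bookkeeping deducing the second display from the first.

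However, your explanation of \emph{how} $\nu$ produces the needed smallness is not correct, and an essential step is missing. The embedding $\|\bX'-\bY'\|_{L^\infty_\theta}\lesssim\|\bX'-\bY'\|_{\dot B^{1/2,\nu}_{2,1}}$ does \emph{not} ``carry an extra factor $\KCC^{-1}$'': its constant is independent of $\KCC$ precisely because $\nu\geq 1$. The smallness must instead be extracted from the \emph{other} factor. After integrating in $\beta$ against $|\beta|^{-3/2}\nu(|\beta|^{-1})$ and taking square roots, the bad term is of the form $C\lambda^{-1/2}\DDTT\,\|\bY'\|_{\DN}\,\|\bX'-\bY'\|_{\CN}$, and one decomposes
\begin{equation}\notag
\|\bY'\|_{\DN}=\int_{\T}\frac{d\beta}{|\beta|^{3/2}}\,\|\delta_\beta\TLam^{1/2}\bY'\|_{L^2_T(L^2_\theta)}+\frac{1}{\KC}\,\|\bY'\|_{\DM}.
\end{equation}
The second piece is small for $\KCC$ large, using the a priori bound $\|\bY'\|_{\DM}\lesssim\lambda^{-1/2}M$. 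But the first piece carries neither a $\KCC^{-1}$ nor a $\subw$ weight, and is only bounded by $\|\bY'\|_{\DM}\lesssim\lambda^{-1/2}M$, which has no smallness; your write-up stops here. The paper closes this by a further splitting at a scale $\eta_1$: for $|\beta|<\eta_1$ one inserts $1\leq\subw(|\beta|^{-1})/\subw(\eta_1^{-1})$ to gain the factor $\subw(\eta_1^{-1})^{-1}$, and for $|\beta|\geq\eta_1$ one bounds the integral by $C\eta_1^{-1/2}\|\TLam^{1/2}\bY'\|_{L^2_T(L^2_\theta)}\leq CT^{1/2}\eta_1^{-1/2}M$, which is exactly the source of the $\eta_1^{-1/2}M\lambda^{-1/2}\DDTT$ contribution to $\WK$ in \eqref{WK.constant.def}. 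Without this step the coefficient in front of $\|\bX'-\bY'\|_{\CN}$ cannot be made less than $1$, and the estimate does not close; with it, your argument matches the paper's.
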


For use below we define the following notation using \eqref{arc.cord.number}:  
\begin{equation}\label{arc.chord.XY2}
    |\BX,\BY|_* \eqdef \min\{|\BX|_*,|\BY|_*\}.
\end{equation}
Then the next two lemmas will be used in the proof of Proposition \ref{prop:continuity}.

\begin{lemma}\label{lemm.A.diff}
We have the following uniform estimate
\begin{multline} \notag
\left| \mathcal{A}[\bX]-\mathcal{A}[\bY] \right|
\lesssim
|\BX|_*^{-1}\left(\left| \DAPXY(\theta)\right| 
+
\left| \DAMXY(\theta)\right| 
\right)
\\
+
|\BX|_*^{-2}\left(\left| \DAPXY(\theta)\right| \left| \DAM(\theta)\right|
+
\left| \DAMXY(\theta)\right| \left| \DAYP(\theta)\right|
\right)
\\
+ 
|\BX,\BY|_*^{-2}\left| \DAL(\bX'-\bY')(\theta)\right|\left( \left| \DAYM(\theta)\right|
+\left| \DAYP(\theta)\right|
\right)
\\
+
|\BX,\BY|_*^{-3}\left| \DAL(\bX'-\bY')(\theta)\right| \left| \DAYM(\theta)\right|
\left| \DAYP(\theta)\right|.
\end{multline}
\end{lemma}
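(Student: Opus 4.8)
The estimate is a ``Lipschitz in the data'' version of Lemma~\ref{A.bound.lem}, so the plan is to start from the explicit formula \eqref{kerbel.A.eqn.deriv} for $4\pi\mathcal{A}[\bX](\theta,\alpha)$ and subtract the same expression with $\bY$ in place of $\bX$, then telescope each of the five terms. Each term has the schematic shape
\begin{equation}\notag
\frac{(\text{bilinear in }\delta_\alpha^\pm\bX')}{|\DAL\bX|^2}\cdot M(\DAL\bX),
\end{equation}
where $M$ is one of $\IO$, $\RO$, $\DO$, which are all $0$-homogeneous in their argument and $1$-Lipschitz on the unit sphere. I would write $\mathcal{A}[\bX]-\mathcal{A}[\bY]$ as a sum over these five terms, and for each term split the difference using the standard ``add and subtract'' trick into: (i) a piece where the numerator's bilinear factors are replaced one at a time (producing factors $\delta_\alpha^\pm(\bX'-\bY')$ times a remaining factor $\delta_\alpha^\pm\bY'$ or $\DAL\bY$), (ii) a piece where the scalar denominator $|\DAL\bX|^{-2}$ is replaced by $|\DAL\bY|^{-2}$, and (iii) a piece where the matrix $M(\DAL\bX)$ is replaced by $M(\DAL\bY)$.

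\textbf{Key steps.} First, I would record the elementary estimates I need: $|\DAL\bZ|\ge|\bZ|_*$ (by \eqref{arc.cord.number}), so $|\DAL\bZ|^{-1}\le|\bZ|_*^{-1}$; the difference bound
\begin{equation}\notag
\left|\frac{1}{|\DAL\bX|^2}-\frac{1}{|\DAL\bY|^2}\right|
\lesssim \frac{|\DAL(\bX'-\bY')|}{|\bX,\bY|_*^3},
\end{equation}
which follows from $\big||\DAL\bX|-|\DAL\bY|\big|\le|\DAL(\bX'-\bY')|$ together with $|\DAL(\bX'-\bY')|=|\DAL\bX-\DAL\bY|$ and \eqref{arc.chord.XY2}; and the matrix-Lipschitz bound
\begin{equation}\notag
|M(\DAL\bX)-M(\DAL\bY)|\lesssim \frac{|\DAL(\bX'-\bY')|}{|\bX,\bY|_*},
\end{equation}
valid for $M\in\{\IO,\RO,\DO\}$ since these are smooth $0$-homogeneous functions of a vector bounded away from $0$ by $|\bX,\bY|_*$, using \eqref{matrix.operators} and that $z\mapsto \hat z$ is $2|z|^{-1}$-Lipschitz. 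Second, I would apply these three estimates term by term to \eqref{kerbel.A.eqn.deriv}. In the terms containing $M=\DO-\IO$ (the last line of \eqref{kerbel.A.eqn.deriv}) one keeps the extra gain $|\DO(\DAL\bX)-\IO|\lesssim|\delta_\alpha^+\bX'+\delta_\alpha^-\bX'||\DAL\bX|^{-1}$-type cancellation, but for the \emph{difference} estimate that cancellation is not needed since we are comparing two instances, so the crude $|\DO-\IO|\lesssim 1$ suffices; the same applies to $\RO$. Third, collecting all the pieces and bounding $|\delta_\alpha^\pm\bZ'|\lesssim |\delta_\alpha\bZ'|$ (or treating $\delta_\alpha^\pm$ as in Remark~\ref{ignore.remark}), and using $|\DAL\bZ|\le \|\bZ'\|_{L^\infty_\theta}$ only where a positive power of $|\DAL\bZ|$ appears in the numerator (it does not here, the numerators are differences), one reads off exactly the four groups of terms in the statement: the $|\bX|_*^{-1}$ group from replacing one numerator factor while the denominator and matrix are still those of $\bX$; the $|\bX|_*^{-2}$ group from the bilinear replacements paired with the surviving single $\delta_\alpha^\pm\bZ'$ and one extra $|\DAL\bX|^{-1}$; and the two $|\bX,\bY|_*^{-2}$ and $|\bX,\bY|_*^{-3}$ groups from changing the matrix $M$ and the scalar denominator respectively, each producing the factor $|\DAL(\bX'-\bY')|$ times bilinear-in-$\bY'$ factors.

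\textbf{Main obstacle.} The bookkeeping is the whole difficulty: there are five terms in \eqref{kerbel.A.eqn.deriv}, each split into three or four sub-differences, and one must choose at each stage whether to express a remaining factor in terms of $\bX$ or $\bY$ so that the final bound is symmetric-looking and matches the asserted right-hand side (which uses $|\bX|_*^{-1},|\bX|_*^{-2}$ for the ``numerator-only'' replacements but $|\bX,\bY|_*^{-2},|\bX,\bY|_*^{-3}$ for the denominator/matrix replacements). The key point making this work cleanly is that whenever we replace the denominator $|\DAL\bX|^{-2}$ or the matrix $M(\DAL\bX)$, the \emph{bilinear numerator factor that survives} must be taken with $\bY'$ (since that is what remains after the replacement chain), which is why the last two groups carry $\DAYM,\DAYP$ rather than $\DAM,\DAP$. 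Once that ordering convention is fixed, each sub-term is bounded by an immediate application of the three elementary estimates above plus $|\delta_\alpha^\pm\bZ'|\lesssim|\delta_\alpha\bZ'|$, and no further analysis is required; the proof is purely algebraic. The detailed verification is carried out in \secref{sec:kernelDIFF}.
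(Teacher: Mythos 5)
Your proposal is correct and takes essentially the same route as the paper: the proof in \secref{sec:kernelDIFF} telescopes each of the five terms of \eqref{kerbel.A.eqn.deriv} into a group where the $\delta_\alpha^{\pm}$ numerator factors are replaced one at a time (keeping the $\bX$ kernel, giving the $|\bX|_*^{-1}$ and $|\bX|_*^{-2}$ terms) and a group where the kernels $\DO(\DAL\bX)/|\DAL\bX|^2$, $\RO(\DAL\bX)/|\DAL\bX|^2$, $\RO(\DAL\bX)$, $\DO(\DAL\bX)$ are replaced (with the surviving numerator factors in $\bY'$, giving the $|\BX,\BY|_*^{-2}$ and $|\BX,\BY|_*^{-3}$ terms), exactly via the arc-chord lower bound and the Lipschitz estimates for $z\mapsto|z|^{-2}$ and $z\mapsto\RO(z),\DO(z)$ that you list. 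One minor caveat: the pointwise inequality $|\delta_\alpha^{\pm}\bZ'|\lesssim|\delta_\alpha\bZ'|$ you mention at the end does not hold pointwise (only at the level of $L^p$ norms, as in Lemma \ref{continunity.delta.pm} and Remark \ref{ignore.remark}), but it is also not needed here, since the asserted bound is already stated in terms of $\delta_\alpha^{\pm}$.
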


We also use the decomposition in \eqref{Abeta.split} as
\begin{equation}\notag
     \delta_\beta \mathcal{A}[\bX] =   \mathcal{A}_{1\beta}[\bX]  +  \mathcal{A}_{2\beta}[\bX], 
    \quad     
    \delta_\beta \mathcal{A}[\bY]  =   \mathcal{A}_{1\beta}[\bY]  +  \mathcal{A}_{2\beta}[\bY].
\end{equation}
We further introduce the following notation
\begin{equation}\label{notation.two.beta}
|\delta_\beta\DAX, \delta_\beta\DAY|
\eqdef
\max\{|\delta_\beta\DAX(\theta)|, | \delta_\beta\DAY(\theta)|\}.
\end{equation}

\begin{lemma}\label{lem:Abeta.upper} We have the uniform estimate for the difference
\begin{multline}\label{A1beta.upper}
\left| \mathcal{A}_{1\beta}[\bX] - \mathcal{A}_{1\beta}[\bY] \right|
\lesssim
\left( |\delta_\beta \DAPXY(\theta)|
+
|\delta_\beta \DAMXY(\theta)|
\right)|\BX|_{*}^{-1}
\\
+
\left(
|\delta_\beta \DAPXY(\theta)||\tau_\beta \DAM(\theta)|+ |\delta_\beta \delta_\alpha^- (\BX'-\BY')(\theta)| |\DAYP(\theta)| \right) |\BX|_{*}^{-2}
\\
+
|\delta_\beta \DAYP(\theta)| 
|\tau_\beta \DAMXY(\theta)|
|\BX|_{*}^{-2}
+
|\DAPXY(\theta)||\delta_\beta \DAM(\theta)| |\BX|_{*}^{-2}
\\
+
\left( |\delta_\beta \DAYP(\theta)|
+
|\delta_\beta \DAYM(\theta)|
\right)
|\tau_\beta \DAL(\bX'-\bY')(\theta)||\BX,\BY|_{*}^{-2}
\\
+
|\delta_\beta \DAYP(\theta)||\tau_\beta \DAYM(\theta)|
|\tau_\beta \DAL(\bX'-\bY')(\theta)||\BX,\BY|_{*}^{-3}
\\
+
|\DAYP(\theta)||\delta_\beta \DAYM(\theta)| 
|\DAL(\bX'-\bY')(\theta)| |\BX,\BY|_{*}^{-3}.
\end{multline}
And we have 
\begin{multline}\label{A2beta.upper}
\left| \mathcal{A}_{2\beta}[\bX] - \mathcal{A}_{2\beta}[\bY] \right|
\lesssim
|\DAPXY(\theta)|(1+|\tau_\beta \DAM(\theta)|+| \DAM(\theta)| ) \frac{|\delta_\beta \DAX(\theta)|}{|\BX|_{*}^{3}}
\\
+
\left(
|\tau_\beta \DAMXY(\theta)|
| \DAYP(\theta)| 
+
| \DAMXY(\theta)|(1+|\DAYP(\theta)|)
\right)\frac{|\delta_\beta \DAX(\theta)|}{|\BX|_{*}^{3}}
\\
+
(| \DAYP(\theta)|(1+| \DAYM(\theta)| +|\tau_\beta \DAYM(\theta)|)+| \DAYM(\theta)|)
\frac{|\delta_\beta \DAL(\bX'-\bY')(\theta)|}{|\BX,\BY|_{*}^{3}}
\\
+
| \DAYP(\theta)||\tau_\beta \DAYM(\theta)|
|\tau_\beta\DAL(\bX'-\bY')(\theta)|
\frac{|\delta_\beta\DAX, \delta_\beta\DAY|}{|\BX,\BY|_{*}^{4}}
\\
+
\left(
| \DAYP(\theta)||\tau_\beta \DAYM(\theta)|
+
|\DAYP(\theta)|| \DAYM(\theta)| 
\right)
|\DAL(\bX'-\bY')(\theta)| \frac{|\delta_\beta\DAX, \delta_\beta\DAY|}{|\BX,\BY|_{*}^{4}}
\\
+
\left(| \DAYP(\theta)|
+
| \DAYM(\theta)|
\right)
|\DAL(\bX'-\bY')(\theta)| \frac{|\delta_\beta\DAX, \delta_\beta\DAY|}{|\BX,\BY|_{*}^{4}}.
\end{multline}
\end{lemma}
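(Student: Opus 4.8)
The plan is to prove both estimates as a (somewhat long) bookkeeping exercise built on the explicit formulas for $\mathcal{A}_{1\beta}[\bX]$ and $\mathcal{A}_{2\beta}[\bX]$ produced in the proof of Lemma~\ref{A.bound.lem}, using three elementary Lipschitz facts. Recall from that proof that, after the decomposition \eqref{Abeta.split}, each of the finitely many terms making up $\mathcal{A}_{i\beta}[\bX]$ is a product of: (i) a \emph{small} factor, $\delta_\beta\DAP$ or $\delta_\beta\DAM$ when $i=1$, and $\delta_\beta\DATX$ when $i=2$; (ii) one or two \emph{linear} factors among $\DAP$, $\DAM$ and their $\tau_\beta$--translates; and (iii) a bounded homogeneous degree--zero matrix ($\RO(\DATX)$, $\DO(\DATX)$, or $\DO(\DATX)-\IO$) divided by a power $|\DATX|^{-k}$ with $k\in\{2,3\}$. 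Writing $M$ for one of the maps $z\mapsto\RO(z)$, $z\mapsto\DO(z)$, $z\mapsto|z|^{-k}$, I would first record: for nonzero $a,b\in\R^2$,
\begin{equation*}
|\RO(a)-\RO(b)|+|\DO(a)-\DO(b)|\lesssim|\widehat a-\widehat b|\lesssim\frac{|a-b|}{\min\{|a|,|b|\}},
\qquad
\bigl|\,|a|^{-k}-|b|^{-k}\,\bigr|\lesssim\frac{|a-b|}{\min\{|a|,|b|\}^{k+1}},
\end{equation*}
together with the second--order versions $|DM(a)-DM(b)|\lesssim|a-b|\,\min\{|a|,|b|\}^{-k-2}$. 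Applied with $a=\DATX$ and $b=\DAL\bY(\theta)$ (or their $\tau_\beta$--translates), and using the arc--chord lower bounds $|\DATX|\ge|\bX|_*$, $|\DAL\bY(\theta)|\ge|\bY|_*$ from \eqref{arc.cord.number} together with $\DATX-\DAL\bY(\theta)=\DAL(\bX-\bY)(\theta)$, this converts each matrix/power difference into a factor $|\DAL(\bX-\bY)(\theta)|$ (or its $\tau_\beta$--translate) divided by the appropriate power of $|\bX,\bY|_*$ from \eqref{arc.chord.XY2}.

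For \eqref{A1beta.upper} I would then treat $\mathcal{A}_{1\beta}[\bX]-\mathcal{A}_{1\beta}[\bY]$ term by term by a telescoping identity, replacing $\bX$ by $\bY$ one factor at a time, and assigning each surviving factor to the solution that matches the stated right--hand side. If the swapped factor is one of the small or linear factors (items (i),(ii)), its difference is the same operation applied to $\bX'-\bY'$, which produces $\delta_\beta\DAPXY$, $\delta_\beta\DAMXY$ or $\DAPXY$, while all remaining denominators still come from $\bX$ --- this yields the $|\bX|_*^{-1}$ and $|\bX|_*^{-2}$ lines of \eqref{A1beta.upper}. If the swapped factor is one of the matrices or the power $|\DATX|^{-k}$ (item (iii)), the elementary facts above contribute the $|\bX,\bY|_*^{-2}$ and $|\bX,\bY|_*^{-3}$ lines, with small factor $\DAL(\bX'-\bY')(\theta)$ (here one passes between $\DAL(\bX-\bY)=\delta_\alpha(\bX-\bY)/\alpha$ and expressions in primes via the averaging identity $\DAL f(\theta)=\int_0^1 f'(\theta+\tau\alpha)\,d\tau$ used in the proof of Lemma~\ref{continunity.delta.pm}). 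Collecting the summands and invoking Lemma~\ref{continunity.delta.pm} to replace $\delta_\alpha^\pm$ by $\delta_\alpha$ where convenient (cf.\ Remark~\ref{ignore.remark}) gives \eqref{A1beta.upper}.

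The estimate \eqref{A2beta.upper} is proved the same way, with one genuinely new ingredient: the summand in which the swapped factor is the outer $\delta_\beta$--difference $\delta_\beta\,[\,M(\DATX)\,]$ of a matrix or power $M$. For this I would use a second--difference identity. With $a_1=\DAL\bX(\theta+\beta)$, $a_2=\DAL\bX(\theta)$, $b_1=\DAL\bY(\theta+\beta)$, $b_2=\DAL\bY(\theta)$ and $M(a_1)-M(a_2)=\int_0^1 DM(a_2+s(a_1-a_2))\cdot(a_1-a_2)\,ds$,
\begin{equation*}
\delta_\beta\,[M(\DATX)]-\delta_\beta\,[M(\DAL\bY(\theta))]
=\int_0^1 DM(\xi_a)\cdot\delta_\beta\DAL(\bX-\bY)(\theta)\,ds
+\int_0^1\bigl(DM(\xi_a)-DM(\xi_b)\bigr)\cdot\delta_\beta\DAL\bY(\theta)\,ds,
\end{equation*}
where $\xi_a$ lies between $a_1,a_2$ and $\xi_b$ between $b_1,b_2$. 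Bounding $|DM(\xi_a)|\lesssim|\bX|_*^{-k-1}\le|\bX,\bY|_*^{-k-1}$ on the first integral produces the terms of \eqref{A2beta.upper} with small factor $\delta_\beta\DAL(\bX'-\bY')(\theta)$ over $|\bX,\bY|_*^{3}$; bounding $|DM(\xi_a)-DM(\xi_b)|\lesssim|\bX,\bY|_*^{-k-2}\bigl(|\DAL(\bX-\bY)(\theta)|+|\DAL(\bX-\bY)(\theta+\beta)|\bigr)$ on the second produces the terms carrying the max $|\delta_\beta\DAX,\delta_\beta\DAY|$ from \eqref{notation.two.beta} over $|\bX,\bY|_*^{4}$, together with factors $\DAL(\bX'-\bY')(\theta)$ or $\tau_\beta\DAL(\bX'-\bY')(\theta)$; the remaining summands, in which a small or linear factor is swapped while $\delta_\beta\DATX$ is left untouched, retain $\delta_\beta\DAX$ and a denominator $|\bX|_*^{3}$, giving the first line of \eqref{A2beta.upper}.

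I expect the main obstacle to be organizational rather than conceptual: across all of these telescoping summands one must consistently track which solution each surviving factor belongs to, whether it is $\tau_\beta$--translated, which power of $|\bX|_*$ versus $|\bX,\bY|_*$ appears, and which of the several admissible small--difference factors is produced, so that the collected bound is \emph{exactly} of the stated form. The only place where more than bookkeeping is needed is the second--difference step above, and there the key point is simply that the $D^2M$ bounds are legitimate, which is guaranteed by the strictly positive lower bounds $|\DATX|\ge|\bX|_*$ and $|\DAL\bY(\theta)|\ge|\bY|_*$.
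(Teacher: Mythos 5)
Your overall strategy coincides with the paper's: both proofs telescope the explicit formulas for $\mathcal{A}_{1\beta}$ and $\mathcal{A}_{2\beta}$ from the proof of Lemma \ref{A.bound.lem}, swapping $\bX$ for $\bY$ one factor at a time, bounding the swapped vector factors directly and the swapped matrix/power factors by Lipschitz estimates together with the arc-chord lower bounds. Your first-order facts $|\RO(a)-\RO(b)|+|\DO(a)-\DO(b)|\lesssim |a-b|/\min\{|a|,|b|\}$ and $\bigl||a|^{-k}-|b|^{-k}\bigr|\lesssim |a-b|/\min\{|a|,|b|\}^{k+1}$ are correct and can be proved purely algebraically using only the endpoint values, so the entire argument for \eqref{A1beta.upper} and most of \eqref{A2beta.upper} is sound and matches the paper's term-by-term inspection.

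There is, however, a genuine gap in your second-difference step for \eqref{A2beta.upper}. You write $M(a_1)-M(a_2)=\int_0^1 DM(a_2+s(a_1-a_2))\cdot(a_1-a_2)\,ds$ with $a_1=\tau_\beta\DATX$, $a_2=\DATX$, and then bound $|DM(\xi_a)|\lesssim |\bX|_*^{-k-1}$. But $\xi_a$ is a convex combination of $\DAL\bX(\theta)$ and $\DAL\bX(\theta+\beta)$, and such combinations are \emph{not} controlled by the arc-chord condition \eqref{arc.cord.number}: for $\bX$ a circle and $\beta=\pi$ one has $\DAL\bX(\theta+\pi)=-\DAL\bX(\theta)$, so the midpoint $\xi_a$ at $s=1/2$ is exactly $0$, where $DM$ is singular; indeed $\int_0^1|DM(\xi_a)|\,ds=\infty$ there, so even the integral representation breaks down. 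Since $\beta$ ranges over all of $\T$ in the lemma, this case cannot be excluded. The fix is to abandon the mean-value expansion along the segment and instead expand $\delta_\beta\bigl[M(\DATX)\bigr]-\delta_\beta\bigl[M(\DAL\bY(\theta))\bigr]$ by discrete Leibniz rules applied to the explicit rational structure of $\RO(z)/|z|^k$ and $\DO(z)/|z|^k$ (numerators built from $z$ and $|z|^{-1}$), so that every surviving factor is evaluated at one of the four points $\DATX$, $\tau_\beta\DATX$, $\DAL\bY(\theta)$, $\tau_\beta\DAL\bY(\theta)$, each of which genuinely satisfies the lower bound $|\cdot|\ge|\BX,\BY|_*$. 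This is what the paper's explicit decompositions $\mathcal{A}_{2\beta i2}[\bX,\bY]$ accomplish, and it produces exactly the $|\delta_\beta\DAL(\bX'-\bY')|\,|\BX,\BY|_*^{-3}$ and $|\delta_\beta\DAX,\delta_\beta\DAY|\,|\BX,\BY|_*^{-4}$ terms you are after.
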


The proofs of Lemmas \ref{lemm.A.diff} and \ref{lem:Abeta.upper} are contained in \secref{sec:kernelDIFF}.

\begin{proof}[Proof of Proposition \ref{prop:continuity}] For now we consider \eqref{peskin.general.tension}, and we take the difference of two solutions as
\begin{multline}\notag
\partial_t \BX'(\theta) -\partial_t \BY'(\theta) = \int_{\T} d\alpha \frac{\mathcal{K}[\BX](\theta, \alpha)}{\alpha^2}  \delta_\alpha \left(\bT(\BX'(\theta))-\bT(\BY'(\theta))\right)
\\
+
\int_{\T} d\alpha \frac{\mathcal{A}[\BX](\theta, \alpha)-\mathcal{A}[\BY](\theta, \alpha)}{\alpha^2}  \delta_\alpha \bT(\BY'(\theta)). 
\end{multline}
We now take $\delta_\beta$ of the equation above to obtain
\begin{multline}\notag
\partial_t \delta_\beta(\BX' - \BY')(\theta) = \int_{\T} d\alpha \frac{\tau_\beta \mathcal{K}[\BX](\theta, \alpha)}{\alpha^2}  \delta_\beta\delta_\alpha \left(\bT(\BX'(\theta))-\bT(\BY'(\theta))\right)
\\
+
\int_{\T} d\alpha \frac{\delta_\beta \mathcal{A}[\BX](\theta, \alpha)}{\alpha^2} 
\delta_\alpha(\bT(\BX'(\theta))-\bT(\BY'(\theta)))
\\
+
\int_{\T} d\alpha \frac{\tau_\beta(\mathcal{A}[\BX]-\mathcal{A}[\BY])(\theta, \alpha)}{\alpha^2}  \delta_\beta\delta_\alpha \bT(\BY'(\theta))
\\
+
\int_{\T} d\alpha \frac{\delta_\beta(\mathcal{A}[\BX]-\mathcal{A}[\BY])(\theta, \alpha)}{\alpha^2}  \delta_\alpha \bT(\BY'(\theta)).
\end{multline}
Now we consider this expression in $L^2$ similar to \eqref{beta.L2.difference.1} as
\begin{multline}\label{energy.diff}
\frac{d}{dt} || \delta_\beta \bX' - \delta_\beta \bY'||_{L^2}^2
    \\
    =
    - \int_{\T} d\theta \int_{\T} d\alpha ~
    \delta_\beta\delta_\alpha  (\bX'-  \bY')(\theta)  \cdot
    \frac{\tau_\beta \mathcal{K}[\bX](\theta, \alpha)}{\alpha^2}  
  \delta_\beta\delta_\alpha  (\bT(\BX'(\theta))-\bT(\BY'(\theta)))
    \\
    - \int_{\T} d\theta \int_{\T} d\alpha ~
    \delta_\beta \delta_\alpha  (\bX'-  \bY')(\theta) 
     \cdot\frac{\delta_\beta\mathcal{A}[\bX](\theta, \alpha)}{\alpha^2}  
    \delta_\alpha  (\bT(\BX'(\theta))-\bT(\BY'(\theta)))
        \\
    - \int_{\T} d\theta \int_{\T} d\alpha ~
       \delta_\beta\delta_\alpha  (\bX'-  \bY')(\theta) 
    \cdot \frac{ \tau_\beta (\mathcal{A}[\bX]-\mathcal{A}[\bY])(\theta, \alpha)}{\alpha^2}  
    \delta_\beta \delta_\alpha  \bT(\BY'(\theta))
    \\
    - \int_{\T} d\theta \int_{\T} d\alpha ~
           \delta_\beta\delta_\alpha  (\bX'-  \bY')(\theta) 
           \cdot 
    \frac{\delta_\beta(\mathcal{A}[\BX]-\mathcal{A}[\BY])(\theta, \alpha)}{\alpha^2}  \delta_\alpha \bT(\BY'(\theta)).
\end{multline}
To prove our estimate, we will first expand out each of the terms above.

To this end we recall  \eqref{e:deltaalphaT} and \eqref{e:albe.T.XY}.  As in \eqref{e:deltaalphaT} we expand out
\begin{multline}\label{difference.alpha.T}
    \delta_\alpha  (\bT(\BX'(\theta))-\bT(\BY'(\theta)))
=
  \DBTX  \delta_\alpha( \BX'-  \BY' )(\theta) \\ + (\DBTX - \DBTY)\delta_\alpha\bY'(\theta). 
\end{multline}
Further as in \eqref{DBTX.def} we calculate that 
\begin{equation}\label{difference.DBTXY}
    \DBTX - \DBTY
 =
\int_0^1 ds_1~ \DDTB(s_1,\theta,\alpha) ~
(g_1[\BX']-g_1[\BY'] ), 
\end{equation}
where $g_1[\BX']=g_1[\BX'](s_1, \alpha, \theta)$ is defined below \eqref{DBTX.def} and
\begin{equation}\label{DDBT.term}
\DDTB(s_1,\theta,\alpha) 
\eqdef
\int_0^1 ds_2~  D^2\bT
(f_2[\BX',\BY'](s_1,s_2,\theta,\alpha)).
\end{equation}
Here we also use the definition
\begin{multline}\notag
f_2[\BX',\BY'](s_1,s_2,\theta,\alpha)
\eqdef
s_2(s_1 \tau_\alpha ( \BX'-\BY')(\theta) + 
    (1-s_1)( \BX'-\BY')(\theta) )
    \\
    +s_1  \tau_\alpha \BY'(\theta) + 
     (1-s_1)\BY'(\theta).
\end{multline}
Thus recalling Remark \ref{ignore.remark} and using \eqref{e:QuantitativeTensionMap} we have 
\begin{multline*}
        |    \delta_\alpha  (\bT(\BX'(\theta))-\bT(\BY'(\theta)))  |
\leq
\DTT |\delta_\alpha  (\BX'-\BY')(\theta)|
\\
+\DDTT
 | (\BX'-\BY')(\theta)|
|\delta_\alpha  \BY'(\theta)|.
\end{multline*}
We will use this estimate for the second term in \eqref{energy.diff}.

 For the first term in \eqref{energy.diff}, we expand \eqref{e:albe.T.XY} out as
\begin{multline*}
\delta_\beta\delta_\alpha  (\bT(\BX'(\theta))-\bT(\BY'(\theta)))
=
  \tau_\beta \DBTX  \delta_\beta\delta_\alpha( \BX'-  \BY' )(\theta) 
  \\ 
  + \tau_\beta (\DBTX - \DBTY)\delta_\beta\delta_\alpha\bY'(\theta) 
      + 
    \delta_\beta \DBTX \delta_\alpha(\BX'-\BY')(\theta)
    \\
        + 
    (\delta_\beta \DBTX-\delta_\beta \DBTY )\delta_\alpha \BY'(\theta).
\end{multline*}
Notice that $\delta_\beta \DBTX$ is calculated in \eqref{delta.beta.DBTX} and it has the bound \eqref{delta.beta.DBTX.bound}.  We further calculate using $g_1$ from \eqref{DBTX.def} and  \eqref{delta.beta.DBTX} that 
\begin{multline}\label{delta.beta.DBTX.minus}
\delta_\beta\DBTX - \delta_\beta\DBTY
=
\int_0^1 ds_1 ~
\DDBT[\BX'](\theta) g_1[\delta_\beta(\BX'-\BY')](s_1, \alpha, \theta)
\\    
    +
    \int_0^1 ds_1 ~
    (\DDBT[\BX']- \DDBT[\BY']) g_1[\delta_\beta\BY'](s_1, \alpha, \theta),
\end{multline}
where 
\begin{equation*}
    \DDBT[\BX']- \DDBT[\BY']
    =
   \int_0^1 ds_2~ \DDDBT(\theta,\alpha,\beta) g_2[\BX'-\BY'],
\end{equation*}
where $g_2[\BX'-\BY']$ is defined in \eqref{g2.s2.def}.  We further use
\begin{equation*}
\DDDBT(\theta,\alpha,\beta) 
\eqdef
 \int_0^1 ds_3~  D^3\bT
(g_3(s_3,\theta,\alpha,\beta)),
\end{equation*}
and with $g_2[\BX']$ defined in \eqref{g2.s2.def} we use
\begin{equation*}
g_3(s_3,\theta,\alpha,\beta)
\eqdef
s_3 g_2[\BX'](s_2,s_1,\alpha,\theta,\beta) + (1-s_3)g_2[\BY'](s_2,s_1,\alpha,\theta,\beta).
\end{equation*}
Notice that $  \tau_\beta \DBTX  \delta_\beta\delta_\alpha( \BX'-  \BY' )(\theta)$ with \eqref{DBTX.def} will give rise to the crucial elliptic term in \eqref{energy.diff} using \eqref{e:DTdefn}.  Putting all of this together including Remark \eqref{ignore.remark} using \eqref{delta.beta.DBTX.bound}, \eqref{e:QuantitativeTensionMap} and \eqref{tension.derivatives.continuity} we conclude the following bound
\begin{multline*}
|\delta_\beta\delta_\alpha  (\bT(\BX'(\theta))-\bT(\BY'(\theta)))
-
  \tau_\beta \DBTX  \delta_\beta\delta_\alpha( \BX'-  \BY' )(\theta) |
  \\ 
  \leq
  \DDTT(
| \delta_\beta\delta_\alpha\bY'(\theta) |
 |(\BX'-\BY')(\theta)|
      + 
    |\delta_\beta\BX'(\theta)|
    |\delta_\alpha(\BX'-\BY')(\theta)|)
    \\
        + 
  \DDTT  
    |\delta_\beta( \BX' - \BY')(\theta)| |\delta_\alpha \BY'(\theta)|
        + 
  \DDDTT
    |(\BX'-\BY')(\theta)|
    |\delta_\beta \BY'(\theta)| |\delta_\alpha \BY'(\theta)|.
\end{multline*}
 We will use this to estimate the first term in \eqref{energy.diff}. 
 To bound the third term in \eqref{energy.diff}, we recall  \eqref{e:albe.T.XY} and \eqref{beta.alpha.TXP}.
Lastly, to bound fourth term in \eqref{energy.diff} we recall \eqref{e:deltaalphaT} and \eqref{delta.alpha.BX.bound}.

Plugging all of these calculations into \eqref{energy.diff}, and using  \eqref{kerbel.eqn.deriv} with \eqref{kerbel.A.eqn.deriv} and \eqref{e:DTdefn} and Remark \eqref{ignore.remark} we obtain 
\begin{multline}\label{energy.difference.first1}
    \frac{d}{dt} || \delta_\beta (\bX' -  \bY')||_{L^2}^2
    +
 \lambda\int_{\T} d\theta \int_{\T} \frac{d\alpha}{\alpha^2} ~
       |\delta_\beta\delta_\alpha  (\bX'-  \bY')(\theta)|^2 
    \\
    \leq   \sum_{j=0}^{3} \TT_j
    +  \sum_{j=4}^{7} \TT_j
        +  \sum_{j=8}^{9} \TT_j.
\end{multline}
To ease the notation, when we list the terms below we will drop the $(\theta)$ and $(\theta, \alpha)$ notation from each term.  For example we will write $\tau_\beta \mathcal{A}[\bX](\theta, \alpha) = \tau_\beta \mathcal{A}[\bX]$.  Then with \eqref{kerbel.eqn.deriv} and  \eqref{kerbel.A.eqn.deriv} we have 
\begin{equation}\notag
    \TT_0 \eqdef  \DTT \int_{\T} d\theta \int_{\T} \frac{d\alpha}{\alpha^2} ~
   | \delta_\beta\delta_\alpha  (\bX'-  \bY')|^2  |\tau_\beta \mathcal{A}[\bX]|, 
\end{equation}
\begin{equation}\notag
    \TT_1 \eqdef  \DTT \int_{\T} d\theta \int_{\T} \frac{d\alpha}{\alpha^2} ~
   | \delta_\beta\delta_\alpha  (\bX'-  \bY')| | \delta_\alpha  (\bX'-  \bY')|  |\delta_\beta \mathcal{A}[\bX]|, 
\end{equation}
\begin{equation}\notag
    \TT_2 \eqdef  \DTT \int_{\T} d\theta \int_{\T} \frac{d\alpha}{\alpha^2} ~
   | \delta_\beta\delta_\alpha  (\bX'-  \bY')|  | \delta_\beta\delta_\alpha  \bY'|  |\tau_\beta (\mathcal{A}[\BX]-\mathcal{A}[\BY])|, 
\end{equation}
\begin{equation}\notag
    \TT_3 \eqdef  \DTT \int_{\T} d\theta \int_{\T} \frac{d\alpha}{\alpha^2} ~
   | \delta_\beta\delta_\alpha  (\bX'-  \bY')|  | \delta_\alpha  \bY'|  |\delta_\beta(\mathcal{A}[\BX]-\mathcal{A}[\BY])|, 
\end{equation}
\begin{equation}\notag
    \TT_4 \eqdef  \DDTT  \int_{\T} d\theta \int_{\T} \frac{d\alpha}{\alpha^2} ~
   | \delta_\beta\delta_\alpha  (\bX'-  \bY')|  |\tau_\beta \mathcal{K}[\bX]|
   | \bX'-  \bY'| | \delta_\beta \delta_\alpha  \bY'|, 
\end{equation}
\begin{equation}\notag
    \TT_5 \eqdef \DDTT \int_{\T} d\theta \int_{\T} \frac{d\alpha}{\alpha^2} ~
   | \delta_\beta\delta_\alpha  (\bX'-  \bY')|  |\tau_\beta \mathcal{K}[\bX]|
   |  \delta_\beta  \bX'| | \delta_\alpha  (\bX'-  \bY')|, 
\end{equation}
\begin{equation}\notag
    \TT_{6} \eqdef  \DDTT  \int_{\T} d\theta \int_{\T} \frac{d\alpha}{\alpha^2} ~
   | \delta_\beta\delta_\alpha  (\bX'-  \bY')|  |\tau_\beta \mathcal{K}[\bX]|
      | \delta_\beta  (\bX'-  \bY')| 
|  \delta_\alpha  \bY'|,
\end{equation}
\begin{equation}\notag
    \TT_{7} \eqdef  \DDTT  \int_{\T} d\theta \int_{\T} \frac{d\alpha}{\alpha^2} ~
   | \delta_\beta\delta_\alpha  (\bX'-  \bY')|  |\delta_\beta \mathcal{A}[\bX]|
  |\bX'-  \bY'| |\delta_\alpha \bY'|, 
\end{equation}
\begin{equation}\notag
    \TT_{8} \eqdef  \DDTT \int_{\T} d\theta \int_{\T} \frac{d\alpha}{\alpha^2} ~
   | \delta_\beta\delta_\alpha  (\bX'-  \bY')|   |\tau_\beta (\mathcal{A}[\BX]-\mathcal{A}[\BY])|
    |\delta_\beta \bY'| |\delta_\alpha \bY'|, 
\end{equation}
\begin{equation}\notag
    \TT_{9} \eqdef  \DDDTT  \int_{\T} d\theta \int_{\T} \frac{d\alpha}{\alpha^2} ~
   | \delta_\beta\delta_\alpha  (\bX'-  \bY')| 
   |\tau_\beta \mathcal{K}[\bX]|
      |\bX'-  \bY'|
    |\delta_\beta \BY'| |\delta_\alpha \BY'|. 
\end{equation}
We will estimate each of the terms above individually.  In the all of the following estimates we will use a small $\eta>0$ to be chosen at the end of the proof.

First notice that $\TT_{0}$ is analogous to $\LLT_{1}$ from \eqref{beta.L2.difference} and \eqref{e:Abounds}.  Thus similar to \eqref{LA.estimate} we have 
\begin{equation}\label{TT1.estimate}
     \TT_{0}
     \leq
\left( \LDS +     C \kappa_1  \DTT  \right) ||\delta_\beta \TLam^{\frac12}(\bX'-\bY')||_{L^2_\theta}^2 + C ||\delta_\beta (\bX'-\bY')||_{L^2_\theta}^2 \BB,
\end{equation}
where $\kappa_1 = \kappa_1(\eta)$ is given by \eqref{kappa1.def} and $\BB$ is defined in \eqref{BB.const.def}.   Later we will be able to choose $\eta>0$ small enough in  so that  we have  $C \kappa_1   \DTT \leq \frac{1}{2}\LDS$.  This is our main estimate for the term $\TT_{0}$.

Then the term $\TT_{1}$ is exactly $\LLT_2[\bX'-\bY', \bX', \bX'-\bY']$ from \eqref{LLT2.generalform}.  Thus as in \eqref{LLT2.bound.general}, $\TT_{1}$ satisfies the estimate 
\begin{multline}\label{TT2.bound}
\TT_{1}
\leq 
\LDS ||\delta_\beta \TLam^{\frac12}(\bX'-\bY')||_{L^2_\theta}^2
+     
C\kappa_3\DTT^2 
||   \delta_\beta \BX'||_{L^\infty_\theta}^2 ||  \bX'-\bY'||_{\BA}^2
\\
+
 \CDS
|| \delta_\beta(\bX'-\bY')||_{L^2_\theta}^2
+C || \delta_\beta\BX'||_{L^2_\theta}^2
||  \bX'-\bY'||_{L^\infty_\theta}^2 \DTT^2 
\WK_2,
\end{multline}
where $\kappa_3 = \kappa_3(\eta)$ is given by \eqref{kappa3.def} 
and as in  \eqref{BG.const.def} we have  
\begin{equation}\label{WK2.constant.def}
\WK_2 = \WK_2[M,\rho^{-1},\eta^{-1}]
    \eqdef 
 \eta^{-2}    
\left( \sum_{j=1}^2 \rho^{-j} M^{j-1}\right)^2(1+\rho^{-2} M^2).
\end{equation}
This is our main estimate for the term $\TT_{1}$.

Next we consider $\TT_{7}$.  After bounding $|\bX'-  \bY'|  \lesssim ||\bX'-  \bY'||_{L^\infty_\theta}$, then as in \eqref{LLT2.bound.general} with \eqref{apriori.bd.norm.Z} the term $\TT_{7}$ satisfies the bounds 
\begin{multline}\label{TT11.bound}
\TT_{7}
\leq 
\LDS ||\delta_\beta \TLam^{\frac12}(\bX'-\bY')||_{L^2_\theta}^2
+     
C\kappa_3\DDTT^2 M^2 || \bX'-\bY'||_{L^\infty_\theta}^2 
||   \delta_\beta \BX'||_{L^\infty_\theta}^2
\\
+
 \CDS
|| \delta_\beta(\bX'-\bY')||_{L^2_\theta}^2 
+ 
C
|| \delta_\beta\BX'||_{L^2_\theta}^2 
|| \bX'-\bY'||_{L^\infty_\theta}^2 M^2\DDTT^2 \WK_2.
\end{multline}
This is our main estimate for $\TT_{7}$.

Next, we apply the estimate \eqref{TTG.y.estimate} to $\TT_{5}$ to obtain
\begin{multline}\label{TT7.bound}
        \TT_{5} 
        \leq
      \LDS  ||   \delta_\beta \TLam^{\frac12}(\bX'-  \bY')||_{L^2_\theta}^2
      +
C\kappa_5 \DDTT^2||   \delta_\beta \bX'||_{L^\infty_\theta}^2 ||\bX'-  \bY'||_{\BA}^2
\\
+
 \CDS
|| \delta_\beta(\bX'-  \bY')||_{L^2_\theta}^2+  C || \delta_\beta\bX'||_{L^2_\theta}^2 
||\bX'-  \bY'||_{L^\infty_\theta}^2 \DDTT^2
\WK_5.
\end{multline}
where recalling $\WK_1=\WK_1[\rho, M]=C(1+ \rho^{-2} M^2)$ from \eqref{K.bound.infty} we define
\begin{equation}\label{kappa5.def}
    \kappa_5 \eqdef \frac{(1+ \rho^{-2} M^2)^2}{\lambda \MA(\eta^{-1})^2},
\end{equation}
and 
\begin{equation}\label{WK5.constant.def}
    \WK_5 \eqdef  \eta^{-2}
  (1+ \rho^{-2} M^2)^2.
\end{equation}
Next, we apply the estimate \eqref{TTG.y.estimate} to $\TT_{6}$ to obtain
\begin{multline}\label{TT9.bound}
        \TT_{6}
        \leq
      \LDS  ||   \delta_\beta \TLam^{\frac12}(\bX'-  \bY')||_{L^2_\theta}^2
      +
C\kappa_6 \DDTT^2 ||   \delta_\beta (\bX'-  \bY')||_{L^\infty_\theta}^2
\\
+
 C
|| \delta_\beta(\bX'-  \bY')||_{L^2_\theta}^2
\DDTT 
 M (1+ \rho^{-2} M^2) \eta^{-1},
\end{multline}
where
\begin{equation}\label{kappa6.def}
    \kappa_6 = \frac{M^2}{\lambda \MA(\eta^{-1})^2} (1+ \rho^{-2} M^2)^2. 
\end{equation}
We also apply the estimate \eqref{TTG.y.estimate} to $\TT_{9}$ to obtain
\begin{multline}\label{TT18.bound}
        \TT_{9} 
        \leq
      \LDS  ||   \delta_\beta \TLam^{\frac12}(\bX'-  \bY')||_{L^2_\theta}^2
      +
C\kappa_6 \DDDTT^2 ||\bX'-  \bY'||_{L^\infty_\theta}^2 ||   \delta_\beta \bY'||_{L^\infty_\theta}^2
\\
+
\CDS
|| \delta_\beta(\bX'-  \bY')||_{L^2_\theta}^2
\\
+C || \delta_\beta\bY'||_{L^2_\theta}^2 
||\bX'-  \bY'||_{L^\infty_\theta}^2
\DDDTT^2 
 M^2 (1+ \rho^{-2} M^2)^2 \eta^{-2}.
\end{multline}
This is our main estimate for $\TT_{9}$.

We will now estimate the terms $\TT_{2}$ and $\TT_{8}$.  From Lemma \ref{lemm.A.diff}, Remark \ref{ignore.remark} and \eqref{apriori.bd.Z} we have 
\begin{multline} \label{A.diff.est.noPM}
\left| \mathcal{A}[\bX]-\mathcal{A}[\bY] \right|
\lesssim
| \delta_\alpha (\BX'-\BY')(\theta)|\left( \rho^{-1}+
\rho^{-2}(| \delta_\alpha \BX'(\theta)|+| \delta_\alpha \BY'(\theta)|)
\right)
\\
+ 
\left| \DAL (\BX - \BY)\right|  | \delta_\alpha \BY'(\theta)|
\left(\rho^{-2}+ \rho^{-3}|\delta_\alpha \BY'(\theta)|\right).
\end{multline}
We thus define
\begin{equation}\label{WK21.constant.def}
    \WK_{21}= \WK_{21}[\rho,M] \eqdef  
    \rho^{-1}+
\rho^{-2}M, \quad   \WK_{22}= \rho^{-1}\WK_{21}.
\end{equation}
Then for $\TT_{2}$ using \eqref{operator.bd.first} we split it up as
\begin{multline*}
        \TT_{2} \leq  C    \WK_{21}\DTT \int_{\T} d\theta \int_{\T} \frac{d\alpha}{\alpha^2} ~
   | \delta_\beta\delta_\alpha  (\bX'-  \bY')(\theta)|  | \delta_\beta  \bY'(\theta)|  | \delta_\alpha (\BX'-\BY')(\theta)|
   \\
   +
   C    \WK_{22}\DTT || \BX'-\BY'||_{L^\infty_{\theta}}\int_{\T} d\theta \int_{\T} \frac{d\alpha}{\alpha^2} ~
   | \delta_\beta\delta_\alpha  (\bX'-  \bY')(\theta)|  | \delta_\beta  \bY'(\theta)|  | \delta_\alpha \BY'(\theta)|
   \\
   = \TT_{21}+\TT_{22}.
\end{multline*}
Then for $\TT_{21}$ we use \eqref{TTG.y.estimate} to obtain
\begin{multline}\label{TT31.estimate}
\TT_{21}
\leq
\LDS ||   \delta_\beta \TLam^{\frac12}(\bX'-  \bY')||_{L^2_\theta}^2
+
\frac{C \WK_{21}^2\DTT^2}{\lambda \MA(\eta^{-1})^2} ||   \delta_\beta \BY'||_{L^\infty_\theta}^2
|| \bX'-  \bY'||_{\BA}^2
\\
+
\CDS || \delta_\beta(\bX'-  \bY')||_{L^2_\theta}^2 
+
C|| \delta_\beta\BY'||_{L^2_\theta}^2
||\bX'-  \bY'||_{L^{\infty}_\theta}^2 
\WK_{21}^2\DTT^2 \eta^{-2}.
\end{multline}
And for $\TT_{22}$ we similarly obtain
\begin{multline}\label{TT32.estimate}
\TT_{22}
\leq
\LDS ||   \delta_\beta \TLam^{\frac12}(\bX'-  \bY')||_{L^2_\theta}^2
+
\frac{C \WK_{22}^2\DTT^2 M^2}{\lambda \MA(\eta^{-1})^2} ||   \delta_\beta \BY'||_{L^\infty_\theta}^2 || \BX'-\BY'||_{L^\infty_{\theta}}^2
\\
+
\CDS
|| \delta_\beta(\bX'-  \bY')||_{L^2_\theta}^2 
+
C || \delta_\beta\BY'||_{L^2_\theta}^2|| \BX'-\BY'||_{L^\infty_{\theta}}^2
M^2   \WK_{22}^2\DTT^2 \eta^{-2}.
\end{multline}
These are our main estimates for $\TT_{2}$.

For the term $\TT_{8}$, also using \eqref{operator.bd.first}, with \eqref{apriori.bd.norm.Z} and Remark \ref{ignore.remark} we will use the following estimate
\begin{equation}\notag
    \left| \mathcal{A}[\bX]-\mathcal{A}[\bY] \right|
\leq
C\WK_{8}
|| \BX'-\BY'||_{L^\infty_{\theta}},
\end{equation}
where
\begin{equation}\label{WK8.constant.def}
    \WK_{8}= \WK_{8}[\rho,M] \eqdef  
    \rho^{-1}+
\rho^{-2}M
+ 
\rho^{-3}M^2.
\end{equation}
Then for $\TT_{8}$ we further use \eqref{TTG.y.estimate} to find
\begin{multline}\label{TT13.estimate}
\TT_{8}
\leq
\LDS ||   \delta_\beta \TLam^{\frac12}(\bX'-  \bY')||_{L^2_\theta}^2
+
C \frac{ \WK_{8}^2 M^2 \DDTT^2
}{\lambda \MA(\eta^{-1})^2} ||   \delta_\beta \BY'||_{L^\infty_\theta}^2
|| \BX'-\BY'||_{L^\infty_{\theta}}^2
\\
+
\CDS
|| \delta_\beta(\bX'-  \bY')||_{L^2_\theta}^2 
+ 
C
|| \delta_\beta\BY'||_{L^2_\theta}^2 || \BX'-\BY'||_{L^\infty_{\theta}}^2 M^2   \WK_{8}^2
 \eta^{-2} \DDTT^2.
\end{multline}
This is our main estimate for $\TT_{8}$.

We will now estimate $\TT_{3}$.  To this end we use the decomposition in \eqref{Abeta.split} as
\begin{equation}\notag
     \delta_\beta \mathcal{A}[\bX] =   \mathcal{A}_{1\beta}[\bX]  +  \mathcal{A}_{2\beta}[\bX], 
    \quad     
    \delta_\beta \mathcal{A}[\bY]  =   \mathcal{A}_{1\beta}[\bY]  +  \mathcal{A}_{2\beta}[\bY]. 
\end{equation}
Then from \eqref{A1beta.upper} with Remark \ref{ignore.remark} and \eqref{apriori.bd.Z} we have the following bound
\begin{multline}\notag
\left| \mathcal{A}_{1\beta}[\bX] - \mathcal{A}_{1\beta}[\bY] \right|
\lesssim
 |\delta_\beta \delta_\alpha (\BX'-\BY')(\theta)|\rho^{-1}
\\
+
|\delta_\alpha (\BX'-\BY')(\theta)|
\left(
|\delta_\beta \delta_\alpha \BY'(\theta)| 
+
|\delta_\beta \delta_\alpha \BX'(\theta)| 
\right)
\rho^{-2}
\\
+
|\delta_\beta \delta_\alpha (\BX'-\BY')(\theta)| 
\left(
| \delta_\alpha \BY'(\theta)| 
+
|\delta_\alpha \BX'(\theta)| 
\right)
\rho^{-2}
\\
+
|\delta_\beta \delta_\alpha \BY'(\theta)|
|\DAL(\BX - \BY)(\theta)|
\left(1+ \rho^{-1}|\delta_\alpha \BY'(\theta)|\right)\rho^{-2}.
\end{multline}
And from \eqref{A2beta.upper} we have
\begin{multline}\notag
\left| \mathcal{A}_{2\beta}[\bX] - \mathcal{A}_{2\beta}[\bY] \right|
\lesssim
\rho^{-3}
|\delta_\alpha (\BX'-\BY')(\theta)|
|\delta_\beta \DAX (\theta)|
\\
+
\rho^{-3}\left(
|\delta_\alpha \BY'(\theta)|
+
|\delta_\alpha \BX'(\theta)|
\right)
|\delta_\alpha (\BX'-\BY')(\theta)|
|\delta_\beta \DAX (\theta)|
\\
+
\rho^{-3}\left(|\delta_\alpha \BY'(\theta)|+|\delta_\alpha \BY'(\theta)|^2 \right)
|\delta_\beta \DAL(\BX - \BY)(\theta)|
\\
+
\rho^{-4}\left(|\delta_\alpha \BY'(\theta)|+|\delta_\alpha \BY'(\theta)|^2 \right)
|\DAL(\BX - \BY)(\theta)| (|\delta_\beta\DAX (\theta)|+|\delta_\beta\DAY (\theta)|).
\end{multline}
We thus define $\WK_{3}= \WK_{3}[\rho,M]$ by
\begin{equation}\label{WK3.constant.def}
        \WK_{3} \eqdef  
    \rho^{-1}+
\rho^{-2}\left(1+M\right)\left(1+\rho^{-1}(1+M)\right)
+
\rho^{-4}M\left(1+M \right).
\end{equation}
And then we plug these estimates in, using also \eqref{operator.bd.first}, to observe 
\begin{multline}\notag
 \TT_{3}
 \leq
     C  \DTT \WK_{3} \int_{\T} d\theta \int_{\T} \frac{d\alpha}{\alpha^2} ~
   | \delta_\beta\delta_\alpha  (\bX'-  \bY')(\theta)| |\delta_\alpha (\BX'-\BY')(\theta)| |\delta_\beta \BZ'(\theta)|
      \\
      +
  C  \DTT \WK_{3} \int_{\T} d\theta \int_{\T} \frac{d\alpha}{\alpha^2} ~
   | \delta_\beta\delta_\alpha  (\bX'-  \bY')(\theta)| |\delta_\alpha (\BX'-\BY')(\theta)| |\delta_\beta \DAL\BZ(\theta)|
      \\
   +
  C  \DTT \WK_{3} \int_{\T} d\theta \int_{\T} \frac{d\alpha}{\alpha^2} ~
   | \delta_\beta\delta_\alpha  (\bX'-  \bY')(\theta)|| \delta_\beta  (\bX'-  \bY')(\theta)| | \delta_\alpha  \bZ'(\theta)| 
      \\
   +
  C  \DTT \WK_{3} \int_{\T} d\theta \int_{\T} \frac{d\alpha}{\alpha^2} ~
   | \delta_\beta\delta_\alpha  (\bX'-  \bY')(\theta)| |\delta_\beta \DAL(\BX - \BY)(\theta)| | \delta_\alpha  \bZ'(\theta)| 
               \\
   +
  C  \DTT \WK_{3}  || \BX'- \BY'||_{L^\infty_\theta }\int_{\T} d\theta \int_{\T} \frac{d\alpha}{\alpha^2} ~
   | \delta_\beta\delta_\alpha  (\bX'-  \bY')(\theta)|  | \delta_\alpha  \bY'(\theta)|     |\delta_\beta \BZ'(\theta)|
         \\
   +
  C  \DTT \WK_{3}|| \BX'- \BY'||_{L^\infty_\theta }\int_{\T} d\theta \int_{\T} \frac{d\alpha}{\alpha^2} ~
   | \delta_\beta\delta_\alpha  (\bX'-  \bY')(\theta)|  | \delta_\alpha  \bY'(\theta)|     |\delta_\beta \DAL\BZ(\theta)|
   \\
   =\sum_{j=1}^6  \TT_{3j}.
\end{multline}
Here we recall the notation $\BZ'$ defined above \eqref{initial.assumption.Z}.

Now for the term $\TT_{31}$ we use  \eqref{TTG.y.estimate} to get
\begin{multline}\label{TT41.estimate}
     \TT_{31}
\leq
    \LDS  ||   \delta_\beta \TLam^{\frac12}(\bX'-  \bY')||_{L^2_\theta}^2
    +
\frac{C \DTT^2 \WK_{3}^2}{\lambda \MA(\eta^{-1})^2}
|| \BX'-\BY'||_{\BA}^2
||\delta_\beta \BZ'||_{L^\infty_\theta }^2
\\
+
    \CDS || \delta_\beta(\bX'-  \bY')||_{L^2_\theta}^2
    +
 C||\delta_\beta \BZ'||_{L^2_\theta }^2 
 ||\bX'-  \bY'||_{L^\infty_\theta}^2
 \DTT^2 \WK_{3}^2 \eta^{-2}.
\end{multline}
Then because of \eqref{averaging.infinity.est}, in $\TT_{32}$ we can treat $|\delta_\beta \DAL\BZ(\theta)|$ the same as $|\delta_\beta \BZ'(\theta)|$ in $\TT_{31}$.  Thus $\TT_{32}$ also satisfies \eqref{TT41.estimate}.

Next for the term $\TT_{33}$ we again use  \eqref{TTG.y.estimate} and \eqref{apriori.bd.norm.Z} to obtain
\begin{multline}\label{TT45.estimate}
     \TT_{33}
\leq
    \LDS  ||   \delta_\beta \TLam^{\frac12}(\bX'-  \bY')||_{L^2_\theta}^2
    +
\frac{C \DTT^2 \WK_{3}^2}{\lambda \MA(\eta^{-1})^2}
M^2
||\delta_\beta (\BX'-\BY')||_{L^\infty_\theta }^2
\\
    +
   C || \delta_\beta(\bX'-  \bY')||_{L^2_\theta}^2
  \DTT \WK_{3} \eta^{-1}
 M.
\end{multline}
Then again because of \eqref{averaging.infinity.est} $\TT_{34}$ also satisfies \eqref{TT45.estimate}.

For the term $\TT_{35}$ we use  \eqref{TTG.y.estimate} to obtain
\begin{multline}\label{TT47.estimate}
     \TT_{35}
\leq
    \LDS  ||   \delta_\beta \TLam^{\frac12}(\bX'-  \bY')||_{L^2_\theta}^2
    +
\frac{C \DTT^2 \WK_{3}^2}{\lambda \MA(\eta^{-1})^2}
M^2
||\delta_\beta \BZ'||_{L^\infty_\theta }^2
||\BX'-\BY'||_{L^\infty_\theta }^2
\\
+
    \CDS || \delta_\beta(\bX'-  \bY')||_{L^2_\theta}^2
    +
   C || \delta_\beta \bZ'||_{L^2_\theta}^2
||\BX'-\BY'||_{L^\infty_\theta }^2
  \DTT^2 \WK_{3}^2 \eta^{-2}
M^2.
\end{multline}
And again with \eqref{averaging.infinity.est} then $\TT_{36}$ also satisfies \eqref{TT47.estimate}.   These are our main estimates for $\TT_{3}$.

The last term to estimate  is  $\TT_{4}$.  From \eqref{K.bound.infty} we can bound
\begin{equation*}
            \TT_{4}
        \leq
       \WK_1 \DDTT  || \bX'-  \bY'||_{L^\infty_\theta} \int_{\T} d\theta \int_{\T} \frac{d\alpha}{\alpha^2} ~
   | \delta_\beta\delta_\alpha  (\bX'-  \bY')(\theta)|  
   | \delta_\beta \delta_\alpha  \bY'(\theta)|.
\end{equation*}
For the term $\TT_{4}$  we apply Cauchy-Schwartz to obtain 
\begin{equation*}
            \TT_{4}
\leq
 \WK_1 \DDTT  || \bX'-  \bY'||_{L^\infty_\theta} 
||   \delta_\beta \TLam^{\frac12}(\bX'-  \bY')||_{L^2_\theta}
||   \delta_\beta \TLam^{\frac12}\bY'||_{L^2_\theta}.
\end{equation*}
Notice that this term does not have the same opportunity to achieve an extra smallness using the regularity from Definition \ref{subw.definition} similar to the other terms, as in \eqref{TTG.y.estimate}.  Thus the presence of the term $\TT_{4}$ is the reason why we use the equivalent norm with \eqref{nu.definition}.  For now we apply Young's inequality 
\begin{equation}\label{TT61.estimate}
            \TT_{4}
\leq
\LDS ||   \delta_\beta \TLam^{\frac12}(\bX'-  \bY')||_{L^2_\theta}^2
+
C \lambda^{-1} \WK_1^2 \DDTT^2   || \bX'-  \bY'||_{L^\infty_\theta}^2 
||   \delta_\beta \TLam^{\frac12}\bY'||_{L^2_\theta}^2.
\end{equation}
This completes our individual estimates for all of the terms in \eqref{energy.difference.first1}.

Next we collect all the estimates above in \eqref{TT1.estimate}, \eqref{TT2.bound}, \eqref{TT11.bound},  \eqref{TT7.bound}, \eqref{TT9.bound},  \eqref{TT18.bound}, \eqref{TT31.estimate}, \eqref{TT32.estimate}, \eqref{TT13.estimate}, \eqref{TT41.estimate},  \eqref{TT45.estimate}, \eqref{TT47.estimate} and \eqref{TT61.estimate} and put them into \eqref{energy.difference.first1} to obtain
\begin{multline}\notag
    \frac{d}{dt} || \delta_\beta (\bX' -  \bY')||_{L^2_\theta}^2
    +
  \frac{3\lambda}{4} || \delta_\beta \TLam^{\frac12} (\bX'-  \bY')||_{L^2_\theta}^2
    \\
    \leq   
    C\kappa_{7} \left(||   \delta_\beta \BX'||_{L^\infty_\theta}^2+||   \delta_\beta \BY'||_{L^\infty_\theta}^2 \right)||  \bX'-\bY'||_{\BA}^2
    +
C|| \partial_{\beta}( \bX'-\bY')||_{L^2_\theta}^2 \WK_{9}
\\
+
C\kappa_{9} || \partial_{\beta}( \bX'-\bY')||_{L^\infty_\theta}^2
+
C\left(||   \delta_\beta \BX'||_{L^2_\theta}^2+||   \delta_\beta \BY'||_{L^2_\theta}^2 \right)
|| \bX'-\bY'||_{L^\infty_\theta}^2
    \WK_{7}
\\
+     C \kappa_1  \DTT  ||\delta_\beta \TLam^{\frac12}(\bX'-\bY')||_{L^2_\theta}^2
+
C \lambda^{-1} \DDTT^2  \WK_1^2
||   \delta_\beta \TLam^{\frac12} \bY'||_{L^2_\theta}^2
|| \bX'-  \bY'||_{L^\infty_\theta}^2.
\end{multline}
Here we recall \eqref{kappa1.def}.  Further recalling \eqref{kappa3.def}, \eqref{kappa5.def}, \eqref{kappa6.def}, \eqref{WK21.constant.def},  \eqref{WK8.constant.def} and \eqref{WK3.constant.def}, we define
\begin{multline}\label{kappa7.def}
        \kappa_{7}
    \eqdef \kappa_3\DTT^2 
    +
    \kappa_3\DDTT^2 M^2 
    +
    \kappa_5 \DDTT^2
    +
    \kappa_6 \DDDTT^2
    +
    \frac{ \WK_{21}^2\DTT^2}{\lambda \MA(\eta^{-1})^2}
    +
    \frac{ \WK_{22}^2\DTT^2 M^2}{\lambda \MA(\eta^{-1})^2}
     \\
     +
     \frac{ \WK_{8}^2 M^2 \DDTT^2
}{\lambda \MA(\eta^{-1})^2}
+
 \frac{ \DTT^2 \WK_{3}^2}{\lambda \MA(\eta^{-1})^2} (1+M^2),
\end{multline}
and additionally recalling \eqref{WK2.constant.def} and \eqref{WK5.constant.def} we have 
\begin{multline}\label{WK7.constant.def}
        \WK_{7}
    \eqdef \DTT^2 
\WK_2
+
M^2\DDTT^2 \WK_2
+
\DDTT^2
\WK_5
+
\DDDTT^2 
 M^2 (1+ \rho^{-2} M^2)^2 \eta^{-2}
 +
 \WK_{21}^2\DTT^2 \eta^{-2}
\\
+
M^2   \WK_{22}^2\DTT^2 \eta^{-2}
+
M^2   \WK_{8}^2
 \eta^{-2} \DDTT^2
 +
  \DTT^2 \WK_{3}^2 \eta^{-2}(1+M^2).
\end{multline}
Above we also used the defintion
\begin{equation}\label{kappa8.def}
            \kappa_{9}
    \eqdef
    \kappa_6 \DDTT^2
    +
    \frac{\DTT^2 \WK_{3}^2}{\lambda \MA(\eta^{-1})^2}
M^2,
\end{equation}
and additionally recalling \eqref{BB.const.def} we define
\begin{equation}\label{WK88.constant.def}
    \WK_{9}
    \eqdef
    1+
    \BB 
    +
    \DDTT 
 M (1+ \rho^{-2} M^2) \eta^{-1}
 +
   \DTT \WK_{3} \eta^{-1}
 M.
\end{equation}
We now choose $\eta>0$ small enough so that we have $C \kappa_1   \DTT \leq \frac{1}{4}\lambda$.
  Next we further integrate in time over $0 \le s \le t$ and afterwards we take the essential supremum in time over $0 \le t \le T$ to obtain 
\begin{multline}\notag
 \sup_{0 \le t \le T} || \delta_\beta (\bX' -  \bY')||_{L^2}^2(t)
    +
  \frac{\lambda}{2} || \delta_\beta \TLam^{\frac12} (\bX'-  \bY')||_{L^2_T(L^2_\theta)}^2
    \\
    \leq   
     || \delta_\beta (\bX'_0 -  \bY'_0)||_{L^2}^2
    +
    C\kappa_{7} \left(||   \delta_\beta \BX'||_{L^2_T(L^\infty_\theta)}^2+||   \delta_\beta \BY'||_{L^2_T(L^\infty_\theta)}^2 \right)||  \bX'-\bY'||_{L^{\infty}_T(\BA)}^2
\\
+
C \kappa_{9}|| \partial_{\beta}( \bX'-\bY')||_{L^2_T(L^\infty_\theta)}^2
+
C|| \partial_{\beta}( \bX'-\bY')||_{L^2_T(L^2_\theta)}^2 \WK_{9}
\\
+
C\left(||   \delta_\beta \BX'||_{L^2_T(L^2_\theta)}^2+||   \delta_\beta \BY'||_{L^2_T(L^2_\theta)}^2 \right)
|| \bX'-\bY'||_{L^{\infty}_T(L^\infty_\theta)}^2
\WK_{7}
\\
+
\frac{1}{2}T\sup_{0 \le t \le T}|| \partial_{\beta}( \bX'-\bY')||_{L^2_\theta}^2(t)
+
C \lambda^{-1} \DDTT^2  
||   \delta_\beta \TLam^{\frac12}\bY'||_{L^2_T(L^2_\theta)}^2
|| \bX'-  \bY'||_{L^{\infty}_T(L^\infty_\theta)}^2.
\end{multline}
Next we suppose that $0<T \le 1$, and we use the inequality \eqref{ineq.gen} to obtain
\begin{multline}\notag
   \frac{1}{2} \sup_{0 \le t \le T} || \delta_\beta (\bX' -  \bY')||_{L^2}(t)
    +
  \frac{\lambda^{1/2}}{2} || \delta_\beta \TLam^{\frac12} (\bX'-  \bY')||_{L^2_T(L^2_\theta)}
    \\
    \leq   
     || \delta_\beta (\bX'_0 -  \bY'_0)||_{L^2}
    +
    C\kappa_{7}^{1/2} \left(||   \delta_\beta \BX'||_{L^2_T(L^\infty_\theta)}+||   \delta_\beta \BY'||_{L^2_T(L^\infty_\theta)} \right)||  \bX'-\bY'||_{L^{\infty}_T(\BA)}
\\
+
C \kappa_{9}^{1/2}|| \partial_{\beta}( \bX'-\bY')||_{L^2_T(L^\infty_\theta)}
+
C|| \partial_{\beta}( \bX'-\bY')||_{L^2_T(L^2_\theta)} \WK_{9}^{1/2}
\\
+
C\left(||   \delta_\beta \BX'||_{L^2_T(L^2_\theta)}+||   \delta_\beta \BY'||_{L^2_T(L^2_\theta)} \right)
|| \bX'-\bY'||_{L^{\infty}_T(L^\infty_\theta)}
\WK_{7}^{1/2}
\\
+
C \lambda^{-1/2} \DDTT
||   \delta_\beta \TLam^{\frac12}\bY'||_{L^2_T(L^2_\theta)}
|| \bX'-  \bY'||_{L^{\infty}_T(L^\infty_\theta)}.
\end{multline}
We further integrate the above in $d\beta$ against $|\beta|^{-3/2}\nu(|\beta|^{-1})$ 
for $\nu$ defined in \eqref{nu.definition} to obtain
\begin{multline}\notag
   \frac{1}{2} ||\bX' -  \bY'||_{\CN}
    +
  \frac{\lambda^{1/2}}{2} || \bX'-  \bY'||_{\DN}
    \leq   
     || \bX'_0 -  \bY'_0||_{\BN}
     \\
    +
    C\kappa_{7}^{1/2} \left(||    \BX'||_{\widetilde{L}^{2}_T(\dot{B}_{\infty, 1}^{\frac12, \nu})}+|| \BY'||_{\widetilde{L}^{2}_T(\dot{B}_{\infty, 1}^{\frac12, \nu})} \right)||  \bX'-\bY'||_{\CN}
\\
+
C \kappa_{9}^{1/2}|| \bX'-\bY'||_{\widetilde{L}^{2}_T(\dot{B}_{\infty, 1}^{\frac12, \nu})}
+
CT^{1/2}\WK_{9}^{1/2} || \bX'-\bY'||_{\CN} 
\\
+
CT^{1/2}\left(||\BX'||_{\CN}+||\BY'||_{\CN} \right)
|| \bX'-\bY'||_{L^{\infty}_T(L^\infty_\theta)}
\WK_{7}^{1/2}
\\
+
C \lambda^{-1/2} \DDTT
||  \bY'||_{\DN}
|| \bX'-  \bY'||_{L^{\infty}_T(L^\infty_\theta)}.
\end{multline}
We use the embedding \eqref{embedding.infty.use} to see that 
$||    f||_{\widetilde{L}^{2}_T(\dot{B}_{\infty, 1}^{\frac12, \nu})} \leq C_{\nu}  ||    f||_{\DN}$.  We also use the embeddings in \eqref{embed.infty}, Proposition \ref{besov.ineq.prop} and then we use Definition \ref{subw.definition} to obtain
 \begin{equation}\notag
|| f||_{L^{\infty}_T(L^\infty_\theta)}
     \leq
C || f ||_{L^{\infty}_T(\dot{B}^{\frac{1}{2}}_{2,1})}
   \leq C 
|| f ||_{\CN}. 
 \end{equation}
 The last inequality above follows simply because $\nu \ge 1$ in \eqref{nu.definition}.
 Thus we have 
 \begin{multline}\notag
   \frac{1}{2} ||\bX' -  \bY'||_{\CN}
    +
  \frac{\lambda^{1/2}}{2} || \bX'-  \bY'||_{\DN}
    \leq   
     || \bX'_0 -  \bY'_0||_{\BN}
     \\
    +
    C \max\{1,M\}C_{\nu} \kappa_{7}^{1/2} \left(||    \BX'||_{\DN}+|| \BY'||_{\DN} \right)||  \bX'-\bY'||_{\CN}
\\
+
C C_{\nu} \kappa_{9}^{1/2}|| \bX'-\bY'||_{\DN}
+
CT^{1/2}\WK_{9}^{1/2} || \bX'-\bY'||_{\CN} 
\\
+
CT^{1/2}\left(||\BX'||_{\CN}+||\BY'||_{\CN} \right)
|| \bX'-\bY'||_{\CN}
\WK_{7}^{1/2}
\\
+
C \lambda^{-1/2} \DDTT
||  \bY'||_{\DN}
|| \bX'-  \bY'||_{\CN}.
\end{multline}
Now using \eqref{apriori.bd.norm.Z} and \eqref{equivalent.nu.norm} we can choose $\eta>0$ additionally small enough so that $\kappa_7>0$ from \eqref{kappa7.def} enforces
\begin{equation*}
    C \max\{1,M\}C_{\nu} \kappa_{7}^{1/2} \left(||    \BX'||_{\DN}+|| \BY'||_{\DN} \right)
    \leq 4 C \max\{1,M\}C_{\nu} \kappa_{7}^{1/2} M
   <
    \frac{1}{8}.
\end{equation*}
Then we can further choose $\eta>0$ additionally possibly smaller  so that $\kappa_9$ from \eqref{kappa8.def} enforces
\begin{equation*}
    C C_{\nu} \kappa_{9}^{1/2}
    <
   \frac{\lambda^{1/2}}{4} .
\end{equation*}
Thus we obtain 
 \begin{multline}\notag
   \frac{3}{8} ||\bX' -  \bY'||_{\CN}
    +
  \frac{\lambda^{1/2}}{4} || \bX'-  \bY'||_{\DN}
    \leq   
     || \bX'_0 -  \bY'_0||_{\BN}
     \\
     +CT^{1/2} \WK_0 || \bX'-\bY'||_{\CN} 
+
C \lambda^{-1/2} \DDTT
||  \bY'||_{\DN}
|| \bX'-  \bY'||_{\CN}.
\end{multline}
where recalling \eqref{WK88.constant.def}, \eqref{WK7.constant.def}, \eqref{apriori.bd.norm.Z} and  \eqref{equivalent.nu.norm} we define
\begin{equation}\label{WK0.constant.def}
    \WK_0 \eqdef 
\WK_{9}^{1/2} 
+
M
\WK_{7}^{1/2}.
\end{equation}
Next from \eqref{nu.definition} and \eqref{D.space.temporal} we have that 
   \begin{equation}\notag
||  \bY'||_{\DN}
=
\int_{\T} \frac{d\beta}{|\beta|^{3/2}} 
 ||\delta_\beta \TLam^{\frac{1}{2}}\bY'||_{L^2_T(L^2_\theta)}
 +
 \frac{1}{\KC}
||  \bY'||_{\DM}.
    \end{equation}
    Since we can bound $||  \bY'||_{\DM} \leq 
 C \lambda^{-1/2}  || \BY'_0||_{\BS}$ as in Proposition \ref{prop:general.apriori.final.local} and \eqref{apriori.bd.norm.Z}, then we can make the second term above arbitrarily small.  In particular we can choose $\KCC\ge 1$ large enough so that
    \begin{equation}\notag
 C \lambda^{-1/2} \frac{\DDTT ||  \bY'||_{\DM}}{\KC}
\leq  C \lambda^{-1}  \frac{\DDTT || \BY'_0||_{\BS}}{\KC}
\leq  C \lambda^{-1}  \frac{\DDTT}{\KCC} < \frac{1}{16}.
    \end{equation}
It is important that $\KCC = \KCC(\lambda, \DDTT)$ but $\KCC$ does not depend upon $\eta$.  Then 
 for the first term above, we split into $|\beta| < \eta_1$ and $|\beta| \geq \eta_1$ for some small $\eta_1>0$.  Then similar to \eqref{extra.smallness.besov} using also \eqref{apriori.bd.norm.Z} we have 
   \begin{equation}\notag
\int_{|\beta| < \eta_1} \frac{d\beta}{|\beta|^{3/2}} 
 ||\delta_\beta \TLam^{\frac{1}{2}}\bY'||_{L^2_T(L^2_\theta)}
\leq
\frac{1}{\subw(\eta_1^{-1})}
||  \bY'||_{\DM}
\leq
C\frac{M}{\lambda^{\frac12}\subw(\eta_1^{-1})}.
    \end{equation}
For the other part, again with \eqref{apriori.bd.norm.Z}, we have
\begin{multline*}
    \int_{|\beta| \geq \eta_1} \frac{d\beta}{|\beta|^{3/2}} 
 ||\delta_\beta \TLam^{\frac{1}{2}}\bY'||_{L^2_T(L^2_\theta)}
\leq
C\eta_1^{-1/2}
 ||\TLam^{\frac{1}{2}}\bY'||_{L^2_T(L^2_\theta)}
 \\
 \leq
 C T^{1/2}\eta_1^{-1/2}
||\bY'||_{L^\infty_T(\dot{B}^{\frac{1}{2}}_{2,2})}
 \leq
  C T^{1/2}M\eta_1^{-1/2}.
\end{multline*}
Notice that $\frac{1}{\subw(\eta_1^{-1})}$ can be made arbitrarily small for $\eta_1>0$ chosen small enough.  Thus if we choose $\eta_1>0$ small enough we have 
\begin{equation}\notag
C \lambda^{-1/2} \DDTT
\frac{M}{\lambda^{\frac12}\subw(\eta_1^{-1})}
< 
\frac{1}{16}.
\end{equation}
 Thus we obtain 
      \begin{equation}\notag
 ||\bX' -  \bY'||_{\CN}
    +
  \lambda^{1/2} || \bX'-  \bY'||_{\DN}
    \leq   
    4 || \bX'_0 -  \bY'_0||_{\BN}
     +C || \bX'-\bY'||_{\CN} T^{1/2} \WK, 
 \end{equation}
 where using \eqref{WK0.constant.def} we define 
 \begin{equation}\label{WK.constant.def}
   \WK \eqdef 
\WK_0+
\eta_1^{-1/2}
M \lambda^{-1/2} \DDTT.
\end{equation}
The proof is complete.
\end{proof}

\subsection{$L^2$ continuity estimate}\label{sec:l2continuity} For some $\CTS >0$ we now suppose for $T>0$ that for some $c>0$ and $\lambda>0$ as in \eqref{e:DTdefn} for some $M > 0$ that we have
\begin{equation}\label{apriori.bd.norm.H1}
||\bZ'||_{L^\infty_T \dot{H}^{\frac12}}+ c\lambda^{\frac12} ||\bZ'||_{L^2_T \dot{H}^1} \leq \CTS M.
\end{equation}
Notice that this condition is implied by \eqref{apriori.bd.norm.Z}.  
Then in this subsection we will prove in the following proposition that as long as \eqref{apriori.bd.norm.H1} holds then the $L^2_\theta$ norm of the difference of two solutions to \eqref{peskin.general.tension} is stable.

\begin{proposition}\label{prop.L2.cont}
Let $\bX, \bY: [0,T]\times \T \to \R^2$ be two weak solutions to the Peskin problem \eqref{peskin.general.tension} with tension $\TE$ \eqref{tension.map.def} in the sense of Definition \ref{def:solution} with initial data $\BX_0,$ $\BY_0$ respectively.  Assume that $\BX_0,$ $\BY_0,$ and $\bT$ satisfy the assumptions of Theorem \ref{thm:mainquant}, in particular we assume \eqref{e:QuantitativeTensionMap}.  Additionally assume \eqref{apriori.bd.Z} holds with $T>0$. 
Then for two solutions $\BX'$ and $\BY'$  over $0\le t \le T$  we have 
\begin{equation}\notag
    ||(\bX'-\bY')(t)||_{L^2_\theta} \leq C  ||\bX_0'-\bY_0'||_{L^2_\theta},
\end{equation}
where $C = C(M, \rho, \lambda, \DTT, \DDTT)>0.$
\end{proposition}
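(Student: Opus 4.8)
The plan is to run an $L^2_\theta$ energy estimate on the difference $\bX' - \bY'$, which is a low-regularity version of the strong continuity estimate of \secref{sec:strongCont}, so much of the bookkeeping can be borrowed directly from the proof of Proposition \ref{prop:continuity} but \emph{without} differencing in $\beta$. Starting from the two equations \eqref{peskin.general.tension} for $\bX'$ and $\bY'$, subtract them to get
\[
\partial_t(\bX'-\bY')(\theta) = \int_{\T}\frac{d\alpha}{\alpha^2}\,\mathcal{K}[\bX](\theta,\alpha)\,\delta_\alpha\!\left(\bT(\bX'(\theta))-\bT(\bY'(\theta))\right) + \int_{\T}\frac{d\alpha}{\alpha^2}\,(\mathcal{K}[\bX]-\mathcal{K}[\bY])(\theta,\alpha)\,\delta_\alpha\bT(\bY'(\theta)),
\]
noting that $\mathcal{K}[\bX]-\mathcal{K}[\bY] = \mathcal{A}[\bX]-\mathcal{A}[\bY]$ by \eqref{kerbel.eqn.deriv}. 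Pairing with $(\bX'-\bY')$ in $L^2_\theta$ and using the divergence-form symmetry of $\mathcal{K}$ in $\theta,\theta+\alpha$ (exactly as in the computation leading to \eqref{beta.L2.difference.1}), we get $\tfrac12\tfrac{d}{dt}\|(\bX'-\bY')(t)\|_{L^2_\theta}^2$ expressed as a sum of double integrals against $\delta_\alpha(\bX'-\bY')$. The leading term, using \eqref{kerbel.eqn.deriv} and the lower bound $\overline{D\bT}\ge\lambda\IO$ coming from \eqref{e:DTdefn} together with \eqref{difference.alpha.T}, produces the good dissipative term $-\lambda\|\TLam^{\frac12}(\bX'-\bY')\|_{L^2_\theta}^2$.

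For the error terms I would split, using \eqref{difference.alpha.T}, \eqref{difference.DBTXY} and \eqref{DDBT.term}, exactly as in \eqref{energy.difference.first1}, but now there is no $\delta_\beta$, so every term is of the schematic form
\[
\int_{\T} d\theta\int_{\T}\frac{d\alpha}{\alpha^2}\;|\delta_\alpha(\bX'-\bY')(\theta)|\;\big|\mathcal{B}(\theta,\alpha)\big|\;|\delta_\alpha \bZ'(\theta)|^{j}\cdots,
\]
where $\mathcal{B}$ is $\mathcal{A}[\bX]$, $\mathcal{K}[\bX]$, or $\mathcal{A}[\bX]-\mathcal{A}[\bY]$, controlled by \eqref{e:Abounds}, \eqref{K.bound.infty}, and Lemma \ref{lemm.A.diff} respectively, and the remaining $\bX'-\bY'$ factor (from $\overline{D\bT}[\bX']-\overline{D\bT}[\bY']$ or from $\mathcal{A}[\bX]-\mathcal{A}[\bY]$) is pulled out in $L^\infty_\theta$ and then immediately re-absorbed: since $\bX'-\bY'$ needs to be controlled only in $L^2_\theta$ on the left, any appearance of $\|\bX'-\bY'\|_{L^\infty_\theta}$ must \emph{not} be kept as a small quantity but rather estimated via $\|\bX'-\bY'\|_{L^\infty_\theta}\lesssim \|\bX'-\bY'\|_{\dot B^{1/2}_{2,1}}$ and absorbed into a time-integrable quantity using the higher-order a priori bound \eqref{apriori.bd.norm.H1}. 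Concretely, after a Hölder/Cauchy–Schwarz split into $|\alpha|<\eta$ and $|\alpha|\ge\eta$ (no $\mu$-gain is needed here, which is why $D\bT\in W^{1,\infty}$ suffices and $D\bT\in W^{2,\infty}$ is not required), one bounds every error contribution by
\[
\tfrac{\lambda}{4}\|\TLam^{\frac12}(\bX'-\bY')\|_{L^2_\theta}^2 \;+\; C\big(1+\|\bX'\|_{\dot H^1}^2+\|\bY'\|_{\dot H^1}^2\big)\,\|(\bX'-\bY')\|_{L^2_\theta}^2,
\]
where $C = C(M,\rho,\lambda,\DTT,\DDTT)$; the terms with $\mathcal{A}[\bX]-\mathcal{A}[\bY]$ give the $\dot H^1$ factors via $|\DAL(\bX-\bY)|\lesssim\|(\bX'-\bY')'\|$-type bounds combined with Besov interpolation as in Lemma \ref{time.space.equivalent}. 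Having done this, we arrive at a differential inequality
\[
\frac{d}{dt}\|(\bX'-\bY')(t)\|_{L^2_\theta}^2 + \lambda\|\TLam^{\frac12}(\bX'-\bY')(t)\|_{L^2_\theta}^2 \le C\,\Phi(t)\,\|(\bX'-\bY')(t)\|_{L^2_\theta}^2,
\]
with $\Phi(t) = 1+\|\bX'(t)\|_{\dot H^1}^2+\|\bY'(t)\|_{\dot H^1}^2\in L^1([0,T])$ by \eqref{apriori.bd.norm.H1}. Grönwall's inequality then yields $\|(\bX'-\bY')(t)\|_{L^2_\theta}^2 \le e^{C\int_0^T\Phi}\|\bX_0'-\bY_0'\|_{L^2_\theta}^2$, and since $\int_0^T\Phi\lesssim T + \lambda^{-1}\CTS^2M^2$ is controlled by the stated constants, the claim follows with a constant $C = C(M,\rho,\lambda,\DTT,\DDTT)$.

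The main obstacle I anticipate is the careful treatment of the terms involving $\mathcal{A}[\bX]-\mathcal{A}[\bY]$ from Lemma \ref{lemm.A.diff}: these carry factors like $|\DAL(\bX-\bY)(\theta)|$ and up to cubic powers of $\delta_\alpha\bY'$, and one must verify that after extracting $\|\bX'-\bY'\|_{L^\infty_\theta}$ or $\|\DAL(\bX-\bY)\|_{L^\infty_\theta}$ (both bounded by $\|(\bX'-\bY')'\|_{\dot B^{1/2}_{2,1}}$, hence ultimately by $\|(\bX'-\bY')(t)\|_{L^2_\theta}^{1/2}\|\TLam^{\frac12}(\bX'-\bY')(t)\|_{L^2_\theta}^{1/2}$ via interpolation) the residual double integral is controlled by $\|\TLam^{\frac12}(\bX'-\bY')\|_{L^2_\theta}$ times an $L^1_t$ quantity built from $\|\bX'\|_{\dot H^1},\|\bY'\|_{\dot H^1}$, so that Young's inequality closes with a genuinely absorbable $\tfrac\lambda4\|\TLam^{\frac12}(\bX'-\bY')\|_{L^2_\theta}^2$ and no leftover supercritical power of $\|\bX'-\bY'\|_{L^2_\theta}$. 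This is exactly the place where using the a priori $L^2_T\dot H^1$ bound \eqref{apriori.bd.norm.H1}, rather than trying to gain smallness from the $\mu$-weight, is essential.
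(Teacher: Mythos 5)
Your overall architecture is the same as the paper's: an $L^2_\theta$ energy identity for $\bX'-\bY'$, the dissipative term $-\lambda\|\TLam^{\frac12}(\bX'-\bY')\|_{L^2_\theta}^2$ from the leading part of $\mathcal{K}[\bX]\,\overline{D\bT}[\bX']$, error terms closed by Besov interpolation and Young's inequality, and Gr\"onwall against an $L^1_t$ weight built from $\|\bZ'\|_{\dot H^1}^2$ using the a priori bound. The paper does not split in $|\alpha|\lessgtr\eta$ here, but that is a cosmetic difference.

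There is, however, a genuine gap in the one step you flag as the ``main obstacle.'' You propose to handle the terms where $\bX'-\bY'$ appears undifferenced in $\alpha$ (coming from $\overline{D\bT}[\bX']-\overline{D\bT}[\bY']$ and from $\mathcal{A}[\bX]-\mathcal{A}[\bY]$) by extracting $\|\bX'-\bY'\|_{L^\infty_\theta}$ and then bounding it by $\|\bX'-\bY'\|_{L^2_\theta}^{1/2}\|\TLam^{\frac12}(\bX'-\bY')\|_{L^2_\theta}^{1/2}$. That inequality is false: it amounts to the endpoint embedding $\dot H^{1/2}(\T)\hookrightarrow L^\infty(\T)$ (interpolating $L^2$ and $\dot H^{1/2}$ with equal weights only reaches $\dot H^{1/4}$, or at best $\dot B^{1/4}_{2,1}$, neither of which controls $L^\infty$ in one dimension). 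The Agmon-type inequality $\|f\|_{L^\infty}\lesssim\|f\|_{L^2}^{1/2}\|f\|_{\dot H^{1}}^{1/2}$ requires a full derivative of the difference, which is exactly what you do not control; and the crude bound $\|\bX'-\bY'\|_{L^\infty}\lesssim M$ loses the factor of $\|\bX'-\bY'\|_{L^2}$ needed for Gr\"onwall. The correct (and the paper's) resolution is to never place the difference factor in $L^\infty_\theta$: apply H\"older in $\theta$ with exponents $(2,2,\infty)$, keeping $|\bX'-\bY'|$ in $L^2_\theta$ and $|\delta_\alpha(\bX'-\bY')|$ in $L^2_\theta$ while putting the powers $|\delta_\alpha\bZ'|^j$ in $L^\infty_\theta$; then Cauchy--Schwarz in $\alpha$ gives $\|\bX'-\bY'\|_{L^2_\theta}\,\|\bX'-\bY'\|_{\dot H^{1/2}}\,\|\bZ'\|_{\dot B^{1/(2j)}_{\infty,2j}}^{j}$, and the embedding plus interpolation $\|\bZ'\|_{\dot B^{1/(2j)}_{\infty,2j}}^{j}\lesssim\|\bZ'\|_{\dot H^{1/2}}^{j-1}\|\bZ'\|_{\dot H^{1}}$ followed by Young's inequality yields $c\lambda\|\bX'-\bY'\|_{\dot H^{1/2}}^2+C\lambda^{-1}\|\bX'-\bY'\|_{L^2_\theta}^2\|\bZ'\|_{\dot H^{1/2}}^{2(j-1)}\|\bZ'\|_{\dot H^{1}}^2$, which is absorbable and Gr\"onwall-compatible. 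With that substitution your argument closes.
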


\begin{proof}
Direct calculation gives us that 
\begin{multline}\notag
    \frac{d}{dt}||\bX'-\bY'||_{L^2_\theta}^2 = -\int_\T \int_\T \frac{d\alpha d\theta}{\alpha^2} \delta_\alpha (\bX'-\bY')\cdot \left( \mathcal{K}[\bX]\delta_\alpha \bT(\bX') - \mathcal{K}[\bY]\delta_\alpha \bT(\bY')\right)
    \\ = -\int_\T \int_\T \frac{d\alpha d\theta}{\alpha^2} \delta_\alpha (\bX'-\bY')\cdot \mathcal{K}[\bX]\delta_\alpha (\bT(\bX') -  \bT(\bY'))
    \\ -\int_\T \int_\T \frac{d\alpha d\theta}{\alpha^2} \delta_\alpha (\bX'-\bY')\cdot \left( \mathcal{A}[\bX] - \mathcal{A}[\bY]\right)\delta_\alpha \bT(\bY')
 = I_{\mathcal{K}} + I_{\mathcal{A}}.
\end{multline}
 Recalling \eqref{DBTX.def}, we use \eqref{kerbel.eqn.deriv} and \eqref{difference.alpha.T} to expand out
$I_{\mathcal{K}}$ as 
\begin{multline}\notag
I_{\mathcal{K}} =  -\frac{1}{4\pi}\int_\T \int_\T \frac{d\alpha d\theta}{\alpha^2} \delta_\alpha (\bX'-\bY') \cdot \DBT[\bX'] \delta_\alpha (\bX'-\bY') 
\\
 -\int_\T \int_\T \frac{d\alpha d\theta}{\alpha^2} \delta_\alpha (\bX'-\bY') \cdot \mathcal{A}[\bX]\DBT[\bX'] \delta_\alpha (\bX'-\bY') 
 \\
 -\int_\T \int_\T \frac{d\alpha d\theta}{\alpha^2} \delta_\alpha (\bX'-\bY') \cdot  \mathcal{K}[\bX](\DBT[\bX']-\DBT[\bY'])\delta_\alpha \bY'
 =I_{\mathcal{K}}^1+I_{\mathcal{K}}^2+I_{\mathcal{K}}^3. 
\end{multline}
Then from \eqref{e:DTdefn} we have $I_{\mathcal{K}}^1 \leq -\lambda ||\TLam^{\frac12}(\bX'-\bY')||_{L^2_\theta}^2$.  

Next we estimate the following sample term for an integer $j\geq 1$ using Proposition \ref{besov.ineq.prop} and Lemma \ref{Besov.interpolation} and Young's inequality for any small constant $c>0$ as
\begin{multline}\label{sample.term.est}
    \int_\T d\theta \int_\T \frac{d\alpha }{\alpha^2}
    |\delta_\alpha (\bX'-\bY')|^2 |\delta_\alpha \bZ'|^j 
    \leq  C \int_\T \frac{d\alpha}{\alpha^2} ||\delta_\alpha (\bX'-\bY')||_{L^2_\theta}^2
    ||\delta_\alpha \BZ'||_{L^\infty_\theta}^j
    \\ 
    \leq  C ||\bX'-\bY'||_{\dot{B}^{\frac14}_{2,4}}^2 ||\bZ'||_{\dot{B}^{\frac{1}{2j}}_{\infty, 2j}}^j 
        \leq  C ||\bX'-\bY'||_{\dot{H}^{\frac14}}^2 ||\bZ'||_{\dot{B}^{\frac{j+1}{2j}}_{2, 2j}}^j 
    \\ \leq  C ||\bX'-\bY'||_{\dot{H}^{\frac12}}
    ||\bX'-\bY'||_{L^2_\theta}
    ||\bZ'||_{\dot{H}^{\frac12}}^{j-1} 
  ||\bZ'||_{\dot{H}^{1}}
  \\
  \leq c\lambda ||\bX'-\bY'||_{\dot{H}^{\frac12}}^2
  + 
  C \lambda^{-1} ||\bX'-\bY'||_{L^2_\theta}^2
    ||\bZ'||_{\dot{H}^{\frac12}}^{2(j-1)} 
  ||\bZ'||_{\dot{H}^{1}}^2.
\end{multline}
Then for $I_{\mathcal{K}}^2$ we use \eqref{DBTX.def}, \eqref{e:QuantitativeTensionMap}, \eqref{e:Abounds}, \eqref{apriori.bd.Z} and \eqref{sample.term.est}  to obtain
\begin{multline}\label{est.IK2}
    I_{\mathcal{K}}^2
    \leq
    C\int_\T d\theta \int_\T \frac{d\alpha }{\alpha^2}|\delta_\alpha (\bX'-\bY')|^2 (\rho^{-1}|\delta_\alpha \bX'| + \rho^{-2}|\delta_\alpha \bX'|^2)|\DBT[\bX']| 
    \\ \leq \frac{\lambda}{8}||\bX'-\bY'||_{\dot{H}^{\frac12}}^2 +C\frac{\DTT^2}{\lambda}\left(1+\frac{||\bX'||_{\dot{H}^{\frac12}}^2}{\rho^2}\right)\frac{||\bX'||_{\dot{H}^1}^2}{\rho^2}||\bX'-\bY'||_{L^2_\theta}^2.
\end{multline}
Next we will estimate $I_{\mathcal{K}}^3$.  First similar to \eqref{sample.term.est} for an integer $j\geq 1$ we estimate 
\begin{multline}\label{sample.term2.est}
    \int_\T d\theta \int_\T \frac{d\alpha }{\alpha^2}
    |\delta_\alpha (\bX'-\bY')||\BX'-\BY'| |\delta_\alpha \bZ'|^j 
    \\
    \leq  C 
     || \bX'-\bY'||_{L^2_\theta}\int_\T \frac{d\alpha}{\alpha^2} ||\delta_\alpha (\bX'-\bY')||_{L^2_\theta}
    ||\delta_\alpha \BZ'||_{L^\infty_\theta}^j
    \\ 
    \leq  C  || \bX'-\bY'||_{L^2_\theta} || \bX'-\bY'||_{\dot{H}^{\frac12}} ||\bZ'||_{\dot{B}^{\frac{1}{2j}}_{\infty, 2j}}^j 
    \\ \leq  C ||\bX'-\bY'||_{\dot{H}^{\frac12}}
    ||\bX'-\bY'||_{L^2_\theta}
    ||\bZ'||_{\dot{H}^{\frac12}}^{j-1} 
  ||\bZ'||_{\dot{H}^{1}}
  \\
  \leq c\lambda ||\bX'-\bY'||_{\dot{H}^{\frac12}}^2
  + 
  C \lambda^{-1} ||\bX'-\bY'||_{L^2_\theta}^2
    ||\bZ'||_{\dot{H}^{\frac12}}^{2(j-1)} 
  ||\bZ'||_{\dot{H}^{1}}^2.
\end{multline}
Now we use \eqref{difference.DBTXY} with \eqref{DBTX.def} and \eqref{e:QuantitativeTensionMap} to see that 
\begin{equation}\notag
  \left|  \DBTX - \DBTY \right|
 \lesssim \DDTT   |  \BX'-\BY'  |.
\end{equation}
Thus for $I_{\mathcal{K}}^3$ with \eqref{kerbel.eqn.deriv}, \eqref{e:Abounds} and \eqref{sample.term2.est}  we have the following bound
\begin{multline}\label{e:L2contDbound}
 I_{\mathcal{K}}^3  \leq C \DDTT \int_\T \int_\T \frac{d\alpha d\theta}{\alpha^2}|\delta_\alpha (\bX'-\bY')| |\BX'-\BY'| \left(|\delta_\alpha \bZ'|+\frac{|\delta_\alpha \bZ'|^2}{\rho} + \frac{|\delta_\alpha \bZ'|^3}{\rho^2}\right)
     \\ 
     \leq 
     C\frac{\DDTT^2}{\lambda}\left(1+\rho^{-2}||\bZ'||_{\dot{H}^{\frac12}}^2+\rho^{-4}||\bZ'||_{\dot{H}^{\frac12}}^4\right)||\bZ'||_{\dot{H}^1}^2||\bX'-\bY'||_{L^2_\theta}^2
     \\
     +     \frac{\lambda}{8}||\bX'-\bY'||_{\dot{H}^{\frac12}}^2.
\end{multline}
These are all of our estimates for $ I_{\mathcal{K}}$.

To estimate $I_{\mathcal{A}}$ we use the bounds in \eqref{delta.alpha.BX.bound} and \eqref{A.diff.est.noPM} to see that 
\begin{multline}\notag
I_{\mathcal{A}} \lesssim  \DTT \int_\T \int_\T \frac{d\alpha d\theta}{\alpha^2} |\delta_\alpha (\bX'-\bY')|^2 
\left(\rho^{-1}|\delta_\alpha \bZ'| +
\rho^{-2}|\delta_\alpha \bZ'|^2
\right)
\\
+\DTT \rho^{-2}\int_\T \int_\T \frac{d\alpha d\theta}{\alpha^2} |\delta_\alpha (\bX'-\bY')|
\left| \DAL (\BX - \BY)\right| 
\left(|\delta_\alpha \bZ'|^2 +
\rho^{-1}|\delta_\alpha \bZ'|^3
\right)
=I_{\mathcal{A}}^1+I_{\mathcal{A}}^2.
\end{multline}
Then similar to \eqref{sample.term.est} and \eqref{est.IK2} we have 
\begin{equation*}
        I_{\mathcal{A}}^1
 \leq \frac{\lambda}{8}||\bX'-\bY'||_{\dot{H}^{\frac12}}^2 +C\frac{\DTT^2}{\lambda}\left(1+\frac{||\bZ'||_{\dot{H}^{\frac12}}^2}{\rho^2}\right)\frac{||\bZ'||_{\dot{H}^1}^2}{\rho^2}||\bX'-\bY'||_{L^2_\theta}^2.
\end{equation*}
Also using \eqref{operator.bd.first} then similar to \eqref{sample.term2.est} and \eqref{e:L2contDbound} we have 
\begin{multline}\notag
I_{\mathcal{A}}^2  
     \leq 
     C\frac{\DDTT^2\rho^{-4}}{\lambda}\left(||\bZ'||_{\dot{H}^{\frac12}}^2+\rho^{-2}||\bZ'||_{\dot{H}^{\frac12}}^4\right)||\bZ'||_{\dot{H}^1}^2||\bX'-\bY'||_{L^2_\theta}^2
     \\
     +     \frac{\lambda}{8}||\bX'-\bY'||_{\dot{H}^{\frac12}}^2.
\end{multline}
These are our main estimates for $I_{\mathcal{A}}$.

Now from all of the bounds above we define
\begin{equation*}
        \JT (t)
    \eqdef
     \frac{\DTT^2}{\rho^2\lambda}\left(1+\frac{||\bZ'(t)||_{\dot{H}^{\frac12}}^2}{\rho^2}\right)
    +
        \frac{\DDTT^2}{\lambda}\left(1+\rho^{-2}\right)\left(1+\rho^{-4}||\bZ'(t)||_{\dot{H}^{\frac12}}^4\right).
\end{equation*}
Then putting all of these bounds together, we get that 
\begin{equation*}
        \frac{d}{dt}\log(||(\bX'-\bY')(t)||_{L^2_\theta}^2)
        \leq C \JT(t) ||\bZ'(t)||_{\dot{H}^1}^2.
\end{equation*}
We conclude that 
\begin{equation}\notag
    ||(\bX'-\bY')(t)||_{L^2_\theta}^2 \leq \exp\left(C\int_0^t ds~\JT(s) ||\bZ'(s)||_{\dot{H}^1}^2 \right) ||\bX'_0-\bY'_0||_{L^2_\theta}^2.
\end{equation}
Then applying \eqref{apriori.bd.norm.H1} completes the proof.
\end{proof}

\begin{corollary}\label{cor.L2.cont.m}
Let $\bX, \bY: [0,T]\times \T \to \R^2$ be two weak solutions to the Peskin problem \eqref{peskin.general.tension} with tension $\TE$ in the sense of Definition \ref{def:solution} with initial data $\BX_0,$ $\BY_0$ respectively, satisfying all the conditions in Proposition \ref{prop.L2.cont}.    Let $\subw$ and $\omega$ satisfy in Definition \ref{subw.definition} and additionally suppose that there exists $r_* \ge 1$ such that $\frac{\omega(r)}{\subw(r)}$ is decreasing for $r \ge r_*$ and in particular 
\begin{equation*}
    \displaystyle\lim\limits_{r\to \infty}\frac{\omega(r)}{\subw(r)} = 0.
\end{equation*}
For any $\varepsilon>0,$ there exists $\delta_*>0$ such that for any $0<\delta\le \delta_*$ then \eqref{apriori.bd.norm.Z} and  $||\bX'_0-\bY'_0||_{L^2_\theta}<\delta$ imply
\begin{equation}\notag
 ||\bX'-\bY'||_{\mathcal{B}^{\omega}_T}<\varepsilon.
\end{equation}
\end{corollary}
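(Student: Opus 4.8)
The plan is to prove Corollary~\ref{cor.L2.cont.m} as an interpolation argument combining the two a priori estimates already available: the $L^2_\theta$ continuity estimate of Proposition~\ref{prop.L2.cont} (which gives smallness of $\|\bX'-\bY'\|_{L^\infty_T L^2_\theta}$ once $\|\bX'_0-\bY'_0\|_{L^2_\theta}$ is small) and the uniform higher-order bound in the $\CM$ norm coming from \eqref{apriori.bd.norm.Z}, which controls $\|\bX'-\bY'\|_{\CM} \le \|\bX'\|_{\CM}+\|\bY'\|_{\CM} \le 2\CTS M$ (equivalently a uniform bound in $L^\infty_T(\dot B^{1/2,\subw}_{2,1})$). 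The point is that the $\mathcal{B}^\omega_T$ norm, which weights $|\beta|^{-3/2}\|\delta_\beta(\bX'-\bY')\|_{L^\infty_T L^2_\theta}$ by $\omega(|\beta|^{-1})$, sits strictly between these two: because $\omega/\mu \to 0$, high frequencies (small $\beta$) are controlled with a small factor by the $\dot B^{1/2,\subw}_{2,1}$ bound, while low frequencies (large $\beta$) are controlled by the plain $L^2_\theta$ bound which is small by Proposition~\ref{prop.L2.cont}.

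Concretely, first I would split the defining integral for $\|\bX'-\bY'\|_{\mathcal{B}^\omega_T}$ at a threshold $|\beta| = \eta$, to be chosen:
\begin{equation}\notag
\|\bX'-\bY'\|_{\mathcal{B}^\omega_T}
= \int_{|\beta|<\eta}\frac{d\beta}{|\beta|^{3/2}}\omega(|\beta|^{-1})\|\delta_\beta(\bX'-\bY')\|_{L^\infty_T L^2_\theta}
+ \int_{|\beta|\ge\eta}\frac{d\beta}{|\beta|^{3/2}}\omega(|\beta|^{-1})\|\delta_\beta(\bX'-\bY')\|_{L^\infty_T L^2_\theta}.
\end{equation}
For the first (high-frequency) piece, I would use that $\omega(r)/\mu(r)$ is decreasing for $r\ge r_*$, so for $|\beta|$ small enough (i.e. $|\beta|^{-1}\ge r_*$) we have $\omega(|\beta|^{-1}) \le \frac{\omega(\eta^{-1})}{\mu(\eta^{-1})}\mu(|\beta|^{-1})$; this bounds the first integral by $\frac{\omega(\eta^{-1})}{\mu(\eta^{-1})}\|\bX'-\bY'\|_{\CM} \le 2\CTS M \frac{\omega(\eta^{-1})}{\mu(\eta^{-1})}$, which $\to 0$ as $\eta\to 0$ since $\omega/\mu\to0$. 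Choosing $\eta$ small makes this $< \varepsilon/2$. For the second (low-frequency) piece, $\omega$ and the integration region are both fixed once $\eta$ is fixed, so $\int_{|\beta|\ge\eta}\frac{d\beta}{|\beta|^{3/2}}\omega(|\beta|^{-1}) =: C_\eta < \infty$ (using $\omega(r)\lesssim \log(4+r)$ from Definition~\ref{subw.definition} and $|\beta|\ge\eta$), and we bound $\|\delta_\beta(\bX'-\bY')\|_{L^\infty_T L^2_\theta} \le 2\|\bX'-\bY'\|_{L^\infty_T L^2_\theta} \le 2C\|\bX'_0-\bY'_0\|_{L^2_\theta}$ by Proposition~\ref{prop.L2.cont}. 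Thus the second piece is $\le 2C C_\eta \delta$, which is $< \varepsilon/2$ provided $\delta \le \delta_* := \varepsilon/(4CC_\eta)$.

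The mild technical care needed is in the high-frequency estimate: the monotonicity $\omega/\mu \downarrow$ only holds for $r \ge r_*$, so strictly speaking one should take $\eta \le 1/r_*$ so that every $|\beta|<\eta$ has $|\beta|^{-1} > r_*$; on the remaining (empty, if $\eta\le 1/r_*$) range there is nothing to do. One also needs $\mu(|\beta|^{-1})\ge 1$, which holds by Definition~\ref{subw.definition}, so that weighting up to $\mu$ is legitimate. I expect no genuine obstacle here — the statement is essentially a soft consequence of the two quantitative estimates plus the hypothesis $\omega/\mu\to0$; the only thing to get right is the order of quantifiers (choose $\eta$ first, depending on $\varepsilon$, $M$, $\mu$, $\omega$; then choose $\delta_*$ depending on $\varepsilon$ and $\eta$), which the splitting above does correctly. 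Finally, once $\|\bX'-\bY'\|_{\mathcal{B}^\omega_T}<\varepsilon$ for all $\varepsilon>0$ when $\|\bX'_0-\bY'_0\|_{L^2_\theta}=0$, this also yields the uniqueness assertion in $\mathcal{B}^\omega_T$ stated at the end of Theorem~\ref{first:unique}.
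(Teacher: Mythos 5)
Your proposal is correct and is essentially the paper's own argument: the same splitting of the $\mathcal{B}^\omega_T$ integral at $|\beta|=\eta$, the same use of the monotonicity of $\omega/\mu$ to control the small-$\beta$ piece by the $\CM$ bounds, and the same application of Proposition \ref{prop.L2.cont} to the large-$\beta$ piece, with the quantifiers in the same order ($\eta$ first, then $\delta_*$). Your remark about restricting to $\eta \le 1/r_*$ is a valid point of care that the paper leaves implicit.
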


\begin{proof}
For any small $\eta>0,$ we can bound
\begin{multline}\notag
    ||\bX'-\bY'||_{\mathcal{B}^\omega_T}  = \int_\T \frac{d\beta}{|\beta|^{3/2}}\omega(|\beta|^{-1}) \sup_{0\leq t\leq T} ||\delta_\beta(\bX'-\bY')(t)||_{L^2_\theta} \leq 
\int_{|\beta|<\eta} + \int_{|\beta|>\eta}
\\ \leq \frac{\omega(\eta^{-1})}{\mu(\eta^{-1})}(||\bX'||_{\CM}+||\bY'||_{\CM}) + \frac{\omega(\eta^{-1})}{\eta^{1/2}} \sup\limits_{0\leq t\leq T} ||(\bX'-\bY')(t)||_{L^2_\theta}.
\end{multline}
Thus by our assumptions on $\subw$, $\omega$, $\bX'$, and $\bY'$, we can take $\eta>0$ sufficiently small to guarantee that
$$\displaystyle\frac{\omega(\eta^{-1})}{\mu(\eta^{-1})} (||\bX'||_{\CM}+||\bY'||_{\CM})<\frac{\varepsilon}{2}.$$
Then applying Proposition \ref{prop.L2.cont}, we can take $\delta>0$ sufficiently small to obtain the result.  
\end{proof}

\section{Higher regularity}\label{sec:smoothing}

In this section we establish the gain of higher regularity for the solutions $\BX'(t,\theta)$ to the Peskin problem \eqref{peskin.general.tension} satisfying \eqref{initial.assumption}, \eqref{apriori.bd} and \eqref{apriori.bd.norm}.  In \secref{sec:oneTwoEst} we prove the $C^{\frac12}_{t,x}$ estimate.  Then in \secref{sec:HigherEst} we prove the $C^{1,\alpha}_{t,x}$ estimate and the higher regularity.

\subsection{$C^{\frac12}_{t,x}$ estimate for $\BX'(t,\theta)$}\label{sec:oneTwoEst}
We now prove the $C^{\frac12}_{t,x}$ estimate for solutions $\BX'(t,\theta)$ to the Peskin problem \eqref{peskin.general.tension}.  We first prove in Lemma \ref{estimate.q} a general estimate of some quantities that will come up repeatedly in subsequent estimates.

\begin{lemma}\label{estimate.q}  For any $q\in \N$ we have the following uniform estimates: 
\begin{multline}\notag 
\YTT_q \eqdef \int_{\T} \frac{d\beta}{\beta^2} \int_\T d\theta
\bigg|\int_{\T} \frac{d\alpha}{\alpha^2}   | \delta_\alpha \bX'(\theta)|^{q} |\delta_\beta\delta_\alpha  \bX'(\theta)|\bigg|^2 \lesssim ||\bX'||_{\dot{H}^{\frac12}}^{2(q-1)}||\bX'||_{\dot{H}^1}^4,
\\ 
\STT_q \eqdef \int_{\T} \frac{d\beta}{\beta^2} \int_\T d\theta 
\bigg|\int_{\T} \frac{d\alpha}{\alpha^2}   | \delta_\alpha \bX'(\theta)|^{q+1} |\delta_\beta  \bX'(\theta)|\bigg|^2
\lesssim
||\bX'||_{\dot{H}^{\frac12}}^{2q} ||\bX'||_{\dot{H}^{1}}^4.
\end{multline}
\end{lemma}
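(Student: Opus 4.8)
The plan is to estimate $\YTT_q$ and $\STT_q$ by first applying Minkowski's integral inequality in $\theta$ to pull the $L^2_\theta$ norm inside the $\alpha$-integral, then recognizing the resulting $\beta$-integral as a Besov norm, and finally invoking the embedding and interpolation lemmas (Proposition \ref{besov.ineq.prop} and Lemma \ref{Besov.interpolation}) to reduce everything to $\dot H^{1/2}$ and $\dot H^1$ norms of $\bX'$.

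\medskip

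\textbf{Estimate of $\YTT_q$.} First I would bound $|\delta_\alpha \bX'(\theta)|^q \le \|\delta_\alpha\bX'\|_{L^\infty_\theta}^q$ pointwise and then apply Minkowski's integral inequality in $\theta$ to get
\begin{equation}\notag
\YTT_q \lesssim \int_\T \frac{d\beta}{\beta^2}\left(\int_\T \frac{d\alpha}{\alpha^2}\,\|\delta_\alpha\bX'\|_{L^\infty_\theta}^q\,\|\delta_\beta\delta_\alpha\bX'\|_{L^2_\theta}\right)^2.
\end{equation}
To handle the inner $\alpha$-integral I would split the weight $\alpha^{-2} = \alpha^{-(1+\frac{q}{2q})}\cdot \alpha^{-(1+\frac12)+\frac{q}{2q}}$... more cleanly, I would just use Cauchy–Schwarz in $\alpha$ against a convenient splitting of the measure, or directly recognize that $\int_\T \frac{d\alpha}{\alpha^2}\|\delta_\alpha\bX'\|_{L^\infty_\theta}^q \|\delta_\beta\delta_\alpha\bX'\|_{L^2_\theta}$ is a bilinear expression controlled, after Cauchy–Schwarz, by $\|\bX'\|_{\dot B^{1/(2q)}_{\infty,2q}}^{q}\,\|\delta_\beta\bX'\|_{\dot B^{1/2}_{2,2q'}}$ or similar. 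Then using $\|\bX'\|_{\dot B^{1/(2q)}_{\infty,2q}}\lesssim \|\bX'\|_{\dot B^{(q+1)/(2q)}_{2,2q}}\lesssim \|\bX'\|_{\dot H^{1/2}}^{(q-1)/q}\|\bX'\|_{\dot H^1}^{1/q}$ (Proposition \ref{besov.ineq.prop} followed by Lemma \ref{Besov.interpolation}, exactly as in \eqref{sample.term.est}), and integrating the remaining $\beta$-dependent factor against $\beta^{-2}$ to produce $\|\delta_\beta\bX'\|$-type norms that interpolate to $\|\bX'\|_{\dot H^1}^2$, one arrives at $\YTT_q\lesssim \|\bX'\|_{\dot H^{1/2}}^{2(q-1)}\|\bX'\|_{\dot H^1}^4$. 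The exponent bookkeeping is the delicate part: one needs the powers of $\dot H^{1/2}$ to total $2(q-1)$ and of $\dot H^1$ to total $4$, which matches the homogeneity/scaling count ($q+1$ factors of $\bX'$ each at regularity level, plus the outer square, giving scaling weight consistent with $2q+2$ total derivatives).

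\medskip

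\textbf{Estimate of $\STT_q$.} This one is analogous but the factor $|\delta_\beta\bX'(\theta)|$ has no smallness as $\alpha\to0$, so I would pull $\|\delta_\beta\bX'\|_{L^2_\theta}$ out and separately handle $\int_\T \frac{d\alpha}{\alpha^2}\|\delta_\alpha\bX'\|_{L^\infty_\theta}^{q+1}$ — but that integral diverges, so instead I would keep one $\delta_\alpha\bX'$ in the form $\|\delta_\alpha\bX'\|_{L^\infty_\theta}$ under the $\alpha^{-2}$ weight and distribute. Concretely: bound $|\delta_\alpha\bX'(\theta)|^{q+1}|\delta_\beta\bX'(\theta)| \le \|\delta_\alpha\bX'\|_{L^\infty_\theta}^{q}\cdot|\delta_\alpha\bX'(\theta)|\,|\delta_\beta\bX'(\theta)|$, apply Minkowski, then Cauchy–Schwarz in $\theta$ on the last two factors to get $\|\delta_\alpha\bX'\|_{L^2_\theta}\|\delta_\beta\bX'\|_{L^\infty_\theta}$, and finally recognize $\int_\T\frac{d\alpha}{\alpha^2}\|\delta_\alpha\bX'\|_{L^\infty_\theta}^q\|\delta_\alpha\bX'\|_{L^2_\theta}$ as $\lesssim \|\bX'\|_{\dot B^{1/(2q)}_{\infty,\cdot}}^q\|\bX'\|_{\dot B^{1/2}_{2,2}}\lesssim \|\bX'\|_{\dot H^{1/2}}^{q}\|\bX'\|_{\dot H^1}$, leaving $\int_\T\frac{d\beta}{\beta^2}\|\delta_\beta\bX'\|_{L^\infty_\theta}^2 \lesssim \|\bX'\|_{\dot B^{1/2}_{\infty,2}}^2\lesssim\|\bX'\|_{\dot H^1}^2$. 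Multiplying gives $\STT_q\lesssim \|\bX'\|_{\dot H^{1/2}}^{2q}\|\bX'\|_{\dot H^1}^4$, as claimed.

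\medskip

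\textbf{Main obstacle.} The routine part is the chain of Besov embeddings; the genuinely delicate point is organizing the splits so that (a) the inner $\alpha$-integral actually converges — one must never leave an unweighted $|\delta_\alpha\bX'|^{q+1}$ under $d\alpha/\alpha^2$, so the two ``extra'' powers of regularity (half a derivative from the $\alpha^{-2}$ weight acting on $\delta_\alpha$, and whatever comes from distributing the $L^\infty$ factor) have to be allocated carefully — and (b) the final exponents on $\dot H^{1/2}$ and $\dot H^1$ come out exactly as stated rather than with a slightly wrong split like $\dot H^{3/4}$. I would double-check this via the scaling $\bX'_\lambda(\theta)=\lambda^{a}\bX'(\lambda\theta)$: both sides must scale the same way, which pins down that $\YTT_q$ has total ``$2q+2$ derivatives worth'' of $\bX'$ and forces the pairing $2(q-1)+2\cdot2 = 2q+2$, and similarly $2q + 2\cdot 2 = 2q+4$ for $\STT_q$ — wait, that needs rechecking, but the scaling consistency check is exactly the tool that guarantees the allocation is right.
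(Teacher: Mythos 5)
Your argument is correct and is essentially the paper's proof of this lemma: Minkowski's inequality in $\theta$, H\"older in $\theta$, Cauchy--Schwarz in $\alpha$ against the measure $d\alpha/\alpha^2$ to produce Besov norms, and then Proposition \ref{besov.ineq.prop} plus Lemma \ref{Besov.interpolation}; the exponent bookkeeping you worry about closes exactly as in the paper, via $\|\bX'\|_{\dot{B}^{1/2q}_{\infty,2q}}^{2q}\lesssim\|\bX'\|_{\dot{H}^{1/2+1/2q}}^{2q}\lesssim\|\bX'\|_{\dot{H}^{1/2}}^{2(q-1)}\|\bX'\|_{\dot{H}^{1}}^{2}$ and $\int_\T\frac{d\beta}{\beta^2}\|\delta_\beta\bX'\|_{\dot{H}^{1/2}}^2\approx\|\bX'\|_{\dot{H}^1}^2$. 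The only deviation is in $\STT_q$: you split $|\delta_\alpha\bX'|^{q+1}$ as $\|\delta_\alpha\bX'\|_{L^\infty_\theta}^{q}\,|\delta_\alpha\bX'|$ and Cauchy--Schwarz in $\alpha$, whereas the paper keeps all $q+1$ factors together in $L^{2(q+1)}_\theta$ against $\|\delta_\beta\bX'\|_{L^\infty_\theta}$, so the $\alpha$-integral is exactly $\|\bX'\|_{\dot{B}^{1/(q+1)}_{2(q+1),q+1}}^{q+1}$; both routes give the stated bound. One aside in your writeup is false: $\int_\T\frac{d\alpha}{\alpha^2}\|\delta_\alpha\bX'\|_{L^\infty_\theta}^{q+1}$ does not diverge --- it equals $\|\bX'\|_{\dot{B}^{1/(q+1)}_{\infty,q+1}}^{q+1}\lesssim\|\bX'\|_{\dot{H}^{1/2+1/(q+1)}}^{q+1}$, and the ``naive'' split you rejected (pulling out $\|\delta_\beta\bX'\|_{L^2_\theta}$) in fact also yields $\STT_q\lesssim\|\bX'\|_{\dot{H}^{1/2}}^{2q}\|\bX'\|_{\dot{H}^1}^4$ after the same interpolation; since your proof does not rely on that remark, it does not affect its validity.
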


\begin{proof}
Fix $q\in \N$.  We apply Minkowski's inequality in $\theta$ and $\alpha$, and then we use the Cauchy-Schwartz inequality to obtain
\begin{multline}\notag
   \int_\T d\theta
\bigg|\int_{\T} \frac{d\alpha}{\alpha^2}   | \delta_\alpha \bX'(\theta)|^{q} |\delta_\beta\delta_\alpha  \bX'(\theta)|\bigg|^2 
\leq 
\left(\int_\T \frac{d \alpha}{\alpha^2} \left[\int_\T d\theta |\delta_\alpha \bX'|^{2q} |\delta_\beta\delta_\alpha \bX'|^2\right]^\frac{1}{2}\right)^2 
\\ \leq \left(\int_\T \frac{d\alpha}{\alpha^2}||\delta_\alpha \bX'||_{L^\infty_\theta}^q||\delta_\beta \delta_\alpha \bX'||_{L^2_\theta} \right)^2 \leq ||\bX'||_{\dot{B}^{1/2q}_{\infty, 2q}}^{2q}||\delta_\beta \bX'||_{\dot{H}^{1/2}}^2,
\end{multline}
and similarly
\begin{multline}\notag
   \int_\T d\theta
\bigg|\int_{\T} \frac{d\alpha}{\alpha^2}   | \delta_\alpha \bX'(\theta)|^{q+1} |\delta_\beta  \bX'(\theta)|\bigg|^2 \leq \left(\int_\T \frac{d \alpha}{\alpha^2} \left[\int_\T d\theta |\delta_\alpha \bX'|^{2(q+1)} |\delta_\beta \bX'|^2\right]^\frac{1}{2}\right)^2 
\\ \leq \left(\int_\T \frac{d\alpha}{\alpha^2}||\delta_\alpha \bX'||_{L^{2(q+1)}_\theta}^{q+1}||\delta_\beta  \bX'||_{L^\infty_\theta} \right)^2 \leq ||\bX'||_{\dot{B}^{1/(q+1)}_{2(q+1), q+1}}^{2(q+1)}||\delta_\beta \bX'||_{L^\infty_\theta}^2.
\end{multline}
Integrating against $\displaystyle\frac{d\beta}{\beta^2}$ 
we obtain
\begin{eqnarray}\notag
    &\YTT_q \leq ||\bX'||_{\dot{B}^{1/2q}_{\infty, 2q}}^{2q}\int_\T\frac{d\beta}{\beta^2}||\delta_\beta \bX'||_{\dot{H}^{1/2}}^2\lesssim ||\bX'||_{\dot{B}^{1/2q}_{\infty, 2q}}^{2q}||\bX'||_{\dot{H}^1}^2,
    \\ \notag
&\STT_q \leq ||\bX'||_{\dot{B}^{1/(q+1)}_{2(q+1), q+1}}^{2(q+1)}\int_\T\frac{d\beta}{\beta^2}||\delta_\beta \bX'||_{L^\infty_\theta}^2 
\lesssim
||\bX'||_{\dot{B}^{1/(q+1)}_{2(q+1), q+1}}^{2(q+1)}
||\bX'||_{\dot{B}^{1/2}_{\infty, 2}}^{2}.
\end{eqnarray}
Then above we will use $||\bX'||_{\dot{B}^{1/2}_{\infty, 2}}^{2} \lesssim || \bX'||_{\dot{H}^1}^2$ from Proposition \ref{besov.ineq.prop} .
Finally, since $q\ge 1$, applying Proposition \ref{besov.ineq.prop} and Lemma \ref{Besov.interpolation} gives 
\begin{multline}\notag
    ||\bX'||_{\dot{B}^{1/2q}_{\infty, 2q}}^{2q} \lesssim ||\bX'||_{\dot{H}^{1/2 + 1/2q}}^{2q} \lesssim ||\bX'||_{\dot{H}^{1/2}}^{2(q-1)}||\bX'||_{\dot{H}^1}^2, 
\\ ||\bX'||_{\dot{B}^{1/(q+1)}_{2(q+1), q+1}}^{2(q+1)} \lesssim ||\bX'||_{\dot{H}^{1/2 + 1/2(q+1)}}^{2(q+1)} \lesssim ||\bX'||_{\dot{H}^{1/2}}^{2q}||\bX'||_{\dot{H}^1}^2,
\end{multline}
completing the estimate.  
\end{proof}

Let $\bX'$ be a smooth solution of \eqref{peskin.general.tension} with \eqref{kerbel.eqn.deriv} and \eqref{kerbel.A.eqn.deriv}, we will use the equation in the form \eqref{v.theta.def}.  Next we prove the $\dot{H}^1$ estimate.

\begin{proposition}\label{prop:H1.estimate}
For any $0 < t_0 <t < T$ we have the following estimate
\begin{equation}\label{H1.estimate}
    ||\bX'||_{\dot{H}^1}^2(t) \leq  ||\bX'||_{\dot{H}^1}^2(t_0)\exp\left(C
    || \HEU||_{L^\infty(t_0,t)}\int_{t_0}^t ds ||\bX'||_{H^1}^2(s)\right) .
\end{equation}
Here  $\HEU=\HEU(s)=\HEU(||\bX'(s)||_{\dot{H}^{\frac12}}, \rho^{-1}, \lambda^{-1}, \DTT, \DDTT)$ is a polynomial that is written explicitly in \eqref{HEU.def}.  Thus in particular we have that
\begin{equation}\label{H1.estimate2}
    ||\bX'||_{\dot{H}^1}^2(t)\leq C(M, \mu, \rho, \lambda, \DTT, \DDTT)  ||\bX'||_{\dot{H}^1}^2(t_0).
\end{equation}
\end{proposition}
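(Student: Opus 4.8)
The plan is to run an $L^2_\theta$ energy estimate on $\bX''=\partial_\theta\bX'$, starting from the equation in the form \eqref{v.theta.def}, $\partial_t\bX'+\TLam\bT(\bX')=\mathcal{V}$. Since $\TLam$ commutes with $\partial_\theta$ and $\partial_\theta\bT(\bX')=D\bT(\bX')\bX''$, differentiating once in $\theta$ gives
\begin{equation}\notag
\partial_t\bX''+\TLam\bigl(D\bT(\bX')\bX''\bigr)=\partial_\theta\mathcal{V}.
\end{equation}
As $\bX'$ has mean zero, $||\bX'||_{\dot H^1}=||\bX''||_{L^2_\theta}$, so pairing this equation with $\bX''$ in $L^2_\theta$ yields
\begin{equation}\notag
\tfrac12\tfrac{d}{dt}||\bX'||_{\dot H^1}^2+\int_\T d\theta~\bX''\cdot\TLam\bigl(D\bT(\bX')\bX''\bigr)=\int_\T d\theta~\bX''\cdot\partial_\theta\mathcal{V}.
\end{equation}
First I would extract the dissipation from the left-hand integral: expand the quadratic form of $\TLam$ as in \eqref{tildeLambda:eq}, write $\delta_\alpha\bigl(D\bT(\bX')\bX''\bigr)=D\bT(\bX'(\theta))\,\delta_\alpha\bX''+\bigl(\delta_\alpha D\bT(\bX')\bigr)\bX''(\theta+\alpha)$, and use the ellipticity $D\bT(z)\ge\lambda\IO$ from \eqref{e:DTdefn}; the diagonal piece is then bounded below by $\lambda||\TLam^{\frac12}\bX''||_{L^2_\theta}^2\approx\lambda||\bX'||_{\dot H^{\frac32}}^2$.

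\textbf{The commutator.} The off-diagonal piece is, using $|\delta_\alpha D\bT(\bX')|\le\DDTT\,|\delta_\alpha\bX'|$ from \eqref{e:QuantitativeTensionMap}, controlled by $C\DDTT\int_\T d\theta\int_\T\frac{d\alpha}{\alpha^2}|\delta_\alpha\bX''(\theta)|\,|\delta_\alpha\bX'(\theta)|\,|\bX''(\theta+\alpha)|$. Putting $\delta_\alpha\bX'$ in $L^\infty_\theta$, both copies of $\bX''$ in $L^2_\theta$, and then Cauchy--Schwarz in $\alpha$, this is $\lesssim\DDTT\,||\TLam^{\frac12}\bX''||_{L^2_\theta}\,||\bX'||_{\dot B^{1/2}_{\infty,2}}\,||\bX''||_{L^2_\theta}$; the embedding $||\bX'||_{\dot B^{1/2}_{\infty,2}}\lesssim||\bX'||_{\dot H^1}$ from Proposition \ref{besov.ineq.prop} and Young's inequality turn it into $\varepsilon\lambda||\bX'||_{\dot H^{\frac32}}^2+C\lambda^{-1}\DDTT^2||\bX'||_{\dot H^1}^4$, with the first term absorbed by the dissipation.

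\textbf{The remainder.} For the right-hand side I would not differentiate $\mathcal{V}$ directly; instead, by the $\dot H^{\frac12}$--$\dot H^{-\frac12}$ duality for mean-zero functions and $||\partial_\theta\mathcal{V}||_{\dot H^{-1/2}}\approx||\mathcal{V}||_{\dot H^{1/2}}$,
\begin{equation}\notag
\Bigl|\int_\T d\theta~\bX''\cdot\partial_\theta\mathcal{V}\Bigr|\le||\bX''||_{\dot H^{1/2}}\,||\partial_\theta\mathcal{V}||_{\dot H^{-1/2}}\lesssim||\bX'||_{\dot H^{\frac32}}\,||\mathcal{V}||_{\dot H^{1/2}}.
\end{equation}
By \eqref{Besov.Space}, $||\mathcal{V}||_{\dot H^{1/2}}^2=\int_\T\frac{d\beta}{\beta^2}||\delta_\beta\mathcal{V}||_{L^2_\theta}^2$; expanding $\delta_\beta\mathcal{V}$ by the Leibniz rule, bounding $\delta_\beta\mathcal{A}$ with Lemma \ref{A.bound.lem} and $\delta_\beta\delta_\alpha\bT(\bX')$, $\delta_\alpha\bT(\bX')$ with \eqref{beta.alpha.TXP} and \eqref{delta.alpha.BX.bound}, and using \eqref{apriori.bd} and Remark \ref{ignore.remark}, each resulting term is --- up to constants built from $\rho^{-1},\DTT,\DDTT$ --- of the kind controlled by $\YTT_q$ or $\STT_q$ in Lemma \ref{estimate.q}. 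That lemma gives $||\mathcal{V}||_{\dot H^{1/2}}^2\lesssim P\bigl(||\bX'||_{\dot H^{\frac12}},\rho^{-1},\DTT,\DDTT\bigr)\,||\bX'||_{\dot H^1}^4$ for an explicit polynomial $P$, and Young's inequality then produces $\varepsilon\lambda||\bX'||_{\dot H^{\frac32}}^2+C\lambda^{-1}P\,||\bX'||_{\dot H^1}^4$.

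\textbf{Conclusion and main obstacle.} Collecting the three bounds and absorbing the $\varepsilon\lambda||\bX'||_{\dot H^{\frac32}}^2$ contributions into the dissipation gives $\frac{d}{dt}||\bX'||_{\dot H^1}^2\le C\,\HEU\,||\bX'||_{\dot H^1}^2\cdot||\bX'||_{\dot H^1}^2$, with $\HEU$ the polynomial $C\lambda^{-1}(\DDTT^2+P)$ in $||\bX'(s)||_{\dot H^{\frac12}},\rho^{-1},\lambda^{-1},\DTT,\DDTT$ appearing in \eqref{HEU.def}. Dividing by $||\bX'||_{\dot H^1}^2$, integrating $\frac{d}{dt}\log||\bX'||_{\dot H^1}^2$ over $(t_0,t)$ and exponentiating yields \eqref{H1.estimate}. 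For \eqref{H1.estimate2}: $||\bX'(s)||_{\dot H^{1/2}}\lesssim||\bX'(s)||_{\dot B^{1/2}_{2,1}}\le||\bX'||_{\CM}\le\CTS M$ by \eqref{apriori.bd.norm}, so $||\HEU||_{L^\infty(t_0,t)}$ is bounded by a constant depending only on $M,\rho,\lambda,\DTT,\DDTT$, while $\int_{t_0}^t||\bX'||_{\dot H^1}^2\,ds\le\int_0^T||\bX'||_{\dot H^1}^2\,ds$ is finite and controlled by $M,\mu,\rho,\lambda$ exactly as in the computation in the proof of Lemma \ref{H1.small.time}. The delicate step I anticipate is the dissipative term $\int_\T\bX''\cdot\TLam(D\bT(\bX')\bX'')$: since $\TLam$ and multiplication by $D\bT(\bX')$ do not commute, one must peel off the coercive part through the difference representation of $\TLam$ and verify that the $D^2\bT$ commutator costs only the quartic factor $||\bX'||_{\dot H^1}^4$, which is precisely what is reabsorbed when passing to $\frac{d}{dt}\log||\bX'||_{\dot H^1}^2$; by contrast, the reduction of $||\mathcal{V}||_{\dot H^{1/2}}$ to Lemma \ref{estimate.q} is lengthy but mechanical.
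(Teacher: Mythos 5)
Your proposal is correct and follows essentially the same route as the paper: the same $\dot H^1$ energy identity, the same extraction of the coercive term $-\lambda\|\bX'\|_{\dot H^{3/2}}^2$ with a commutator error of size $C\lambda^{-1}\DDTT^2\|\bX'\|_{\dot H^1}^4$, the same duality pairing $\|\bX'\|_{\dot H^{3/2}}\|\mathcal{V}\|_{\dot H^{1/2}}$ reduced to Lemma \ref{estimate.q}, and the same Gr\"onwall argument on $\log\|\bX'\|_{\dot H^1}^2$. The only cosmetic difference is that you differentiate the equation in $\theta$ and split the bilinear form $\delta_\alpha(D\bT(\bX')\bX'')$ directly, whereas the paper applies $\TLam^{3/2}$ to $\bT(\bX')$ and isolates the commutator $\HEK$; the two computations are interchangeable.
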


\begin{proof}
Notice that we use $||\bX'||_{\dot{H}^1}^2 = \int_{\T} d\theta~ | \TLam \BX'(\theta)|^2$ with \eqref{tildeLambda:eq}.  Thus from \eqref{v.theta.def} we have 
\begin{equation}\label{time.deriv.energy}
        \frac{1}{2}\frac{d}{dt}||\bX'||_{\dot{H}^1}^2 = -\int_{\T} d\theta ~\TLam^{\frac32}\bX' \cdot \TLam^{\frac32}\bT(\bX') 
        + \int_{\T} d\theta ~\TLam^{\frac32}\bX' \cdot \TLam^{\frac12}\VTT. 
\end{equation}
To estimate the first term we split 
\begin{equation*}
            \TLam^{3/2}\bT(\bX') = D\bT(\bX'(\theta)) \TLam^{3/2} \bX' + \HEK,
\end{equation*}
where similar to \eqref{e:deltaalphaT} and \eqref{DBTX.def} we have
\begin{equation*}
       \HEK \eqdef \frac{1}{4\pi}\int_{\T} \frac{d\alpha}{\alpha^{5/2}} \left(\int_0^1\left( D\bT(\bX'(\theta) + s\delta_\alpha \bX'(\theta)) - D\bT(\bX'(\theta)) \right)ds\right)\delta_\alpha \bX'(\theta).
\end{equation*}
Then similar to \eqref{DDBT.term} we have 
\begin{equation*}
     \left| D\bT(\bX'(\theta) + s\delta_\alpha \bX'(\theta)) - D\bT(\bX'(\theta)) \right|
\lesssim \DDTT |\delta_\alpha \bX'(\theta)|.
\end{equation*}
Then using Minkowski's inequality and the Besov space embeddings in Proposition \ref{besov.ineq.prop}, we bound $\HEK$ in $L^2$ as
\begin{equation}\notag
|| \HEK ||_{L^2_\theta}
\lesssim
\DDTT \int_{\T} \frac{d\alpha}{\alpha^{5/2}}||\delta_\alpha \BX'||_{L^4_\theta}^2
\lesssim  
\DDTT ||\bX'||_{\dot{B}^{3/4}_{4,2}}^2 \lesssim  \DDTT ||\bX'||_{\dot{H}^1}^2 .
\end{equation}
Recalling $D\bT(z)\geq \lambda I$ from \eqref{e:DTdefn}, applying Young's inequality we thus have
\begin{equation}\label{HEK.estimate}
\begin{split}
-\int_{\T} d\theta~ \TLam^{\frac32}\bX' \cdot \TLam^{\frac32}\bT(\bX') &\leq -\lambda ||\bX'||_{\dot{H}^{\frac32}}^2 + C\DDTT  ||\bX'||_{\dot{H}^{\frac32}} ||\bX'||_{\dot{H}^{1}}^{2} 
\\&\leq -\frac{\lambda}{2} ||\bX'||_{\dot{H}^{\frac32}}^2 + C \lambda^{-1}\DDTT^2 ||\bX'||_{\dot{H}^{1}}^4.
\end{split}
\end{equation}
This is our main estimate for the first term in \eqref{time.deriv.energy}.  

To estimate the second term in \eqref{time.deriv.energy}, it suffices to bound 
$\TLam^{\frac12}\VTT$ from \eqref{v.theta.def} in $L^2$.  This is equivalent to bounding 
$    \int_{\T} \frac{d\beta}{\beta^2} \int_{\T} d\theta \left(\delta_\beta  \VTT(\theta)\right)^2$.   Thus, we have 
\begin{multline*}
     || \TLam^{\frac12}\VTT||_{L^2_\theta}^2 \approx    \int_{\T} \frac{d\beta}{\beta^2} \int_{\T} d\theta \bigg|\int_\T \frac{d\alpha}{\alpha^2}  \delta_\beta[\mathcal{A}(\theta, \alpha) \delta_\alpha \bT(\bX'(\theta))] \bigg|^2
         \\
     \lesssim
          \int_{\T} \frac{d\beta}{\beta^2} \int_{\T} d\theta \bigg|\int_\T \frac{d\alpha}{\alpha^2}  |\mathcal{A}(\theta, \alpha)| \ |\delta_\beta\delta_\alpha \bT(\bX'(\theta))|\bigg|^2
          \\
          +
               \int_{\T} \frac{d\beta}{\beta^2} \int_{\T} d\theta \bigg|\int_\T \frac{d\alpha}{\alpha^2}    |\delta_\beta\mathcal{A}(\theta, \alpha)| \ |\tau_\beta\delta_\alpha \bT(\bX'(\theta))| \bigg|^2 = \HUK+\HJK.
\end{multline*}
We now use \eqref{beta.alpha.TXP},  \eqref{e:Abounds}, \eqref{apriori.bd} and Lemma \ref{estimate.q} to calculate that
\begin{multline*}
        \HUK
    \lesssim
    \DTT^2 (\rho^{-2} \YTT_1+\rho^{-4} \YTT_2)+\DDTT^2 (\rho^{-2} \STT_1+\rho^{-4} \STT_2)
    \\
    \lesssim 
    (\DTT^2+ \DDTT^2 ||\bX'||_{\dot{H}^{\frac12}}^2 ) (\rho^{-2} + \rho^{-4}||\bX'||_{\dot{H}^{\frac12}}^2 ) ||\bX'||_{\dot{H}^{1}}^4.
\end{multline*}
These are our main estimates for the term containing $\HUK$.

To bound the term $\HJK$, we will use  \eqref{delta.alpha.BX.bound} and the estimate of $|\delta_\beta \mathcal{A}(\theta, \alpha)|$ in Lemma \ref{A.bound.lem}, \eqref{A1betaRemark} and \eqref{A2betaRemark}.  Then as in Lemma \ref{estimate.q} we have
\begin{multline*}
        \HJK
    \lesssim
    \DTT^2 (\rho^{-2} \YTT_1+\rho^{-4}\YTT_2 +\rho^{-4}\STT_1 + \rho^{-6} \STT_2 )
    \\
    \lesssim 
    \DTT^2 (\rho^{-2} + \rho^{-4} ||\bX'||_{\dot{H}^{\frac12}}^2+ \rho^{-6} || \bX'||_{\dot{H}^{\frac12}}^{4} ) ||\bX'||_{\dot{H}^{1}}^4.
\end{multline*}
Notice that above the estimates in \eqref{A2betaRemark} with $|\delta_\beta \DAL \BX(\theta)|$ can be treated the same as  $|\delta_\beta \BX'(\theta)|$ in Lemma \ref{estimate.q} due to \eqref{operator.bd.first}.

Thus putting everything together, 
we have that 
\begin{equation}\notag
|| \TLam^{\frac12}\VTT ||^2_{L^2_\theta} 
 \lesssim 
     (\DTT^2\rho^{-2}+ \DDTT^2 ) \rho^{-2} ||\bX'||_{\dot{H}^{\frac12}}^2 (1 + \rho^{-2}||\bX'||_{\dot{H}^{\frac12}}^2 ) ||\bX'||_{\dot{H}^{1}}^4
     +
\DTT^2
 \rho^{-2}
||\bX'||_{\dot{H}^{1}}^4.
\end{equation}
Thus for the second term in \eqref{time.deriv.energy} after applying Young's inequality we have  
\begin{multline*}
        \bigg|\int_{\T} d\theta \TLam^{\frac32}\bX' \cdot \TLam^{
        \frac12}\VTT \bigg|  \leq \frac{\lambda}{4}||\bX'||_{\dot{H}^{\frac32}}^2
        +
C\lambda^{-1} 
\DTT^2
 \rho^{-2}
||\bX'||_{\dot{H}^{1}}^4 
        \\
        +  C\lambda^{-1} 
(\DTT^2\rho^{-2}+ \DDTT^2 ) \rho^{-2} ||\bX'||_{\dot{H}^{\frac12}}^2 (1 + \rho^{-2}||\bX'||_{\dot{H}^{\frac12}}^2 ) ||\bX'||_{\dot{H}^{1}}^4.
\end{multline*}
From the above estimate and \eqref{HEK.estimate} we are motivated to define $\HEU = \HEU (s)$ by
\begin{equation}\label{HEU.def}
    \HEU \eqdef
    \lambda^{-1}
    (\DTT^2\rho^{-2}+ \DDTT^2 )  \left(
    \rho^{-2} ||\bX'(s)||_{\dot{H}^{\frac12}}^2 (1 + \rho^{-2}||\bX'(s)||_{\dot{H}^{\frac12}}^2 ) 
     +
1
 \right).
\end{equation}
We plug these estimates into \eqref{time.deriv.energy} and apply Gr\"onwall's inequality to get \eqref{H1.estimate}.  

Recalling \eqref{apriori.bd.norm} and noting that 
\begin{equation}\notag
    ||\bX'||_{L^\infty_t \dot{H}^{\frac12}} \lesssim ||\bX'||_{\CM}, \qquad ||\bX'||_{L^2_t \dot{H}^{1}} \lesssim  ||\bX'||_{\DM},
\end{equation}
then gives \eqref{H1.estimate2}.  
\end{proof}

Next, we prove the gain of $\dot{H}^1$ for small times.

\begin{lemma}\label{H1.Linfinity.time.bound}
Let $\bX'$ be a solution to the Peskin problem \eqref{peskin.general.tension}.  Then for any fixed $\varepsilon>0$ sufficiently small, there exists a time $T_\varepsilon= T_\varepsilon(\varepsilon, \rho, \mu, M, \lambda)>0$ such that for all $0<t\leq T_\varepsilon$ we have
\begin{equation}\notag
    ||\bX'||_{\dot{H}^1}(t) \leq \varepsilon t^{-1/2}.
\end{equation}
\end{lemma}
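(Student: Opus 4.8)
\textbf{Proof proposal for Lemma \ref{H1.Linfinity.time.bound}.}

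The plan is to combine the differential inequality underlying Proposition \ref{prop:H1.estimate} with the small-time control of the time-integrated $\dot H^1$ norm from Lemma \ref{H1.small.time}. First I would recall that in the course of proving Proposition \ref{prop:H1.estimate} we established the pointwise-in-time differential inequality
\begin{equation*}
    \frac{d}{dt}||\bX'||_{\dot{H}^1}^2(t) \leq C\,\HEU(t)\,||\bX'||_{\dot{H}^1}^2(t)\,||\bX'||_{\dot{H}^1}^2(t),
\end{equation*}
where $\HEU(t) = \HEU(||\bX'(t)||_{\dot H^{1/2}},\rho^{-1},\lambda^{-1},\DTT,\DDTT)$ is the polynomial from \eqref{HEU.def}; under \eqref{apriori.bd.norm} we have $||\bX'||_{L^\infty_T\dot H^{1/2}}\lesssim ||\bX'||_{\CM}\leq \CTS M$, so $\HEU(t)\leq C_0 = C_0(M,\rho,\lambda,\DTT,\DDTT)$ uniformly on $[0,T]$. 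Thus the inequality simplifies to $\frac{d}{dt}g(t)\leq C_0\, g(t)\, ||\bX'||_{\dot H^1}^2(t)$ with $g(t) := ||\bX'||_{\dot H^1}^2(t)$. By Lemma \ref{H1.small.time}, for any $\delta>0$ there is a time $T_\delta = T_\delta(\delta,M,\mu,\rho,\lambda)>0$ so that $\int_0^{T_\delta} ||\bX'(s)||_{\dot H^1}^2\,ds\leq \delta$; choosing $\delta$ so that $C_0\delta\leq \log 2$, Gr\"onwall gives $g(t)\leq 2\,g(t_0)$ for all $0\leq t_0\leq t\leq T_\delta$, i.e. the map $t\mapsto ||\bX'||_{\dot H^1}^2(t)$ is ``almost monotone'' on $[0,T_\delta]$ in the sense that $g(t)\leq 2 g(t_0)$ whenever $t_0\leq t$.

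Next I would exploit this quasi-monotonicity together with the integral bound to pin down the rate $t^{-1/2}$. Fix $t\in(0,T_\delta]$ and consider the interval $[t/2,t]$, of length $t/2$. Since $g(t)\leq 2g(s)$ for every $s\in[t/2,t]$, we get
\begin{equation*}
    \frac{t}{2}\cdot \frac{g(t)}{2} \leq \int_{t/2}^{t} g(s)\,ds \leq \int_0^{T_\delta} ||\bX'(s)||_{\dot H^1}^2\,ds \leq \delta,
\end{equation*}
hence $g(t)\leq 4\delta/t$, that is $||\bX'||_{\dot H^1}(t)\leq 2\sqrt{\delta}\, t^{-1/2}$. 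Given the target $\varepsilon>0$, I first choose $\delta>0$ small enough that both $C_0\delta\leq\log 2$ and $2\sqrt\delta\leq\varepsilon$ hold, and then set $T_\varepsilon := T_\delta$; this yields $||\bX'||_{\dot H^1}(t)\leq \varepsilon t^{-1/2}$ for all $0<t\leq T_\varepsilon$, with $T_\varepsilon = T_\varepsilon(\varepsilon,\rho,\mu,M,\lambda)$ (the dependence on $\lambda,\DTT,\DDTT$ being absorbed into the constant $C_0$ and hence into the constraint on $\delta$; one may also simply list these among the parameters).

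The only mildly delicate point — and the main thing to be careful about — is the justification of the Gr\"onwall step at the level of regularity available a priori: the differential inequality for $||\bX'||_{\dot H^1}^2$ is derived for smooth solutions, so strictly speaking this lemma should be read as an a priori estimate for smooth solutions (or solutions already known to lie in $L^\infty_{t,loc}\dot H^1$ on $(0,T]$ with $\dot H^{3/2}$ integrable), exactly as in the proof of Proposition \ref{prop:H1.estimate}; the qualitative regularity needed to run it is supplied later in \secref{sec:smoothing} via the bootstrap, or via approximation. A second minor point is that Lemma \ref{H1.small.time} is stated for solutions satisfying \eqref{apriori.bd} and \eqref{apriori.bd.norm}, which are precisely our standing hypotheses here, so its time $T_\delta$ depends only on $\delta, M,\mu,\rho,\lambda$ as claimed. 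No step requires anything beyond what has already been proved.
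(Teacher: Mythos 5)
Your proposal is correct and takes essentially the same route as the paper: both arguments rest on Lemma \ref{H1.small.time} for the smallness of $\int_0^{T}\|\bX'\|_{\dot H^1}^2\,ds$ together with the Gr\"onwall bound of Proposition \ref{prop:H1.estimate}, applied on the dyadic interval $[t/2,t]$ with the same $\log 2$ budget. The only difference is organizational: the paper pigeonholes a good time $t_0\in[t/2,t]$ with $\|\bX'\|_{\dot H^1}^2(t_0)\lesssim \varepsilon^2/t$ and then propagates forward, whereas you apply Gr\"onwall first to get the quasi-monotonicity $g(t)\le 2g(s)$ and then average over $[t/2,t]$ — the same computation in a different order, with the same parameter dependence caveat you already note.
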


\begin{proof}
For a fixed $\varepsilon>0$ by Lemma \ref{H1.small.time} for all $t>0$ sufficiently small we have
\begin{equation}\label{small.time.integ}
\int_0^{t}ds ||\bX'||_{\dot{H}^1}^2(s) \leq \frac{\varepsilon^2\log 2}{4}.
\end{equation}
Then as 
\begin{equation}\notag
    \int_{t/2}^{t}ds \frac{\varepsilon^2}{4s} = \frac{\varepsilon^2 \log 2}{4}, 
\end{equation}
there must some time $t_0\in [ t/2, t]$ such that 
\begin{equation}\notag
    ||\bX'||_{\dot{H}^1}^2(t_0) \leq \frac{\varepsilon^2}{4t_0} \leq \frac{\varepsilon^2}{2t}. 
\end{equation}
Then combining the $\dot{H}^1$ estimate \eqref{H1.estimate} with Lemma \ref{H1.small.time}  gives us that
\begin{multline}\notag
    ||\bX'||_{\dot{H}^1}^2(t)  \leq  \frac{\varepsilon^2}{2t} \exp\left(C\sup\limits_{t/2\leq s\leq t}\HEU(s)\int_{t/2}^t ds ||\bX'||_{\dot{H}^1}^2(s)\right)
    \\
    \leq \frac{\varepsilon^2}{2t} \exp\left(C\sup\limits_{t/2\leq s\leq t}\HEU(s)\varepsilon^2\right) \leq \frac{\varepsilon^2}{t},
\end{multline}
so long as $\varepsilon$ is sufficiently small.
\end{proof}

Next we will prove the $C^{1/2}_{t,\theta}$ estimate.

\begin{lemma}\label{lem:chalf}
Let $Q_t = [\frac{t}{2}, t]\times \T$ for all times $0<t\leq T_*,$ where
$0<T_*\le T_\varepsilon$ for some fixed $\varepsilon>0$ and $T_\varepsilon$ as in Lemma \ref{H1.Linfinity.time.bound}.  Then there exists a finite constant $ C=C(\mu, M, \rho, \lambda, \DTT, \DDTT)>0$ such that 
\begin{equation}\notag
    ||\bX'||_{C^{1/2}_{t,\theta}(Q_t)}\leq C t^{-1/2}.
\end{equation}
\end{lemma}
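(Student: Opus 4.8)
The plan is to prove the two pieces of the $C^{1/2}_{t,\theta}$ estimate separately: the spatial H\"older seminorm $\sup_{t/2 \le s \le t}\|\bX'(s)\|_{C^{1/2}_\theta}$ and the temporal H\"older seminorm $\sup_\theta [\bX'(\cdot,\theta)]_{C^{1/2}_t([t/2,t])}$, together with the $L^\infty$ bound on $Q_t$. All three will follow from the $\dot H^1$ control in Lemma~\ref{H1.Linfinity.time.bound}, namely $\|\bX'(s)\|_{\dot H^1}\lesssim \varepsilon s^{-1/2}\lesssim t^{-1/2}$ for $s\in[t/2,t]$, combined with the a priori Besov bound \eqref{apriori.bd.norm} giving $\|\bX'\|_{\CM}\le \CTS M$ (hence uniform control of $\|\bX'(s)\|_{\dot B^{1/2}_{2,1}}$ and thus of $\|\bX'(s)\|_{L^\infty_\theta}$ by \eqref{embed.infty}), and the bound on $\partial_t\bX'$ in $L^2_\theta$ from Lemma~\ref{time.space.equivalent}.

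For the spatial estimate, fix $s\in[t/2,t]$. In one space dimension, $\dot H^1(\T)\hookrightarrow C^{1/2}(\T)$: by Morrey/Sobolev embedding, for any $\theta_1,\theta_2$ one has $|\bX'(s,\theta_1)-\bX'(s,\theta_2)| \le |\theta_1-\theta_2|^{1/2}\|\partial_\theta \bX'(s)\|_{L^2_\theta} = |\theta_1-\theta_2|^{1/2}\|\bX'(s)\|_{\dot H^1}$. By Lemma~\ref{H1.Linfinity.time.bound} this is $\lesssim |\theta_1-\theta_2|^{1/2} t^{-1/2}$, which is the desired $C^{1/2}_\theta$ control with constant $\lesssim t^{-1/2}$; the $L^\infty_\theta$ bound $\|\bX'(s)\|_{L^\infty_\theta}\lesssim \|\bX'(s)\|_{\dot B^{1/2}_{2,1}}\lesssim \CTS M \lesssim t^{-1/2}$ for $t$ bounded handles the lower-order part of the full $C^{1/2}$ norm (and here one uses $t\le T_*\le T_\varepsilon$ so that $t^{-1/2}$ dominates a fixed constant, or one simply states the bound with an additive constant absorbed).

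For the temporal estimate, fix $\theta\in\T$ and two times $t_1<t_2$ in $[t/2,t]$. Using the equation in the form \eqref{v.theta.def}, $\partial_t \bX'(s,\theta)$ exists and by Lemma~\ref{time.space.equivalent}, $\|\partial_t\bX'(s)\|_{L^2_\theta}\le C_1 \|\bX'(s)\|_{\dot H^1}\lesssim t^{-1/2}$. To convert an $L^2_\theta$ bound on $\partial_t\bX'$ into a pointwise-in-$\theta$ temporal modulus of continuity, I would interpolate: write $\bX'(t_2,\theta)-\bX'(t_1,\theta)=\int_{t_1}^{t_2}\partial_t\bX'(s,\theta)\,ds$, and estimate its $L^\infty_\theta$ norm by the embedding $\|f\|_{L^\infty_\theta}\lesssim \|f\|_{\dot B^{1/2}_{2,1}}\lesssim \|f\|_{L^2_\theta}^{1/2}\|f\|_{\dot H^1}^{1/2}$ (Gagliardo--Nirenberg) applied to $f=\int_{t_1}^{t_2}\partial_t\bX'\,ds$. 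By Minkowski, $\|f\|_{L^2_\theta}\le \int_{t_1}^{t_2}\|\partial_t\bX'(s)\|_{L^2_\theta}\,ds \lesssim (t_2-t_1)t^{-1/2}$, and $\|f\|_{\dot H^1}=\|\int_{t_1}^{t_2}\partial_t\bX''\,ds\|_{L^2_\theta}$; the latter requires a bound on $\partial_t\bX'$ in $\dot H^1_\theta$, i.e. on $\partial_t\bX''$, which is not directly available. The cleaner route is instead: $\|f\|_{L^\infty_\theta}\le \int_{t_1}^{t_2}\|\partial_t\bX'(s)\|_{L^\infty_\theta}\,ds$, and bound $\|\partial_t\bX'(s)\|_{L^\infty_\theta}$ by differentiating the equation once more, or — more robustly — use that $\partial_t\bX'\in L^2_s L^2_\theta$ together with a parabolic-type scaling argument on $Q_t$.

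The hard part will be precisely this last point: upgrading the $L^2_{s,\theta}$ (or pointwise-in-time $L^2_\theta$) control of $\partial_t\bX'$ to a genuine $C^{1/2}_t$ modulus of continuity uniform in $\theta$, since $L^2_\theta$-in-space is weaker than $L^\infty_\theta$ and a naive Minkowski estimate loses the spatial localization. The fix I would pursue is to run a Schauder-type / De Giorgi-type local argument on the rescaled equation — exactly as in \cite{MR3656476} for the scalar fractional porous medium equation, to which equation \eqref{peskin.general.tension} is modeled after \eqref{peskin.expand.tension} — using that $\bX'$ solves $\partial_t\bX' + \TLam\bT(\bX') = \VTT$ with $\VTT\in L^2$ bounded (by \eqref{Vbound.g} and Lemma~\ref{estimate.q}-type estimates) and with the ellipticity \eqref{e:DTdefn}; the $C^{1/2}_{t,\theta}$ regularity for such a drift-free fractional parabolic equation of order one with bounded right-hand side and uniformly elliptic (but merely measurable, after freezing) coefficients is standard, and the scaling $t\mapsto t^{-1/2}$ in the constant comes from the parabolic self-similar scaling of the equation on the cylinder $Q_t$ of size $\sim t$. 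Once the local $C^{1/2}$ estimate on unit cylinders is in hand, rescaling $\bX'_t(s,\theta)=t^{-1/2}\bX'(ts, t\theta)$ (which solves the same equation by the invariance in \secref{sec:scaling}, up to harmless modifications of $\bT$) and undoing the scaling produces the stated bound with constant $C t^{-1/2}$, where $C$ depends only on $\mu, M, \rho, \lambda, \DTT, \DDTT$ through the a priori bounds \eqref{apriori.bd}, \eqref{apriori.bd.norm} and Lemma~\ref{H1.Linfinity.time.bound}.
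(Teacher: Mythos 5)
Your spatial estimate is exactly the paper's: $\|\bX'(s)\|_{C^{1/2}_\theta}\lesssim\|\bX'(s)\|_{\dot H^1}\lesssim t^{-1/2}$ on $[t/2,t]$ from Proposition \ref{prop:H1.estimate} and Lemma \ref{H1.Linfinity.time.bound}. The gap is in the temporal modulus. You correctly isolate the difficulty (upgrading the $L^2_\theta$ bound on $\partial_t\bX'$ from Lemma \ref{time.space.equivalent} to a pointwise-in-$\theta$ statement), but your fallback --- a De Giorgi/Schauder argument on a rescaled equation --- is left undeveloped and would not deliver the stated conclusion. Under the invariant scaling $\bX_\lambda(t,\theta)=\lambda^{-1}\bX(\lambda t,\lambda\theta)$ the derivative $\bX'$ is scale-invariant, so the prefactor $t^{-1/2}$ in your rescaling is unjustified; and at this stage the coefficients $D\bT(\bX')$ and the forcing $\VTT$ are only bounded measurable, so a De Giorgi-type theorem for an order-one nonlocal parabolic operator would at best produce $C^\alpha$ for some small unquantified $\alpha$, not the exponent $1/2$ that the later bootstrap (Proposition \ref{prop.C1beta}) actually uses. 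The paper invokes the machinery of \cite{MR3656476} only afterwards, via Lemma \ref{lem.heat.eqn.estimates}, once H\"older continuity of the frozen coefficients and of $\VTT$ is already available.

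Ironically, the interpolation route you abandoned does close; you gave up one line too early. For $f=\bX'(s_2,\cdot)-\bX'(s_1,\cdot)=\int_{s_1}^{s_2}\partial_t\bX'\,ds$ you do not need $\partial_t\bX''$ to control $\|f\|_{\dot H^1}$: simply bound $\|f\|_{\dot H^1}\le\|\bX'(s_1)\|_{\dot H^1}+\|\bX'(s_2)\|_{\dot H^1}\lesssim t^{-1/2}$ directly from Lemma \ref{H1.Linfinity.time.bound}, while Minkowski and Lemma \ref{time.space.equivalent} give $\|f\|_{L^2_\theta}\lesssim|s_2-s_1|\,t^{-1/2}$; then, since $f$ has mean zero, Lemma \ref{Besov.embedding} and Lemma \ref{Besov.interpolation} give $\|f\|_{L^\infty_\theta}\lesssim\|f\|_{L^2_\theta}^{1/2}\|f\|_{\dot H^1}^{1/2}\lesssim|s_2-s_1|^{1/2}t^{-1/2}$, which is the claim. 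The paper's own argument is a different but equally elementary averaging trick: it compares $\bX'(s_i,\theta)$ with its spatial average over $(\theta-\alpha,\theta+\alpha)$ (error $\lesssim\sqrt{\alpha/t}$ by the spatial $C^{1/2}$ bound), estimates the difference of the two averages by Cauchy--Schwarz in the spatial variable as $\lesssim\alpha^{-1/2}\int_{s_1}^{s_2}\|\partial_t\bX'(s)\|_{L^2}\,ds\lesssim|s_2-s_1|/\sqrt{\alpha t}$, and optimizes $\alpha=s_2-s_1$. Either of these would replace your fallback; as written, the proposal is incomplete.
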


\begin{proof}
Combining Proposition \ref{prop:H1.estimate}, Lemma \ref{H1.Linfinity.time.bound} and the embedding in Proposition \ref{besov.ineq.prop} gives us for any time $t/2\leq s\leq t$ that  
\begin{equation}\label{oneTwoTheta}
    ||\bX'(s)||_{C^{1/2}_\theta} \lesssim ||\bX'(s)||_{\dot{H}^1} \lesssim t^{-1/2}.
\end{equation}
Thus $\bX'$ is uniformly $C^{1/2}$ in $\theta$ on the time interval $[t/2, t]$.

To show H{\"o}lder continuity in time, let $t/2\leq s_1<s_2\leq t$, and $\theta\in \T.$  Fixing some $\alpha>0$ to be determined, by the $C^{1/2}_\theta$ estimate above we have for $i\in\{1,2\}$ that
\begin{equation}\label{point.minus.avg}
    \bigg| \bX'(s_i, \theta) - \frac{1}{2\alpha}\int_{-\alpha}^{\alpha} d\beta \bX'(s_i, \theta+\beta) \bigg| \lesssim \sqrt{\frac{\alpha}{t}}.
\end{equation}
Taking the difference of the two averages at times $s_1$ and $s_2$, we get that 
\begin{multline}\label{avg.difference}
        \bigg|\frac{1}{2\alpha}\int_{-\alpha}^{\alpha} d\beta \left( \bX'(s_2, \theta+\beta)-\bX'(s_1, \theta+\beta) \right)\bigg| 
        \\
        = \bigg|\frac{1}{2\alpha}\int_{-\alpha}^{\alpha} d\beta \int_{s_1}^{s_2}ds ~\partial_t \bX'(s, \theta+\beta)\bigg|.
\end{multline}
Applying Cauchy-Schwartz in the $d\beta$ integral to equation \eqref{avg.difference}, using Lemma \ref{time.space.equivalent} and \eqref{oneTwoTheta} we get that 
\begin{multline}\label{avg.bound}
 \bigg|\frac{1}{2\alpha}\int_{-\alpha}^{\alpha} d\beta \int_{s_1}^{s_2}ds ~\partial_t \bX'(s, \theta+\beta)\bigg| \lesssim  \frac{1}{\sqrt{\alpha }}\int_{s_1}^{s_2}ds~ || \partial_t \bX'(s)||_{L^2}
 \\
 \lesssim  \frac{1}{\sqrt{\alpha }}\int_{s_1}^{s_2}ds ~|| \bX'(s)||_{\dot{H}^1}
 \lesssim \frac{|s_2-s_1|}{\sqrt{\alpha t}}.
\end{multline}
Taking $\alpha = s_2-s_1>0$ and combining equations \eqref{point.minus.avg} and \eqref{avg.bound} then gives us
\begin{equation}\notag
    |\bX'(s_2,\theta) - \bX'(s_1,\theta)|\lesssim \frac{|s_1-s_2|^{1/2}}{t^{1/2}}.
\end{equation}
This completes the proof.
\end{proof}

\subsection{$C^{1,\alpha}$ estimate for $\bX'$}\label{sec:HigherEst}

With Lemma \ref{lem:chalf}, we have shown that our solution $\bX'$ is in $C^{1/2}$ in both time $t$ and the parametrization $\theta$.   Our next goal is to prove that $\bX'\in C^{1,\alpha}_{t,\theta}([\tau, T]\times\T ; \R^2)$ for any fixed $\tau>0$.

Our proof follows from the paper \cite{MR3656476}, where the authors prove regularity estimates for the (scalar) fractional porous medium equation
\begin{equation}\notag
    \partial_t u + (-\Delta)^{\sigma/2}\varphi(u) = 0.
\end{equation}
They make similar assumptions on their scalar nonlinearity $\varphi$ as we make on our tension map $\bT$, and their proof transfers over to our vector valued case.  

We shall go through the argument of \cite{MR3656476} and show that it applies.  But first, recall that $\bX'$ solves the equation 
\begin{equation}\notag
\partial_t \bX' + \TLam \bT(\bX') = \mathcal{V}(t,\theta),
\end{equation}
where $\mathcal{V}$ is defined in \eqref{v.theta.def}.  Thus we are dealing with a fractional porous media equation with an additional forcing term, so we shall need some estimates on $\mathcal{V}$.  

\begin{lemma}\label{lem.v.regularity}
Let $\mathcal{V}(t, \theta)$ be as in \eqref{v.theta.def}.  If $\bX'\in L^\infty_t \dot{H}^1_\theta \cap L^\infty_t \dot{H}^{\frac12}_\theta$, then 
\begin{equation}\notag
    \mathcal{V}(t,\theta)\in L^\infty_{t,\theta}.
\end{equation}
If $\bX'\in C^{\beta}_{t,\theta}$ for some $\displaystyle\frac{1}{2}<\beta<1,$ then 
\begin{equation}\notag
 \mathcal{V}(t,\theta)\in C^{2\beta-1}_{t,\theta}.
\end{equation}
If $\bX'\in C^{0,1}_{t, \theta}$, then $\mathcal{V}$ is log-Lipschitz.  Finally, if $\bX'\in C^{k, \beta}_{t, \theta}$ and $\TE\in C^{k,\beta}_{r}$ for some $k\geq 1,$ $0<\beta\leq 1$ then all $k$-th order derivatives of $\mathcal{V}$ are log-$C^\beta$.  
\end{lemma}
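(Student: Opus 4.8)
The plan is to differentiate the explicit formula $\mathcal{V}(t,\theta) = \int_{\T}\frac{d\alpha}{\alpha^2}\,\mathcal{A}[\bX](\theta,\alpha)\,\delta_\alpha\bT(\bX'(\theta))$ from \eqref{v.theta.def} directly $k$ times in $\theta$ (and analogously in $t$ and in mixed variables), to keep track of how much vanishing in $\alpha$ survives in each resulting term, and then to bound the modulus of continuity of the $k$-th derivative by splitting the $\alpha$-integral at the scale of the increment. First I would record the structure of the integrand: by \eqref{kerbel.A.eqn.deriv} and \eqref{matrix.operators}, $\mathcal{A}[\bX](\theta,\alpha)$ is a finite sum of terms of the form $|D_\alpha\bX(\theta)|^{-2}\,(L_1\cdot M(D_\alpha\bX(\theta))\,L_2)\,M'(D_\alpha\bX(\theta))$ where each $L_i$ is one of $\delta_\alpha^+\bX'(\theta)$, $\delta_\alpha^-\bX'(\theta)$, $D_\alpha\bX(\theta)$ with at least one of $L_1,L_2$ a genuine difference $\delta_\alpha^\pm\bX'$, and $M,M'$ are among $\IO$, $\RO(\cdot)$, $\DO(\cdot)$; and $\delta_\alpha\bT(\bX'(\theta)) = \DBTX\,\delta_\alpha\bX'(\theta)$ by \eqref{e:deltaalphaT}. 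Since $|D_\alpha\bX(\theta)|\ge|\bX(t)|_*\ge\rho$ by \eqref{apriori.bd}, the matrices $M(D_\alpha\bX(\theta))$ and the scalar $|D_\alpha\bX(\theta)|^{-2}$ are smooth functions of $D_\alpha\bX(\theta)$ with bounded derivatives, and $\theta\mapsto D_\alpha\bX(\theta)=\int_0^1\bX'(\theta+\tau\alpha)\,d\tau$ and $\theta\mapsto\DBTX(\theta)$ inherit all of $\bX'\in C^{k,\beta}_{t,\theta}$ (the latter using $\bT\in C^{k,\beta}$), uniformly in $\alpha$.

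Next I would apply $\partial_\theta^k$ together with the discrete Leibniz rule, the identities $\partial_\theta\,\delta_\alpha^\pm\bX' = \delta_\alpha^\pm\bX''$ and $\partial_\theta\,D_\alpha\bX = D_\alpha\bX'$, and Faà di Bruno for $\partial_\theta^j[\bT(\bX'(\theta))]$ and $\partial_\theta^j[\DBTX(\theta)]$. This expresses $\partial_\theta^k\mathcal{V}(t,\theta)$ as a finite sum of integrals $\int_{\T}\frac{d\alpha}{\alpha^2}\,\Phi_i(\theta,\alpha)$, where each $\Phi_i$ is a product of a uniformly bounded smooth matrix/scalar factor built from $D_\alpha\bX(\theta)$ and its $\theta$-derivatives, times a product of at least two ``difference factors'' of the form $\delta_\alpha^\pm(\partial_\theta^a\bX')(\theta)$, $\delta_\alpha(\partial_\theta^a\bX')(\theta)$ with $a\le k$, or $\delta_\alpha(\partial_\theta^a[\bT\circ\bX'])(\theta)$ with $a\le k$. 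The combinatorial point is that only $k$ derivatives are available, so in any given $\Phi_i$ at most one difference factor can carry a $k$-th order derivative of $\bX'$; such a factor vanishes only to order $|\alpha|^\beta$, while every other difference factor is a first difference of a Lipschitz object (recall $k\ge1$) and hence vanishes to order $|\alpha|$. Consequently $|\Phi_i(\theta,\alpha)|\lesssim|\alpha|^{1+\beta}$, which both makes every integral absolutely convergent (so $\partial_\theta^k\mathcal{V}$ is a well-defined bounded function) and is exactly the place where the $\delta_\alpha^\pm$ cancellation built into $\mathcal{A}$ in \eqref{kerbel.A.eqn.deriv} and the lower bound $|\bX(t)|_*\ge\rho$ are used. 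This is the same mechanism which, for $k=0$, gives $\mathcal{V}\in C^{2\beta-1}$ when $\beta<1$ and log-Lipschitz when $\beta=1$; for $k\ge1$ the extra Lipschitz regularity of $\bX'$ is what turns the gain into $C^\beta$ with a logarithm rather than $C^{2\beta-1}$.

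To get the modulus of continuity, fix $\theta$ and a small increment $h$, write $\delta_h g(\theta)=g(\theta+h)-g(\theta)$, and split $\int_{|\alpha|<|h|}+\int_{|\alpha|\ge|h|}$. On $|\alpha|<|h|$ the triangle inequality and $|\Phi_i|\lesssim|\alpha|^{1+\beta}$ give a contribution $\lesssim\int_0^{|h|}|\alpha|^{\beta-1}\,d\alpha\lesssim|h|^\beta$. On $|\alpha|\ge|h|$ I would bound $\delta_h\Phi_i(\theta,\alpha)$ by distributing $\delta_h$ over the factors: landing $\delta_h$ on a bounded smooth factor or on a difference factor of a derivative of order $<k$ costs a full power $|h|$ and leaves the remaining integrand of $\alpha$-order $\ge\beta$, hence a contribution $\lesssim|h|\int_{|h|}^{\pi}|\alpha|^{\beta-2}\,d\alpha\lesssim|h|^\beta$; landing $\delta_h$ on the (at most one) difference factor carrying the $k$-th order derivative costs only $|h|^\beta$ but leaves a complementary difference factor of $\alpha$-order exactly $1$ intact, so the remaining integrand is $\lesssim|h|^\beta|\alpha|^{-1}$ and $\int_{|h|}^{\pi}|\alpha|^{-1}\,d\alpha\lesssim\log(1/|h|)$, producing the logarithmic loss. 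Collecting, $|\partial_\theta^k\mathcal{V}(t,\theta+h)-\partial_\theta^k\mathcal{V}(t,\theta)|\lesssim|h|^\beta(1+|\log|h||)$, i.e. $\partial_\theta^k\mathcal{V}$ is log-$C^\beta$ (log-Lipschitz when $\beta=1$); the same scheme handles $\partial_t$ and mixed space--time derivatives of total order $k$, using that $\bX'\in C^{k,\beta}_{t,\theta}$ puts every partial of order $\le k$ in $C^\beta_{t,\theta}$ and that $\partial_t$ acting on the $\bX'(t,\theta\pm\alpha)$ occurrences and on the composition with $\bT$ does not alter the $\alpha$-vanishing structure.

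The main obstacle is purely organizational: carrying out the $\partial_\theta^k$ Leibniz/Faà di Bruno expansion cleanly enough to confirm that every one of the $O(1)$ many resulting terms retains the product of an $|\alpha|^1$-vanishing and an $|\alpha|^\beta$-vanishing difference factor, and then isolating, among the finitely many terms of each $\delta_h\Phi_i$, the single one in which $\delta_h$ falls on the top-order difference factor — that term alone is the source of the $\log$. No new analytic ingredient beyond the interpolation and Besov embeddings already used in this section is needed.
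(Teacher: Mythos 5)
Your treatment of the H\"older and higher-derivative claims is correct and is essentially the paper's own argument: the paper likewise distributes the increment $\delta_h$ (there written as the difference between $\Theta$ and $\Phi$) over the factors of $\mathcal{A}\,\delta_\alpha\bT(\bX')$, splits the $\alpha$-integral at the increment scale (with an extra split at $|\alpha|=1$ that on $\T$ only affects constants), and isolates the single contribution where the increment falls on the roughest difference factor — the paper's $J_1$ and $J_2$ — as the source of the logarithm; the only organizational difference is that the paper first splits $\mathcal{A}=\mathcal{A}_L+\mathcal{A}_Q$ according to whether one or two factors are genuine differences, which you handle implicitly by noting each term carries at least one such factor besides $\delta_\alpha\bT(\bX')$. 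Your bookkeeping (at most one difference factor can absorb all $k$ derivatives, every other differenced factor is Lipschitz, hence $|\Phi_i|\lesssim|\alpha|^{1+\beta}$, and the two-region split yields $|h|^\beta\log(1/|h|)$) checks out, and your assertion that the same mechanism gives $C^{2\beta-1}$ for $k=0$, $\tfrac12<\beta<1$, and log-Lipschitz for $\beta=1$ is also correct.

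The one genuine omission is the first assertion of the lemma: $\mathcal{V}\in L^\infty_{t,\theta}$ under the hypothesis $\bX'\in L^\infty_t\dot H^1_\theta\cap L^\infty_t\dot H^{1/2}_\theta$ only. Your entire mechanism rests on a pointwise modulus $|\delta_\alpha\bX'(\theta)|\lesssim|\alpha|^\beta$, which is simply not available under a Sobolev hypothesis, so none of your estimates produce this claim. The paper proves it by a different (short) route: the pointwise bound
\begin{equation}\notag
|\mathcal{V}(t,\theta)|\lesssim \DTT\int_{\T}\frac{d\alpha}{\alpha^2}\Big(\rho^{-1}|\delta_\alpha\bX'(\theta)|^2+\rho^{-2}|\delta_\alpha\bX'(\theta)|^3\Big),
\end{equation}
followed by Minkowski's inequality and the embeddings/interpolation $\|\bX'\|_{\dot B^{1/2}_{\infty,2}}^2\lesssim\|\bX'\|_{\dot H^1}^2$ and $\|\bX'\|_{\dot B^{1/3}_{\infty,3}}^3\lesssim\|\bX'\|_{\dot H^{1/2}}\|\bX'\|_{\dot H^1}^2$. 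Your closing remark that only the section's Besov embeddings are needed gestures at this, but as written the proposal does not prove that part of the statement; adding the display above (a few lines) closes the gap.
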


\begin{proof}
To prove the $L^\infty$ estimate, 
as in \eqref{Vbound.g} we bound
\begin{multline}\notag
    |\mathcal{V}(t,\theta)| \lesssim \DTT \int_\T\frac{d\alpha}{\alpha^2} \left(\frac{|\delta_\alpha \bX'|^2}{\rho} + \frac{|\delta_\alpha \bX'|^3}{\rho^2}\right) \lesssim \DTT \left(\frac{||\bX'(t)||_{\dot{B}^{1/2}_{\infty,2}}^2}{\rho} + \frac{||\bX'(t)||_{\dot{B}^{1/3}_{\infty,3}}^3}{\rho^2}\right)
   \\ \lesssim \DTT \left(1+ \frac{||\bX'||_{L^\infty_t H^{1/2}}}{\rho}\right) \frac{||\bX'||_{L^\infty_t H^1}^2}{\rho}.  
\end{multline}
With Proposition \ref{besov.ineq.prop},  we just used the following embedding and interpolation 
$||\bX'||_{\dot{B}^{1/3}_{\infty,3}} \lesssim ||\bX'||_{\dot{H}^{\frac56}}\lesssim ||\bX'||_{\dot{H}^{\frac12}}^{\frac{1}{3}} ||\bX'||_{\dot{H}^{1}}^{\frac{2}{3}}$.

Now assume that $\bX' \in C^{\beta}_{t,\theta}$ for some $1/2<\beta<1.$
Letting $\Theta = (t,\theta),$ and $\Phi = (s,\phi)$, we need to bound the difference of $ |\mathcal{V}(\Theta) -\mathcal{V}(\Phi)|.$
To begin, we split $\mathcal{A}$ from \eqref{kerbel.A.eqn.deriv} into two pieces $\mathcal{A}_L$ and  $\mathcal{A}_Q,$ where 
\begin{multline}\notag
  \mathcal{A}_L \eqdef   \frac{(\delta_\alpha^+ \bX'+\delta_\alpha^- \bX') \cdot \DO(\DATX) \DATX}{|\DATX|^2} \IO
\\ - \frac{(\delta_\alpha^+ \bX'+\delta_\alpha^- \bX') \cdot \RO(\DATX) \DATX}{|\DATX|^2} \RO(\DATX), 
\end{multline}
and 
\begin{multline}\notag
    \mathcal{A}_Q \eqdef \frac{\delta_\alpha^+ \bX' \cdot \DO(\DATX) \delta_\alpha^- \bX'}{|\DATX|^2}\IO
-\frac{\delta_\alpha^+ \bX' \cdot \RO(\DATX) \delta_\alpha^- \bX'}{|\DATX|^2}
\RO(\DATX)
\\ 
+ \frac{\delta_\alpha^+ \bX' \cdot (\DO(\DATX) - \IO) \delta_\alpha^- \bX'}{|\DATX|^2} \DO(\DATX).
\end{multline}
Correspondingly, we define $\mathcal{V}_L$ and $\mathcal{V_Q}$.  We will focus on proving that $\mathcal{V}_L$ is $C^{2\beta-1}$ when $\bX'$ is $C^\beta$.  Since $\bX'\in L^\infty \cap C^\beta$ then $\mathcal{A}_Q$ is   $\min\{|\alpha|^\beta, 1\}$ smoother than $\mathcal{A}_L$ so that the proof for $\mathcal{V}_Q$ follows similarly.

To show that $\mathcal{V}_L$ is $2\beta-1$ H\"older continuous, fix any $\Theta\neq \Phi\in [0,T]\times \T$
\begin{multline}\label{e.v.holder.int.bound}
    |\mathcal{V}_L(\Theta) - \mathcal{V}_L(\Phi)| \lesssim \frac{\DTT}{\rho} \int_\T d\alpha  \frac{|(\delta_\alpha^+ + \delta_\alpha^-)(\bX'(\Theta)-\bX'(\Phi)| \ |\delta_\alpha \bX'|}{\alpha^2} 
    \\+  \frac{\DTT}{\rho} \int_\T d\alpha  \frac{|(\delta_\alpha^+ + \delta_\alpha^-)\bX'| \ |\delta_\alpha \bX'(\Theta) - \delta_\alpha \bX'(\Phi)|}{\alpha^2}  
    \\+ \frac{\DTT}{\rho^2} \int_\T d\alpha  \frac{|(\delta_\alpha^+ + \delta_\alpha^-)\bX'| \ |\delta_\alpha \bX'|}{\alpha^2}  | \DAL ( \bX(\Theta) -  \bX(\Phi))| 
    \\+  \frac{\DDTT}{\rho} \int_\T d\alpha  \frac{|(\delta_\alpha^+ + \delta_\alpha^-)\bX'| \ |\delta_\alpha \bX'|}{\alpha^2} \left(|\bX'(\Theta) - \bX'(\Phi)|+|\tau_\alpha (\bX'(\Theta) - \bX'(\Phi)|\right)
    \\ =   \frac{\DTT}{\rho}I_1 +  \frac{\DTT}{\rho}I_2 + \frac{\DTT}{\rho^2} I_3 + \frac{\DDTT}{\rho} I_4.
\end{multline}
Note that above and below when we do not write the dependence on the variable $\Theta$ or $\Phi$ it is because it will not have an effect on the following argument.

As $\beta>1/2$, we can easily bound
\begin{equation}\notag
    \int_\T d\alpha  \frac{|(\delta_\alpha^+ + \delta_\alpha^-)\bX'| \ |\delta_\alpha \bX'|}{\alpha^2} \lesssim ||\bX'||_{C^\beta}^2 + ||\bX'||_{L^\infty}^2.
\end{equation}
Thus 
\begin{equation}\label{eqn.I34.bound}
    I_3+I_4 \lesssim (||\bX'||_{C^\beta}^2 + ||\bX'||_{L^\infty}^2) ||\bX'||_{C^\beta}|\Theta - \Phi|^\beta.
\end{equation}
To bound $I_1$ and $I_2$, we split each integral into the regions where $|\alpha|< |\Theta-\Phi|$ and $|\alpha|> |\Theta-\Phi|$.  For small $\alpha$, we use the bounds
\begin{equation}\notag
|\delta_\alpha \bX'|, |\delta_\alpha^\pm \bX'| \lesssim ||\bX'||_{C^\beta} |\alpha|^\beta,
\end{equation}
and for large $\alpha$ we bound
\begin{multline}\notag
    |(\delta_\alpha^+ + \delta_\alpha^-)(\bX'(\Theta)-\bX'(\Phi)| \ |\delta_\alpha \bX'| \lesssim ||\bX'||_{C^\beta}^2|\Theta-\Phi|^\beta |\alpha|^\beta,
    \\ |(\delta_\alpha^+ + \delta_\alpha^-)\bX'| \ |\delta_\alpha \bX'(\Theta) - \delta_\alpha \bX'(\Phi)|\lesssim ||\bX'||_{C^\beta}^2 |\Theta-\Phi|^\beta |\alpha|^\beta.
\end{multline}
Plugging in these bounds, we then get that 
\begin{equation}\label{eqn.I12.bound}
    I_1+I_2 \lesssim ||\bX'||_{C^\beta}^2 |\Theta-\Phi|^{2\beta-1}.
\end{equation}
As $2\beta-1 < \beta$, plugging in \eqref{eqn.I34.bound} and \eqref{eqn.I12.bound} into \eqref{e.v.holder.int.bound} gives us that $\mathcal{V}_L \in C^{2\beta-1}_{t,\theta}.$
The proof for $\mathcal{V}_Q$ follows similarly, giving the result for $\mathcal{V}$.  

Now suppose that $\bX'$ is Lipschitz.  Then again focusing on the $\mathcal{V}_L$ bound, we again are left to bound \eqref{e.v.holder.int.bound}.  As $\mathcal{V}$ is bounded, we may assume without loss of generality that $|\Theta-\Phi|\leq 1$.  We can bound $I_3,I_4$ using the same argument as the $1/2<\beta<1$ case to get 
\begin{equation}\label{eqn.I34.bound2}
    I_3, I_4 \lesssim (||\bX'||_{C^{0,1}}^2 + ||\bX'||_{L^\infty}^2) ||\bX'||_{C^{0,1}}|\Theta - \Phi|.
\end{equation}
To bound $I_1,I_2$ we now need to split our integral into 3 regions.  For $|\alpha|\leq |\Theta-\Phi|,$ we again use the bounds
\begin{equation}\label{eqn.I12.small.alpha}
    |\delta_\alpha \bX'|, |\delta_\alpha^\pm \bX'| \lesssim ||\bX'||_{C^{0,1}} |\alpha|. 
\end{equation}
For $|\Theta-\Phi|\leq |\alpha|\leq 1,$ we use the bounds 
\begin{multline}\label{eqn.I12.med.alpha}
    |(\delta_\alpha^+ + \delta_\alpha^-)(\bX'(\Theta)-\bX'(\Phi)| \ |\delta_\alpha \bX'|\lesssim ||\bX'||_{C^{0,1}}^2|\Theta-\Phi| \ |\alpha|,
    \\ |(\delta_\alpha^+ + \delta_\alpha^-)\bX'| \ |\delta_\alpha \bX'(\Theta) - \delta_\alpha \bX'(\Phi)|\lesssim ||\bX'||_{C^{0,1}}^2 |\Theta-\Phi| \ |\alpha|.
\end{multline}
And for $|\alpha|>1, $ we use 
\begin{multline}\label{eqn.I12.large.alpha}
    |(\delta_\alpha^+ + \delta_\alpha^-)(\bX'(\Theta)-\bX'(\Phi)| \ |\delta_\alpha \bX'|\lesssim ||\bX'||_{C^{0,1}} ||\bX'||_{L^\infty}|\Theta-\Phi| ,
    \\ 
    |(\delta_\alpha^+ + \delta_\alpha^-)\bX'| \ |\delta_\alpha \bX'(\Theta) - \delta_\alpha \bX'(\Phi)| \lesssim ||\bX'||_{C^{0,1}}||\bX'||_{L^\infty} |\Theta-\Phi| .
\end{multline}
Integrating and plugging in the above bounds \eqref{eqn.I12.small.alpha},  \eqref{eqn.I12.med.alpha} and \eqref{eqn.I12.large.alpha} we then get that 
\begin{equation}\label{eqn.I12.bound2}
    I_1+I_2 \lesssim ||\bX'||_{C^{0,1}}(||\bX'||_{C^{0,1}} + ||\bX'||_{L^\infty})(1-\log|\Theta-\Phi|))|\Theta-\Phi|.
\end{equation}
Plugging \eqref{eqn.I34.bound2}, \eqref{eqn.I12.bound2} into \eqref{e.v.holder.int.bound} gives us that $\mathcal{V}$ is log-Lipschitz.  

Now assume that $\bX'\in C_{t,\theta}^{k,\beta}$ and $\TE\in C^{k,\beta}_r$ for some $k\geq 1,$ and $0<\beta\leq 1.$  We claim that for every $0\leq j\leq k$ that $\partial_t^j \partial_\theta^{k-j}\mathcal{V}$ is log-$C^\beta$.  The difference of $|\partial_t^j \partial_\theta^{k-j}\mathcal{V}(\Theta) - \partial_t^j \partial_\theta^{k-j}\mathcal{V}(\Phi)|$ can be bounded by the sum of a number of integrals.  They can all be bounded similarly as above but for clarity we will directly show how to bound the two most difficult integrals, namely
\begin{multline}\notag
   J_1 = \int_\T d\alpha  \frac{|\partial_t^j \partial_\theta^{k-j}(\delta_\alpha^+ + \delta_\alpha^-)( \bX'(\Theta)-\bX'(\Phi)| \ |\delta_\alpha \bX'|}{\alpha^2}, 
   \\ J_2 = \int_\T d\alpha  \frac{|(\delta_\alpha^+ + \delta_\alpha^-) \bX'|}{\alpha^2}\bigg|\delta_\alpha D^k \bT(\bX'(\Theta)) -\delta_\alpha D^k \bT(\bX'(\Phi))\bigg| \ |\partial_t \bX'|^j  |\bX''|^{k-j}.
\end{multline}
Without loss of generality, we assume $|\Theta-\Phi|\leq 1$.  To bound $J_1$, we again split our integral into 3 regions.  For $|\alpha|\leq |\Theta-\Phi|$ we use the bound
\begin{equation}\notag
    |\partial_t^j \partial_\theta^{k-j}(\delta_\alpha^+ + \delta_\alpha^-)( \bX'(\Theta)-\bX'(\Phi)| \ |\delta_\alpha \bX'| \lesssim ||\bX'||_{C^{k,\beta}} ||\bX'||_{C^{0,1}} |\alpha|^{1+\beta}.
\end{equation}
For $|\Theta-\Phi|\leq |\alpha|\leq 1,$ we use the bounds
\begin{equation}\notag
    |\partial_t^j \partial_\theta^{k-j}(\delta_\alpha^+ + \delta_\alpha^-)( \bX'(\Theta)-\bX'(\Phi)| \ |\delta_\alpha \bX'| \lesssim ||\bX'||_{C^{k,\beta}} ||\bX'||_{C^{0,1}}|\Theta-\Phi|^\beta   |\alpha|.
\end{equation}
Finally for $|\alpha|>1$ we use 
\begin{equation}\notag
    |\partial_t^j \partial_\theta^{k-j}(\delta_\alpha^+ + \delta_\alpha^-)( \bX'(\Theta)-\bX'(\Phi)| \ |\delta_\alpha \bX'| \lesssim ||\bX'||_{C^{k,\beta}} ||\bX'||_{L^\infty}|\Theta-\Phi|^\beta .
\end{equation}
Plugging these in, we get that 
\begin{equation}\notag
    J_1 \lesssim ||\bX'||_{C^{k,\beta}}(||\bX'||_{C^{0,1}}+||\bX'||_{L^\infty})(1-\log|\Theta-\Phi|) |\Theta-\Phi|^\beta.
\end{equation}
The other important integral to bound is $J_2$.  Note that 
\begin{equation}\notag
    |\partial_t \bX'|^j |\bX''|^{k-j}\leq ||\bX'||_{C^{0,1}}^k.
\end{equation}
To bound the rest of $J_2$, we split the integral into the same 3 regions for $\alpha$. Using the 3 bounds
\begin{multline}\notag
    |(\delta_\alpha^+ + \delta_\alpha^-) \bX'| \ |\delta_\alpha D^k\bT(\Theta)- \delta_\alpha D^k\bT(\Phi)| \lesssim ||\bT||_{C^{k,\beta}} ||\bX'||_{C^{0,1}}^2 |\alpha|^{1+\beta}, 
    \\ |(\delta_\alpha^+ + \delta_\alpha^-) \bX'| \ |\delta_\alpha D^k\bT(\Theta)- \delta_\alpha D^k\bT(\Phi)| \lesssim ||\bT||_{C^{k,\beta}} ||\bX'||_{C^{0,1}}^2 |\Theta-\Phi|^\beta |\alpha|,
    \\ |(\delta_\alpha^+ + \delta_\alpha^-) \bX'| \ |\delta_\alpha D^k\bT(\Theta)- \delta_\alpha D^k\bT(\Phi)| \lesssim ||\bT||_{C^{k,\beta}} ||\bX'||_{C^{0,1}} ||\bX'||_{L^\infty}  |\Theta-\Phi|^\beta,
\end{multline}
for small, medium, and large $\alpha$ respectively.  Plugging these in, we then get that 
\begin{equation}\notag
    J_2 \lesssim ||\bT||_{C^{k,\beta}}||\bX'||_{C^{0,1}}^{k+1}(||\bX'||_{C^{0,1}}+||\bX'||_{L^\infty})(1-\log |\Theta-\Phi|) |\Theta-\Phi|^\beta.
\end{equation}
All the other integrals involved in bounding $|\partial_t^j \partial_\theta^{k-j}\mathcal{V}(\Theta)-\partial_t^j \partial_\theta^{k-j}\mathcal{V}(\Phi)|$ can be bounded either following similar arguments, or by using only lower order norms.  Thus all $k$-th order derivatives of $\mathcal{V}$ are log-$C^\beta.$
\end{proof}

With the regularity estimates for $\mathcal{V}$, we can now slightly modify \cite{MR3656476}'s proof of regularity for the scalar fractional porous medium equation.  The crux of their argument is an a priori estimate for solutions to the fractional heat equation.

\begin{lemma}\label{lem.heat.eqn.estimates}(V\'{a}zquez, de Pablo, Quir\'{o}s and Rodr\'{i}guez \cite{MR3656476})
Let $f,g: [0,T]\times \R \to \R$ be such that 
\begin{equation}\notag
    \left\{\begin{array}{l} \partial_t g + \Lambda g = \Lambda f,
    \\ g(0,\cdot) \equiv 0 . \end{array}\right.
\end{equation}
Fix $\Theta_0 = (t_0, \theta_0)\in (0,T)\times \R.$  Suppose that there exist some $0<\beta, \epsilon< 1$ and $r_0>0$ such that $f$ satisfies 
\begin{equation}\notag
    \left\{\begin{array}{l}
         |f(\Theta_1)-f(\Theta_0)| \leq c |\Theta_1 - \Theta_0|^{\beta+\epsilon}, 
         \\ |f(\Theta_1) - f(\Theta_2)| \leq c r^\epsilon |\Theta_1-\Theta_2|^\beta ,
    \end{array}\right.
\end{equation}
for all $\Theta_1, \Theta_2\in B_r(\Theta_0)=\{\Theta: |\Theta - \Theta_0|<r\}$ and $0<r\leq r_0.$

Then $g$ satisfies 
\begin{equation}\notag
    |g(\Theta_0+\Phi) + g(\Theta_0-\Phi) - 2g(\Theta_0)| \lesssim |\Phi|^{\beta+\epsilon}.
\end{equation}
\end{lemma}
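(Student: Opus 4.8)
\textbf{Proof proposal for Lemma \ref{lem.heat.eqn.estimates}.}

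The plan is to proceed via the explicit representation of $g$ through Duhamel's formula and the fractional heat kernel. Since $g$ solves $\partial_t g + \Lambda g = \Lambda f$ with $g(0,\cdot)\equiv 0$, and since $\Lambda = (-\Delta)^{1/2}$ generates the Poisson semigroup $e^{-t\Lambda}$ with kernel $P_t(x) = \frac{1}{\pi}\frac{t}{t^2+x^2}$ on $\R$, Duhamel gives
\begin{equation}\notag
g(t,\cdot) = \int_0^t e^{-(t-s)\Lambda}\Lambda f(s,\cdot)\,ds = -\int_0^t \partial_s\left(e^{-(t-s)\Lambda}\right) f(s,\cdot)\,ds,
\end{equation}
and after an integration by parts in $s$ (the boundary term at $s=t$ is $f(t,\cdot)$, at $s=0$ it is $e^{-t\Lambda}f(0,\cdot)$) one writes $g$ as $f(t,\cdot)$ minus a smoothing piece plus a piece involving $\partial_s f$. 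Concretely one reduces the estimate on the second difference $g(\Theta_0+\Phi)+g(\Theta_0-\Phi)-2g(\Theta_0)$ to controlling integrals of the kernel $P_\tau$ and its $x$-derivatives against differences of $f$, split near and far from $\Theta_0$.

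The key steps, in order, would be: (1) write $g$ via Duhamel and reorganize into a local term $f(t,\theta)$ (which is harmless — $f$ itself is better than $C^{\beta+\epsilon}$ at $\Theta_0$ by the first hypothesis) plus a nonlocal convolution against the Poisson kernel; (2) fix $\Phi = (\tau,\phi)$ with $|\Phi|$ small and decompose the remaining spatial/temporal integrals into the region $\{|\Theta - \Theta_0| \lesssim |\Phi|\}$ and its complement, further sliced into dyadic annuli $\{2^k|\Phi| \le |\Theta-\Theta_0| < 2^{k+1}|\Phi|\}$; (3) on the inner region, use the hypothesis $|f(\Theta_1)-f(\Theta_0)|\le c|\Theta_1-\Theta_0|^{\beta+\epsilon}$ directly together with integrability of $P_\tau$ and $\nabla P_\tau$ near the singularity; (4) on each annulus, use the weaker modulus $|f(\Theta_1)-f(\Theta_2)|\le c r^\epsilon|\Theta_1-\Theta_2|^\beta$ with $r\approx 2^k|\Phi|$, and crucially exploit the \emph{second difference} structure of the kernel: $|P_\tau(x+y)+P_\tau(x-y)-2P_\tau(x)| \lesssim |y|^2 \|\partial_x^2 P_\tau\|_{L^\infty(\text{annulus})}$ (and similarly in the time variable, using smoothness of $\tau\mapsto P_\tau$), which gains two powers of $|\Phi|$ relative to the annulus scale; (5) sum the geometric series in $k$, which converges precisely because the gain from the kernel's second difference beats the loss $r^\epsilon |\Phi|^\beta \cdot (\text{annulus volume factor})$, yielding the total bound $\lesssim |\Phi|^{\beta+\epsilon}$.

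The main obstacle I expect is the bookkeeping in step (4)–(5): one has to track simultaneously the time variable and the space variable (the second difference $\Theta_0\pm\Phi$ moves in both), and verify that in every dyadic annulus the kernel's second-order cancellation supplies exactly the two extra powers of $|\Phi|$ needed to absorb the factor $r^\epsilon$ coming from the oscillation-with-scale hypothesis on $f$ and still leave a summable series with exponent $\beta+\epsilon$. The scaling $\Lambda = (-\Delta)^{1/2}$ is self-similar of order one, so the time and space variables enter on the same footing, which makes this work; one must also handle the contribution of $\partial_s f$ carefully — but since that term carries an extra factor $(t-s)$ from the integration by parts, it is at least as good as the main term. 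I would borrow the organization of this argument directly from \cite{MR3656476}, since the statement is quoted verbatim from there and the vector-valued nature of our $f,g$ plays no role (the estimate is componentwise).
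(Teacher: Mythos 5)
The paper does not actually prove this lemma: it is imported verbatim from the reference, with the remark that it ``is a collection of Lemmas 4.1, 5.1, and 5.3 from \cite{MR3656476}.'' So there is no in-paper argument to measure you against; what can be assessed is whether your sketch is a viable reconstruction of the cited proof. In broad strokes it is --- representation of $g$ by the Poisson semigroup, splitting the integration region at scale $|\Phi|$, using the pointwise hypothesis $|f(\Theta_1)-f(\Theta_0)|\le c|\Theta_1-\Theta_0|^{\beta+\epsilon}$ on the inner region and the scaled modulus $c\,r^\epsilon|\Theta_1-\Theta_2|^\beta$ on dyadic annuli, and exploiting second-difference cancellation of the kernel to make the sum over annuli converge to $|\Phi|^{\beta+\epsilon}$ --- this is exactly the architecture of the arguments in \cite{MR3656476}.

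There is, however, one step in your reduction that does not survive the stated hypotheses: the integration by parts in $s$ that produces a term involving $\partial_s f$. The lemma assumes only H\"older-type moduli of continuity for $f$ in space-time; $\partial_s f$ need not exist, and no bound on it is available, so the claim that this term ``carries an extra factor $(t-s)$'' and is ``at least as good as the main term'' cannot be run as written. The standard repair --- and what the cited proof effectively does --- is to avoid differentiating $f$ altogether: keep $\Lambda e^{-(t-s)\Lambda}$ as a single kernel $\Lambda P_{t-s}$, which has mean zero in the space variable, and pair it with differences $f(s,y)-f(s,x)$ (or $f-f(\Theta_0)$). One then uses the kernel bounds $|\Lambda P_\tau(z)|\lesssim \min\{\tau^{-2},\,\tau|z|^{-3}\}$ together with the H\"older hypotheses to make all integrals absolutely convergent, and applies the second-difference cancellation to $\Lambda P_\tau$ itself on the dyadic annuli. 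With that substitution your outline goes through; without it, the decomposition in your step (1) is not legitimate for the class of $f$ the lemma addresses.
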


Note that Lemma \ref{lem.heat.eqn.estimates} above is a collection of Lemmas 4.1, 5.1, and 5.3 from \cite{MR3656476}.  Lemma \ref{lem.heat.eqn.estimates} effectively says that if $f$ is $C^{\beta}$ everywhere and $C^{\beta+\epsilon}$ at a fixed point $\Theta_0,$ then so is the solution $g$.  We also remark that Lemma \ref{lem.heat.eqn.estimates} generalizes automatically from $\R$ to $\T$.  Also as in \secref{sec:para} then  Lemma \ref{lem.heat.eqn.estimates} generalizes automatically from $\partial_t g + \Lambda g = \Lambda f$ to $\partial_t g + \TLam g = \TLam f$.

As in  \cite{MR3656476}, we apply Lemma \ref{lem.heat.eqn.estimates} repeatedly to steadily improve the regularity of our solution $\bX'$ in a bootstrapping argument.  

\begin{proposition}\label{prop.C1beta}
Let $\bX: [0,T]\times \T\to \R^2$ be the solution to the Peskin problem we constructed.  Then for any $0<\tau<T$, $\bX' \in C^{1,\beta}_{t,\theta}([\tau, T]\times \T; \R^2)$ for all $0<\beta<1.$  
\end{proposition}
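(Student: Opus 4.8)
The strategy is a bootstrapping argument in the spirit of de Pablo--Quir\'{o}s--Rodr\'{i}guez--V\'{a}zquez \cite{MR3656476}, using the linear estimate of Lemma \ref{lem.heat.eqn.estimates} applied to the equation written in the form \eqref{v.theta.def}. First I would observe that by Lemma \ref{lem:chalf} we already have $\bX'\in C^{1/2}_{t,\theta}(Q_t)$ for $0<t\le T_*$, so fix any small $\tau>0$ and work on $[\tau,T]$. Set $g = \bT(\bX')-\bT(\bX'_0)$ (or, more precisely, work with the difference against the solution of the linear problem with the same initial data) so that $g$ solves $\partial_t g + \TLam g = \TLam f + (\text{lower order})$ after incorporating the forcing $\mathcal{V}$; the key input is that $D\bT(z)\ge \lambda\IO$ so that $\bT$ is bi-Lipschitz onto its image on the relevant range of $\bX'$, which lets us transfer H\"older regularity freely between $\bX'$, $\bT(\bX')$, and $g$. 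By Lemma \ref{lem.v.regularity}, if $\bX'\in C^{\beta}_{t,\theta}$ for some $\tfrac12<\beta<1$ then $\mathcal{V}\in C^{2\beta-1}_{t,\theta}$, and at the starting regularity $\bX'\in C^{1/2}$ we have $\mathcal{V}\in L^\infty_{t,\theta}$; these are exactly the kinds of bounds on the right-hand side that Lemma \ref{lem.heat.eqn.estimates} needs.

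The iteration proceeds as follows. Starting from $\bX'\in C^{\beta_0}_{t,\theta}$ with $\beta_0 = 1/2$, I would feed the current regularity into Lemma \ref{lem.v.regularity} to get regularity of $\mathcal{V}$, write the equation as a forced fractional heat equation, and apply Lemma \ref{lem.heat.eqn.estimates} (with a suitable choice of $\beta$ and $\epsilon$, using that $\Lambda$ is order one) to conclude that $\bX'$ gains a fixed amount of H\"older regularity on $[\tau,T]\times\T$ — roughly, $C^{\beta}\to C^{\min(2\beta,\,1-)}$ or $C^{\beta}\to C^{\beta+\epsilon}$ for a controlled $\epsilon$. Because the fractional Laplacian here is of order $1$, each application of the linear estimate improves the differentiability index by a definite increment as long as we stay below $C^{1}$; after finitely many steps we reach $\bX'\in C^{1-\epsilon}_{t,\theta}$ for every $\epsilon>0$, then push through the Lipschitz threshold using the log-Lipschitz bound on $\mathcal{V}$ from Lemma \ref{lem.v.regularity}, and finally cross $C^1$ to obtain $\bX'\in C^{1,\beta}_{t,\theta}([\tau,T]\times\T;\R^2)$ for all $0<\beta<1$. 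At each stage one has to choose the base point $\Theta_0$ arbitrarily and verify the local (pointwise-improved) and global H\"older hypotheses of Lemma \ref{lem.heat.eqn.estimates}, exactly as in \cite{MR3656476}; the vector-valued nature of $\bX'$ and $\bT$ causes no difficulty since the argument is done componentwise and $D\bT\ge\lambda\IO$ preserves the needed coercivity.

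The main obstacle — and the point that requires the most care — is handling the \emph{nonlinearity inside} $\TLam\bT(\bX')$ when transferring regularity across the tension map, and making sure the forcing term $\mathcal{V}$ never has worse regularity than what the linear estimate can absorb at that stage. Concretely, $\bT(\bX')$ is only as smooth as $\bX'$ (modulo the assumed $C^{k,\gamma}$ regularity of $\TE$, which for the first two steps is only $C^{1,1}$), so the gain in each iteration is limited by $\min$ of the gain from the linear estimate and the smoothness of $\bT$; one must check that $\bT\in C^{1,1}_{loc}$ is enough to reach $C^{1,\beta}$ for $\bX'$ and that the degenerate/singular behavior of $\TE$ at $0$ and $\infty$ is irrelevant because $|\bX'|$ is trapped in a compact subinterval of $(0,\infty)$ by the arc-chord bound \eqref{apriori.bd} and the $L^\infty$ bound $\|\bX'\|_{L^\infty_{t,\theta}}\lesssim M$. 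The second delicate point is bookkeeping the constants: all H\"older norms produced along the way depend on $\tau$, $\beta$, $M$, $\rho$, $\lambda$, $\|D\bT\|_{W^{1,\infty}}$ (and $\|\bT\|_{C^{k,\gamma}}$ in the higher-regularity extension), and one must confirm these stay finite and do not blow up as the iteration proceeds, which follows because there are only finitely many steps and each uses the (already established) uniform $\dot H^1$ bound from Proposition \ref{prop:H1.estimate} together with Lemma \ref{lem:chalf} as the base case. I would then remark, as the paper does in the statement of Theorem \ref{thm:main}, that the same scheme continued with $\TE\in C^{k,\gamma}_{loc}$ yields $\bX'\in C^{k,\gamma}_{t,\theta}$ hence $\bX\in C^{k+1,\gamma}_{loc}$, using the last part of Lemma \ref{lem.v.regularity} about $k$-th derivatives of $\mathcal{V}$ being log-$C^{\gamma}$.
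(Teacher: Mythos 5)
You have gathered the right ingredients (Lemma \ref{lem:chalf} as the base case, Lemma \ref{lem.v.regularity} for $\mathcal{V}$, Lemma \ref{lem.heat.eqn.estimates} for the linear problem, bootstrap \`a la \cite{MR3656476}), but the mechanism that actually makes the bootstrap run is missing, and the substitutes you propose do not work as stated. The equation \eqref{v.theta.def} is quasilinear: if you set $g=\bT(\bX')$ then $\partial_t g = D\bT(\bX')\partial_t\bX'$, so $g$ solves $\partial_t g + D\bT(\bX')\TLam g = D\bT(\bX')\mathcal{V}$, a \emph{variable-coefficient} fractional heat equation; Lemma \ref{lem.heat.eqn.estimates} is a constant-coefficient statement whose right-hand side must have the exact form $\TLam f$ with $f$ satisfying the pointwise-improved H\"older hypotheses, and the forcing $\mathcal{V}$ cannot be fed into it directly at all. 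The bi-Lipschitz property of $\bT$ does not bridge this. The paper's proof resolves both issues by freezing the coefficient at an arbitrary point $\Theta_0$: one diagonalizes $D\bT(\bX'(\Theta_0))$ (its eigenvalues lie in $[\lambda,\DTT]$ by \eqref{e:DTdefn}), lets $\bV$ solve the constant-coefficient linear equation \eqref{eqn.v.pde} with forcing built from $\mathcal{V}$ and data $\bX'(\tau/2)$ (this absorbs both the forcing and the initial data; bounded forcing already gives $\bV\in C^\beta$ for all $\beta<1$), and writes $\bU=\bX'-\bV$, which solves $\partial_t\bU + D\bT(\bX'(\Theta_0))\TLam\bU = \TLam\bF$ with zero data, where $\bF = D\bT(\bX'(\Theta_0))\bX' - \bT(\bX')$. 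The decisive point, absent from your proposal, is that $\bF$ is \emph{quadratically small at $\Theta_0$}, estimates \eqref{eqn.F1}--\eqref{eqn.F2}, using only $\DDTT$; with $\bX'\in C^{1/2}$ this lets one apply Lemma \ref{lem.heat.eqn.estimates} with $\beta=\epsilon=1/2$ and jump in a single step to $\bX'\in C^{\beta}$ for every $\beta<1$, and a second application (now $\mathcal{V}\in C^{\beta}$ so $\bV\in C^{1,\beta}$, and $\bF=O(|\Theta-\Theta_0|^{2\beta})$) carries the exponent past $1$ and yields $C^{1,\beta}$.

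Without this quadratic structure your iteration by ``a fixed increment per step'' has no justification from the stated lemmas and would stall below $C^{1}$: bounded or H\"older forcing plus a variable coefficient in front of $\TLam$ gives nothing better than $C^{1-}$ control of $\bX'$ by the tools at hand, so the step ``push through the Lipschitz threshold using the log-Lipschitz bound on $\mathcal{V}$'' has no basis --- and indeed the log-Lipschitz part of Lemma \ref{lem.v.regularity} plays no role in this proposition (it is only needed for the higher-regularity argument). In short, the proposal names the correct external lemma but omits the frozen-coefficient decomposition $\bX'=\bU+\bV$ at an arbitrary base point and the pointwise quadratic estimates on $\bF$, which are the heart of the proof; as written there is a genuine gap.
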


\begin{proof}
To begin, fix some point $\Theta_0 = (t_0,\theta_0)\in (\tau, T)\times \T.$  Let $\bV$ be the solution to the equation 
\begin{equation}\label{eqn.v.pde}
   \left\{ \begin{array}{l}
   \partial_t \bV + D\bT(\bX'(\Theta_0)) \TLam \bV = -\mathcal{V}[\bX'],
   \\ \bV(\tau/2,\cdot) = \bX'(\tau/2, \cdot).
    \end{array} \right.
\end{equation}
Together Proposition \ref{prop:H1.estimate} and Lemma \ref{H1.Linfinity.time.bound} imply that $\bX'\in L^\infty_t([\tau/2, T]; H^1(\T;\R^2)).$  Thus in particular, by Lemma \ref{lem.v.regularity}, $\mathcal{V}\in L^\infty([\tau/2, T]\times\T).$  Notice that \eqref{eqn.v.pde} can be diagonalized using $\bV\cdot \widehat{\bX'}(\Theta_0)$ and  $\bV\cdot \widehat{\bX'}(\Theta_0)^\perp$.  Then since $\bV$ is a solution to the fractional heat equation with bounded initial data and bounded forcing term, we thus have for any $0<\beta<1$ that 
\begin{equation}\label{eqn.v.low.reg}
    \bV\in C^{\beta}([\tau,T]\times \T),
\end{equation}
with the constant depending on $\tau, \beta, ||\bX'||_{L^\infty}, ||\mathcal{V}||_{L^\infty}, \TLam,$ and  $\DTT$.  
Now take $\bU(t,\theta) \eqdef \bX'(t,\theta) - \bV(t,\theta).$  Then using \eqref{v.theta.def} we see that $\bU$ solves the system 
\begin{equation}\notag
    \left\{\begin{array}{l} \partial_t \bU + D\bT(\bX'(\Theta_0)) \TLam \bU = \TLam \bF,
    \\ \bU(\tau/2,\cdot)\equiv 0, \end{array}\right.
\end{equation}
where 
\begin{equation}\notag
    \bF(t,\theta) = D\bT(\bX'(\Theta_0))\bX'(t,\theta) - \bT(\bX'(t,\theta)).  
\end{equation}
Note that $\bF$ satisfies 
\begin{multline}\label{eqn.F1}
    |\bF(\Theta_1) - \bF(\Theta_0)| 
    \\ = |\bT(\bX'(\Theta_1)) - \bT(\bX'(\Theta_0)) - D\bT(\bX'(\Theta_0))(\bX'(\Theta_1) - \bX'(\Theta_0)|
    \\ \leq \DDTT |\bX'(\Theta_1) - \bX'(\Theta_0)|^2,
\end{multline}
and 
\begin{multline}\label{eqn.F2}
    |\bF(\Theta_1) - \bF(\Theta_2)| 
    \\ = |\bT(\bX'(\Theta_1)) - \bT(\bX'(\Theta_2)) - D\bT(\bX'(\Theta_0))(\bX'(\Theta_1) - \bX'(\Theta_2)|
    \\ = \bigg|\left(\int_0^1 ds D\bT(s \bX'(\Theta_1) + (1-s)\bX'(\Theta_2)) - D\bT(\bX'(\Theta_0))\right)(\bX'(\Theta_1)-\bX'(\Theta_2))\bigg| 
    \\ \leq \DDTT \max\{|\bX'(\Theta_1) - \bX'(\Theta_0)|,|\bX'(\Theta_2) - \bX'(\Theta_0)|\} |\bX'(\Theta_1)-\bX'(\Theta_2)|.
\end{multline}
We will use \eqref{eqn.F1} and \eqref{eqn.F2} to apply the bounds in Lemma \ref{lem.heat.eqn.estimates}.

Let $\bU_1 = \bU\cdot \widehat{\bX'}(\Theta_0)$ and $\bU_2 = \bU\cdot \widehat{\bX'}(\Theta_0)^\perp.$  Then using \eqref{e:DTdefn} we see that $\BU_1$ solves the scalar equation 
\begin{equation}\notag
    \left\{\begin{array}{l} \partial_t \bU_1 +\TE'(|\BX'(\Theta_0)|)  \TLam \bU_1 = \TLam (\bF\cdot \widehat{\BX'}(\Theta_0)),
    \\ \bU_1(\tau/2,\cdot)\equiv 0, \end{array}\right.
\end{equation}
and $\bU_2$ solves 
\begin{equation}\notag
    \left\{\begin{array}{l} \partial_t \bU_2 +\frac{\TE(|\BX'(\Theta_0)|)}{|\BX'(\Theta_0)|}  \TLam \bU_2 = \TLam (\bF\cdot \widehat{\BX'}(\Theta_0)^\perp),
    \\ \bU_2(\tau/2,\cdot)\equiv 0, \end{array}\right.
\end{equation}
Note that from \eqref{e:DTdefn} and \eqref{e:QuantitativeTensionMap} we have
$$\lambda\leq \TE'(|\bX'(\Theta_0)|), \frac{\TE(|\BX'(\Theta_0)|)}{|\BX'(\Theta_0)|} \leq \DTT.$$
As $\bF$ satisfies \eqref{eqn.F1}, \eqref{eqn.F2} and $\bX'\in C^{1/2}_{t,\theta}$ by Lemma \ref{lem:chalf}, after rescaling in time we can apply Lemma \ref{lem.heat.eqn.estimates} to $\bU_i
$ with $\beta = \epsilon=1/2 $ to get 
\begin{equation}\notag
|\bU(\Theta_0+\Phi)+\bU(\Theta_0-\Phi)-\bU(\Theta_0)| \lesssim |\Phi|,
\end{equation}
where the constant depends on $||\bX'||_{C^{1/2}},  \lambda, \DTT,$ and $\DDTT.$  In particular, we have that $\bU$ is $C^{\beta}$ at $\Theta_0$ for any $\beta<1$.  As $\bX' = \BU+\bV$, we thus have for any $\beta<1$ that 
\begin{equation}\notag
    |\bX'(\Theta_0+\Phi)-\bX'(\Theta_0)|\lesssim |\Phi|^\beta.
\end{equation}
Since $\Theta_0\in [\tau, T]\times \T$ was arbitrary, we thus have that $\bX'\in C^\beta([\tau, T]\times \T; \R^2)$ for all $0<\beta<1.$  

But now as $\bX'\in C^\beta$ for all $\beta<1,$ by Lemma \ref{lem.v.regularity} we have that $\mathcal{V}$ is $C^\beta$ for all $\beta<1$ as well.  Thus as $\bV$ solves \eqref{eqn.v.pde} with a $C^\beta$ forcing term, we must have $\bV\in C^{1,\beta}([\tau,T]\times \T)$ for any $\beta<1.$  

As $\bF$ satisfies \eqref{eqn.F1}, \eqref{eqn.F2} and $\bX'\in C^{\beta}_{t,\theta}$, after rescaling in time we can again apply Lemma \ref{lem.heat.eqn.estimates} to $\bU_i
$ with for any $\epsilon=\beta<1$ to get 
\begin{equation}\notag
|\bU(\Theta_0+\Phi)+\bU(\Theta_0-\Phi)-\bU(\Theta_0)| \lesssim |\Phi|^{2\beta}.
\end{equation}
Since $\bX' = \bU+\bV$ and $\Theta_0\in [\tau, T]\times \T$ and $\beta<1$ were arbitrary, we thus have that $\bX'\in C^{1,\beta}([\tau, T]\times \T)$ for all $\beta<1.$
\end{proof}

\begin{proposition}\label{prop:higherReg}
Assume that $\TE\in C^{k,\gamma}([0,\infty))$ for some $k\geq 2, $ and $0<\gamma<1.$  Then for any $\tau>0$, $\bX'\in C^{k,\gamma}([\tau, T]\times \T; \R^2)$.  
\end{proposition}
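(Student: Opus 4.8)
The plan is to prove Proposition \ref{prop:higherReg} by induction on $k$, bootstrapping on the regularity of $\bX'$ via the linear fractional-heat estimate Lemma \ref{lem.heat.eqn.estimates}, exactly as in Proposition \ref{prop.C1beta} but iterated. The base case $k=2$ starts from the conclusion of Proposition \ref{prop.C1beta}, namely $\bX'\in C^{1,\beta}_{t,\theta}([\tau',T]\times\T;\R^2)$ for every $\tau'>0$ and every $0<\beta<1$. In particular, for each fixed $0\le j\le 1$, the derivative $\partial_t^{j}\partial_\theta^{1-j}\bX'$ is $C^{\beta}$ in $(t,\theta)$, hence also Lipschitz; so by the last assertion of Lemma \ref{lem.v.regularity} (with $k=1$), applied with the hypothesis $\TE\in C^{1,\beta}_{loc}$, all first-order derivatives of $\mathcal{V}$ are log-$C^{\beta}$. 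More precisely, using $\TE\in C^{k,\gamma}([0,\infty))$ with $k\ge 2$, we may take $\beta$ arbitrarily close to $1$ and get that first derivatives of $\mathcal V$ are log-$C^\beta$ for all $\beta<1$, in particular $\mathcal V\in C^{1,\beta'}_{t,\theta}$ for every $\beta'<1$ on $[\tau',T]\times\T$ after shrinking the time interval.

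First I would differentiate the equation once. Write $\bW = \partial_\theta\bX'$ (the analysis for $\partial_t\bX' = -\TLam\bT(\bX')+\mathcal V$ is then deduced from the equation). Differentiating $\partial_t\bX'+\TLam\bT(\bX')=\mathcal V$ in $\theta$ and using $\partial_\theta(\bT(\bX'))=D\bT(\bX')\bX''=D\bT(\bX')\bW$, one obtains
\begin{equation}\notag
\partial_t\bW + \TLam\big(D\bT(\bX'(\Theta_0))\,\bW\big) = \TLam\bG + \partial_\theta\mathcal V,
\end{equation}
where $\bG = \big(D\bT(\bX'(\Theta_0))-D\bT(\bX'(t,\theta))\big)\bW$, and $\Theta_0=(t_0,\theta_0)$ is the fixed freezing point as in the proof of Proposition \ref{prop.C1beta}. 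Diagonalizing the constant-coefficient matrix $D\bT(\bX'(\Theta_0))$ using $\widehat{\bX'}(\Theta_0)$ and $\widehat{\bX'}(\Theta_0)^\perp$ as before, each scalar component solves a fractional heat equation with elliptic constant between $\lambda$ and $\DTT$. One then splits $\bW = \bW_{\mathrm{heat}} + \bW_{\mathrm{err}}$ where $\bW_{\mathrm{heat}}$ absorbs the already-smooth forcing $\partial_\theta\mathcal V$ (which is $C^{\beta'}$ for all $\beta'<1$) and the initial data $\bW(\tau/2,\cdot)$, so $\bW_{\mathrm{heat}}\in C^{1,\beta'}$ by standard fractional-heat regularity, and $\bW_{\mathrm{err}}$ solves $\partial_t\bW_{\mathrm{err}}+D\bT(\bX'(\Theta_0))\TLam\bW_{\mathrm{err}} = \TLam\bG$ with zero initial data. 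Since $D^2\bT$ is bounded (from $\TE\in C^{2,\gamma}$) and $\bX'\in C^{1,\beta}$, the function $\bG$ satisfies the pointwise bounds of the shape required in Lemma \ref{lem.heat.eqn.estimates}: $|\bG(\Theta_1)-\bG(\Theta_0)|\lesssim |\bX'(\Theta_1)-\bX'(\Theta_0)|\,|\bW(\Theta_1)-\bW(\Theta_0)|\lesssim |\Theta_1-\Theta_0|^{1+\beta}$ and $|\bG(\Theta_1)-\bG(\Theta_2)|\lesssim r^{1}|\Theta_1-\Theta_2|^{\beta}$ on $B_r(\Theta_0)$, using that $\bW = \partial_\theta\bX'$ is $C^\beta$ and, crucially, $C^{1}$-controlled near $\Theta_0$ — here one exploits that near $\Theta_0$ we already have $\bX'\in C^{1,\beta}$ so $\bW$ itself is Hölder and $\bX'$ is differentiable, giving the extra power of $r$. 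Applying Lemma \ref{lem.heat.eqn.estimates} with $\beta$ there equal to the Hölder exponent of $\bW$ and $\epsilon$ equal to $1$ (after rescaling time to normalize the elliptic constant) yields $|\bW_{\mathrm{err}}(\Theta_0+\Phi)+\bW_{\mathrm{err}}(\Theta_0-\Phi)-2\bW_{\mathrm{err}}(\Theta_0)|\lesssim |\Phi|^{\beta+1}$, which combined with $\bW_{\mathrm{heat}}\in C^{1,\beta'}$ and arbitrariness of $\Theta_0$ upgrades $\bW=\partial_\theta\bX'\in C^{1,\beta'}$ for all $\beta'<1$; equivalently $\bX'\in C^{2,\beta'}$ on $[\tau,T]\times\T$. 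This completes $k=2$.

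For the inductive step, suppose $\bX'\in C^{m,\beta'}_{t,\theta}([\tau',T]\times\T;\R^2)$ for all $\tau'>0$ and all $\beta'<1$, for some $2\le m\le k-1$, under the standing hypothesis $\TE\in C^{k,\gamma}$. Then by Lemma \ref{lem.v.regularity} (the last assertion, with $k$ there equal to $m$, which is available since $\TE\in C^{m,\gamma}$ and $\bX'\in C^{m,\beta'}$), all $m$-th order derivatives of $\mathcal V$ are log-$C^{\beta'}$; in particular, using one extra derivative of $\TE$, we can arrange $\mathcal V\in C^{m,\beta'}$ for all $\beta'<1$ on the slightly shrunk interval. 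I would then apply $\partial_t^{j}\partial_\theta^{m-j}$ to the equation, isolate the highest-order unknown — say $\bW^{(m)}$, a generic $m$-th order derivative of $\bX'$ — and obtain a linear equation $\partial_t\bW^{(m)} + D\bT(\bX'(\Theta_0))\TLam\bW^{(m)} = \TLam\bG_m + (\text{already }C^{\beta'}\text{ terms built from }\mathcal V\text{ and lower-order derivatives})$, where $\bG_m$ is a sum of terms each of which is $\big(D\bT(\bX'(\Theta_0))-D\bT(\bX'(t,\theta))\big)$ times a $C^{\beta'}$ function, plus terms involving $D^{\ell}\bT(\bX')$ for $2\le\ell\le m$ multiplied by products of derivatives of $\bX'$ of total order $\le m$ with at least two factors, all of which are already Hölder-continuous of order $\beta'$ and vanish to first order at $\Theta_0$. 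As in the base case, $\bG_m$ satisfies the hypotheses of Lemma \ref{lem.heat.eqn.estimates} with $\epsilon=1$ because the frozen-coefficient difference contributes an extra factor $|\bX'(\cdot)-\bX'(\Theta_0)|=O(r)$ and the lower-order product terms are differentiable near $\Theta_0$ with Hölder derivatives. Diagonalizing, rescaling time, and applying Lemma \ref{lem.heat.eqn.estimates} to each scalar component upgrades every $m$-th order derivative of $\bX'$ from $C^{\beta'}$ to $C^{1,\beta'}$, i.e. $\bX'\in C^{m+1,\beta'}$ for all $\beta'<1$ on $[\tau,T]\times\T$. Iterating until $m+1=k$ gives $\bX'\in C^{k,\beta'}$ for all $\beta'<1$; a final pass, now using the full strength $\TE\in C^{k,\gamma}$ (Hölder exponent $\gamma$ rather than $\beta'$) in Lemma \ref{lem.v.regularity} to get that $k$-th derivatives of $\mathcal V$ are log-$C^{\gamma}$, together with one more application of Lemma \ref{lem.heat.eqn.estimates} with $\epsilon=\gamma$, sharpens this to $\bX'\in C^{k,\gamma}([\tau,T]\times\T;\R^2)$.

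The main obstacle I expect is the bookkeeping in the inductive step: after applying $\partial_t^j\partial_\theta^{m-j}$ to $\partial_t\bX'+\TLam\bT(\bX')=\mathcal V$ one must carefully verify (i) that every term other than the single highest-order linear term $\TLam(D\bT(\bX'(\Theta_0))\bW^{(m)})$ is already in $C^{\beta'}$ with constants controlled by the inductive hypothesis and by $\|\bT\|_{C^{k,\gamma}}$, and (ii) that the ``frozen minus variable coefficient'' perturbation $\bG_m$ genuinely satisfies the two-scale Hölder bounds $|\bG_m(\Theta_1)-\bG_m(\Theta_0)|\lesssim|\Theta_1-\Theta_0|^{\beta'+1}$ and $|\bG_m(\Theta_1)-\bG_m(\Theta_2)|\lesssim r\,|\Theta_1-\Theta_2|^{\beta'}$ near $\Theta_0$, which is where the gain of one derivative comes from and where the structure of the nonlinearity $\bT(\bX')$ (as opposed to an arbitrary $C^{k,\gamma}$ function of $(t,\theta)$) is used. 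A secondary technical point is that time derivatives $\partial_t\bX'$ are only controlled through the equation itself, so when $j\ge 1$ one must first substitute $\partial_t\bX' = -\TLam\bT(\bX')+\mathcal V$ and use the already-established spatial regularity plus the regularity of $\mathcal V$ from Lemma \ref{lem.v.regularity} before differentiating further; this is routine but needs to be spelled out to close the induction cleanly. Apart from this, the argument is a direct, if lengthy, iteration of the scheme already executed in Proposition \ref{prop.C1beta}.
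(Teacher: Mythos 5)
Your overall strategy coincides with the paper's: induct on the order of derivatives, freeze the coefficient $D\bT(\bX'(\Theta_0))$, split the differentiated solution into a piece driven by already-H\"older forcing plus a remainder driven by a perturbation that vanishes to high order at $\Theta_0$, and apply Lemma \ref{lem.heat.eqn.estimates}. However, there is a concrete error in your base case that propagates through the induction. You define $\bG = \bigl(D\bT(\bX'(\Theta_0))-D\bT(\bX')\bigr)\bW$ with $\bW=\bX''$, and then claim $|\bG(\Theta_1)-\bG(\Theta_0)|\lesssim |\bX'(\Theta_1)-\bX'(\Theta_0)|\,|\bW(\Theta_1)-\bW(\Theta_0)|$. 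This is false: since $\bG(\Theta_0)=0$, the left side equals $|\bigl(D\bT(\bX'(\Theta_0))-D\bT(\bX'(\Theta_1))\bigr)\bW(\Theta_1)|$, which is controlled by $|\bX'(\Theta_1)-\bX'(\Theta_0)|\cdot|\bW(\Theta_1)|$, i.e.\ only $O(|\Theta_1-\Theta_0|)$, not $O(|\Theta_1-\Theta_0|^{1+\beta})$. To get the required two-scale smallness you must also subtract $\bW(\Theta_0)$, writing
\begin{equation}\notag
\bigl(D\bT(\bX')-D\bT(\bX'(\Theta_0))\bigr)\bW
=\bigl(D\bT(\bX')-D\bT(\bX'(\Theta_0))\bigr)\bigl(\bW-\bW(\Theta_0)\bigr)
+\bigl(D\bT(\bX')-D\bT(\bX'(\Theta_0))\bigr)\bW(\Theta_0),
\end{equation}
and the leftover term, after applying $\TLam$, becomes $\bW(\Theta_0)\,\TLam D\bT(\bX')$, which must be moved into the forcing of the auxiliary equation for $\bV$; this in turn requires verifying that $\TLam D\bT(\bX')$ is H\"older (using $\TE\in C^{2,\gamma}$ and $\bX'\in C^{1,\beta}$). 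This is exactly the decomposition in the paper's equation \eqref{eqn.X''.theta0} and \eqref{eqn.v.pde2}, and it is the step your write-up is missing; the analogous subtraction of $\partial^j\bX'(\Theta_0)$ is likewise needed at each stage of the induction.

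A secondary point: you invoke Lemma \ref{lem.heat.eqn.estimates} with $\epsilon=1$, but the lemma is stated only for $0<\beta,\epsilon<1$. The paper instead takes $\beta=\epsilon<1$ (the corrected $\bF$ is $O(|\Theta-\Theta_0|^{2\beta})$ globally on $B_r(\Theta_0)$ with the factorization $r^\beta\cdot|\Theta_1-\Theta_2|^\beta$), obtaining the exponent $2\beta$, which suffices because $\beta<1$ is arbitrary. Your final sharpening pass from $C^{k,\beta'}$ for all $\beta'<1$ to $C^{k,\gamma}$ is consistent with the paper's handling of the top order, where the $k$-th order forcing is only $C^\gamma$ rather than $C^{\beta}$ for all $\beta<1$.
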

\begin{proof}
If $k=2$, we will show $\bX'\in C^{2,\gamma}.$  Else, we will show that $\bX'\in C^{2,\beta}$ for all $\beta<1$ and then proceed by induction on $k$.  

So to begin, we will prove that $\bX''\in C^{1,\gamma}.$  Differentiating our equation for $\bX'$ \eqref{v.theta.def}, we get that 
\begin{equation}\label{eqn.X''}
    \partial_t \bX'' + \TLam (D\bT(\bX')\bX'') = \mathcal{V}'.
\end{equation}
Fix some point $\Theta_0\in [\tau, T)\times \T.$  Then we can rewrite \eqref{eqn.X''} as 
\begin{multline}\label{eqn.X''.theta0}
    \partial_t \BX'' + D\bT(\BX'(\Theta_0))\TLam \bX'' = \mathcal{V}' - \bX''(\Theta_0)\TLam D\bT(\bX') 
    \\- \TLam \left[(D\bT(\bX')-D\bT(\bX'(\Theta_0))(\bX''-\bX''(\Theta_0))\right].
\end{multline}
As in the proof of Proposition \ref{prop.C1beta} again take $\bV$ to be the solution to 
\begin{equation}\label{eqn.v.pde2}
   \left\{ \begin{array}{l}
   \partial_t \bV + D\bT(\bX'(\Theta_0)) \TLam \bV = \mathcal{V}'[\bX']- \bX''(\Theta_0)\TLam D\bT(\bX') ,
   \\ \bV(\tau/2,\cdot) = \bX''(\tau/2, \cdot).
    \end{array} \right.
\end{equation}
By Proposition \ref{prop.C1beta} and Lemma \ref{lem.v.regularity} we have that $\mathcal{V}'\in C^{\beta}$ for all $\beta<1$.  If $k>2$, then  $\TLam D\bT(\bX')$ is $C^{\beta}$ for all $\beta<1$, and if $k=2$ then $\TLam D\bT(\bX')$ is $C^\gamma$.  Thus $\bV\in C^{1,\beta}([\tau, T]\times \T)$ for all $\beta<1$ if $k>2$ and  $\bV\in C^{1,\gamma}([\tau, T]\times \T)$ if $k=2$.  
Taking $\bU = \bX''-\bV$, subtracting \eqref{eqn.v.pde2} from \eqref{eqn.X''.theta0} gives us that $\bU$ solves 
\begin{equation}\notag
    \left\{\begin{array}{l} \partial_t \bU + D\bT(\bX'(\Theta_0)) \TLam \bU = \TLam \bF,
    \\ \bU(\tau/2,\cdot)\equiv 0, \end{array}\right.
\end{equation}
where 
\begin{multline}\notag
\bF(\Theta) = (D\bT(\bX'(\Theta))-D\bT(\bX'(\Theta_0)))(\bX''(\Theta)-\bX''(\Theta_0)) 
\\ = O(|\bX''(\Theta)-\bX''(\Theta_0)|^2) = O(|\Theta-\Theta_0|^{2\beta}),
\end{multline}
for all $\beta<1.$  Using Lemma \ref{lem.heat.eqn.estimates} and following the same argument as in Proposition \ref{prop.C1beta}, we then get that $\BU$ is $C^{2\beta}$ at $\Theta_0$.  If $k=2,$ then we get that $\bX'' = \bU+\bV$ is $C^{1,\gamma}.$ And if $k\geq 2,$ then $\bX''$ is $C^{1,\beta}$ for all $\beta<1.$  A symmetric argument works for $\partial_t \bX',$ so we get that $\bX'\in C^{2,\beta}$ for all $\beta<1$ if $k>2,$ and $\bX'\in C^{2,\gamma}$ if $k=2$.  

We now proceed by induction.  Suppose that we have proven that $\bX'\in C^{j,\beta} $ for all $\beta<1$ for some $j<k.$ Let $\partial^j = \partial_t^{l}\partial_\theta^{j-l}$ for some $0\leq l\leq j$ be some $j$-th order derivative.  Then for any $\Theta_0\in [\tau, T]\times \T$, similar to \eqref{eqn.X''.theta0} we can write the equation for $\partial^j \bX'$ as
\begin{multline}\notag
    \partial_t (\partial^j\bX') + D\bT(\bX'(\Theta_0)) \TLam \partial^j \bX' = \partial^j \mathcal{V} - \TLam(\partial^j \bT(\bX') - D\bT(\bX')\partial^j\bX')
    \\ -\partial^j \bX'(\Theta_0)\TLam D\bT(\bX')
     -\TLam \left[(D\bT(\bX') - D\bT(\bX'(\Theta_0))(\partial^j\bX' - \partial^j\bX'(\Theta_0)\right].
\end{multline}
Then Lemma \ref{lem.v.regularity} $\partial^j \mathcal{V}$ is $C^{\beta}$ for all $\beta<1$.  Since $k>2$, $\TLam D\bT(\bX')$ is also $C^{\beta}$ for all $\beta<1$.  Finally, $\TLam (\partial^j \bT(\bX') - D\bT(\bX')\partial^j\bX')$ is either $C^{\beta}$ for all $\beta<1$ if $k>j+1$, or its $C^\gamma$ if $k=j+1$.  

Thus by defining $\bV, \bU, $and $\bF$ analogously, we can follow the same proof scheme as in Proposition \ref{prop.C1beta} and get that $\partial^j \bX'$ is $C^{1,\beta} $ for all $\beta<1$ if $k>j+1$ or  $\partial^j \bX'$ is $C^{1,\gamma}$ if $k=j+1$.   Thus by induction, we have that $\bX'\in C^{k,\gamma}$ if $\TE\in C^{k,\gamma}.$
\end{proof}

\section{Proof of the main theorem}\label{sec:mainThmProof}

In this section we will collect the previous a priori estimates to explain the proofs of our main theorems from \secref{sec:mainResults}.  We will use an approximation argument starting with the existence and uniqueness theorem for general tension from \cite{rodenberg_thesis}:

\begin{theorem}\label{rodenberg.thm} \cite[Theorem 1.2.9 on page 17]{rodenberg_thesis}.
From \eqref{tension.map.def} we suppose the tension $\TE: [0,\infty) \to [0,\infty)$ satisfies $\TE(s) \in h^{1,\gamma}(0,\infty)$, for any fixed $0 < \gamma < 1$, is such that both $\TE(s)>0$ and $\TE'(s)>0$.  Consider the fully nonlinear Peskin problem \eqref{e:boundaryintegral} and \eqref{stokeslet.def} with initial data $\BX_0 \in h^{1,\gamma}(\T)$ with $|\bX_0|_*>0$.  (a) Then there exists $T>0$ such that  \eqref{e:boundaryintegral} and \eqref{stokeslet.def} has a unique solution $\BX(t) \in C([0,T]; h^{1,\gamma}(\T))\cap C^1([0,T]; h^{0,\gamma}(\T))$. (b) There exists some $\varepsilon>0$ such that if $\BY_0 \in h^{1,\gamma}(\T)$ with $||\BX_0 - \BY_0 ||_{h^{1,\gamma}}<\varepsilon$ then \eqref{e:boundaryintegral} and \eqref{stokeslet.def} has a unique solution $\BY(t; \BY_0)\in C([0,T]; h^{1,\gamma}(\T))\cap C^1([0,T]; h^{0,\gamma}(\T))$ corresponding to the initial data $\BY_0$ where $T>0$ is the same as in statement (a).
\end{theorem}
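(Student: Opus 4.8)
The plan is to follow the analytic-semigroup approach of Mori--Rodenberg--Spirn \cite{MR3935476} for the simple-tension case, adapting it to a nonlinear tension by first \emph{localizing} the problem around the initial configuration. Since $\BX_0\in h^{1,\gamma}(\T)$ with $|\BX_0|_*>0$, the quantity $|\BX_0'(\theta)|$ ranges over a compact subinterval $[a,b]\subset(0,\infty)$. Replace $\TE$ by a modified tension $\tilde\TE$ that coincides with $\TE$ on a neighbourhood of $[a,b]$ but is globally uniformly elliptic, with $\tilde\TE'$ bounded above and below by positive constants --- this is the tension-replacement trick explained in \secref{sec:TensionAssumptions}. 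It then suffices to solve \eqref{e:boundaryintegral} with $\tilde\TE$ on a short interval and to verify a posteriori that the solution stays in the region where $\tilde\TE=\TE$.

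Working with $\tilde\TE$, I would write \eqref{e:boundaryintegral} in the form $\partial_t\BX=\mathcal{L}_{\BX_0}\BX+\mathcal{N}_{\BX_0}(\BX)$, where $\mathcal{L}_{\BX_0}$ is the Fr\'echet derivative at the frozen profile $\BX_0$ of the right-hand side of \eqref{e:boundaryintegral} and $\mathcal{N}_{\BX_0}$ collects the remaining, lower-order terms. The cancellation identity $[\nabla_u G_1(z)]z=-G_2(z)u$ from \secref{sec:GenTenDerivation} shows that no term containing $\BX''$ survives, so $\mathcal{N}_{\BX_0}$ depends on $\BX$ only through $\BX'$ and the chord $\delta_\alpha\BX$; hence it is a genuinely lower-order map, locally Lipschitz from $h^{1,\gamma}(\T)$ to $h^{0,\gamma}(\T)$ with Lipschitz constant that is small on a small $h^{1,\gamma}$-ball around $\BX_0$ and over short times. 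After diagonalizing in the moving frame $(\widehat{\BX_0'},\widehat{\BX_0'}^\perp)$, the principal part of $\mathcal{L}_{\BX_0}$ is a positive multiple of $\TLam$ weighted by $\tilde\TE'(|\BX_0'(\theta)|)$, resp.\ $\tilde\TE(|\BX_0'(\theta)|)/|\BX_0'(\theta)|$, both $\ge\lambda>0$; one then shows that $\mathcal{L}_{\BX_0}$ generates an analytic semigroup on $h^{0,\gamma}(\T)$ with domain $h^{1,\gamma}(\T)$, exactly as in \cite{MR3935476}.

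Granting this, part (a) follows from a contraction-mapping argument (equivalently, abstract quasilinear maximal regularity in continuous interpolation spaces) in $C([0,T];h^{1,\gamma}(\T))\cap C^1([0,T];h^{0,\gamma}(\T))$: for $T>0$ small and a small ball $\mathcal{B}$ around the constant path $t\mapsto\BX_0$, the map $\BX\mapsto e^{t\mathcal{L}_{\BX_0}}\BX_0+\int_0^t e^{(t-s)\mathcal{L}_{\BX_0}}\mathcal{N}_{\BX_0}(\BX(s))\,ds$ is a contraction on $\mathcal{B}$. Shrinking $T$ if needed, one checks using $\big||\BX(t)|_*-|\BX_0|_*\big|\le\|\BX'(t)-\BX_0'\|_{L^\infty}$ (as in \eqref{chord.arc.upper}) that every $\BX\in\mathcal{B}$ still satisfies $\inf_{t}|\BX(t)|_*>0$ and that $|\BX'(t,\theta)|$ remains in the neighbourhood of $[a,b]$ where $\tilde\TE=\TE$, so the fixed point solves the original problem. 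Uniqueness on $[0,T]$ comes from a Gr\"onwall estimate for the difference of two solutions, the dissipation of $\mathcal{L}_{\BX_0}$ absorbing the principal part and the Lipschitz bound on $\mathcal{N}_{\BX_0}$ controlling the rest.

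For part (b), observe that all constants above --- the semigroup bounds for $\mathcal{L}_{\BY_0}$, the Lipschitz constant of $\mathcal{N}_{\BY_0}$, the ball radius, and the time $T$ --- are uniform as $\BY_0$ ranges over a small $h^{1,\gamma}$-neighbourhood of $\BX_0$, since $|\BY_0|_*$, $\|\BY_0'\|_{h^{0,\gamma}}$, the ellipticity constant and the modified tension all vary continuously and stay in a fixed compact regime. Hence for $\|\BX_0-\BY_0\|_{h^{1,\gamma}}$ small enough the solution $\BY(t;\BY_0)$ exists on the \emph{same} interval $[0,T]$, and continuous dependence is immediate from the contraction estimates. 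The principal obstacle in this program is the second step: proving that the non-translation-invariant operator $\mathcal{L}_{\BX_0}$, with $\theta$-dependent weight $D\bT(\BX_0'(\theta))$ multiplying $\TLam$, generates an analytic semigroup on $h^{0,\gamma}(\T)$ with the sharp domain $h^{1,\gamma}(\T)$. This is a delicate Schauder/pseudodifferential estimate for a variable-coefficient fractional Laplacian, and it is precisely here that the little H\"older spaces (rather than $C^{1,\gamma}$) and the strict positivity $\TE'>0$ are indispensable.
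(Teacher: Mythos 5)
This statement is not proved in the paper at all: it is quoted verbatim from Rodenberg's thesis \cite[Theorem 1.2.9]{rodenberg_thesis} and used as a black box in \secref{sec:mainThmProof}, so there is no in-paper proof to compare against. Your outline — localize the tension around the range of $|\BX_0'|$, freeze coefficients at $\BX_0$, show the frozen linearization generates an analytic semigroup on $h^{0,\gamma}$ with domain $h^{1,\gamma}$, and close by contraction — is exactly the strategy of the cited work (the paper's introduction describes Rodenberg's proof as applying the semigroup method of \cite{MR3935476} after localizing around the initial data), so it is consistent with the intended argument; note, however, that your sketch defers rather than establishes the one genuinely hard step you correctly identify, namely the generation of the analytic semigroup by the variable-coefficient fractional operator, so it is a faithful plan rather than a complete proof.
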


In Theorem \ref{rodenberg.thm} recall that the little H{\"o}lder spaces $h^{1,\gamma}$ are the completion of $C^\infty$ in the $C^{1,\gamma}$ norm and that $C^{1,\alpha} \subset h^{1,\gamma}$ whenever $\alpha > \gamma >0$.  We refer to \cite{MR3935476,rodenberg_thesis} and the references therin for further discussion of the little H{\"o}lder spaces.

Now let $\bX_0'\in \dot{B}^{\frac{1}{2}}_{2,1}(\T; \R^2)$ with $|\bX_0|_*>0$.  We choose $\rho$ such that $\bX_0$ satisfies $$|\bX_0|_*\geq 3\rho>0.$$  Then by Lemma \ref{lem:VallePoisson} there is some function $\mu$ satisfying the conditions of Definition \ref{subw.definition} and a constant $M>0$ such that $$||\bX'_0||_{\dot{B}^{\frac{1}{2}, \mu}_{2,1}}\leq M <\infty .$$   Let the scalar tension $\TE: [0,\infty) \to [0,\infty)$ satisfy the strong bounds \eqref{e:QuantitativeScalarTension}, \eqref{e:DTdefn} and \eqref{e:QuantitativeTensionMap}.

Next we define the following approximations
\begin{equation*}
    \bX_{0,n}(\alpha)
    =
    \sum_{|k| \le n} \widehat{\bX}_0(k) e^{ik\alpha}, \quad n \ge 1.
\end{equation*}
Above, for $k\in \Z$, we define the standard Fourier transform on $\T$ as
\begin{equation}\notag
\mathcal{F}_{\T}(f)(k) = \widehat{f}(k) \eqdef \frac{1}{2\pi}\int_{\T}   f(\alpha)e^{-ik\alpha} d\alpha.
\end{equation}
Then $\bX_{0,n}(\alpha)$ is smooth, $|| \bX_{0,n}' ||_{\dot{B}^{\frac{1}{2},\mu}_{2,1}} \leq || \bX_{0}' ||_{\dot{B}^{\frac{1}{2},\mu}_{2,1}} \leq M$ for all $n$, and  we have  
\begin{equation}\notag
 \bX_{0,n}' \to \bX_0'  \mbox{ as }  n\to\infty \mbox{ in } \dot{B}^{\frac{1}{2}}_{2,1} \cap \dot{B}^{\frac{1}{2},\mu}_{2,1}.
\end{equation}
Since $\dot{B}^{\frac12}_{2,1}$ controls the $L^\infty$ norm as in Lemma \ref{Besov.embedding},
we have 
\begin{equation}\label{bounded.embed.12}
    ||f||_{L^\infty}\lesssim  \int_{\T} \frac{||\delta_\beta f||_{L^2_\theta}}{|\beta|^{3/2}} d\beta \approx ||f||_{\dot{B}^{\frac12}_{2,1}}.  
\end{equation}
Using this estimate and \eqref{chord.arc.upper} then as $n\to\infty$ we also have 
\begin{equation}\notag
    \left||\bX_0|_* - |\bX_{0,n}|_* \right|
\lesssim ||\bX_0'-\bX_{0,n}'||_{\dot{B}^{\frac12}_{2,1}} \to 0.
\end{equation}
We conclude in particular that $|\bX_{0,n}|_{*} \geq |\bX_{0}|_{*} + o(1)$.  Therefore, for any small $\varepsilon>0$ there is $1 \le N_\varepsilon<\infty$ such that $|\bX_{0,n}|_{*} \geq |\bX_{0}|_{*} -\varepsilon>0$ for all $n\ge N_\varepsilon$.  Since we will be taking the limit as $n\to\infty$, without loss of generality we can take $N_\varepsilon=1$ by throwing away the first $N_\varepsilon$ terms in the sequence and relabelling.  Specifically we choose $\varepsilon = \rho$ and then we have $$|\bX_{0,n}|_{*} \geq 2\rho>0$$ uniformly.  We also have $\bX_{0,n} \in h^{1,\gamma}(\T)$ for all $n \ge 1$ and any $0 < \gamma < 1$.

Then using the result in \cite{rodenberg_thesis}, as stated above in Theorem \ref{rodenberg.thm}, we have that there exists a unique solution 
\begin{equation}\notag
\bX_n(t,\theta)
\in C([0,T_{\text{max}}]; h^{1,\frac12}(\T))\cap C^1([0,T_{\text{max}}]; h^{0,\frac12}(\T))
\end{equation}
to the fully nonlinear Peskin problem \eqref{e:boundaryintegral}  and \eqref{stokeslet.def} with tension $\TE$ for some time $T_{\text{max}}>0$.  Notice that over $0<t<T_{\text{max}}$ the solution to \eqref{e:boundaryintegral}  and \eqref{stokeslet.def} in $C([0,T_{\text{max}}]; h^{1,\frac12}(\T))\cap C^1([0,T_{\text{max}}]; h^{0,\frac12}(\T))$ has enough regularity to be a weak solution the equation \eqref{peskin.general.tension} with kernel \eqref{kerbel.eqn.deriv} in the sense of Definition \ref{def:solution}.  If $T_{\text{max}}<\infty$  then either 
\begin{equation}\notag
    \liminf\limits_{t\to T_{\text{max}}} |\bX_n(t)|_{*} = 0, 
\end{equation}
or 
\begin{equation}\notag
     \limsup\limits_{t\to T_{\text{max}}} ||\bX'_n||_{C^{1/2}_\theta}(t) = \infty.
\end{equation}
We will show that our estimates imply that this can not happen over a uniform time interval that is independent of $n$.

To this end, since the tension $\TE$ satisfies \eqref{e:QuantitativeScalarTension} and \eqref{e:QuantitativeTensionMap} then our previous a priori estimates apply.  Next let $T_M^*$  be defined by 
\begin{equation}\label{e:tMdefn}
    T_M^* = \inf\left\{ T>0: ||\bX'_n||_{\CM} +2c\lambda^{1/2}||\bX'_n||_{\DM}> 5M\right\},
\end{equation}
where $c$ and $\lambda^{1/2}$ are the constants in Proposition \ref{prop:general.apriori.final.local}.  
We further define $T_\rho^*$ by
\begin{equation}\label{e:trhodefn}
 T_\rho^* = \inf\left\{ t>0: |\bX_n(t)|_* < \rho \right\}.
\end{equation}
We then take the time $T^*$ to be the minimum of the two
\begin{equation}\label{e:t*defn}
    T^* = \min\{T_M^*, T_\rho^*\}.
\end{equation}
Since under our assumptions the norms $||\bX'_n||_{\CM}$ and  $||\bX'_n||_{\DM}$ are continuous in $T>0$ then we have $T^*>0$.  We will estimate this time $T^*$ from below in terms of $M$, $\subw$ and $\rho$.  We will show that $T^*$ can be taken independent of $n$ and  $T_{\text{max}}\geq T^*$.  

We then estimate $\BX'_n(t)$ on the time interval $[0,T^*]$.  We shall first consider the case that $T_M^*\leq T_\rho^*$, and get a lower bound on $T_M^*$ using Proposition \ref{prop:general.apriori.final.local}.  For $0\leq t\leq T^*$ under \eqref{e:tMdefn}, \eqref{e:trhodefn} and \eqref{e:t*defn} we have that 
\begin{equation}\label{bounds.T.star}
||\BX'_n(t)||_{\dot{B}^{\frac12,\subw}_{2,1}} \leq 5M, 
\end{equation}
and
\begin{equation}\label{bounds.rho.T.star}
| \BX_n(t)|_*\geq \rho>0.
\end{equation}
Then for $\BPU=\BPU[M,\rho,\lambda, \DDTT, \DTT]$ as defined in \eqref{BU.def} with \eqref{BB.const.def}, \eqref{BK.const.def} and \eqref{BC.const.def}, we obtain from Proposition \ref{prop:general.apriori.final.local} for $0 < T_M$ sufficiently small that
\begin{equation}\notag
CT^{1/2}_M
\BPU^{1/2}[M,\rho,\lambda, \DDTT, \DTT]
\leq 
\frac{1}{2}.
\end{equation}
Thus we can plug this back into Proposition \ref{prop:general.apriori.final.local} to obtain 
\begin{equation}\notag
\frac{1}{2}||\bX'_n||_{\CM} + c\lambda^{1/2}||\bX'_n||_{\DM}
    \\
    \leq
    2 || \BX'_0||_{\CM}
\leq 2M,
\end{equation}
which holds for all $T\in [0,T_M]$.  Thus $T_M^* \ge T_M > 0$ uniformly in $n$.

Next, suppose that $T_M^*\geq T_\rho^*$.  Let $T^*$ be as defined in \eqref{e:t*defn}, then we have \eqref{bounds.T.star} and \eqref{bounds.rho.T.star} over $0 \le t \le T^*$, and we also have \eqref{bounded.embed.12}.  Thus for a fixed $\eta>0$ to be chosen sufficiently small, then 
breaking up the integral on the right-hand side of \eqref{bounded.embed.12} into $|\beta|<\eta$ and $|\beta|>\eta$, we get in general that
\begin{equation}\notag
\begin{split}
    \int_{\T} \frac{||\delta_\beta f||_{L^2}}{|\beta|^{3/2}} d\beta &= \int_{|\beta|<\eta} \frac{||\delta_\beta f||_{L^2}}{|\beta|^{3/2}} d\beta+\int_{|\beta|>\eta} \frac{||\delta_\beta f||_{L^2}}{|\beta|^{3/2}} d\beta
    \\&\leq \frac{1}{\mu(\eta^{-1})} \int_{\T} \frac{||\delta_\beta f||_{L^2}}{|\beta|^{3/2}} \subw(|\beta|^{-1})d\beta + \frac{4}{\eta^{1/2}}||f||_{L^2}.
\end{split}
\end{equation}
Since $\mu(\eta^{-1})^{-1} \to 0$ as $\eta \to 0$, then under \eqref{bounds.T.star} for any $\varepsilon>0$, we can choose $\eta = \eta(\mu, M, \varepsilon)>0$ such that 
\begin{equation}\notag
    ||\BX'_n(t)-\BX'_{0,n}||_{L^\infty}\leq \frac{\varepsilon}{2}  +\frac{C}{\eta^{1/2}}||\BX'_n(t)-\BX'_{0,n}||_{L^2},
\end{equation}
for some universal constant $C>0$.  Thus, it remains to control the continuity of $\BX'_n(t)$ in $L^2$.  Then from Corollary \ref{prop:time.estimate} over $0 \le  t \le T_\rho$ uniformly in $n$ we have 
\begin{equation}\notag
    ||\BX'_n(t)-\BX'_{0,n}||_{L^2} = \bigg|\bigg| \int_0^t \partial_t \BX'_n(s) ds \bigg|\bigg|_{L^2} \leq \int_0^t ||\partial_t \BX'_n(s)||_{L^2} ds \leq C_2 T_\rho^{1/2}.
\end{equation}
Then by taking $T_\rho$ sufficiently small for some time $T_\rho = T_\rho(\mu, M, \rho, \varepsilon)>0$ and using  using  \eqref{chord.arc.upper}
we can guarantee that
\begin{equation}\label{arccord.bound.ep}
    \left||\BX_n(t)|_* - |\BX_{0,n}|_* \right|
\leq 
    ||\BX'_n(t)-\BX'_{0,n}||_{L^\infty}\leq \varepsilon, \quad 0 \le t \le T_\rho.
\end{equation}
In particular, taking $\varepsilon = \rho$ we can guarantee that \eqref{bounds.rho.T.star} holds over $0 \le  t \le T_\rho$ uniformly in $n$. Thus $T_\rho^* \ge T_\rho >0$  uniformly in $n$.  

In particular then \eqref{bounds.rho.T.star} and \eqref{e:trhodefn} imply that $T_{\text{max}}>T^*_\rho$.  Next we consider $T^*_M>0$ defined in \eqref{e:tMdefn}.   By Lemma \ref{H1.Linfinity.time.bound} we have  for any small $t_0>0$ that 
$||\bX'_n(t_0)||_{\dot{H}^1} <\infty$ uniformly in $n$.  Then by \eqref{H1.estimate2} for any $t_0<t < T^*_M$ we have $||\bX'_n(t)||_{\dot{H}^1} \lesssim ||\bX'_n(t_0)||_{\dot{H}^1}$.  Further from Proposition \ref{Besov.equivalence.prop} and then Proposition \ref{besov.ineq.prop} we have 
$$||\bX'_n(t)||_{\dot{C}^{\frac12}_\theta} \approx ||\bX'_n(t)||_{\dot{B}^{\frac12}_{\infty, \infty}} 
\lesssim ||\bX'_n(t)||_{\dot{B}^{1}_{2, \infty}} 
\lesssim ||\bX'_n(t)||_{\dot{H}^{1}} \lesssim ||\bX'_n(t_0)||_{\dot{H}^1}.$$
Then using \eqref{bounded.embed.12} and \eqref{e:tMdefn} we have $||\bX'_n(t)||_{L^\infty_\theta} \leq C ||\bX'_n(t)||_{\dot{B}^{\frac12}_{2,1}}\leq 5 C M$ uniformly in $n$ over $0<t<T^*_M$ .  We conclude that $T_{\text{max}} \ge T^*_M$.  Thus   $T_{\text{max}} \ge T^*>0$ uniformly in $n$.

Thus our sequence of solutions $\bX_n(t)$ are all defined uniformly in $n$ on the interval $[0,T^*]$.  They also satisfy the uniform bounds 
\begin{equation}\label{e:uniformbounds}
\begin{split}
   \begin{array}{rl} ||\bX'_n||_{\CM} + c\lambda^{1/2}||\bX'_n||_{\DM}& \leq 5CM,
    \\ \inf_{0<t<T} |\bX_n(t)|_*& \geq \rho, 
    \\ ||\bX_n||_{C_{t,\theta}^{2,\beta}([\tau, T]\times \T)} &\leq C(M, \mu, \lambda, \DTT, \DDTT, \tau, \beta), \quad \forall 0<\beta< 1,
    \end{array}
\end{split}
\end{equation}
where the last bounds follow by Proposition \ref{prop.C1beta}.  
After passing to a subsequence, we then have that the sequence converges strongly in $L^\infty_t \dot{B}^{3/2}_{2,1}\cap L^2_t H^2_\theta\cap C^{2,\beta}_{loc}((0,T]\times \T)$ to a limit $\bX(t)$ satisfying the same bounds in \eqref{e:uniformbounds}.  Thus $\BX(t)$ will be a strong solution to the Peskin problem with tension $\TE$ and initial data $\bX_0$ in the sense of Definition \ref{def:StrongSolution}.    Thus Theorem \ref{thm:mainquant} follows, and the higher regularity in Theorem \ref{thm:mainquant} is a consequence of Proposition \ref{prop:higherReg}.  Theorem \ref{first:unique} is then a direct consequence of Corollary \ref{cor.L2.cont.m}.

Alternatively, for a tension $\TE$ satisfying also \eqref{tension.derivatives.continuity}, then by Proposition \ref{prop:continuity} using the equivalent weight $\nu$ in \eqref{nu.definition} we have
\begin{equation*}
     ||\bX'_n -  \bX'_m||_{\CN}
    +
  2\lambda^{\frac12} || \bX'_n -  \bX'_m||_{\DN}
    \leq   
    8 || \bX'_{0,n} -  \bX'_{0,m}||_{\BN}.
\end{equation*}
From \eqref{equivalent.nu.norm} we have $|| \bX'_{0,n} -  \bX'_{0,m}||_{\BN} \leq 2 || \bX'_{0,n} -  \bX'_{0,m}||_{\BA} \to 0$ as $m,n\to\infty$.  
Therefore $\{\bX'_n(t)\}$ is a Cauchy sequence in $\CN \cap \DN$ over $0<t<T=T^*$.  Since $\bX_{0,n}'\to \bX_0'$ in $\BA$ as $n \to \infty$ then $\bX'_n(t) \to \bX'(t)$ in $\CN \cap \DN$ over $0<t<T=T^*$.  Then the limit $\bX: [0,T^*] \to \R^2$ is a solution to the Peskin problem \eqref{peskin.general.tension} for tension $\TE$ with initial data $\bX_0$.  Now Theorem \ref{main:unique} follows from Proposition \ref{prop:continuity}.

Lastly, suppose that our scalar tension $\TE$ only satisfies the weaker qualitative assumptions \eqref{e:QualitativeScalarTension}.  We again assume that $\BX_0$ satisfies $||\bX_0'||_{\dot{B}^{\frac12, \subw}_{2,1}}\leq M$ and $|\bX_0|_*\geq 3\rho>0$.  Let $\tilde{\TE}:[0,\infty)\to [0,\infty)$ be such that  \begin{equation}\label{e:TildeTension}
    \tilde{\TE}(r) = \TE(r), \qquad \rho \leq r\leq ||\bX_0'||_{L^\infty}+\rho,
\end{equation}
and $\tilde{\TE}$ satisfies the stronger assumptions 
\eqref{e:QuantitativeScalarTension}, \eqref{e:DTdefn} and \eqref{e:QuantitativeTensionMap}.  Then by the above argument, there exists a strong solution $\BX:[0,T]\times \T\to \R^2$ to the Peskin problem with tension $\tilde{\TE}$ and initial data $\BX_0.$  

We claim that $\BX(t)$ is also a solution over [0,T] to the Peskin problem with our original tension $\TE$ as well.  To see this, notice that \eqref{arccord.bound.ep} implies that
\begin{equation}\label{e:Linfinityfinalbound}
||\bX'(t)-\bX'_0||_{L^\infty} \leq \rho, \qquad 0\leq t\leq T.
\end{equation}
We conclude that $\rho \leq |\bX(t)|_* \leq \inf_\theta |\bX'(t,\theta)|\leq  ||\bX'(t)||_{L^\infty_\theta} \leq ||\bX_0'||_{L^\infty_\theta}+\rho$ over $0\leq t\leq T$.  
Thus combining \eqref{e:TildeTension} and \eqref{e:Linfinityfinalbound} we obtain
\begin{multline}\notag
    \partial_t \bX(t,\theta) = \int_{\T} d\alpha  \frac{\bX'(\theta+\alpha) \DO(D_\alpha \bX)\bX'(\theta+\alpha)}{|\delta_\alpha \bX|^2} \frac{\tilde{\TE}(|\bX'|)(\theta+\alpha)}{|\bX'(\theta+\alpha)|} \delta_\alpha \bX(\theta)
    \\ = \int_{\T} d\alpha  \frac{\bX'(\theta+\alpha) \DO(D_\alpha \bX)\bX'(\theta+\alpha)}{|\delta_\alpha \bX|^2} \frac{\TE(|\BX'|)(\theta+\alpha)}{|\bX'(\theta+\alpha)|} \delta_\alpha \bX(\theta).
\end{multline}
We conclude that $\bX(t,\theta)$ is a solution to the Peskin problem \eqref{peskin.general.tension} for our original tension $\TE$ on the time interval $[0,T]$. The gain of higher regularity in Theorem \ref{thm:main} follows from Proposition \ref{prop:higherReg}.  We thus conclude that Theorem \ref{thm:main} holds.

\appendix
\section{Littlewood-Paley decomposition on the torus}\label{sec:LPtorus}

In this appendix we will state and prove some important Besov space embedding inequalities in the torus that are used in the main text.  Then we will work on  $\T^\DMD = [-\pi, \pi]^\DMD$ for $\DMD \ge 1$ since all the results are the same in any dimension. To this end we quickly build the Littlewood-Paley operators on $\T^\DMD$.    We refer to \cite[Section 2.3]{BCD} regarding the theory of Besov spaces using Littlewood-Paley operators in $\R^\DMD$.   The theory of Besov spaces on  $\T^\DMD$ is essentially the same, and it has been developed in \cite[Chapter 3.5]{SchTrie1987}.   We will explain the main embedding inequalities for Besov spaces in $\T^\DMD$ using the Littlewood-Paley operators in this appendix.  This approach allows us to develop the embeddings of the spaces $\dot{B}^{s,\subw}_{p,r}(\T^\DMD)$ in \eqref{Besov.mu.Space} and to develop the equivalences of $\dot{B}^{s,\subw}_{p,r}(\T^\DMD)$ in Proposition \ref{Besov.equivalence.prop}.  Although the proofs are  known, this appendix is included because we could not find any reference for these estimates of $\dot{B}^{s,\subw}_{p,r}(\T^\DMD)$.

To this end, we choose $\phi \in C^\infty_c(\mathbb{R}^\DMD)$ with $0 \le \phi \le 1$ such that $\phi(x) = \phi(|x|)$ and $\phi(x)=1$ for $|x|<\frac32$ and $\phi(x) =0$ for $|x|\geq \frac{8}{3}$, and $\phi(|x|)$ is non-increasing for $|x|\ge 0$.    Then define 
\begin{equation*}
    \varphi(x) \eqdef \phi(x)-\phi(2x),
\end{equation*}
so that $\varphi(x)\ge 0$ and $\varphi(x)=0$ for $|x|<\frac34$ and $\varphi(x)=0$ for $|x|\geq \frac83$. Further define
\begin{equation}\notag
\varphi_j(x) \eqdef \varphi(2^{-j}x), \quad j \in \Z. 
\end{equation}
Then we have for $x\ne 0$ that
\begin{equation}\notag
\sum_{j=m'}^m \varphi_j(x) = \phi(2^{-m}x)- \phi(2^{-m'+1}x) \to 1 \quad \text{as} \quad m\to\infty, m'\to-\infty. 
\end{equation}
In this sense it holds that
\begin{equation}\label{sum.one}
\sum_{j=-\infty}^\infty \varphi_j(x) =  1, \quad (x\ne 0). 
\end{equation}
These will be the building blocks of the Littlewood-Paley decomposition on $\T^\DMD$.  

 For a function $f:\T^\DMD \to \mathbb{C}$ we have the Fourier series representation
\begin{equation}\notag
    f(\alpha) = \sum_{k\in \mathbb{Z}^\DMD} \widehat{f}(k) e^{ik\cdot\alpha}, \quad 
    k\cdot\alpha = k_1 \alpha_1 + \cdots k_\DMD \alpha_\DMD.
\end{equation}
where the Fourier transform on $\T^\DMD$ is defined by 
\begin{equation}\notag
\mathcal{F}_{\T^\DMD}(f)(k) = \widehat{f}(k) \eqdef \frac{1}{(2\pi)^\DMD}\int_{\T^\DMD}   f(\alpha)e^{-ik\cdot\alpha} d\alpha, \quad k\in \mathbb{Z}^\DMD.
\end{equation}
Note that in the remainder of this section our function $f$ will always have mean zero, which means that 
\begin{equation}\notag
\mathcal{F}_{\T^\DMD}(f)(0)  = \frac{1}{(2\pi)^\DMD}\int_{\T^\DMD}   f(\alpha) d\alpha=0.
\end{equation}
Then we define the Littlewood-Paley projections on $\T^\DMD$ by 
\begin{equation}\notag
    \Delta_j f(\alpha) = \sum_{k\in \mathbb{Z}^\DMD} \varphi_j(k)\widehat{f}(k) e^{ik\cdot\alpha}, \quad j\in \Z,
\end{equation}
where the sum above clearly only contains a finite number of terms.  In particular we define the following sets 
\begin{equation}\notag
E_j \eqdef \{k\in \mathbb{Z}^\DMD: 3\cdot 2^{j-2} \leq |k| < 2^{j+3}/3 \}.
\end{equation}
Then 
\begin{equation}\notag
    \Delta_j f(\alpha) = \sum_{k\in E_j} \varphi_j(k)\widehat{f}(k) e^{ik\cdot \alpha}, \quad j\in \Z.
\end{equation}
We further have from \eqref{sum.one} that 
\begin{equation}\notag
    f(\alpha) = \sum_{j=-\infty}^\infty \Delta_j f(\alpha).
\end{equation}
We point out that the sum above terminates for sufficiently negative $j$.  In particular $\varphi_j(k) \ne 0$ if and only if $|k| < 2^{j+3}/3$.  Since $\widehat{f}(0)=0$ then $\Delta_j f =0$ whenever $2^{j+3}/3<1$.  Thus there exists a uniform fixed value $\jj\in \Z$ such that 
\begin{equation}\label{sum.terminates}
    f(\alpha) = \sum_{j=\jj}^\infty \Delta_j f(\alpha).
\end{equation}
Further notice that if $\varphi_j(k) \varphi_{j'}(k)\ne 0$ for $j' \ge j$ from the support condition this implies that $2^{j'} \frac34 \le \frac43 2^{j+1}$, which further implies $   0 \le  j' - j \le 1$. This combined with \eqref{sum.one} further implies that 
\begin{equation}\label{LP.extend}
    \Delta_j f(\alpha) 
    =
    \sum_{|j-j'|\le 1} \Delta_j \Delta_{j'}f(\alpha).
\end{equation}
This will be useful in several places below.

Next define 
\begin{equation}\notag
    h_j(\alpha) \eqdef \sum_{k\in \mathbb{Z}^\DMD} \varphi_j(k) e^{ik\cdot \alpha}. 
\end{equation}
Then we have that $\Delta_j f(\alpha) = (h_j * f)(\alpha).$
For a Schwartz function $\psi: \mathbb{R}^\DMD\to \mathbb{C}$ the Poisson summation formula grants
\begin{equation}\notag
\sum_{k \in \mathbb{Z}^\DMD} e^{ik\cdot \beta} \psi(k)
=
(2\pi)^\DMD \sum_{m \in \mathbb{Z}^\DMD}  \mathcal{F}_{\mathbb{R}^\DMD}^{-1}(\psi)(\beta+2\pi m), \quad \forall \beta \in \T^\DMD,
\end{equation}
where the inverse Fourier transform on $\mathbb{R}^\DMD$ is given by 
\begin{equation}\notag
\mathcal{F}_{\mathbb{R}^\DMD}^{-1}(f)(x) = \frac{1}{(2\pi)^{\frac{\DMD}{2}}} \int_{\mathbb{R}^\DMD}  f(\alpha)e^{ix\cdot \alpha} d\alpha.
\end{equation}
Therefore we are able to write 
\begin{equation}\notag
    h_j(\alpha) = \sum_{k\in \mathbb{Z}^\DMD} \varphi_j(k) e^{ik\cdot\alpha} 
    = (2\pi)^\DMD \sum_{m \in \mathbb{Z}^\DMD}  \mathcal{F}_{\mathbb{R}^\DMD}^{-1}(\varphi_j)(\alpha+2\pi m).
\end{equation}
In particular we will denote $\phi \eqdef \mathcal{F}_{\mathbb{R}^\DMD}^{-1}(\varphi)$ to conclude that
\begin{equation}\label{form.hj}
    h_j(\alpha) 
    = (2\pi)^\DMD ~2^{\DMD j} \sum_{m \in \mathbb{Z}^\DMD}  \phi(2^j\alpha+2\pi 2^j m).
\end{equation}
Since $\phi=\mathcal{F}_{\mathbb{R}^\DMD}^{-1}(\varphi)$ is a Schwartz function on $\mathbb{R}^\DMD$ then the sum converges aboslutely and $h_j$ is a periodic function on $\T^\DMD$.   If we use the form \eqref{form.hj} for the function $h_j$ then the proofs of the Besov space inequalities on $\mathbb{R}^\DMD$ translate to  $\mathbb{T}^\DMD$.

\begin{lemma}\label{lemma.bernstein}
With \eqref{form.hj}, for any $M\ge 0$ we have the following uniform estimate
\begin{equation}\label{l1.bound.h}
        \int_{\T^\DMD}  d\alpha~
  |2^j \alpha|^M  \left| h_j(\alpha) \right| 
\leq
 (2\pi)^\DMD \int_{\R^\DMD} d\alpha~ |\alpha|^M \left|\phi(\alpha)\right|<\infty.
\end{equation}
We also have for any $M\ge 0$ the  following uniform estimate for all $\alpha \in \T^\DMD$:
\begin{equation}\label{pointwise.hj}
|\alpha|^M \left| h_j(\alpha) \right| \lesssim 2^{\DMD j} 2^{-jM}.
\end{equation}
Further these bounds imply that for any $p\in (1,\infty)$ we have
\begin{equation}\label{lp.bound.h}
        \left(\int_{\T^\DMD}  d\alpha~
 ( |2^j \alpha|^M  \left| h_j(\alpha) \right| )^p \right)^{\frac1p}
    \lesssim 2^{\DMD j\left(1-\frac{1}{p} \right)}. 
\end{equation}
\end{lemma}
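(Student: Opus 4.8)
The plan is to derive all three estimates from the Poisson-summation representation \eqref{form.hj}, namely $h_j(\alpha) = (2\pi)^\DMD 2^{\DMD j}\sum_{m\in\Z^\DMD}\phi(2^j\alpha + 2\pi 2^j m)$ with $\phi = \mathcal{F}_{\R^\DMD}^{-1}(\vp)$ a Schwartz function, proving \eqref{l1.bound.h} and \eqref{pointwise.hj} directly and then \eqref{lp.bound.h} by interpolating them.

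For \eqref{l1.bound.h} I would first move the $|h_j|$ through the triangle inequality into the sum over $m$, and in the $m$-th summand substitute $u = 2^j\alpha + 2\pi 2^j m$, so that $\alpha \in \T^\DMD$ corresponds to $u$ ranging over the cube $Q_{j,m} := [-2^j\pi, 2^j\pi]^\DMD + 2\pi 2^j m$; these cubes tile $\R^\DMD$ up to a null set, and the Jacobian $2^{-\DMD j}$ cancels the prefactor $2^{\DMD j}$. Since $|2^j\alpha|^M = |u - 2\pi 2^j m|^M$, the integral becomes $(2\pi)^\DMD\sum_m\int_{Q_{j,m}}|u - 2\pi 2^j m|^M|\phi(u)|\,du$. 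The key elementary observation is that on $Q_{j,m}$ one has $|u - 2\pi 2^j m| \le |u|$: this is trivial for $m=0$, and for $m \neq 0$ it holds because every coordinate of $u - 2\pi 2^j m$ has modulus at most $2^j\pi$ while $\|u\|_\infty \ge 2\pi 2^j\|m\|_\infty - 2^j\pi \ge 2^j\pi$. Bounding each summand by $\int_{Q_{j,m}}|u|^M|\phi(u)|\,du$ and summing over $m$ reassembles $\int_{\R^\DMD}|u|^M|\phi(u)|\,du$, which is finite because $\phi$ is Schwartz; in dimension $\DMD\ge 2$ the same argument produces the bound with an extra harmless factor $\DMD^{M/2}$, and for $\DMD=1$ the constant is exactly as stated.

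For \eqref{pointwise.hj} I would again split the sum over $m$ at $m=0$. The $m=0$ term contributes $(2\pi)^\DMD 2^{\DMD j}|\alpha|^M|\phi(2^j\alpha)| = (2\pi)^\DMD 2^{\DMD j}2^{-jM}\,|2^j\alpha|^M|\phi(2^j\alpha)| \lesssim 2^{\DMD j}2^{-jM}$, using $\sup_y |y|^M|\phi(y)| < \infty$. For $m \neq 0$ and $\alpha \in \T^\DMD$ I would use $|\alpha|^M \lesssim 1$ together with the rapid decay $|\phi(y)| \lesssim_N |y|^{-N}$ and the uniform bound $\sum_{m\neq 0}|\alpha + 2\pi m|^{-N} \lesssim_N 1$ (valid for $N > \DMD$, uniformly over $\T^\DMD$) to obtain a contribution $\lesssim_N 2^{\DMD j}2^{-jN}$; choosing $N \ge M$ and recalling that $h_j$ vanishes identically for $j$ below a fixed threshold (as in the discussion around \eqref{sum.terminates}), this is $\lesssim 2^{\DMD j}2^{-jM}$ uniformly in $j$, the remaining finitely many intermediate values of $j$ being absorbed into the constant. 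Adding the two contributions gives \eqref{pointwise.hj}. Finally, for \eqref{lp.bound.h}, writing $g_j(\alpha) := |2^j\alpha|^M|h_j(\alpha)|$, estimate \eqref{l1.bound.h} gives $\|g_j\|_{L^1(\T^\DMD)} \lesssim 1$ and \eqref{pointwise.hj} gives $\|g_j\|_{L^\infty(\T^\DMD)} = 2^{jM}\sup_\alpha|\alpha|^M|h_j(\alpha)| \lesssim 2^{\DMD j}$; the elementary interpolation $\|g_j\|_{L^p}^p = \int |g_j|^{p-1}|g_j| \le \|g_j\|_{L^\infty}^{p-1}\|g_j\|_{L^1}$ then yields $\|g_j\|_{L^p} \lesssim 2^{\DMD j(1-1/p)}$.

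The routine ingredients (Schwartz bounds on $\phi$, lattice summability, the change of variables) pose no difficulty; the point requiring genuine care is the geometric inequality $|u - 2\pi 2^j m| \le |u|$ on the tiling cube $Q_{j,m}$ together with the dimensional bookkeeping it entails, and making sure the small- and negative-$j$ regime is consistent with the stated bounds, where one uses that $h_j \equiv 0$ once $2^{j+3}/3 < 1$.
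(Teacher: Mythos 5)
Your proof is correct and follows essentially the same route as the paper's: the Poisson-summation representation \eqref{form.hj}, an unfolding/tiling change of variables for the $L^1$ bound, Schwartz decay plus uniform lattice summability for the pointwise bound (with the reduction to $j$ above the fixed threshold where $h_j\not\equiv 0$, which the paper also uses implicitly), and $L^1$--$L^\infty$ interpolation for \eqref{lp.bound.h}; the only cosmetic differences are that the paper routes the weight $|\alpha|^M$ through the periodic function $\SL(\alpha)$ and splits the lattice sum at $2^j|\alpha+2\pi m|\lessgtr 1$ rather than at $m=0$ versus $m\neq 0$. Incidentally, your tile inequality $|u-2\pi 2^j m|\le |u|$ on $Q_{j,m}$ is true in the Euclidean norm in every dimension (expand the square and use $v\cdot m\ge -\|v\|_\infty\|m\|_1\ge -2^j\pi|m|^2$ since $\|m\|_1\le|m|^2$ for $m\in\Z^\DMD$), so the extra factor $\DMD^{M/2}$ you allow for $\DMD\ge 2$ is not needed.
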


\begin{proof}We split \eqref{form.hj} as $h_j(\alpha) 
    = g_1(\alpha)+g_2(\alpha)$, where
\begin{equation}\notag
g_1(\alpha)\eqdef
 (2\pi)^\DMD 2^{\DMD j} \sum_{m: 2^j|\alpha+2\pi m|\le 1}  \phi(2^j\alpha+2\pi 2^j m),
\end{equation}
and
\begin{equation}\notag
g_2(\alpha)\eqdef
 (2\pi)^\DMD 2^{\DMD j} \sum_{m: 2^j|\alpha+2\pi m|> 1}  \phi(2^j\alpha+2\pi 2^j m).
\end{equation}
For $g_1(\alpha)$, since $2^j|\alpha+2\pi m|\le 1$, notice that we have 
\begin{equation}\notag
|m|\leq \frac{1}{2\pi} |\alpha+2\pi  m| + \frac{|\alpha|}{2\pi}
\leq 
\frac{1}{2\pi} 2^{-j} + \frac{1}{2} \le \frac{1}{2\pi} 2^{-\jj} + \frac{1}{2} \lesssim 1.
\end{equation}
Since $\left| \phi(\alpha) \right| \lesssim 1$ we have that 
$\left| g_1(\alpha)\right| \lesssim 
2^{\DMD j}$.

For $g_2(\alpha)$, since $2^j|\alpha+2\pi  m|> 1$, we use that $\phi$ is Schwartz on $\mathbb{R}^\DMD$ so that 
\begin{equation}\label{schwartz.estimate}
\left| \phi(\alpha) \right| \leq C_N |\alpha|^{-N}, \quad \forall |\alpha| \ge 1, \quad \forall N \ge \DMD+1.
\end{equation}
In particular 
\begin{equation}\notag
\left| g_2(\alpha)\right| \leq 
2^{\DMD j} C_N   \sum_{m: 2^j|\alpha+2\pi  m|> 1}  |2^j(\alpha+2\pi   m)|^{-N}.
\end{equation}
Here we have the following uniform in $j$ bound  
\begin{equation}\label{uniform.sum.bound}
\sum_{m: 2^j|\alpha+2\pi  m|> 1}  |2^j(\alpha+2\pi   m)|^{-N} \lesssim 1.
\end{equation}
We conclude that $\left| g_2(\alpha)\right|  \lesssim 2^{\DMD j}$.  This establishes \eqref{pointwise.hj} for $M=0$.  

We now prove \eqref{pointwise.hj} for $M>0$.  To this end, for $\alpha \in \T^d$ using \eqref{distance.alpha} we define 
\begin{equation*}
|\SL(\alpha)|^2 
\eqdef 
\sum_{i=1}^d \SL(\alpha_i)^2.
\end{equation*}
Then we have $|\SL(\alpha)| \approx |\alpha|$ uniformly for all $\alpha \in \T^\DMD$.  Further, from \eqref{distance.alpha} we have that  $|\SL(\alpha)| = |\SL(\alpha+2\pi  m)|$ for any $m\in \Z^\DMD$ and any $\alpha \in \T^\DMD$.  Thus from \eqref{form.hj} for any $\alpha \in \T^\DMD$ we have
\begin{equation}\notag
|\alpha|^M \left| h_j(\alpha) \right| 
\lesssim
|\SL(\alpha)|^M \left| h_j(\alpha) \right| 
\lesssim
 2^{\DMD j} \sum_{m \in \mathbb{Z}^\DMD}  |\SL(\alpha+2\pi  m)|^M \left| \phi(2^j\alpha+2\pi  2^j m) \right|.
\end{equation}
Notice from \eqref{distance.alpha} and \eqref{sine.bound} that we have the global uniform bound
\begin{equation}\notag
\frac{ |\SL(\alpha+2\pi m)|}{|\alpha+2\pi  m|} \lesssim 1, \quad \forall \alpha \in \T^\DMD \quad \forall m \in \Z^\DMD.
\end{equation}
We thus conclude that 
\begin{equation}\label{bound.M.hja}
|\alpha|^M \left| h_j(\alpha) \right| 
\lesssim
2^{\DMD j} \sum_{m \in \mathbb{Z}^\DMD}  |\alpha+2\pi  m|^M \left| \phi(2^j\alpha+2\pi  2^j m) \right|.
\end{equation}
We will split this sum into $2^j|\alpha+2\pi   m|\le 1$ and $2^j|\alpha+2\pi  m|> 1$ as previously.  On the region $2^j|\alpha+2\pi   m|\le 1$, as before independent of $j$ we have 
\begin{equation}\notag
 \sum_{m:2^j|\alpha+2\pi  m|\le 1}  |\alpha+2\pi  m|^M \left| \phi(2^j\alpha+2\pi  2^j m) \right|
\lesssim 1.
\end{equation}
This follows exactly as in the proof of \eqref{pointwise.hj} for $M=0$.  Next on the region $2^j|\alpha+2\pi   m|> 1$, we use the estimate \eqref{schwartz.estimate} with $N$ replaced by $N+M$ to obtain
\begin{multline}\notag
 \sum_{m:2^j|\alpha+2\pi   m|> 1}  |\alpha+2\pi   m|^M \left| \phi(2^j\alpha+2\pi  2^j m) \right|
\\
\lesssim 
C_{N+M}  \sum_{m:2^j|\alpha+2\pi   m|> 1}  |\alpha+2\pi   m|^M \left|2^j(\alpha+2\pi m) \right|^{-N-M}
\\
\lesssim 
2^{-jM} \sum_{m:2^j|\alpha+2\pi   m|> 1}  \left|2^j(\alpha+2\pi m) \right|^{-N}
\lesssim 
 2^{-jM}.
\end{multline}
The last uniform inequality follows as in \eqref{uniform.sum.bound}.  Collecting these estimates we obtain \eqref{pointwise.hj} for $M>0$.  We will now prove \eqref{l1.bound.h}. Since $h_j$ is $2\pi$ periodic, from \eqref{form.hj}
\begin{multline}\notag
        \int_{\T^\DMD}  d\alpha~
    \left| h_j(\alpha) \right| 
= (2\pi)^\DMD 2^{\DMD j} \sum_{m \in \mathbb{Z}^\DMD} 
            \int_{\T^\DMD}  d\alpha~
    \left| \phi(2^j\alpha+2\pi  2^j m) \right| 
    \\
\leq (2\pi)^\DMD  \sum_{m \in \mathbb{Z}^\DMD} 
            \int_{2^j\T^\DMD} d\alpha~
    \left| \phi(\alpha+2\pi 2^j m) \right|
        =
 (2\pi)^\DMD \int_{\R^\DMD} d\alpha~ \left|\phi(\alpha)\right|. 
\end{multline}
This yields \eqref{l1.bound.h} for $M=0$.  For $M>0$ we use \eqref{bound.M.hja} and then the proof is exactly the same.  Lastly, to prove \eqref{lp.bound.h} for $M=0$ for any $1<p<\infty$ we interpolate as 
\begin{equation}\notag
    || h_j ||_{L^p} 
    \lesssim     || h_j ||_{L^1}^{\frac{1}{p}}     || h_j ||_{L^\infty}^{1-\frac{1}{p}}. 
\end{equation}
Then \eqref{lp.bound.h} follows from \eqref{l1.bound.h} and \eqref{pointwise.hj}.  The proof of \eqref{lp.bound.h} for $M>0$ is exactly the same.
\end{proof}

\begin{lemma}\label{lemma.bernstein.second}  We have the following Bernstein inequalities
\begin{equation}\label{bernstein.1}
    || \Delta_j f ||_{L^q}
    \lesssim 
     2^{ j\left(\frac{1}{p} -  \frac{1}{q}\right) \DMD}|| \Delta_j f ||_{L^p},
     \quad q \ge p \ge 1.
\end{equation}
For any $0< m \le 1$ we have that 
\begin{equation}\label{bernstein.2}
2^{jm  }|| \Delta_j f ||_{L^p}
\lesssim
    || \Lambda^m \Delta_j f ||_{L^p}
    \lesssim 
     2^{jm }|| \Delta_j f ||_{L^p},
     \quad p \ge 1.
\end{equation}
\end{lemma}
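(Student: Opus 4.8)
The plan is to realize each operator as convolution on $\T^\DMD$ against a kernel whose $L^r$ norm can be estimated exactly as was done for $h_j$ in Lemma \ref{lemma.bernstein}, and then to apply Young's convolution inequality on $\T^\DMD$.

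First I would prove \eqref{bernstein.1}. Set $\widetilde{h}_j \eqdef h_{j-1}+h_j+h_{j+1}$. By \eqref{LP.extend} and the commutativity of convolution, $\Delta_j f = \widetilde{h}_j * \Delta_j f$. Given $q\ge p\ge 1$, define $r\in[1,\infty]$ by $\frac1r = 1+\frac1q-\frac1p$; the hypothesis $q \ge p$ forces $\frac1r\in[0,1]$. Young's inequality gives $||\Delta_j f||_{L^q}\le ||\widetilde{h}_j||_{L^r}||\Delta_j f||_{L^p}$. Since $h_{j\pm1}$ obey the same bounds as $h_j$ up to fixed constants, Lemma \ref{lemma.bernstein} yields $||\widetilde{h}_j||_{L^r}\lesssim 2^{\DMD j(1-1/r)} = 2^{\DMD j(1/p-1/q)}$, where I would use \eqref{l1.bound.h} in the endpoint $r=1$, \eqref{pointwise.hj} in the endpoint $r=\infty$, and \eqref{lp.bound.h} for $1<r<\infty$, in each case with $M=0$. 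This is \eqref{bernstein.1}.

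Next I would prove \eqref{bernstein.2} for a fixed $0<m\le 1$. Choose $\psi\in C^\infty_c(\R^\DMD)$ with $\psi(x)=\psi(|x|)$, $\psi\equiv 1$ on $\mathrm{supp}\,\varphi$, and $\mathrm{supp}\,\psi\subset\{\frac12\le|x|\le 3\}$, and put $\psi_j(x)\eqdef\psi(2^{-j}x)$, so that $\psi_j\equiv 1$ on $\mathrm{supp}\,\varphi_j$ and hence $\psi_j\varphi_j=\varphi_j$. Since $\psi$ vanishes near the origin, $\Psi^{\pm}(\xi)\eqdef|\xi|^{\pm m}\psi(\xi)$ belong to $C^\infty_c(\R^\DMD)$, so $\Phi^{\pm}\eqdef\mathcal{F}^{-1}_{\R^\DMD}(\Psi^{\pm})$ are Schwartz. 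Define the periodic kernels $K_j^{\pm}(\alpha)\eqdef\sum_{k\in\Z^\DMD}|k|^{\pm m}\psi_j(k)\,e^{ik\cdot\alpha}$; comparing Fourier coefficients and using $\psi_j\varphi_j=\varphi_j$ shows $K_j^{+}*\Delta_j f = \Lambda^m\Delta_j f$ and $K_j^{-}*(\Lambda^m\Delta_j f)=\Delta_j f$ (there is no difficulty at $k=0$ since $\varphi_j(0)=\psi_j(0)=0$). Exactly as in the derivation of \eqref{form.hj}, Poisson summation and scaling give $K_j^{\pm}(\alpha)=(2\pi)^\DMD 2^{\DMD j}2^{\pm jm}\sum_{m'\in\Z^\DMD}\Phi^{\pm}(2^j\alpha+2\pi 2^j m')$, and the computation proving \eqref{l1.bound.h} (with $\phi$ replaced by $\Phi^{\pm}$) then yields $||K_j^{\pm}||_{L^1(\T^\DMD)}\lesssim 2^{\pm jm}||\Phi^{\pm}||_{L^1(\R^\DMD)}\lesssim 2^{\pm jm}$. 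Young's inequality finally gives $||\Lambda^m\Delta_j f||_{L^p}\le||K_j^{+}||_{L^1}||\Delta_j f||_{L^p}\lesssim 2^{jm}||\Delta_j f||_{L^p}$ and $||\Delta_j f||_{L^p}\le||K_j^{-}||_{L^1}||\Lambda^m\Delta_j f||_{L^p}\lesssim 2^{-jm}||\Lambda^m\Delta_j f||_{L^p}$, which is \eqref{bernstein.2}.

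The periodization-and-splitting bookkeeping (Poisson summation, then splitting the sum over $m'$ into the regions $2^j|\alpha+2\pi m'|\le 1$ and $>1$) is identical to the proof of Lemma \ref{lemma.bernstein}, so I would simply cite it. The one step that needs genuine care is the construction of the multiplier $\psi$: it must equal $1$ on $\mathrm{supp}\,\varphi$, so that $\psi_j\varphi_j=\varphi_j$ and the two kernel identities hold, while being supported away from $0$, so that $|\xi|^{\pm m}\psi(\xi)$ are smooth and compactly supported — which is exactly what makes $\Phi^{\pm}$ Schwartz and makes the $L^1$ bounds on $K_j^{\pm}$ go through. I expect this to be the main (and only mild) obstacle; everything else reduces to Young's inequality together with the estimates already established in Lemma \ref{lemma.bernstein}.
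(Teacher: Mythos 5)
Your proof is correct and is exactly the argument the paper intends: the paper does not write out a proof but cites \cite[Lemma 2.1]{BCD} together with \eqref{form.hj} and Lemma \ref{lemma.bernstein}, and your write-up (convolution with $\widetilde{h}_j$ plus Young for \eqref{bernstein.1}, and the fattened annulus cutoff $\psi$ with kernels $K_j^{\pm}$ bounded in $L^1$ via Poisson summation for \eqref{bernstein.2}) is precisely that standard argument transplanted to $\T^\DMD$.
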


The proof of Lemma \ref{lemma.bernstein.second} is in \cite[Lemma 2.1 on page 52]{BCD}, if we use \eqref{form.hj} and Lemma \ref{lemma.bernstein} in $\T^\DMD$.  Next, we recall the Besov spaces given in  \eqref{Besov.Space}, \eqref{Besov.mu.Space}, \eqref{Besov.CL.Space} and \eqref{Besov.mu.CL.Space}.  Then for $0<s<1$ and $p,q,r\in [1,\infty]$ and $\subw$ satisfying Definition \ref{subw.definition} we more generally define the semi-norm representation of the Besov spaces over $\T^\DMD$ by 
\begin{equation}\label{Besov.Space.DMD}
    ||f||_{\dot{B}^s_{p,r}(\T^\DMD)} \eqdef \left(\int_{\T^\DMD}  \frac{d\beta}{|\beta|^\DMD}  \left(\frac{||\delta_\beta f||_{L^p(\T^\DMD)}}{|\beta|^{s}} \right)^{r}  \right)^{1/r},
\end{equation}
\begin{equation}\label{Besov.CL.Space.DMD}
    || f||_{\widetilde{L}^{q}_T(\dot{B}_{p, r}^{s}(\T^\DMD))}
    \eqdef 
    \left(\int_{\T^\DMD}  \frac{d\beta}{|\beta|^\DMD}  \left(\frac{||\delta_\beta f||_{L^{q}_T(L^p(\T^\DMD))}}{|\beta|^{s}} \right)^{r}  \right)^{1/r},
\end{equation}
\begin{equation}\label{Besov.mu.Space.DMD}
    ||f||_{\dot{B}^{s,\mu}_{p,r}(\T^\DMD)} \eqdef 
    \left(\int_{\T^\DMD}  \frac{d\beta}{|\beta|^\DMD}  \left(\subw(|\beta|^{-1})\frac{||\delta_\beta f||_{L^p(\T^\DMD)}}{|\beta|^{s}} \right)^{r}  \right)^{1/r},
\end{equation}
\begin{equation}\label{Besov.mu.CL.Space.DMD}
    || f||_{\widetilde{L}^{q}_T(\dot{B}_{p, r}^{s,\subw}(\T^\DMD))}
    \eqdef
    \left(\int_{\T^\DMD}  \frac{d\beta}{|\beta|^\DMD}  \left(\subw(|\beta|^{-1})\frac{||\delta_\beta f||_{L^{q}_T(L^p(\T^\DMD))}}{|\beta|^{s}} \right)^{r}  \right)^{1/r}.
\end{equation}
For all the spaces above we use the standard modification when $r=\infty$.  We can equivalently write these semi-norms using the Littlewood-Paley operators as follows.  We define the $\ell^r=\ell^r(\Z)$ spaces with the norm
\begin{equation}\notag
    || a_j ||_{\ell^r}
    \eqdef 
    \left(\sum_{j \in \Z} |a_j|^r \right)^{1/r},
    \quad 1 \le r < \infty, 
    \quad 
        || a_j ||_{\ell^\infty}
        \eqdef \sup_{j \in \Z} |a_j|.
\end{equation}
Then we have the following equivalent representations of these Besov spaces.

\begin{proposition}\label{Besov.equivalence.prop}
We consider any $0<s<1$ and $p,r\in [1,\infty]$ and $\subw$ satisfying Definition \ref{subw.definition}.  Then we have for \eqref{Besov.Space.DMD} and \eqref{Besov.mu.Space.DMD} that 
\begin{equation}\notag
||f||_{\dot{B}^s_{p,r}(\T^\DMD)} \approx
    || 2^{js} || \Delta_j f||_{L^p_\theta} ||_{\ell^r},
\quad
||f||_{\dot{B}^{s,\subw}_{p,r}(\T^\DMD)} \approx
    || 2^{js} \subw(2^j) || \Delta_j f||_{L^p_\theta} ||_{\ell^r}.
\end{equation}
If also $q \in [1,\infty]$ then for \eqref{Besov.mu.CL.Space.DMD} we have
\begin{equation}\label{uniform.LP.next}
    ||f||_{\widetilde{L}^{q}_T(\dot{B}^{s,\subw}_{p,r}(\T^\DMD))} 
\approx
    || 2^{js} \subw(2^j) || \Delta_j f||_{L^{q}_T(L^p_\theta)} ||_{\ell^r},
\end{equation}
and for \eqref{Besov.CL.Space.DMD} we have 
$
    || f||_{\widetilde{L}^{q}_T(\dot{B}_{p, r}^{s}(\T^\DMD))}
\approx
    || 2^{js}  || \Delta_j f||_{L^{q}_T(L^p_\theta)} ||_{\ell^r}.
$
\end{proposition}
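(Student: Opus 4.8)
The plan is to establish the norm equivalences in Proposition \ref{Besov.equivalence.prop} by transferring the standard Littlewood--Paley characterization of homogeneous Besov spaces from $\mathbb{R}^\DMD$ to $\T^\DMD$, using the explicit kernel representation \eqref{form.hj} together with the kernel estimates in Lemma \ref{lemma.bernstein}. Since all four claimed equivalences have the same structure, it suffices to prove the weighted Chemin--Lerner case \eqref{uniform.LP.next}; the other three follow by specializing $q=\infty$, or $\subw\equiv 1$, or both, and replacing $L^q_T L^p_\theta$ by $L^p_\theta$ throughout (Minkowski's inequality in time makes every step below compatible with the extra $L^q_T$ norm, which I would simply carry along). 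The two inequalities $\lesssim$ and $\gtrsim$ are proved separately.

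For the direction $|| 2^{js}\subw(2^j)||\Delta_j f||_{L^q_T L^p_\theta}||_{\ell^r} \lesssim ||f||_{\widetilde{L}^q_T(\dot B^{s,\subw}_{p,r}(\T^\DMD))}$, I would first write, using \eqref{LP.extend} and the fact that $\int_{\T^\DMD} h_j = \widehat{\varphi_j}(0)\cdot(2\pi)^\DMD \cdot (\text{const}) = 0$ when $j > \jj$ (because $\varphi_j(0)=0$), the identity $\Delta_j f(\theta) = \int_{\T^\DMD} h_j(\beta)\, \delta_{-\beta} f(\theta)\, d\beta$ after symmetrizing; more precisely one exploits that $h_j$ has vanishing integral to replace $f(\theta-\beta)$ by $f(\theta-\beta)-f(\theta)$. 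Then Minkowski's integral inequality (in $\theta$ and in $t$) gives $||\Delta_j f||_{L^q_T L^p_\theta} \lesssim \int_{\T^\DMD} |h_j(\beta)|\, ||\delta_\beta f||_{L^q_T L^p_\theta}\, d\beta$. I would then split the $\beta$-integral dyadically into shells $|\beta|\approx 2^{-k}$, bound $\int_{|\beta|\approx 2^{-k}} |h_j(\beta)|\, d\beta$ using \eqref{l1.bound.h} with a suitable power $M$ (gaining $2^{-(k-j)M}$ for $k>j$ and $2^{(k-j)(\DMD-M)}$-type decay for $k<j$ via \eqref{pointwise.hj} and the volume of the shell), and fold $||\delta_\beta f||_{L^q_T L^p_\theta} \lesssim 2^{-k(s + \text{log-correction})}\, a_k$ where $a_k$ is an $\ell^r$-summable sequence coming from the definition \eqref{Besov.mu.CL.Space.DMD}. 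The doubling property $\subw(2r)\le c_0\subw(r)$ of $\subw$ from Definition \ref{subw.definition} is exactly what lets $\subw(2^j)$ and $\subw(2^k)$ be comparable up to a geometric factor $c_0^{|j-k|}$, so choosing $M$ large enough that the kernel decay beats both $2^{-(k-j)s}$ and $c_0^{|k-j|}$ makes the convolution in $\ell^r$ a discrete Young-type estimate with an $\ell^1$ sequence, yielding the claim.

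For the reverse direction $||f||_{\widetilde{L}^q_T(\dot B^{s,\subw}_{p,r}(\T^\DMD))} \lesssim || 2^{js}\subw(2^j)||\Delta_j f||_{L^q_T L^p_\theta}||_{\ell^r}$, I would use the Littlewood--Paley decomposition $f = \sum_{j\ge \jj}\Delta_j f$ from \eqref{sum.terminates} and estimate $||\delta_\beta f||_{L^q_T L^p_\theta} \le \sum_j ||\delta_\beta \Delta_j f||_{L^q_T L^p_\theta}$. For a fixed $\beta$, one splits the sum at $2^j \approx |\beta|^{-1}$: for the low-frequency part ($2^j |\beta| \le 1$) use $||\delta_\beta \Delta_j f||_{L^p_\theta} \le |\beta|\, ||\nabla \Delta_j f||_{L^p_\theta} \lesssim |\beta| 2^j ||\Delta_j f||_{L^p_\theta}$ (Bernstein, \eqref{bernstein.2} with the gradient, or directly from \eqref{bernstein.1}-type reasoning applied to $h_j'$), and for the high-frequency part ($2^j|\beta| > 1$) use $||\delta_\beta\Delta_j f||_{L^p_\theta} \le 2||\Delta_j f||_{L^p_\theta}$. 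Multiplying by $\subw(|\beta|^{-1})|\beta|^{-s}$, using $s\in(0,1)$ so that both $\sum_{2^j|\beta|\le 1} 2^j|\beta|^{1-s}$ and $\sum_{2^j|\beta|>1} |\beta|^{-s}$ are geometric in the relevant range, and again invoking the doubling of $\subw$ to pass $\subw(|\beta|^{-1})$ to $\subw(2^j)$, one arrives after integrating $d\beta/|\beta|^\DMD$ over $\T^\DMD$ (and applying Minkowski in $t$) at a discrete convolution estimate that closes by Young's inequality on $\ell^r$.

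The main obstacle I anticipate is purely bookkeeping in the periodic setting: the kernel $h_j$ on $\T^\DMD$ is the periodization \eqref{form.hj} of a Schwartz function, so its tails decay but are no longer compactly supported the way $\varphi_j$ is on the Fourier side; one must be careful that the dyadic-shell estimates for $\int |h_j(\beta)||\beta|^M d\beta$ and the pointwise bounds \eqref{pointwise.hj} are applied with a power $M$ chosen \emph{uniformly} large enough to dominate the worst of (i) the Besov weight $2^{-(k-j)s}$, (ii) the doubling constant $c_0^{|k-j|}$ of $\subw$, and (iii) the shell volumes, simultaneously on both sides $k>j$ and $k<j$. A secondary technical point is handling $r=\infty$, $p=\infty$, or $q=\infty$, where the $\ell^r$ or $L^q$ sums become suprema; these require the standard modifications but introduce no new ideas, and I would dispatch them with a remark rather than repeating the argument. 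Since \eqref{form.hj} and Lemma \ref{lemma.bernstein} already package the hard analytic content, the proof is essentially the transcription of the $\mathbb{R}^\DMD$ argument (as in \cite[Section 2.3]{BCD}) with the periodized kernel, and I would present it at that level of detail.
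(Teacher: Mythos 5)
Your overall strategy is the same as the paper's: one direction via the mean--zero kernel identity $\Delta_j f=\int_{\T^\DMD} h_j(\beta)\,\delta_\beta f\,d\beta$ together with the decay estimates of Lemma \ref{lemma.bernstein}, the other via the telescoping sum \eqref{sum.terminates} with the bound $\|\delta_\beta\Delta_jf\|\lesssim\min\{1,2^j|\beta|\}\,\|\Delta_jf\|$ and a split at $2^{j_0}\approx|\beta|^{-1}$. The unweighted and Chemin--Lerner variants are handled exactly as you describe.

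There is, however, a genuine gap in how you control the weight $\subw$ in the direction $\|f\|_{\widetilde L^q_T(\dot B^{s,\subw}_{p,r})}\lesssim\|2^{js}\subw(2^j)\|\Delta_jf\|\|_{\ell^r}$. In the low--frequency regime $2^j|\beta|\le 1$ the term you must sum is, after normalizing, $(2^j|\beta|)^{1-s}\,\subw(|\beta|^{-1})/\subw(2^j)$, and here the available decay is the \emph{fixed} power $(2^j|\beta|)^{1-s}$ coming from the mean--value/Bernstein bound --- there is no free large parameter $M$ as in the kernel estimates. Doubling only gives $\subw(|\beta|^{-1})/\subw(2^j)\le c_0^{k}$ with $2^k=(2^j|\beta|)^{-1}$, and the resulting sum $\sum_k 2^{-k(1-s)}c_0^{kr}$ diverges whenever $c_0^r\ge 2^{1-s}$; so ``invoking the doubling of $\subw$'' does not close this step. (The same issue would appear in the paper's $\Sigma_2$ if one argued as you do.) What is actually needed is the third condition of Definition \ref{subw.definition}, that $r\mapsto\subw(r)/\log(4+r)$ is decreasing, which forces $\subw$ to grow at most logarithmically; the paper implements this through the factorization \eqref{splitting.upper} into $\pi_1,\pi_2,\pi_3$, showing that $\tau^{(1-s)/2}\subw(\tau)^{-r}$ is essentially increasing, so that the ratio of weights is absorbed by half of the power $(2^j|\beta|)^{1-s}$. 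By contrast, in the opposite direction your use of doubling is harmless, because there the kernel decay $2^{-(j-k)M}$ from \eqref{pointwise.hj}--\eqref{lp.bound.h} can be taken with $M$ arbitrarily large and dominates $c_0^{|j-k|}$. So the proposal is repairable, but the key mechanism for the weighted estimate is the logarithmic growth condition on $\subw$, not the doubling property.
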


\begin{remark}\label{rem:besov.define}
These equivalences motivate the standard definition of these Besov spaces 
 for all  $s\in \R$, for all $p,r \in [1,\infty]$ and for any $\subw$ satisfying Definition \ref{subw.definition} as
\begin{equation}\notag
||f||_{\dot{B}^s_{p,r}(\T^\DMD)} \eqdef
    || 2^{js} || \Delta_j f||_{L^p_\theta} ||_{\ell^r},
\quad
||f||_{\dot{B}^{s,\subw}_{p,r}(\T^\DMD)} \eqdef
    || 2^{js} \subw(2^j) || \Delta_j f||_{L^p_\theta} ||_{\ell^r}.
\end{equation}
And if also $q \in [1,\infty]$ then we similarly can define
\begin{equation}\notag
    || f||_{\widetilde{L}^{q}_T(\dot{B}_{p, r}^{s}(\T^\DMD))}
\eqdef
    || 2^{js}  || \Delta_j f||_{L^{q}_T(L^p_\theta)} ||_{\ell^r},
\end{equation}
and
$
||f||_{\widetilde{L}^{q}_T(\dot{B}^{s,\subw}_{p,r}(\T^\DMD))} \eqdef
    || 2^{js} \subw(2^j) || \Delta_j f||_{L^{q}_T(L^p_\theta)} ||_{\ell^r}.
$
\end{remark}

\begin{proof}[Proof of Proposition \ref{Besov.equivalence.prop}]
We only show the proof of the equivalence of \eqref{Besov.mu.CL.Space.DMD} as in \eqref{uniform.LP.next}.  The proofs of the other equivalences are exactly the same, or easier.  In this proof we will write the semi-norm on the RHS in \eqref{uniform.LP.next} as 
    \begin{equation}\notag
    ||f||_{\mathcal{P}}^r
\eqdef
|| 2^{js} \subw(2^j) || \Delta_j f||_{L^{q}_T(L^p_\theta)} ||_{\ell^r}^r.
    \end{equation}
For brevity we write the LHS of \eqref{uniform.LP.next} as 
    $    ||f||_{\mathcal{Q}}
\eqdef ||f||_{\widetilde{L}^{q}_T(\dot{B}^{s,\subw}_{p,r}(\T^\DMD))} $ from \eqref{Besov.mu.CL.Space.DMD}.    
    
Then, from \eqref{LP.extend}, we that 
$
\Delta_j = \sum_{|j-j'|\le 1} \Delta_j \Delta_{j'}.
$
Next we use \eqref{form.hj} to obtain
\begin{equation*}
     \delta_\beta \Delta_j f(\theta) 
      = \sum_{|j-j'|\le 1} \left(\delta_\beta h_j * \Delta_{j'}f \right)(\theta),
\end{equation*}
where we expand
$\delta_\beta h_j(\alpha) = \int_0^1 ds  ~ 2^j \beta \cdot (\nabla h)_j( \alpha+s\beta)$.  Then as in \eqref{form.hj}  we have 
\begin{equation}\notag
    (\nabla h)_j(\alpha) 
    = (2\pi)^\DMD 2^{\DMD j} \sum_{m \in \mathbb{Z}^\DMD}  (\nabla \phi)(2^j\alpha+2\pi  2^j m).
\end{equation}
Notice that for any $y\in \mathbb{R}^\DMD$, exactly the same as \eqref{l1.bound.h}, we have
\begin{equation}\notag
    \int_{\T^\DMD}  d\alpha~
    \left| (\nabla h)_j(\alpha+y) \right| 
    \lesssim
    \int_{\mathbb{R}^\DMD} d\alpha~ \left|\nabla \phi(\alpha)\right| 
    \lesssim 1.
\end{equation}
 We conclude from \eqref{l1.bound.h} and the above that 
\begin{equation}\label{hj.beta.bound}
||\delta_\beta h_j||_{L^1_\theta}
\lesssim \min\{1, 2^j |\beta|\}.
\end{equation}
Thus using Young's inequality we have 
\begin{equation}\notag
    ||\delta_\beta \Delta_j f||_{L^q_T(L^p_\theta)} 
    \lesssim \min\{1, 2^j |\beta|\} \sum_{|j-j'|\le 1} 
    ||\Delta_{j'} f||_{L^q_T(L^p_\theta)}. 
\end{equation}
Thus we have
\begin{equation}\notag
    ||\delta_\beta \Delta_j f||_{L^q_T(L^p_\theta)} 
    \lesssim c_{r,j} 2^{-s j} \subw(2^{j})^{-1} \min\{1, 2^j |\beta|\}     ||f||_{\mathcal{P}}, 
\end{equation}
where above and in the rest of the proof $c_{r,j}\ge 0$ is an element of the unit sphere of $\ell^r(\mathbb{Z})$ (which could be a different element on different lines).  In this case 
\begin{equation}\notag
c_{r,j}=\frac{ \sum_{|j-j'|\le 1} 
2^{s j'}\subw(2^{j'})||\Delta_{j'} f||_{L^q_T(L^p_\theta)}  }{3 ||f||_{\mathcal{P}}}.
\end{equation}
Thus we have that 
\begin{multline}\label{triangle.upper.pq}
    ||\delta_\beta f||_{L^q_T(L^p_\theta)}
\leq
   \sum_{j=-\infty}^{\infty} ||\delta_\beta \Delta_j f||_{L^q_T(L^p_\theta)}
   \\
    \lesssim     ||f||_{\mathcal{P}}
    \left( |\beta| \sum_{j \le j_0}c_{r,j} 2^{j(1-s)}\subw(2^{j})^{-1}  +
    \sum_{j > j_0}c_{r,j} 2^{-sj} \subw(2^{j})^{-1}  \right). 
\end{multline}
Here $j_0=j_0(|\beta|)$ satisfies that 
$\frac{1}{2}\frac{1}{|\beta|} < 2^{j_0} \leq \frac{1}{|\beta|}$.

We first suppose that $1 \le r < \infty$.  Then 
we conclude that 
\begin{equation}\label{Q.P.upper.bd}
    ||f||_{\mathcal{Q}}^r
    \leq C 2^r    ||f||_{\mathcal{P}}^r
    \left( I_1+I_2\right),
\end{equation}
where as in \eqref{Besov.mu.CL.Space.DMD} we have
\begin{equation}\notag
I_1 \eqdef         \int_{\T^\DMD}   \frac{d\beta}{|\beta|^\DMD} \subw(|\beta|^{-1})^r  |\beta|^{r(1-s)} 
    \left(  \sum_{j \le j_0}c_{r,j} 2^{j(1-s)}\subw(2^{j})^{-1}   \right)^r,
\end{equation}
and
\begin{equation}\notag
    I_2 \eqdef         \int_{\T^\DMD}   \frac{d\beta}{|\beta|^\DMD} \subw(|\beta|^{-1})^r  |\beta|^{-rs} 
    \left( 
    \sum_{j > j_0}c_{r,j} 2^{-sj} \subw(2^{j})^{-1}  \right)^r.
\end{equation}
We use H{\"o}lder's inequality, and $2^{-sj_0} \lesssim |\beta|^s$, as 
\begin{multline}\label{use.holder}
     \left( 
    \sum_{j > j_0}c_{r,j} 2^{-sj} \subw(2^{j})^{-1}  \right)^r
    \lesssim
        \left(   \sum_{j > j_0} 2^{-sj}  \right)^{r-1}
    \sum_{j > j_0}c_{r,j}^r 2^{-sj} \subw(2^{j})^{-r}  
    \\
      \lesssim
   |\beta|^{s(r-1)}    \sum_{j > j_0}c_{r,j}^r 2^{-sj} \subw(2^{j})^{-r}.
\end{multline}
Then for $I_2$ by Fubini's theorem we have the estimate
\begin{equation}\notag
    I_2 \lesssim      \sum_{j=-\infty }^{\infty} c_{r,j}^r 2^{-sj} \subw(2^{j})^{-r}
    \int_{\T^\DMD}   \frac{d\beta}{|\beta|^\DMD} \subw(|\beta|^{-1})^r  |\beta|^{-s} 
    \1_{2^j|\beta|  > 1}
        \lesssim        \sum_{j=-\infty }^{\infty} c_{r,j}^r
    \lesssim        1.
\end{equation}
Above we used that $\subw$ is increasing from Definition \ref{subw.definition}.

Next for $I_1$, we use H{\"o}lder's inequality similar to \eqref{use.holder}  to get
\begin{equation*}
  \left(  \sum_{j \le j_0}c_{r,j} 2^{j(1-s)}\subw(2^{j})^{-1}   \right)^r
      \lesssim
   |\beta|^{-(1-s)(r-1)}    \sum_{j \le j_0} c_{r,j}^r 2^{(1-s)j} \subw(2^{j})^{-r}.  
\end{equation*}
Then also by Fubini's theorem, we have 
\begin{equation}\notag
    I_1 \lesssim      \sum_{j=-\infty }^{\infty} c_{r,j}^r   2^{(1-s)j} \subw(2^{j})^{-r}  \int_{\T^\DMD}   \frac{d\beta}{|\beta|^\DMD} \subw(|\beta|^{-1})^r  |\beta|^{1-s} \1_{2^j|\beta|  \le 1}.
\end{equation}
To estimate this term we will use a decomposition that is similar to the one from \cite[Equation (18) on Page 10]{AlaNgu2021lipsh}.  The intuition of the decomposition is that under our assumptions a term like $2^{(1-s)j/2} \subw(2^{j})^{-r} $ will be effectively eventually increasing.  In particular we split
\begin{equation}\label{splitting.upper}
2^{(1-s)j/2} \subw(2^{j})^{-r} 
 =
(\pi_1(2^{j}) \pi_2(2^{j})\pi_3(2^{j}))^{-1},
\end{equation}
where for $c_s \eqdef \exp\left(\frac{3r}{1-s}\right)$ we have 
\begin{equation}\notag
 \pi_1(\tau) 
 \eqdef
 \frac{\subw(\tau)^r }{(\log(4+\tau))^r },
 ~~
  \pi_2(\tau) 
 \eqdef
 \frac{(\log(4+\tau))^r }{(\log(c_s+\tau))^r },
~~
  \pi_3(\tau) 
 \eqdef
   \tau^{-(1-s)/2}(\log(c_s+\tau))^r .
\end{equation}
Then, by Definition \ref{subw.definition}, $ \pi_1(\tau)$ is decreasing for $\tau\in [0,\infty)$.  Further $\pi_2(\tau)$ is clearly uniformly bounded from above and below.  And we will see that $\pi_3(\tau)$ is decreasing for $\tau\in [0,\infty)$.  In particular
\begin{multline}\notag
        \frac{d}{d\tau}  \pi_3(\tau) 
    =\tau^{-(1-s)/2}  \left(\frac{r}{c_s+\tau} - \frac{(1-s)}{2\tau} \log(c_s+\tau )\right)(\log(c_s+\tau))^{r-1}
    \\
    =\tau^{-(1-s)/2} \frac{(1-s)\log(c_s+\tau )}{2\tau}\left(\frac{2\tau r}{(1-s)(c_s+\tau )\log(c_s+\tau )} - 1 \right)<0.
\end{multline}
The above holds for all $\tau\in [0,\infty)$, so that  $\pi_3(\tau)$ is decreasing, because 
\begin{equation}\notag
\frac{2\tau r}{(1-s)(c_s+\tau )\log(c_s+\tau )}
<
\frac{2 r}{(1-s)\log(e^{3r/(1-s)}+\tau )}
\leq
\frac{2}{3}<1.
\end{equation}
Note that this is also true with a better constant than $c_s$, which was chosen for clarity of the exposition.  We conclude that 
\begin{multline}\notag
2^{(1-s)j} \subw(2^{j})^{-r}  \int_{\T^\DMD}   \frac{d\beta}{|\beta|^\DMD} \subw(|\beta|^{-1})^r  |\beta|^{1-s} \1_{2^j|\beta|  \le 1}
 \\
    \lesssim
 2^{(1-s)j/2}         \pi_2(2^{j})^{-1}  \int_{\T^\DMD}   \frac{d\beta}{|\beta|^\DMD} |\beta|^{(1-s)/2} \pi_2(|\beta|^{-1}) \1_{2^j|\beta|  \le 1}
      \lesssim 1.
\end{multline}
Therefore we have that $I_1 \lesssim        \sum_{j=-\infty }^{\infty} c_{r,j}^r \lesssim 1$.  Thus we conclude that
$
||f||_{\mathcal{Q}}
\lesssim     ||f||_{\mathcal{P}}. 
$
This proves the upper bound in \eqref{uniform.LP.next} for $1 \le r <\infty$.  If $r=\infty$ we use $c_{r, j} \lesssim 1$ in \eqref{triangle.upper.pq} then following the same argument we obtain $
||f||_{\mathcal{Q}}
\lesssim     ||f||_{\mathcal{P}}. 
$

We will now prove the opposite inequality.  Using that the mean value of $h_j(\beta)$ from \eqref{form.hj} is zero we have
\begin{equation}\notag
 \Delta_j f(\theta) =
    \int_{\T^\DMD}  d\beta ~ h_j(\beta)  ~ \tau_{\beta}f(\theta)
    =\int_{\T^\DMD}  d\beta ~ h_j(\beta)  ~ \delta_{\beta}f(\theta).
\end{equation}
Then if $r=\infty$ we have 
\begin{equation*}
    2^{js} \subw(2^j) || \Delta_j f||_{L^{q}_T(L^p_\theta)}
    \lesssim
    ||f||_{\mathcal{Q}}
    \int_{\T^\DMD} d\beta ~ \frac{\subw(2^{j})}{\subw(|\beta|^{-1})} ~ |2^{j}\beta|^s  | h_j(\beta)| .
\end{equation*}
Then splitting into $2^j |\beta| \leq 1$ and $2^j |\beta| > 1$ we can prove that $\frac{\subw(2^{j})}{\subw(|\beta|^{-1})} \lesssim 1$ as in Definition \ref{subw.definition} and \eqref{splitting.upper}.  Then when $r=\infty$, $||f||_{\mathcal{P}} \lesssim ||f||_{\mathcal{Q}}$ follows from \eqref{l1.bound.h}.

If $1 \le r < \infty$, then we have $||f||_{\mathcal{P}}^r \leq 2^r(\Sigma_1^r + \Sigma_2^r)$ where
\begin{equation}\notag
    \Sigma_1^r 
    \eqdef
\sum_{j\in \Z} 2^{j s r} \subw(2^{j})^r
    \left(\int_{2^j|\beta| \le 1} d\beta ~ \left| h_j(\beta) \right| ~ 
    ||\delta_{\beta}f||_{L^q_T(L^p_\theta)}\right)^r,
\end{equation}
and
\begin{equation}\notag
    \Sigma_2^r
    \eqdef
\sum_{j\in \Z} 2^{j s r} \subw(2^{j})^r
   \left( \int_{2^j|\beta| > 1} d\beta ~ \left| h_j(\beta) \right| ~ 
    ||\delta_{\beta}f||_{L^q_T(L^p_\theta)}\right)^r.
\end{equation}
For $\Sigma_1^r$
we use H{\"o}lder's inequality and \eqref{lp.bound.h} as 
\begin{multline}\notag
    \left(\int_{2^j|\beta| \le 1} d\beta ~ \left| h_j(\beta) \right| ~ 
    ||\delta_{\beta}f||_{L^q_T(L^p_\theta)}\right)^r
    \\
    \lesssim
    \left(\int_{2^j|\beta| \le 1} d\beta ~ \left| h_j(\beta) \right|^{r'} ~ 
 \right)^{r-1}
\int_{2^j|\beta| \le 1} d\beta ~ 
    ||\delta_{\beta}f||_{L^q_T(L^p_\theta)}^r
    \\
      \lesssim
2^{\DMD j}\int_{2^j|\beta| \le 1} d\beta ~ 
    ||\delta_{\beta}f||_{L^q_T(L^p_\theta)}^r.
\end{multline}
We plug this in and use Fubini's theorem to obtain
\begin{equation}\notag
    \Sigma_1^r 
        \lesssim
\int_{\T^\DMD}  d\beta ~ \left(\sum_{j\in \Z} 2^{j (s r+\DMD)} \subw(2^{j})^r \1_{2^j|\beta| \le 1}\right) 
    ||\delta_{\beta}f||_{L^q_T(L^p_\theta)}^r
    \lesssim ||f||_{\mathcal{Q}}^r.
\end{equation}
The last inequality follows since $\subw(\tau)$ is increasing from Definition \ref{subw.definition}.

Lastly we consider the term $\Sigma_2^r$.  We again use H{\"o}lder's inequality  as 
\begin{multline}\notag
    \left(\int_{2^j|\beta| > 1} d\beta ~ \left| h_j(\beta) \right| ~ 
    ||\delta_{\beta}f||_{L^q_T(L^p_\theta)}\right)^r
    \\
    =
       2^{-j(\DMD+1)r} \left(\int_{2^j|\beta| > 1} \frac{d\beta}{|\beta|^\DMD} ~ |2^j\beta|^{\DMD+1} \left| h_j(\beta) \right| ~ 
    \frac{||\delta_{\beta}f||_{L^q_T(L^p_\theta)}}{|\beta|}\right)^r
    \\
    \lesssim
       2^{-j r} \int_{2^j|\beta| > 1} \frac{d\beta}{|\beta|^\DMD} ~ 
    \frac{||\delta_{\beta}f||_{L^q_T(L^p_\theta)}^r}{|\beta|^r}.
\end{multline}
Above from \eqref{lp.bound.h} we used that 
\begin{equation*}
    \left(\int_{2^j|\beta| > 1} \frac{d\beta}{|\beta|^\DMD} ~ |2^j\beta|^{(\DMD+1)r'} \left| h_j(\beta) \right|^{r'} ~ 
    \right)^{\frac{r}{r'}}
    \lesssim 2^{j \DMD  } 2^{j \DMD (r-1)}.
\end{equation*}
We thus conclude that 
\begin{equation}\notag
    \Sigma_2^r 
        \lesssim
\int_{\T^\DMD}  \frac{d\beta}{|\beta|^\DMD}  ~ \left(\sum_{j\in \Z} 2^{-j r (1-s )} \subw(2^{j})^r \1_{2^j|\beta| > 1}\right) 
    \frac{||\delta_{\beta}f||_{L^q_T(L^p_\theta)}^r}{|\beta|^r}
    \lesssim ||f||_{\mathcal{Q}}^r.
\end{equation}
Above we just used the following uniform inequality
\begin{equation}\notag
\sum_{j\in \Z} 2^{-j r (1-s )} \subw(2^{j})^r \1_{2^j|\beta| > 1}
\lesssim
|\beta|^{r(1-s)} \subw(|\beta|^{-1})^r,
\end{equation}
which is again a consequence of \eqref{splitting.upper}.  This completes the proof. \end{proof}

We also refer the reader to the analogous proofs of Proposition \ref{Besov.equivalence.prop} of these equivalences (without the factor $\subw$ and without the argument in \eqref{splitting.upper}) in the whole space case from \cite[Theorem 2.36 on page 74]{BCD}.  

\begin{proposition}\label{besov.ineq.prop}
For $s_1\in \R$, $1\le p_1 \le p_2 \le \infty$, $1\le r_1 \le r_2 \le \infty$, any $\subw$ satisfying Definition \ref{subw.definition} and $s_2 = s_1+\DMD(\frac{1}{p_1}-\frac{1}{p_2}),$ we have the uniform estimates 
\begin{equation}\notag
    || f||_{\dot{B}_{p_2, r_2}^{s_1}(\T^\DMD)}
    \lesssim
    || f||_{\dot{B}_{p_1, r_1}^{s_2}(\T^\DMD)}, \quad
    || f||_{\dot{B}_{p_2, r_2}^{s_1,\mu}(\T^\DMD)}
    \lesssim
    || f||_{\dot{B}_{p_1, r_1}^{s_2,\mu}(\T^\DMD)}. 
\end{equation}
Additonally for any $1\le q \le \infty$ we have
\begin{equation}\notag
    || f||_{\widetilde{L}^{q}_T(\dot{B}_{p_2, r_2}^{s_1}(\T^\DMD))}
    \lesssim
    || f||_{\widetilde{L}^{q}_T(\dot{B}_{p_1, r_1}^{s_2}(\T^\DMD))}, 
    \quad
    || f||_{\widetilde{L}^{q}_T(\dot{B}_{p_2, r_2}^{s_1,\mu}(\T^\DMD))}
    \lesssim
    || f||_{\widetilde{L}^{q}_T(\dot{B}_{p_1, r_1}^{s_2,\mu}(\T^\DMD))}.
\end{equation}
\end{proposition}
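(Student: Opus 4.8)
The plan is to reduce everything to the Littlewood-Paley characterizations established in Proposition~\ref{Besov.equivalence.prop}, and then to prove the desired embeddings at the level of the sequence spaces $\ell^r$ using the Bernstein inequalities of Lemma~\ref{lemma.bernstein.second}. Concretely, by Remark~\ref{rem:besov.define} and Proposition~\ref{Besov.equivalence.prop} it suffices to show, for the four stated cases, that the corresponding $\ell^r$-type quantities dominate one another; the argument in each case is identical up to cosmetic changes, so I would carry out the $\widetilde{L}^q_T(\dot{B}^{s,\subw}_{p,r})$ case in full and remark that the others follow the same way (or are easier, having no $\subw$ factor and/or no time integration).

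First I would handle the frequency (spatial) Sobolev embedding $p_1 \le p_2$. By \eqref{bernstein.1} we have $||\Delta_j f||_{L^{p_2}_\theta} \lesssim 2^{j\DMD(\frac{1}{p_1}-\frac{1}{p_2})}||\Delta_j f||_{L^{p_1}_\theta}$ for each $j$, and this passes through the $L^q_T$ norm in time by monotonicity of $L^q$ in the measure sense — more precisely, $||\Delta_j f||_{L^q_T(L^{p_2}_\theta)} \lesssim 2^{j\DMD(\frac{1}{p_1}-\frac{1}{p_2})}||\Delta_j f||_{L^q_T(L^{p_1}_\theta)}$ follows by applying the pointwise-in-$t$ Bernstein bound inside the $t$-integral. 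Writing $s_2 = s_1 + \DMD(\frac{1}{p_1}-\frac{1}{p_2})$, multiplying by $2^{js_1}\subw(2^j)$ and using $2^{js_1} \cdot 2^{j\DMD(\frac{1}{p_1}-\frac{1}{p_2})} = 2^{js_2}$ gives termwise
\begin{equation}\notag
2^{js_1}\subw(2^j)||\Delta_j f||_{L^q_T(L^{p_2}_\theta)} \lesssim 2^{js_2}\subw(2^j)||\Delta_j f||_{L^q_T(L^{p_1}_\theta)},
\end{equation}
and taking $\ell^{r_1}$ norms on both sides yields the $p$-embedding with $r_2$ replaced by $r_1$.

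Second, I would upgrade the summability index via the standard inclusion $\ell^{r_1}(\Z) \hookrightarrow \ell^{r_2}(\Z)$ for $r_1 \le r_2$, i.e. $||a_j||_{\ell^{r_2}} \le ||a_j||_{\ell^{r_1}}$, applied to $a_j = 2^{js_1}\subw(2^j)||\Delta_j f||_{L^q_T(L^{p_2}_\theta)}$. Chaining the two steps gives
\begin{equation}\notag
||f||_{\widetilde{L}^q_T(\dot{B}^{s_1,\subw}_{p_2,r_2}(\T^\DMD))} \lesssim ||f||_{\widetilde{L}^q_T(\dot{B}^{s_1,\subw}_{p_2,r_1}(\T^\DMD))} \lesssim ||f||_{\widetilde{L}^q_T(\dot{B}^{s_2,\subw}_{p_1,r_1}(\T^\DMD))},
\end{equation}
which is the claim; the non-$\subw$ cases are identical with $\subw \equiv 1$, and the $\dot{B}^{s}_{p,r}$ and $\dot{B}^{s,\subw}_{p,r}$ cases without time are the same after dropping the $L^q_T$ layer. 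I do not anticipate a genuine obstacle here: the only mildly delicate point is making sure the Bernstein inequality \eqref{bernstein.1}, which is stated for fixed functions on $\T^\DMD$, is applied uniformly in $t$ before integrating in time, and that the $\subw$ factor — being independent of $p$, $q$, and $j'$ — simply rides along unchanged; both are routine. I would phrase the write-up so that the $\widetilde{L}^q_T(\dot{B}^{s,\subw}_{p,r})$ case is done explicitly and the remaining three are dispatched by "the same argument, mutatis mutandis."
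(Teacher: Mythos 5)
Your proof is correct and is essentially the argument the paper intends: the paper simply cites the standard proof (\cite[Proposition 2.20]{BCD}), which is exactly your route of applying the Bernstein inequality \eqref{bernstein.1} blockwise (uniformly in $t$ before taking the $L^q_T$ norm, with the $\subw(2^j)$ factor unchanged) and then the inclusion $\ell^{r_1}\subset \ell^{r_2}$, all at the level of the Littlewood--Paley characterization from Proposition \ref{Besov.equivalence.prop} and Remark \ref{rem:besov.define}.
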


The proof is the standard, see \cite[Proposition 2.20 on page 64]{BCD}.  Next we state a lemma about interpolation in Besov spaces.

\begin{lemma}\label{Besov.interpolation}
If $s_1<s_2$ are real numbers and $\theta\in (0,1)$, then for any $1\leq p\leq \infty$ we have 
\begin{equation}\notag
||f||_{\dot{B}^{\theta s_1+(1-\theta)s_2}_{p,1}}\lesssim 
\frac{1}{s_2-s_1}\left(\frac{1}{\theta}+ \frac{1}{1-\theta}\right) ||f||_{\dot{B}^{s_1}_{p,\infty}}^\theta ||f||_{\dot{B}^{s_2}_{p,\infty}}^{1-\theta}
\end{equation}
Additionally for any $1\leq r\leq \infty$ we have 
$
||f||_{\dot{B}^{\theta s_1+(1-\theta)s_2}_{p,r}}\leq
||f||_{\dot{B}^{s_1}_{p,r}}^\theta ||f||_{\dot{B}^{s_2}_{p,r}}^{1-\theta}.
$
\end{lemma}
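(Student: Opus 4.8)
\textbf{Proof proposal for Lemma \ref{Besov.interpolation}.}

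The plan is to work entirely on the Littlewood--Paley side, using the equivalences of Proposition \ref{Besov.equivalence.prop} and Remark \ref{rem:besov.define} to convert all Besov norms into weighted $\ell^r$ (or $\ell^1$, $\ell^\infty$) norms of the sequence $a_j \eqdef \|\Delta_j f\|_{L^p_\theta}$. Write $s \eqdef \theta s_1 + (1-\theta)s_2$. The elementary pointwise bound to exploit is
\begin{equation}\notag
2^{js} a_j = \bigl(2^{js_1}a_j\bigr)^\theta \bigl(2^{js_2}a_j\bigr)^{1-\theta},
\end{equation}
valid for every $j\in\Z$ since $s = \theta s_1 + (1-\theta)s_2$.

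For the second (easier) inequality, with the $\dot B^{s_k}_{p,r}$ norms on the right, I would simply apply H\"older's inequality in $j$ with exponents $1/\theta$ and $1/(1-\theta)$ to the product above, giving
\begin{equation}\notag
\| 2^{js} a_j \|_{\ell^r} \le \| 2^{js_1} a_j \|_{\ell^r}^{\theta}\, \| 2^{js_2} a_j \|_{\ell^r}^{1-\theta},
\end{equation}
which is exactly $\|f\|_{\dot B^{s}_{p,r}}\le \|f\|_{\dot B^{s_1}_{p,r}}^\theta \|f\|_{\dot B^{s_2}_{p,r}}^{1-\theta}$ after invoking Remark \ref{rem:besov.define}. (The case $r=\infty$ is immediate from the pointwise bound.)

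For the first inequality, the target space is $\dot B^{s}_{p,1}$, so I need to sum $2^{js}a_j$ over $j$ and control it by the two $\dot B^{s_k}_{p,\infty}$ norms; let $A_k \eqdef \sup_j 2^{js_k}a_j = \|f\|_{\dot B^{s_k}_{p,\infty}}$. From the pointwise identity, $2^{js}a_j \le \min\{ 2^{j(s-s_1)}A_1,\ 2^{j(s-s_2)}A_2\}$; note $s-s_1 = (1-\theta)(s_2-s_1)>0$ and $s-s_2 = -\theta(s_2-s_1)<0$, so the first bound is good for $j$ small and the second for $j$ large. I would split the sum at the index $j_0$ where the two bounds cross, i.e. $2^{j_0(s_2-s_1)} \approx A_1/A_2$, and sum the two geometric series. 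Each geometric sum over $j\le j_0$ contributes a factor $1/(1-2^{-(s-s_1)}) \lesssim 1/(s-s_1) = 1/((1-\theta)(s_2-s_1))$, and the sum over $j>j_0$ contributes $1/(s_2-s) = 1/(\theta(s_2-s_1))$; evaluating the geometric series at $j_0$ produces exactly the factor $A_1^\theta A_2^{1-\theta}$. Collecting these gives
\begin{equation}\notag
\sum_{j} 2^{js}a_j \lesssim \frac{1}{s_2-s_1}\Bigl(\frac{1}{\theta}+\frac{1}{1-\theta}\Bigr) A_1^\theta A_2^{1-\theta},
\end{equation}
which is the claim via Remark \ref{rem:besov.define}. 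The only mild technical point (not really an obstacle) is that the homogeneous sums run over all $j\in\Z$, so one must check $j_0$ is a genuine crossing index and handle the degenerate cases $A_1=0$ or $A_2=0$ trivially; since $f$ has mean zero, the sum effectively terminates on the negative side by \eqref{sum.terminates}, so there is no issue at $j\to-\infty$. I expect the bookkeeping of the geometric series constants—making sure the $\frac{1}{s_2-s_1}(\frac1\theta+\frac{1}{1-\theta})$ factor comes out cleanly rather than a worse constant—to be the fussiest part, but it is entirely routine.
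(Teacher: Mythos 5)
Your argument is correct and coincides with the paper's route: the paper gives no proof of this lemma beyond citing \cite[Proposition 2.22 on page 65]{BCD}, and the proof there is exactly your two steps — H\"older's inequality in $j$ on the dyadic sequence $2^{js}\|\Delta_j f\|_{L^p}$ for the $\ell^r$ case, and splitting the $\ell^1$ sum at the crossing index with two geometric series for the endpoint case. The only cosmetic slip is the crossing index: balancing $2^{j(s-s_1)}A_1$ against $2^{j(s-s_2)}A_2$ gives $2^{j_0(s_2-s_1)}\approx A_2/A_1$ rather than $A_1/A_2$, which does not affect the argument.
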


The above is proven in \cite[Proposition 2.22 on page 65]{BCD}.

\begin{lemma}\label{Besov.increase}
If $s_1<s_2$ are real numbers, then for any $1\leq p\leq \infty$ we have 
\begin{equation}\notag
||f||_{\dot{B}^{s_1}_{p,1}}\lesssim 
||f||_{\dot{B}^{s_2}_{p,\infty}}
\end{equation}
\end{lemma}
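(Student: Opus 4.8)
\textbf{Proof plan for Lemma \ref{Besov.increase}.}
The plan is to reduce the inequality to a summable comparison of dyadic pieces using the Littlewood–Paley characterization of the homogeneous Besov spaces on $\T^\DMD$ established in Proposition \ref{Besov.equivalence.prop} and Remark \ref{rem:besov.define}. Write $\gamma \eqdef s_2-s_1>0$. By Remark \ref{rem:besov.define} we have
\begin{equation}\notag
||f||_{\dot{B}^{s_1}_{p,1}} \approx \sum_{j\in \Z} 2^{j s_1} ||\Delta_j f||_{L^p_\theta},
\qquad
||f||_{\dot{B}^{s_2}_{p,\infty}} \approx \sup_{j\in \Z} 2^{j s_2} ||\Delta_j f||_{L^p_\theta}.
\end{equation}
Set $A \eqdef ||f||_{\dot{B}^{s_2}_{p,\infty}}$, so that $2^{j s_2}||\Delta_j f||_{L^p_\theta}\leq A$ for every $j$, i.e. $||\Delta_j f||_{L^p_\theta}\leq A 2^{-j s_2}$ for all $j\in\Z$.

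\textbf{Key step: restrict the sum to $j\geq \jj$ and sum the geometric tail.} The crucial observation is that the dyadic sum defining the $\dot{B}^{s_1}_{p,1}$ norm does not actually run over all of $\Z$: since every $f$ we consider has mean zero (recall the discussion around \eqref{sum.terminates}), the Littlewood–Paley decomposition terminates at a uniform fixed value $\jj\in\Z$, so $\Delta_j f = 0$ for $j<\jj$. Therefore
\begin{equation}\notag
\sum_{j\in \Z} 2^{j s_1} ||\Delta_j f||_{L^p_\theta}
= \sum_{j\geq \jj} 2^{j s_1} ||\Delta_j f||_{L^p_\theta}
\leq A \sum_{j\geq \jj} 2^{j s_1} 2^{-j s_2}
= A \sum_{j\geq \jj} 2^{-j\gamma}
= A \, \frac{2^{-\jj\gamma}}{1-2^{-\gamma}}.
\end{equation}
Since $\jj$ and $\gamma$ are fixed, the constant $\frac{2^{-\jj\gamma}}{1-2^{-\gamma}}$ is finite, and combining with the two norm equivalences above yields $||f||_{\dot{B}^{s_1}_{p,1}}\lesssim ||f||_{\dot{B}^{s_2}_{p,\infty}}$, which is the claim. (If one wishes to track the dependence on $\gamma$, note $\frac{1}{1-2^{-\gamma}}\lesssim 1+\gamma^{-1}$, though the statement only asserts $\lesssim$.)

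\textbf{Anticipated obstacle.} There is no real analytic difficulty here; the only point requiring care is the justification that the dyadic sum is one-sided, i.e. that $\Delta_j f=0$ for $j<\jj$. This is exactly \eqref{sum.terminates}, which holds for mean-zero functions, and all functions appearing in the paper (in particular $\bX'$, which is normalized to have mean zero) satisfy this. Without the mean-zero hypothesis the statement is false — e.g. a nonzero constant lies in $\dot{B}^{s_2}_{p,\infty}$ for any $s_2$ but the low-frequency tail of $\dot{B}^{s_1}_{p,1}$ diverges for $s_1<0$ — so this reduction to $j\geq\jj$ is the essential mechanism and should be stated explicitly. Everything else is the routine Littlewood–Paley bookkeeping already set up in Proposition \ref{Besov.equivalence.prop}.
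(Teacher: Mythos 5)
Your proof is correct and is exactly the argument the paper intends: the paper's one-line justification is that the lemma "follows directly from the property \eqref{sum.terminates}," i.e. the one-sidedness of the dyadic sum for mean-zero functions, which is precisely the key step you identify before summing the geometric tail. No issues.
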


The proof of Lemma \ref{Besov.increase} follows directly from the property \eqref{sum.terminates}.

\begin{lemma}\label{Besov.embedding}
For any $(p,q)\in [1,\infty]^2$ such that $p \leq q$ the space $\dot{B}^{\frac{\DMD}{p}-\frac{\DMD}{q}}_{p,1}(\T^{\DMD})$ is continuously embedded in $L^q(\T^{\DMD})$.  In particular, 
\begin{equation}\notag
||f||_{L^p(\T^{\DMD})}\lesssim 
||f||_{\dot{B}^{\frac{\DMD}{p}-\frac{\DMD}{q}}_{p,1}(\T^{\DMD})}.
\end{equation}
In addition if $p<\infty$ then $\dot{B}^{\frac{\DMD}{p}}_{p,1}(\T^{\DMD})$ is continuously embedded in the space of continuous functions.
\end{lemma}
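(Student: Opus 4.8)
The final statement to prove is Lemma \ref{Besov.embedding}, a standard Besov embedding on the torus. The plan is to use the Littlewood-Paley characterization of $\dot{B}^s_{p,1}(\T^\DMD)$ established in Proposition \ref{Besov.equivalence.prop}, together with the Bernstein inequalities from Lemma \ref{lemma.bernstein.second}, and then sum a telescoping series.

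\textbf{Step 1: Reduce to Littlewood-Paley blocks.} By Proposition \ref{Besov.equivalence.prop} with $r=1$ and $s = \frac{\DMD}{p}-\frac{\DMD}{q}$, we have
\begin{equation}\notag
||f||_{\dot{B}^{\DMD/p - \DMD/q}_{p,1}(\T^\DMD)} \approx \sum_{j \ge \jj} 2^{j(\DMD/p - \DMD/q)} ||\Delta_j f||_{L^p_\theta},
\end{equation}
where $\jj$ is the fixed integer from \eqref{sum.terminates}, so that $f = \sum_{j \ge \jj}\Delta_j f$ with the sum converging in $L^p$.

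\textbf{Step 2: Apply Bernstein and sum.} By the Bernstein inequality \eqref{bernstein.1}, for each $j$ we have $||\Delta_j f||_{L^q_\theta} \lesssim 2^{j(\frac1p - \frac1q)\DMD}||\Delta_j f||_{L^p_\theta}$. Since $q \geq p$, the triangle inequality in $L^q$ gives
\begin{equation}\notag
||f||_{L^q_\theta} \le \sum_{j \ge \jj} ||\Delta_j f||_{L^q_\theta} \lesssim \sum_{j \ge \jj} 2^{j(\frac1p-\frac1q)\DMD} ||\Delta_j f||_{L^p_\theta} \approx ||f||_{\dot{B}^{\DMD/p - \DMD/q}_{p,1}(\T^\DMD)},
\end{equation}
which is the desired embedding. (When $q=\infty$ one interprets $\frac1q = 0$ and uses $||\Delta_j f||_{L^\infty_\theta} \lesssim 2^{j\DMD/p}||\Delta_j f||_{L^p_\theta}$; the sum over $j$ is still absolutely convergent.) For the final claim, taking $q = \infty$ and $p < \infty$, the series $\sum_{j\ge \jj}\Delta_j f$ converges uniformly since $\sum_j ||\Delta_j f||_{L^\infty_\theta} \lesssim ||f||_{\dot{B}^{\DMD/p}_{p,1}} < \infty$, and each $\Delta_j f$ is continuous (indeed a trigonometric polynomial), so the uniform limit $f$ is continuous.

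\textbf{Main obstacle.} There is no serious obstacle here; the lemma is routine given the machinery already assembled in the appendix. The only mild subtlety is bookkeeping with the homogeneous (mean-zero) convention: one must invoke \eqref{sum.terminates} to know the Littlewood-Paley sum starts at a fixed index $\jj$ and converges to $f$ (rather than to $f$ modulo constants), which is where the mean-zero assumption on $f$ throughout the section is used; and one must be slightly careful that the $\ell^1$ summability in $j$ justifies both the $L^q$ triangle inequality step and, for the continuity statement, uniform convergence of the series. These are exactly the points already handled in the proof of Proposition \ref{Besov.equivalence.prop}, so the argument is short.
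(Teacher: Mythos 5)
Your proof is correct and is exactly the standard Littlewood--Paley plus Bernstein argument that the paper itself defers to: the paper gives no proof of this lemma, simply citing \cite[Proposition 2.39 on page 79]{BCD}, whose proof is the one you reproduce (with the mean-zero/\eqref{sum.terminates} bookkeeping correctly handled). Note only that the displayed inequality in the lemma as printed has $L^p$ on the left where it should read $L^q$; your version proves the intended statement.
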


This is proven in \cite[Proposition 2.39 on page 79]{BCD} using \eqref{form.hj}.

\section{Estimates on the differences of the kernels}\label{sec:kernelDIFF}

The purpose of this appendix is to prove the pointwise bounds that are stated in Lemma's \ref{A.bound.lem}, \ref{lemm.A.diff} and \ref{lem:Abeta.upper}.  To ease the notation in this appendix we will drop the dependencies on $\theta$.  In particular we write  $\delta_\alpha^+ \BX'(\theta) = \DAP$, $\delta_\alpha^- \BX'(\theta) = \DAM$, $\delta_\alpha^+ \BY'(\theta) = \DAYP$, $\delta_\alpha^- \BY'(\theta) = \DAYM$, $\DATX = \DAX$, $\DATY = \DAY$,
$\delta_\alpha^+ (\bX'-\bY')(\theta) = \DAPXY$, and 
$\delta_\alpha^- (\bX'-\bY')(\theta) = \DAMXY$ etc.  

First we will give the proof of Lemma \ref{A.bound.lem}.

\begin{proof}[Proof of Lemma \ref{A.bound.lem}]
Considering $\mathcal{A}(\theta, \alpha)$ from \eqref{kerbel.A.eqn.deriv}, for \eqref{Abeta.split} we can write
\begin{equation}\label{A1X.def}
    4\pi \mathcal{A}_{1\beta}(\theta, \alpha) \eqdef    4\pi\mathcal{A}_{1\beta 1}+4\pi\mathcal{A}_{1\beta 2}+4\pi\mathcal{A}_{1\beta 3}+4\pi\mathcal{A}_{1\beta 4},
\end{equation}
where
\begin{equation}\notag
4\pi\mathcal{A}_{1\beta 1}
\eqdef
\delta_\beta \DAP\cdot \tau_\beta\frac{ \DO(\DAX) }{|\DAX|^2}\tau_\beta \DAM\IO 
+
\DAP\cdot \frac{ \DO(\DAX) }{|\DAX|^2}\delta_\beta\DAM \IO,  
\end{equation}
\begin{multline}\notag
4\pi\mathcal{A}_{1\beta 2}
\eqdef
(\delta_\beta\DAP + \delta_\beta\DAM)\cdot \tau_\beta \frac{\DO(\DAX) \DAX}{|\DAX|^2}\IO 
\\ 
- (\delta_\beta\DAP + \delta_\beta\DAM)\cdot  \tau_\beta \frac{ \RO(\DAX) \DAX}{|\DAX|^2}
\tau_\beta \RO(\DAX),
\end{multline}

\begin{multline}\notag
4\pi\mathcal{A}_{1\beta 3}
\eqdef
-\delta_\beta\DAP \cdot \tau_\beta \frac{\RO(\DAX) }{|\DAX|^2} \tau_\beta\DAM
\tau_\beta \RO(\DAX)
\\
-\DAP \cdot \frac{\RO(\DAX) }{|\DAX|^2} \delta_\beta\DAM
\tau_\beta \RO(\DAX),
\end{multline}
\begin{multline}\notag
4\pi\mathcal{A}_{1\beta 4}
\eqdef
 \delta_\beta\DAP \cdot \tau_\beta\frac{(\DO(\DAX) - \IO) }{|\DAX|^2}\tau_\beta\DAM \tau_\beta \DO(\DAX)
\\
+ \DAP\cdot \frac{(\DO(\DAX) - \IO) }{|\DAX|^2}\delta_\beta\DAM \tau_\beta \DO(\DAX).
\end{multline}
Then for $\mathcal{A}_{2\beta}$ we also further split  
\begin{equation}\label{A2X.def}
    4\pi \mathcal{A}_{2\beta}(\theta, \alpha) \eqdef    4\pi\mathcal{A}_{2\beta 1}+4\pi\mathcal{A}_{2\beta 2}+4\pi\mathcal{A}_{2\beta 3}+4\pi\mathcal{A}_{2\beta 4},
\end{equation}
where
\begin{equation*}
4\pi\mathcal{A}_{2\beta 1}
\eqdef
\DAP \cdot\delta_\beta\left(  \frac{ \DO(\DAX) }{|\DAX|^2}\right)\tau_\beta \DAM\IO,
\end{equation*}
\begin{multline}\notag
4\pi\mathcal{A}_{2\beta 2}
\eqdef
-\DAP \cdot\delta_\beta\left(  \frac{\RO(\DAX) }{|\DAX|^2} \right) \tau_\beta\DAM
\tau_\beta \RO(\DAX)
\\
-\DAP \cdot \frac{\RO(\DAX) }{|\DAX|^2} \DAM
 \delta_\beta \RO(\DAX),
\end{multline}
\begin{multline}\notag
    4\pi\mathcal{A}_{2\beta 3}
\eqdef
(\DAP + \DAM) \cdot \delta_\beta\left( \frac{\DO(\DAX) \DAX}{|\DAX|^2}\right)\IO
\\
- (\DAP + \DAM) \cdot \delta_\beta\left(  \frac{ \RO(\DAX) \DAX}{|\DAX|^2}
\RO(\DAX) \right),
\end{multline}
\begin{multline}\notag
4\pi\mathcal{A}_{2\beta 4}
\eqdef
 \DAP \cdot \delta_\beta\left(  \frac{(\DO(\DAX) - \IO) }{|\DAX|^2} \right) \tau_\beta\DAM \tau_\beta \DO(\DAX)
\\
+ \DAP\cdot \frac{(\DO(\DAX) - \IO) }{|\DAX|^2}\DAM 
\delta_\beta \DO(\DAX).
\end{multline}
Then the bound \eqref{A1X.bound} for $\mathcal{A}_{1\beta}(\theta, \alpha)$ follows directly from \eqref{A1X.def}.  And the bound for term $\mathcal{A}_{2\beta}(\theta, \alpha)$ in\eqref{A2X.bound} similarly follows from \eqref{A2X.def}.  This completes the proof.
\end{proof}

Next we give the proof of Lemma \ref{lemm.A.diff}.

\begin{proof}[Proof of Lemma \ref{lemm.A.diff}] We recall \eqref{kerbel.A.eqn.deriv} and then we split
\begin{equation}\notag
   4\pi \mathcal{A}[\bX]-4\pi\mathcal{A}[\bY]= 4\pi\mathcal{A}_1+4\pi\mathcal{A}_2.
\end{equation}
Here we are splitting so that $\mathcal{A}_1$ contains all the $\bX - \bY$ differences on the terms such as  $\delta_\alpha^\pm (\bX'-\bY')(\theta)$, and $\mathcal{A}_2$ contains the terms that have differences on $\DAL(\bX'-\bY')(\theta)$.  Thus for $\mathcal{A}_1$ we further split
\begin{equation*}
 \mathcal{A}_1 =  \mathcal{A}_{11}+\mathcal{A}_{12}+\mathcal{A}_{13}+\mathcal{A}_{14},
\end{equation*}
where
\begin{equation*}
4\pi \mathcal{A}_{11} = \DAPXY \cdot \frac{\DO(\DAX) }{|\DAX|^2}\DAM\IO
+
\DAYP \cdot \frac{\DO(\DAX) }{|\DAX|^2}\DAMXY\IO,
\end{equation*}
\begin{multline} \notag
4\pi \mathcal{A}_{12}  =   (\DAPXY+\DAMXY) \cdot\frac{ \DO(\DAX) \DAX}{|\DAX|^2} \IO
\\
- (\DAPXY+\DAMXY) \cdot\frac{ \RO(\DAX) \DAX}{|\DAX|^2} \RO(\DAX),
\end{multline}
\begin{multline} \notag
4\pi \mathcal{A}_{13}  =  
-\DAPXY \cdot \frac{\RO(\DAX) }{|\DAX|^2}\DAM
\RO(\DAX)
\\ 
-\DAYP \cdot \frac{\RO(\DAX) }{|\DAX|^2}\DAMXY
\RO(\DAX),
\end{multline}
\begin{multline} \notag
4\pi \mathcal{A}_{14}  =   
 \DAPXY \cdot\frac{ (\DO(\DAX) - \IO) }{|\DAX|^2}\DAM \DO(\DAX)
\\ 
+ \DAYP \cdot\frac{ (\DO(\DAX) - \IO) }{|\DAX|^2}\DAMXY \DO(\DAX).
\end{multline}
Therefore we observe that $\mathcal{A}_1$ satisfies the following uniform estimate
\begin{multline} \notag
\left| \mathcal{A}_1 \right|
\lesssim
|\BX|_*^{-2}\left(\left| \DAPXY\right| \left| \DAM\right|
+
\left| \DAMXY\right| \left| \DAYP\right|
\right)
\\
+  
|\BX|_*^{-1}\left(\left| \DAPXY\right| 
+
\left| \DAMXY\right| 
\right).
\end{multline}
And then for $\mathcal{A}_2$ we also split
\begin{equation*}
 \mathcal{A}_2 =  \mathcal{A}_{21}+\mathcal{A}_{22}+\mathcal{A}_{23}+\mathcal{A}_{24},
\end{equation*}
where
\begin{multline} \notag
4\pi \mathcal{A}_{21}  = \DAYP \cdot \left(\frac{\DO(\DAX) }{|\DAX|^2}-\frac{\DO(\DAY) }{|\DAY|^2} \right)\DAYM\IO
\\
+  (\DAYP+\DAYM) \cdot
\left(\frac{\DO(\DAX) \DAX}{|\DAX|^2} - \frac{\DO(\DAY) \DAY}{|\DAY|^2} \right)\IO,
\end{multline}
\begin{multline} \notag
4\pi \mathcal{A}_{22}  = 
-\DAYP \cdot \left(\frac{ \RO(\DAX) }{|\DAX|^2}-\frac{ \RO(\DAY) }{|\DAY|^2}\right)\DAYM
\RO(\DAX)
\\ 
-\DAYP \cdot    \frac{ \RO(\DAY) }{|\DAY|^2}\DAYM
\left(\RO(\DAX) - \RO(\DAY) \right),
\end{multline}
\begin{multline} \notag
4\pi \mathcal{A}_{23}  = 
- (\DAYP+\DAYM) \cdot
\left(\frac{ \RO(\DAX) \DAX}{|\DAX|^2} - \frac{ \RO(\DAY) \DAY}{|\DAY|^2} \right)\RO(\DAX)
\\
- (\DAYP+\DAYM) \cdot
\frac{ \RO(\DAY) \DAY}{|\DAY|^2} 
\left(\RO(\DAX) - \RO(\DAY) \right),
\end{multline}
\begin{multline} \notag
4\pi \mathcal{A}_{24}  
= 
\DAYP \cdot \left(\frac{(\DO(\DAX) - \IO) }{|\DAX|^2}
-
\frac{(\DO(\DAY) - \IO) }{|\DAY|^2}\right)\DAYM \DO(\DAX)
\\ 
+ \DAYP \cdot 
\frac{(\DO(\DAY) - \IO) }{|\DAY|^2}
\DAYM 
\left(\DO(\DAX)-\DO(\DAY)\right).
\end{multline}
Thus by inspection we have the following uniform estimate for $\mathcal{A}_2$ as
\begin{multline} \notag
\left| \mathcal{A}_2 \right|
\lesssim
|\BX,\BY|_*^{-3}\left| \DAL(\bX'-\bY')\right| \left| \DAYM\right|
\left| \DAYP\right|
\\
+ 
|\BX,\BY|_*^{-2}\left| \DAL(\bX'-\bY')\right|\left( \left| \DAYM\right|
+\left| \DAYP\right|
\right).
\end{multline}
This completes the proof.
\end{proof}

We lastly give the proof of Lemma \ref{lem:Abeta.upper}.

\begin{proof}[Proof of Lemma \ref{lem:Abeta.upper}]
Then as in \eqref{A1X.def} and \eqref{A2X.def} we decompose the terms $\mathcal{A}_{j\beta i}[\bX] - \mathcal{A}_{j\beta i}[\bY]$ for $j\in\{1,2\}$ and for $i\in\{1,2,3,4\}$ individually as 
\begin{equation}\notag
\mathcal{A}_{j\beta i}[\bX] - \mathcal{A}_{j\beta i}[\bY]
\eqdef
\mathcal{A}_{j\beta i1}[\bX,\bY]+\mathcal{A}_{j\beta i2}[\bX,\bY],
\end{equation}
where
\begin{multline}\notag
4\pi\mathcal{A}_{1\beta 11}[\bX,\bY]
=
\delta_\beta \DAPXY\cdot \tau_\beta\frac{ \DO(\DAX) }{|\DAX|^2}\tau_\beta \DAM\IO 
\\
+
\delta_\beta \DAYP\cdot \tau_\beta\frac{ \DO(\DAX) }{|\DAX|^2}\tau_\beta \DAMXY\IO
\\
+
\DAPXY\cdot \frac{ \DO(\DAX) }{|\DAX|^2}\delta_\beta\DAM \IO
\\
+
\DAYP\cdot \frac{ \DO(\DAX) }{|\DAX|^2}\delta_\beta\DAMXY \IO, 
\end{multline}
\begin{multline}\notag
4\pi\mathcal{A}_{1\beta 12}[\bX,\bY]
=
\DAYP\cdot \left(\frac{ \DO(\DAX) }{|\DAX|^2}-\frac{ \DO(\DAY) }{|\DAY|^2}\right)\delta_\beta\DAYM \IO,
\\
+
\delta_\beta \DAYP\cdot \tau_\beta
\left(\frac{ \DO(\DAX) }{|\DAX|^2}-\frac{ \DO(\DAY) }{|\DAY|^2}\right)\tau_\beta \DAYM\IO,
\end{multline}
\begin{multline}\notag
4\pi\mathcal{A}_{1\beta 21}[\bX,\bY]
=
(\delta_\beta\DAPXY + \delta_\beta \DAMXY)\cdot \tau_\beta \frac{\DO(\DAX) \DAX}{|\DAX|^2}\IO 
\\ 
- (\delta_\beta\DAPXY + \delta_\beta \DAMXY)\cdot  \tau_\beta \frac{ \RO(\DAX) \DAX}{|\DAX|^2}
\tau_\beta \RO(\DAX),
\end{multline}
\begin{multline}\notag
4\pi\mathcal{A}_{1\beta 22}[\bX,\bY]
\\
=
(\delta_\beta\DAYP + \delta_\beta\DAYM)\cdot \tau_\beta 
\left(\frac{\DO(\DAX) \DAX}{|\DAX|^2} - \frac{\DO(\DAY) \DAY}{|\DAY|^2}\right)\IO 
\\ 
- (\delta_\beta\DAYP + \delta_\beta\DAYM)\cdot  \tau_\beta \frac{ \RO(\DAX) \DAX}{|\DAX|^2}
\tau_\beta \RO(\DAX)
\\ 
+ (\delta_\beta\DAYP + \delta_\beta\DAYM)\cdot  \tau_\beta 
\frac{ \RO(\DAY) \DAY}{|\DAY|^2}
\tau_\beta \RO(\DAY),
\end{multline}
\begin{multline}\notag
4\pi\mathcal{A}_{1\beta 31}[\bX,\bY]
=
-\delta_\beta\DAPXY \cdot \tau_\beta \frac{\RO(\DAX) }{|\DAX|^2} \tau_\beta\DAM
\tau_\beta \RO(\DAX)
\\
-\delta_\beta\DAYP \cdot \tau_\beta \frac{\RO(\DAX) }{|\DAX|^2} \tau_\beta\DAMXY
\tau_\beta \RO(\DAX)
\\
-\DAPXY \cdot \frac{\RO(\DAX) }{|\DAX|^2} \delta_\beta\DAM
\tau_\beta \RO(\DAX)
\\
-\DAYP \cdot \frac{\RO(\DAX) }{|\DAX|^2} \delta_\beta\DAMXY
\tau_\beta \RO(\DAX),
\end{multline}
\begin{multline}\notag
4\pi\mathcal{A}_{1\beta 32}[\bX,\bY]
=
-\delta_\beta\DAYP \cdot \tau_\beta \left(\frac{\RO(\DAX) }{|\DAX|^2} -\frac{\RO(\DAY) }{|\DAY|^2}\right) \tau_\beta\DAYM
\tau_\beta \RO(\DAX)
\\
-\delta_\beta\DAYP \cdot \tau_\beta \frac{\RO(\DAY) }{|\DAY|^2} \tau_\beta\DAYM
\tau_\beta  \left(\RO(\DAX)-\RO(\DAY)\right)
\\
-\DAYP \cdot \left(\frac{\RO(\DAX) }{|\DAX|^2} -\frac{\RO(\DAY) }{|\DAY|^2}\right)  \delta_\beta\DAYM
\tau_\beta \RO(\DAX)
\\
-\DAYP \cdot \frac{\RO(\DAX) }{|\DAX|^2} \delta_\beta\DAYM
\tau_\beta  \left(\RO(\DAX)-\RO(\DAY)\right),
\end{multline}
\begin{multline}\notag
4\pi\mathcal{A}_{1\beta 41}[\bX,\bY]
=
 \delta_\beta\DAPXY \cdot \tau_\beta\frac{(\DO(\DAX) - \IO) }{|\DAX|^2}\tau_\beta\DAM \tau_\beta \DO(\DAX)
\\
+
 \delta_\beta\DAYP \cdot \tau_\beta\frac{(\DO(\DAX) - \IO) }{|\DAX|^2}\tau_\beta\DAMXY \tau_\beta \DO(\DAX)
\\
+ \DAPXY\cdot \frac{(\DO(\DAX) - \IO) }{|\DAX|^2}\delta_\beta\DAM \tau_\beta \DO(\DAX)
\\
+ \DAYP\cdot \frac{(\DO(\DAX) - \IO) }{|\DAX|^2}\delta_\beta\DAMXY \tau_\beta \DO(\DAX),
\end{multline}
\begin{multline}\notag
4\pi\mathcal{A}_{1\beta 42}[\bX,\bY]
\\
=
 \delta_\beta\DAYP \cdot \tau_\beta\left(\frac{(\DO(\DAX) - \IO) }{|\DAX|^2}- \frac{(\DO(\DAY) - \IO) }{|\DAY|^2}\right)\tau_\beta\DAYM \tau_\beta \DO(\DAX)
 \\
 +
  \delta_\beta\DAYP \cdot \tau_\beta\frac{(\DO(\DAY) - \IO) }{|\DAY|^2}\tau_\beta\DAYM \tau_\beta \left(\DO(\DAX) - \DO(\DAY)\right)
\\
+ \DAYP\cdot \left(\frac{(\DO(\DAX) - \IO) }{|\DAX|^2}- \frac{(\DO(\DAY) - \IO) }{|\DAY|^2}\right)\delta_\beta\DAYM \tau_\beta \DO(\DAX)
\\
+ \DAYP\cdot \frac{(\DO(\DAX) - \IO) }{|\DAX|^2}\delta_\beta\DAYM \tau_\beta \left(\DO(\DAX) - \DO(\DAY)\right).
\end{multline}
Therefore by inspection of each term we have 
\begin{multline}\label{A1beta11.upper}
\left| \mathcal{A}_{1\beta11}[\bX,\bY] \right|
+
\left| \mathcal{A}_{1\beta31}[\bX,\bY] \right|
+
\left| \mathcal{A}_{1\beta41}[\bX,\bY] \right|
\\
\lesssim
|\delta_\beta \DAPXY||\tau_\beta \DAM| |\BX|_{*}^{-2}
\\
+
|\delta_\beta \DAYP| 
|\tau_\beta \DAMXY|
|\BX|_{*}^{-2}
+
|\DAPXY||\delta_\beta \DAM| |\BX|_{*}^{-2}
\\
+
|\DAYP||\delta_\beta \delta_\alpha^- (\BX'-\BY')| |\BX|_{*}^{-2},
\end{multline}
\begin{multline}\label{A1beta12.upper}
\left| \mathcal{A}_{1\beta12}[\bX,\bY] \right|
+
\left| \mathcal{A}_{1\beta32}[\bX,\bY] \right|
+
\left| \mathcal{A}_{1\beta42}[\bX,\bY] \right|
\\
\lesssim
|\delta_\beta \DAYP||\tau_\beta \DAYM|
|\tau_\beta \DAL(\bX'-\bY')||\BX,\BY|_{*}^{-3}
\\
+
|\DAYP||\delta_\beta \DAYM| 
|\DAL(\bX'-\bY')| |\BX,\BY|_{*}^{-3},
\end{multline}
\begin{equation}\label{A1beta21.upper}
\left| \mathcal{A}_{1\beta21}[\bX,\bY] \right|
\lesssim
\left( |\delta_\beta \DAPXY|
+
|\delta_\beta \DAMXY|
\right)|\BX|_{*}^{-1},
\end{equation}
\begin{equation}\label{A1beta22.upper}
\left| \mathcal{A}_{1\beta22}[\bX,\bY] \right|
\lesssim
\left( |\delta_\beta \DAYP|
+
|\delta_\beta \DAYM|
\right)
|\tau_\beta \DAL(\bX'-\bY')||\BX,\BY|_{*}^{-2}.
\end{equation}
Then \eqref{A1beta.upper} follows from collecting the estimates in \eqref{A1beta11.upper}, \eqref{A1beta12.upper}, \eqref{A1beta21.upper} and \eqref{A1beta22.upper}.

Next we consider the differences of the form $\mathcal{A}_{2\beta i}[\bX] - \mathcal{A}_{2\beta i}[\bY]$.  We obtain
\begin{multline}\notag
4\pi\mathcal{A}_{2\beta 11}[\bX,\bY]
=
 \DAPXY\cdot \delta_\beta\frac{ \DO(\DAX) }{|\DAX|^2}\tau_\beta \DAM\IO 
\\
+
 \DAYP\cdot \delta_\beta\frac{ \DO(\DAX) }{|\DAX|^2}\tau_\beta \DAMXY\IO,
\end{multline}
\begin{equation}\notag
4\pi\mathcal{A}_{2\beta 12}[\bX,\bY]
=
 \DAYP\cdot \delta_\beta
\left(\frac{ \DO(\DAX) }{|\DAX|^2}-\frac{ \DO(\DAY) }{|\DAY|^2}\right)\tau_\beta \DAYM\IO,
\end{equation}
\begin{multline}\notag
4\pi\mathcal{A}_{2\beta 21}[\bX,\bY]
=
-\DAPXY \cdot \delta_\beta \frac{\RO(\DAX) }{|\DAX|^2} \tau_\beta\DAM
\tau_\beta \RO(\DAX)
\\
-\DAYP \cdot \delta_\beta \frac{\RO(\DAX) }{|\DAX|^2} \tau_\beta\DAMXY
\tau_\beta \RO(\DAX)
\\
-\DAPXY \cdot \frac{\RO(\DAX) }{|\DAX|^2} \DAM
\delta_\beta \RO(\DAX)
\\
-\DAYP \cdot \frac{\RO(\DAX) }{|\DAX|^2} \DAMXY
\delta_\beta \RO(\DAX),
\end{multline}
\begin{multline}\notag
4\pi\mathcal{A}_{2\beta 22}[\bX,\bY]
=
-\DAYP \cdot \delta_\beta \left(\frac{\RO(\DAX) }{|\DAX|^2} -\frac{\RO(\DAY) }{|\DAY|^2}\right) \tau_\beta\DAYM
\tau_\beta \RO(\DAX)
\\
-\DAYP \cdot \delta_\beta \frac{\RO(\DAY) }{|\DAY|^2} \tau_\beta\DAYM
\tau_\beta  \left(\RO(\DAX)-\RO(\DAY)\right)
\\
-\DAYP \cdot \left(\frac{\RO(\DAX) }{|\DAX|^2} -\frac{\RO(\DAY) }{|\DAY|^2}\right)  \DAYM
\delta_\beta \RO(\DAX)
\\
-\DAYP \cdot \frac{\RO(\DAX) }{|\DAX|^2} \DAYM
\delta_\beta  \left(\RO(\DAX)-\RO(\DAY)\right),
\end{multline}
\begin{multline}\notag
4\pi\mathcal{A}_{2\beta 31}[\bX,\bY]
=
(\DAPXY + \DAMXY)\cdot \delta_\beta \frac{\DO(\DAX) \DAX}{|\DAX|^2}\IO
\\ 
- (\DAPXY + \DAMXY)\cdot  \delta_\beta\left( \frac{ \RO(\DAX) \DAX}{|\DAX|^2}
 \RO(\DAX) \right),
\end{multline}
\begin{multline}\notag
4\pi\mathcal{A}_{2\beta 32}[\bX,\bY]
=
(\DAYP + \DAYM)\cdot 
\delta_\beta
\left(\frac{\DO(\DAX) \DAX}{|\DAX|^2} - \frac{\DO(\DAY) \DAY}{|\DAY|^2}\right)\IO
\\ 
- (\DAYP + \DAYM)\cdot  \delta_\beta
\left(\frac{ \RO(\DAX) \DAX}{|\DAX|^2}
 \RO(\DAX) \right)
\\ 
+ (\DAYP + \DAYM)\cdot  \delta_\beta 
\left(\frac{ \RO(\DAY) \DAY}{|\DAY|^2}
 \RO(\DAY) \right),
\end{multline}
\begin{multline}\notag
4\pi\mathcal{A}_{2\beta 41}[\bX,\bY]
=
\DAPXY \cdot  \delta_\beta\frac{(\DO(\DAX) - \IO) }{|\DAX|^2}\tau_\beta\DAM \tau_\beta \DO(\DAX)
\\
+
\DAYP \cdot  \delta_\beta\frac{(\DO(\DAX) - \IO) }{|\DAX|^2}\tau_\beta\DAMXY\tau_\beta \DO(\DAX)
\\
+ \DAPXY \cdot \frac{(\DO(\DAX) - \IO) }{|\DAX|^2}\DAM \delta_\beta \DO(\DAX)
\\
+ \DAYP\cdot \frac{(\DO(\DAX) - \IO) }{|\DAX|^2}\DAMXY\delta_\beta \DO(\DAX),
\end{multline}
\begin{multline}\notag
4\pi\mathcal{A}_{2\beta 42}[\bX,\bY]
\\
=
\DAYP \cdot  \delta_\beta\left(\frac{(\DO(\DAX) - \IO) }{|\DAX|^2}- \frac{(\DO(\DAY) - \IO) }{|\DAY|^2}\right)\tau_\beta\DAYM \tau_\beta \DO(\DAX)
 \\
 +
\DAYP \cdot  \delta_\beta\frac{(\DO(\DAY) - \IO) }{|\DAY|^2}\tau_\beta\DAYM \tau_\beta \left(\DO(\DAX) - \DO(\DAY)\right)
\\
+ \DAYP\cdot \left(\frac{(\DO(\DAX) - \IO) }{|\DAX|^2}- \frac{(\DO(\DAY) - \IO) }{|\DAY|^2}\right)\DAYM \delta_\beta \DO(\DAX)
\\
+ \DAYP\cdot \frac{(\DO(\DAX) - \IO) }{|\DAX|^2}\DAYM \delta_\beta \left(\DO(\DAX) - \DO(\DAY)\right).
\end{multline}
Therefore by inspection of each term we have 
\begin{multline}\label{A2beta11.upper}
\left| \mathcal{A}_{2\beta11}[\bX,\bY] \right|
+
\left| \mathcal{A}_{2\beta21}[\bX,\bY] \right|
+
\left| \mathcal{A}_{2\beta41}[\bX,\bY] \right|
\\
\lesssim
|\DAPXY||\tau_\beta \DAM| \frac{|\delta_\beta \DAX|}{|\BX|_{*}^{3}}
\\
+
\left(
| \DAYP| 
|\tau_\beta \DAMXY|
+
|\DAPXY|| \DAM| 
\right)
\frac{|\delta_\beta \DAX|}{|\BX|_{*}^{3}}
\\
+
|\DAYP|| \delta_\alpha^- (\BX'-\BY')| 
\frac{|\delta_\beta \DAX|}{|\BX|_{*}^{3}},
\end{multline}
\begin{multline}\label{A2beta12.upper}
\left| \mathcal{A}_{2\beta12}[\bX,\bY] \right|
+
\left| \mathcal{A}_{2\beta22}[\bX,\bY] \right|
+
\left| \mathcal{A}_{2\beta42}[\bX,\bY] \right|
\\
\lesssim
| \DAYP||\tau_\beta \DAYM|
\frac{|\delta_\beta \DAL(\bX'-\bY')|}{|\BX,\BY|_{*}^{3}}
\\
+
| \DAYP||\tau_\beta \DAYM|
| \DAL(\bX'-\bY')|
\frac{|\delta_\beta\DAX, \delta_\beta\DAY|}{|\BX,\BY|_{*}^{4}}
\\
+
| \DAYP||\tau_\beta \DAYM|
|\tau_\beta\DAL(\bX'-\bY')|
\frac{|\delta_\beta\DAX, \delta_\beta\DAY|}{|\BX,\BY|_{*}^{4}}
\\
+
|\DAYP|| \DAYM| 
\frac{|\delta_\beta \DAL(\bX'-\bY')|}{|\BX,\BY|_{*}^{3}}
\\
+
|\DAYP|| \DAYM| 
|\DAL(\bX'-\bY')| \frac{|\delta_\beta\DAX, \delta_\beta\DAY|}{|\BX,\BY|_{*}^{4}},
\end{multline}
\begin{equation}\label{A2beta21.upper}
\left| \mathcal{A}_{2\beta31}[\bX,\bY] \right|
\lesssim
\left( | \DAPXY|
+
| \DAMXY|
\right)\frac{|\delta_\beta \DAX|}{|\BX|_{*}^{3}},
\end{equation}
\begin{multline}\label{A2beta22.upper}
\left| \mathcal{A}_{2\beta32}[\bX,\bY] \right|
\lesssim
\left( | \DAYP|
+
| \DAYM|
\right)
\frac{|\delta_\beta \DAL(\bX'-\bY')|}{|\BX,\BY|_{*}^{3}}
\\
+
\left( | \DAYP|
+
| \DAYM|
\right)
|\DAL(\bX'-\bY')| \frac{|\delta_\beta\DAX, \delta_\beta\DAY|}{|\BX,\BY|_{*}^{4}}.
\end{multline}
Then \eqref{A2beta.upper} again follows from collecting the estimates in \eqref{A2beta11.upper}, \eqref{A2beta12.upper}, \eqref{A2beta21.upper} and \eqref{A2beta22.upper}.
\end{proof}

\providecommand{\bysame}{\leavevmode\hbox to3em{\hrulefill}\thinspace}
\providecommand{\href}[2]{#2}

\end{document}